\definecolor{darkred}{rgb}{1,0,0} 
\definecolor{darkgreen}{rgb}{0,0.6,0}
\definecolor{darkblue}{rgb}{0,0,.8}
\def\reflb#1#2{\begingroup
    #2%
    \def\@currentlabel{#2}%
    \phantomsection\label{#1}\endgroup
}
\numberwithin{equation}{section}
\newtheorem {Theorem}{Theorem}
\numberwithin{Theorem}{section}
\newtheorem {Lemma}[Theorem]    {Lemma}
\newtheorem {Proposition}[Theorem]{Proposition}
\newtheorem {Corollary}[Theorem]{Corollary}
\theoremstyle{definition}
\newtheorem{Definition}[Theorem]{Definition}
\theoremstyle{remark}
\newtheorem{Remark}[Theorem]{Remark}
\newtheorem{Example}[Theorem]{Example}
\def    \eps    {\epsilon}
\newcommand{\CA}{{\mathcal A}}
\newcommand{\CI}{{\mathcal I}}
\newcommand{\CM}{{\mathcal M}}
\newcommand{\CN}{{\mathcal N}}
\newcommand{\CU}{{\mathcal U}}
\newcommand{\CS}{{\mathcal S}}
\newcommand{\supp}{\operatorname{supp}}
\newcommand{\gap}{\operatorname{gap}}
\newcommand{\sgn}{\operatorname{sgn}}
\newcommand{\sign}{\operatorname{sign}}
\newcommand{\lcm}{\operatorname{lcm}}
\newcommand{\const}{{\mathit const}}
\newcommand{\ff}{{\mathfrak f}}
\newcommand{\tu}{\tilde{u}}
\newcommand{\tF}{\tilde{F}}
\newcommand{\hmu}{\hat{\mu}}
\newcommand{\tPhi}{\tilde{\Phi}}
\newcommand{\tH}{\tilde{H}}
\newcommand{\tQ}{\tilde{Q}}
\newcommand{\Bb}{{\mathcal B}}
\newcommand{\CB}{{\mathcal B}}
\newcommand{\Cc}{{\mathcal C}}
\newcommand{\PP}{{\mathcal P}}
\def    \C      {{\mathbb C}}
\def    \R      {{\mathbb R}}
\def    \Z      {{\mathbb Z}}
\def    \N      {{\mathbb N}}
\def    \Q      {{\mathbb Q}}
\def    \T      {{\mathbb T}}
\def    \CP     {{\mathbb C}{\mathbb P}}
\def    \12    {{\frac{1}{2}}}
\def    \p      {\partial}
\def    \bp      {\bar{\partial}}
\def    \ev  {\operatorname{\mathit{ev}}}
\def    \im     {\operatorname{im}}
\def    \SH     {\operatorname{SH}}
\def    \Sp     {\operatorname{Sp}}
\def    \TSp     {\widetilde{\operatorname{Sp}}}
\def    \TSpn    {\widetilde{\operatorname{Sp}}{}^*}
\def    \U     {\operatorname{U}}
\def    \HF     {\operatorname{HF}}
\def    \HC     {\operatorname{HC}}
\def    \CCM     {\operatorname{CM}}
\def    \Gr     {\operatorname{Gr}}
\def    \H     {\operatorname{H}}
\def    \HM     {\operatorname{HM}}
\def    \CF     {\operatorname{CF}}
\def    \PD     {\operatorname{PD}}
\def    \barf   {\bar{f}}
\def    \MURS  {\operatorname{\mu_{\scriptscriptstyle{RS}}}}
\def    \MUM  {\operatorname{\mu_{\scriptscriptstyle{M}}}}
\def    \mum  {\operatorname{\mu_{\scriptscriptstyle{M}}}}
\def    \s  {\operatorname{c}}
\def    \hs  {\hat{\operatorname{c}}}
\begin{document}


\setlength{\smallskipamount}{6pt}
\setlength{\medskipamount}{10pt}
\setlength{\bigskipamount}{16pt}





\title[Lusternik--Schnirelmann Theory and Closed Reeb
Orbits]{Lusternik--Schnirelmann Theory and Closed Reeb Orbits}

\author[Viktor Ginzburg]{Viktor L. Ginzburg}
\author[Ba\c sak G\"urel]{Ba\c sak Z. G\"urel}

\address{BG: Department of Mathematics, University of Central Florida,
  Orlando, FL 32816, USA} \email{basak.gurel@ucf.edu}

\address{VG: Department of Mathematics, UC Santa Cruz, Santa Cruz, CA
  95064, USA} \email{ginzburg@ucsc.edu}

\subjclass[2010]{53D40, 37J10, 37J45, 37J55} 

\keywords{Periodic orbits, Reeb flows, Floer and symplectic homology,
  Lusternik--Schnirelmann theory}

\date{\today} 

\thanks{The work is partially supported by NSF CAREER award
  DMS-1454342 (BG) and NSF grants DMS-1414685 (BG) and DMS-1308501
  (VG)}

\bigskip

\begin{abstract}
  We develop a variant of Lusternik--Schnirelmann theory for the shift
  operator in equivariant Floer and symplectic homology. Our key
  result is that the spectral invariants are strictly decreasing under
  the action of the shift operator when periodic orbits are
  isolated. As an application, we prove new multiplicity results for
  simple closed Reeb orbits on the standard contact sphere, the unit
  cotangent bundle to the sphere and some other contact manifolds. We
  also show that the lower Conley--Zehnder index enjoys a certain
  recurrence property and revisit and reprove from a different
  perspective a variant of the common jump theorem of Long and
  Zhu. This is the second, combinatorial ingredient in the proof of
  the multiplicity~results.
\end{abstract}

\maketitle

\tableofcontents

\section{Introduction and main results}
\label{sec:intro+results}

\subsection{Introduction}
\label{sec:intro} 
In this paper we develop a variant of Lusternik--Schnirelmann theory
for the shift operator in equivariant Floer and symplectic homology
and show that the spectral invariants are strictly decreasing under
the action of the shift operator when periodic orbits are isolated. As
an application of this theory, we obtain new multiplicity results for
simple (i.e., un-iterated) closed Reeb orbits on, e.g., $S^{2n-1}$ and
$ST^*S^n$. We also establish, as a second, combinatorial ingredient in
the proof of these multiplicity results, a recurrence property of the
lower Conley--Zehnder index and use it to reprove from a different
perspective a variant of the common jump theorem from \cite{LZ}; see
also \cite{DLW}.

\subsubsection{Lusternik--Schnirelmann theory}
\label{sec:LS-intro-0}
This theory, \cite{LS}, hinges on the general principle that the
minimax critical values associated with homology classes decrease
under cohomology operations. Moreover, they decrease strictly when the
critical sets are sufficiently small, e.g., when the critical points
are isolated. This principle, which we sometimes call the
Lusternik--Schnirelmann inequality, applies to a broad class of
functionals and some (perhaps, many) cohomology operations. In fact,
the authors are not aware of any instance where the principle would
convincingly break down.  An alternative way to think about the
principle is that although the local (co)homology of an isolated
critical point can be arbitrarily large, it does not support
non-trivial cohomology operations such as the cup product or Massey
products. The Lusternik--Schnirelmann inequality is usually applied to
a chain of homology classes to establish a lower bound on the number
of critical values, and hence critical points, of a functional. Such
lower bounds do not require any non-degeneracy conditions, but are
weaker than those coming from Morse theory.

Here, referring the reader to, e.g., \cite[Sec.\ 6]{GG:gaps} and
\cite{Vi97} for a more thorough treatment of the question and
references, we only illustrate the point by a few simple examples.

The most elementary variant of the theory is the
Lusternik--Schnirelmann theorem for smooth functions, giving a lower
bound for the number of critical values of a function with isolated
critical points via the cup-length of its domain.  Ultimately, it is
based on the fact that the minimax value associated with the intersection
product $v\cap w$ of two homology classes is strictly smaller than the
minimax values for both $v$ and $w$, provided of course that the
critical points are isolated and neither $v$ nor $w$ is the
fundamental class.  Note that this variant of the theory entirely
bypasses the notion of the Lusternik--Schnirelmann category.  The
classical Lusternik--Schnirelmann theorem on the existence of three
simple geodesics on $S^2$ is another incarnation of the same principle
applied now to the length functional,~\cite{Gr}.

In Floer theory the minimax critical values are usually referred to as
spectral invariants, and Lusternik--Schnirelmann theory is at the very
heart of the proof of the known cases of the Arnold conjecture for
degenerate Hamiltonians; see, e.g., \cite{Fl1, Fl2, Ho, How:thesis,
  LO, Sc}. Here the functional is the action functional and the
(co)homology operation is the pair-of-pants product or the action of
the quantum homology on the Floer homology. (See \cite[Sec.\
6.2]{GG:gaps} for a detailed discussion of the Hamiltonian Arnold
conjecture in this context and further references.)

When the underlying space is equipped with an $S^1$-action, the
pairing with the generator of $\H^2(\CP^\infty)$ gives rise to an
operator $D$ of degree $-2$ on the $S^1$-equivariant homology. We call
$D$ the shift operator. When the functional is $S^1$-invariant, one
can expect a variant of Lusternik--Schnirelmann theory to hold for
$D$; cf.\ \cite{Gi:sl}. For instance, for a smooth $S^1$-invariant
function, the critical values associated with equivariant homology
classes over $\Q$ are strictly decreasing under the action of $D$ when
the $S^1$-action is locally free; see Section \ref{sec:LS-Morse}. One
can think of this fact as simply the Lusternik--Schnirelmann theorem
for the quotient orbifold.

The shift operator $D$ in the equivariant symplectic homology is the
connecting map in the Gysin exact sequence relating the equivariant
and non-equivariant symplectic homology. It is defined in
\cite{BO:Gysin} via counting Floer trajectories matching marked points
on the orbits. One of the main results of the present paper (Theorem
\ref{thm:LS1-intro}) is the Lusternik--Schnirelmann theorem for the
equivariant symplectic homology. This result is an easy consequence of
the Lusternik--Schnirelmann inequality for the equivariant Floer
homology of autonomous Hamiltonians (Theorem
\ref{thm:LS-Floer-intro}). One point of independent interest that
arises in the proof is that while the original definition of $D$ is
well adapted for the construction of the Gysin exact sequence, it is
not perfectly suited for the proof of the Lusternik--Schnirelmann
inequality. Hence we give a different definition of $D$ via the
intersection index with the ``hyperplane-section'' cycle, and prove
the equivalence of the two definitions in Section
\ref{sec:LS-Floer-D}. A local counterpart of these results is the fact
that $D\equiv 0$ on the level of the local symplectic or Floer
homology of an isolated closed orbit (non-constant in the Hamiltonian
case); see Proposition \ref{prop:D-local} and Corollary
\ref{cor:D-local}.

Applying the Lusternik--Schnirelmann theorem to Reeb flows on the
standard contact $S^{2n-1}$ and $ST^*S^n$, we obtain sequences of
closed Reeb orbits with strictly decreasing actions, satisfying
certain index constraints; see Theorems \ref{thm:LS2-intro} and
\ref{thm:STSn-intro}. These theorems are central to the proofs of our
multiplicity results (Theorems \ref{thm:mult-intro} and
\ref{thm:mult4-intro}) discussed in more detail below.

Lusternik--Schnirelmann theory for the shift operator in equivariant
symplectic homology has some closely related predecessors which have
also been used to obtain lower bounds on the number of simple periodic
orbits. For instance, in the framework of the classical calculus of
variations, we have the Lusternik--Schnirelmann theorem for the shift
operator in a suitably defined equivariant homology for convex
Hamiltonians on $\R^{2n}$, based on the Fadell--Rabinowitz index,
\cite{FR}; see, e.g., \cite[Sec.\ V.3]{Ek} and also \cite[Chap.\
15]{Lo}. Also, there is a similar result for the energy functional on
the space of closed loops yielding lower bounds for the number of
closed geodesics on, say, $S^n$; see \cite{BTZ:JDG83}. More recently,
the Lusternik--Schnirelmann inequality for $D$ in ECH was used to
prove the existence of at least two simple periodic Reeb orbits on
every closed contact three-manifold, \cite{CGH}. In this case, the
nature of the inequality is particularly transparent because the shift
operator is defined by counting holomorphic curves ``passing'' through
a fixed point. Placing this point away from the relevant periodic
orbits, one obtains a lower bound on the energy of the holomorphic
curve by the monotonicity lemma.

\subsubsection{Multiplicity results for closed Reeb orbits and index theory}
\label{sec:mult-intro-0}
As has been mentioned above, our main applications of
Lusternik--Schnirelmann theory are to multiplicity results for simple
closed Reeb orbits (Theorems \ref{thm:mult-intro} and
\ref{thm:mult4-intro}). Arguably, in this class of questions beyond
dimension three, the most interesting manifolds are the standard
contact $S^{2n-1}$ and $ST^*S^n$, i.e., the boundaries of star-shaped
domains in $\R^{2n}$ and $T^*S^n$. For these manifolds, on the one
hand, we have strong benchmark multiplicity results for simple closed
characteristics on convex hypersurfaces and closed geodesics of
Riemannian or Finsler metrics, which we will discuss shortly, and, on
the other, we have precise general conjectures on the minimal number
of simple periodic orbits.

Namely, hypothetically, any Reeb flow on the standard contact
$S^{2n-1}$ has at least $r_{\min}=n$ simple closed Reeb orbits. For
$ST^*S^n$, we have $r_{\min}=n$ when $n$ is even and $r_{\min}=n+1$
when $n$ is odd. In both cases, the lower bounds, which we will refer
to as the \emph{multiplicity conjectures}, are sharp. For $S^{2n-1}$,
the example is an irrational ellipsoid and for $ST^*S^n$ the
Katok--Ziller Finsler metrics have exactly $r_{\min}$ simple closed
geodesics, \cite{Ka,Zi}. A closely related fact is that for both
manifolds the positive equivariant symplectic homology is rather
small. In particular, in contrast with most other cotangent bundles
and some other contact manifolds, there is no homological growth which
would imply the existence of infinitely many simple periodic orbits;
see, e.g., \cite{CH,GM,HM,McL}. Nor do we have in this case the
contact Conley conjecture type phenomena; see \cite{GGM,GG:CC-survey}.

The existence of at least two simple closed Reeb orbits on $S^3$ and
$ST^*S^2$ is a very particular case of the theorem from \cite{CGH}
mentioned above. This result was also proved, specifically for $S^3$
and $ST^*S^2$, in \cite{GHHM,GGo,LL} by different methods; see also
\cite{BL} for the case of Finsler metrics on $S^2$.

When $n\geq 3$, the multiplicity conjectures are completely open. In
general, it is not even known if there are at least two simple closed
Reeb orbits on $S^{2n-1}$ or $ST^*S^n$, i.e., $r_{\min}\geq 2$. (It is
easy to show that this is true in the non-degenerate case, \cite[Rmk.\
3.3]{Gu:pr}.)

However, once certain convexity or index conditions are imposed on the
contact form, the question becomes much more tractable.  Of course,
convexity is not invariant under symplectomorphisms, and in the
context of symplectic topology it is usually replaced by dynamical
convexity introduced in \cite{HWZ:convex} for $\R^4$. A contact form
on $S^{2n-1}$ is said to be dynamically convex if every closed Reeb
orbit has Conley--Zehnder index $\mu\geq n+1$ or, without
non-degeneracy assumptions, $\mu_-\geq n+1$ where $\mu_-$ is the lower
Conley--Zehnder index; see Section \ref{sec:index}.  This lower bound
on $\mu_-$, which follows from geometrical convexity, appears to be a
suitable replacement for convexity of hypersurfaces in $\R^{2n}$. For
$ST^*S^n$, the requirement is that $\mu_-\geq n-1$, which is a
consequence of certain curvature pinching conditions if the contact
form comes from a Riemannian or Finsler metric on $S^n$; see, e.g.,
\cite{HP,Ra:pinching,Wa12,DLW}.

The multiplicity conjectures for non-degenerate contact forms on
$S^{2n-1}$ and $ST^*S^n$ are proved in
\cite{AM,DLLW,DLW,GK,Lo,LZ,Wa,WHL} under the above or even much weaker
index (or convexity) requirements. The proofs are usually based on a
Morse theoretic methods using the size of the relevant homology to
obtain lower bounds on the number of simple orbits. We review these
results in detail in the next section and, in fact, reprove some of
them in Section \ref{sec:pf-main}.

However, our main interest in this paper is the general case of
possibly degenerate contact forms, and this is where
Lusternik--Schnirelmann theory becomes absolutely essential. We prove
that when $\mu_-\geq n+1$ for $S^{2n-1}$ or $\mu_-\geq n-1$ for
$ST^*S^n$, the Reeb flow has at least $r=\lceil n/2\rceil +1$ simple
periodic orbits; see Theorems \ref{thm:mult-intro} and
\ref{thm:mult4-intro}. Leaving a comparison with previous results to
the next section, we only mention here that this lower bound for
$S^{2n-1}$ is a generalization of the lower bounds established in
\cite{LZ,Wa16} for convex hypersurfaces in $\R^{2n}$. We also obtain
some additional information about the orbits when the Reeb flow has
only finitely many simple closed orbits. In particular, we show that
then the orbits satisfy certain action--index resonance relations,
generalizing the relations from \cite{Gu:pr}, and that at least one of
the orbits is elliptic; see Section \ref{sec:pf-main}.

The key difficulty intrinsic in this class of problems is that neither
Lusternik--Schnirelmann nor Morse theory can distinguish simple from
iterated orbits. In other words, while both theories do provide lower
bounds on the number of periodic orbits in a given action or index
range, these orbits need not be geometrically distinct and can in
general be iterations of just one simple orbit. To circumvent this
difficulty, we use what is essentially a combinatorial argument
closely related to the common jump theorems proved in \cite{DLW,LZ}.
(See Section \ref{sec:IR} where we establish a certain index
recurrence property (Theorems \ref{sec:IRT} and \ref{thm:IRT2}) and
prove a variant of the common jump theorem.)  There is a considerable
overlap between Section \ref{sec:IR} and the results from those
papers.  Our treatment of the problem is, however, self-contained and
quite different from \cite{DLW,Lo,LZ}, and the proofs are relatively
straightforward.

\subsection{Results}
\label{sec:results}
Now we are ready to state precisely some of the key results of the
paper starting with Lusternik--Schnirelmann theory for equivariant
symplectic and Floer homology; see Sections \ref{sec:LS} and
\ref{sec:LS2} for more details.

Let $(M^{2n-1},\alpha)$ be a closed contact manifold and let $W$ be an
exact strong symplectic filling of $M$ with $c_1(TW)|_{\pi_2(W)}=0$;
see Section \ref{sec:SH-conditions}. Abusing notation, let us denote
the positive equivariant symplectic homology of $W$ over $\Q$ by
$\SH_*^{G,+}(W;\Q)$, where $G=S^1$, graded by the Conley--Zehnder
index. Let $D\colon \SH_*^{G,+}(W)\to \SH_{*-2}^{G,+}(W)$ be the shift
operator introduced in \cite{BO:Gysin}. There is a spectral invariant
$\s_w(\alpha)\in (0,\,\infty)$, a point in the action spectrum
$\CS(\alpha)$, associated to any non-zero element
$w\in \SH_*^{G,+}(W)$. (By definition, $\s_0(\alpha)=-\infty$.)

Denote by $\PP(\alpha)$ the collection of contractible in $W$ closed Reeb
orbits (not necessarily simple) of $\alpha$. An orbit
$x\in\PP(\alpha)$ is said to be \emph{isolated} if it is isolated in
the extended phase space $M\times (0,\,\infty)$, i.e., there exists a
tubular neighborhood $U$ of the image of $x$ in $M$ such that no other
closed Reeb orbit with period sufficiently close to the period of $x$
intersects $U$.

One of our main results is

\begin{Theorem}[Lusternik--Schnirelmann inequality; symplectic
  homology]
\label{thm:LS1-intro}
Assume that all orbits in $\PP(\alpha)$ are isolated.  Then, for any
non-zero element $w\in \SH_{*}^{G,+}(W)$, we have
$$
\s_w(\alpha)>\s_{D(w)}(\alpha).
$$
\end{Theorem}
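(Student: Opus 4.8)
The plan is to deduce Theorem~\ref{thm:LS1-intro} from the Lusternik--Schnirelmann inequality for the equivariant Floer homology of autonomous Hamiltonians (Theorem~\ref{thm:LS-Floer-intro}) by the standard ``sandwiching'' passage from symplectic homology to Floer homology. First I would fix a non-zero class $w\in\SH_*^{G,+}(W)$ and recall that $\SH_*^{G,+}(W)$ is realized as a direct limit over an exhausting family of autonomous Hamiltonians $H$ on the completion $\wh W$ that are negative on $W$, of small slope there, and linear of slope $\tau$ at infinity, with $\tau\to\infty$. For each such $H$ the spectral invariant $\s_w(H)$ is defined as the minimax value of the action functional over the equivariant Floer chain complex on the class representing $w$, and by the usual argument (the action of a 1-periodic orbit of such an $H$ near the slope-$\tau$ region approximates, up to a controlled error, the period of the corresponding Reeb orbit times a factor) one has $\s_w(H)\to \s_w(\alpha)$ and $\s_{D(w)}(H)\to\s_{D(w)}(\alpha)$ as $\tau\to\infty$. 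The key point is that the shift operator $D$ on $\SH^{G,+}$ is compatible with the shift operator on the equivariant Floer homology level (this is part of the construction in \cite{BO:Gysin}, and is also built into the alternative ``hyperplane-section'' definition of $D$ developed in Section~\ref{sec:LS-Floer-D}), so that it suffices to prove the strict inequality $\s_w(H)>\s_{D(w)}(H)$ for each $H$ in the family, once $\tau$ is large enough that all the relevant 1-periodic orbits correspond to genuine closed Reeb orbits of $\alpha$.

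Next I would invoke the isolatedness hypothesis. By assumption every orbit in $\PP(\alpha)$ is isolated in $M\times(0,\infty)$; for $H$ as above this means every 1-periodic orbit of $X_H$ in the region of interest is isolated, so we are exactly in the situation of Theorem~\ref{thm:LS-Floer-intro} (Lusternik--Schnirelmann inequality for equivariant Floer homology of autonomous Hamiltonians with isolated orbits). That theorem gives
$$
\s_w(H)>\s_{D(w)}(H)
$$
for every such $H$. The delicate part is that, while each of these inequalities is strict, a naive limit only yields $\s_w(\alpha)\geq\s_{D(w)}(\alpha)$, since strict inequalities need not survive passage to a limit. To upgrade to strictness I would argue as follows: both $\s_w(\alpha)$ and $\s_{D(w)}(\alpha)$ lie in the action spectrum $\CS(\alpha)$, which is the set of periods of orbits in $\PP(\alpha)$; under the isolatedness hypothesis (together with the standing finiteness/discreteness one gets from it on bounded period intervals) $\CS(\alpha)$ is a closed, nowhere dense, measure-zero subset of $(0,\infty)$. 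Hence if $\s_w(\alpha)=\s_{D(w)}(\alpha)$, that common value $c$ is an isolated point of the relevant window of $\CS(\alpha)$, and one can run the Lusternik--Schnirelmann argument of Theorem~\ref{thm:LS-Floer-intro} directly at the level $c$: the local equivariant Floer homology at the (finitely many) Reeb orbits of period $c$ does not support the hyperplane-section intersection operator, because $D\equiv 0$ on the local equivariant Floer/symplectic homology of an isolated non-constant closed orbit (Proposition~\ref{prop:D-local} / Corollary~\ref{cor:D-local}). This forces $D(w)$ to be represented by a cycle whose action is strictly below $c$, contradicting $\s_{D(w)}(\alpha)=c$.

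The main obstacle, then, is precisely this last step: transferring the \emph{strictness} across the direct limit and ruling out the equality case. The mechanism that makes it work is the vanishing of $D$ on local symplectic homology of an isolated orbit, which is the local counterpart advertised in the introduction; once that is in hand, the equality case is excluded by the same type of ``a cycle concentrated at a single isolated orbit cannot be hit by $D$'' argument that underlies all Lusternik--Schnirelmann inequalities. Everything else --- the continuity of spectral invariants under change of slope, the identification of $\SH^{G,+}$-level spectral invariants with Floer-level ones, and the compatibility of the two with $D$ --- is routine, modulo the care needed in handling the equivariant parameter and the $S^1$-action, which is exactly what Theorem~\ref{thm:LS-Floer-intro} is designed to package.
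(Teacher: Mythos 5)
Your proposal is correct and follows the paper's overall strategy---realize $\SH_*^{G,+}(W)$ as a limit over a cofinal family of admissible Hamiltonians and feed the Floer-theoretic inequality of Theorem \ref{thm:LS-Floer} into the limit---but it diverges from the paper exactly at the step you single out as delicate, and your alternative there is legitimate. The paper transfers strictness quantitatively: since both spectral invariants lie in the action spectrum, Theorem \ref{thm:LS-Floer} upgrades to $\s_w(H)\geq \s_{D(w)}(H)+\gap(H)$ (Corollary \ref{cor:LS-Floer}); after reducing to a finite action window, one chooses the cofinal sequence $H_j$ so that, by \eqref{eq:two-actions}, $\CS(H_j)=f_j\big(\CS(\alpha)\big)$ with $f_j'=\rho_j\to 1$ uniformly, whence $\gap(H_j)\to\gap(\alpha)>0$ as in \eqref{eq:gap_conv}; the strict inequality then survives the limit with a definite gap and there is no equality case to exclude. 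You instead take the naive limit, obtaining only the non-strict inequality, and rule out the equality $\s_w(\alpha)=\s_{D(w)}(\alpha)=c$ by localization at the level $c$: over a short action window around $c$ the filtered equivariant symplectic homology is built from the local homologies of the finitely many isolated orbits of action $c$, the shift operator kills these (Proposition \ref{prop:D-local}, Corollary \ref{cor:D-local}), and exactness of the action-filtration sequence then pushes $D(w)$ strictly below $c$. This works, but be aware that the ingredient you need is slightly more than Corollary \ref{cor:D-local} for a single orbit: you must also know, at the symplectic-homology level, that $D\equiv 0$ on $\SH_*^{G,(c-\eps,\,c+\eps)}(W;\Q)$ (equivalently, the decomposition of this group into local pieces), which requires its own limiting argument and is in effect the statement \eqref{eq:D=0} from the proof of Theorem \ref{thm:LS-Floer} passed to the limit over $H_j$. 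What the paper's route buys is the stronger quantitative conclusion $\s_w(\alpha)\geq \s_{D(w)}(\alpha)+\gap(\alpha)$ (cf.\ Corollary \ref{cor:LS-Floer}); what yours buys is that you bypass the gap-convergence computation and the need for a specially tailored cofinal family, at the cost of making the local decomposition explicit at the $\SH$ level.
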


This theorem, proved in Section \ref{sec:LS-SH-result}, readily
follows from a similar inequality for the equivariant Floer
homology. Let $H$ be an autonomous Hamiltonian on a symplectic
manifold $V$ which is symplectically aspherical and either closed or
the symplectic completion of a compact domain $W$ with contact type
boundary. (In the latter case, $H$ is also required to be admissible
at infinity.) We denote the filtered $G=S^1$-equivariant Floer
homology of $H$ over $\Q$ for an interval $I$ by
$\HF_{*}^{G,I}(H;\Q)$. In this case we also have a shift operator
$D\colon \HF_{*}^{G,I}(H;\Q)\to \HF_{*-2}^{G,I}(H;\Q)$ (see Section
\ref{sec:LS-Floer}), and to every non-zero class
$w\in \HF_{*}^{G,I}(H;\Q)$ we can associate a spectral invariant
$\s_w(H)\in \CS(H)\cap I$, where $\CS(H)$ is the action spectrum of
$H$.

\begin{Theorem}[Lusternik--Schnirelmann inequality; Floer homology]
\label{thm:LS-Floer-intro}
Assume that all contractible one-periodic orbits of $H$ with action in
$I$ are non-constant and isolated. Then for any non-zero class
$w\in \HF_{*}^{G,I}(H;\Q)$, we have
$$
\s_w(H)>\s_{D(w)}(H).
$$
\end{Theorem}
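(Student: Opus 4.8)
The plan is to reduce everything to the purely homological Lusternik--Schnirelmann mechanism on the filtered complex, realizing the shift operator $D$ geometrically so that a nontrivial $D$-action on a homology class forces a strictly lower filtration level. First I would set up the standard finite-dimensional-style picture: fix an autonomous $H$, take a regular $S^1$-equivariant perturbation of the Floer data (or work with the Morse--Bott cascade complex) so that $\HF^{G,I}_*(H)$ is computed by a filtered chain complex $\CF^{G,I}_*$ whose filtration levels are the actions of the contractible one-periodic orbits of $H$ lying in $I$. The spectral invariant $\s_w(H)$ is then, as usual, $\inf\{a : w \in \operatorname{im}(\H_*(\CF^{G,(-\infty,a]})\to \H_*(\CF^{G,I}))\}$, a point of $\CS(H)\cap I$ by lower semicontinuity and finiteness of the spectrum below any level.

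\smallskip

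Next I would invoke the alternative description of $D$ promised in the introduction: rather than the Bourgeois--Oancea marked-point definition, use $D$ as the intersection index with the ``hyperplane-section'' cycle (this is the definition whose equivalence is established later in the paper, which we may assume here). The crucial point is a \emph{local vanishing} statement: if $x$ is an isolated, non-constant one-periodic orbit, then $D$ acts trivially on the local equivariant Floer homology $\HFL{}^G$ of $x$ (Proposition~\ref{prop:D-local} / Corollary~\ref{cor:D-local}). Concretely, this says that a cycle representing $D(w)$ can always be pushed below any single action value that supports only isolated non-constant orbits, because at such a level the chain-level image of $D$ is a boundary in the local complex. The inequality $\s_w(H)\ge \s_{D(w)}(H)$ is then the soft, non-strict Lusternik--Schnirelmann estimate: if $w$ is carried by $\CF^{G,(-\infty,a]}$ then so is $D(w)$, since $D$ is a filtration-nonincreasing chain map (or chain-homotopy to one) in the hyperplane-section model.

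\smallskip

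The heart of the matter — and the step I expect to be the main obstacle — is upgrading this to the \emph{strict} inequality. Suppose for contradiction that $\s_w(H)=\s_{D(w)}(H)=:a$, necessarily a point of $\CS(H)\cap I$, and let $\{x_1,\dots,x_k\}$ be the (finitely many, since isolated) one-periodic orbits with action exactly $a$. Choosing $\eps>0$ small so that $(a-\eps,a+\eps)$ contains no other action value, one gets a short exact sequence relating $\CF^{G,(-\infty,a-\eps]}$, $\CF^{G,(-\infty,a+\eps]}$, and the ``local'' quotient $\bigoplus_i \HFL{}^G(x_i)$. Since $\s_w=a$, the image of $w$ in this local quotient is nonzero; since $\s_{D(w)}=a$ as well, $D(w)$ also projects nontrivially to the local quotient, and the projection of $D(w)$ is $D$ applied to the projection of $w$ (naturality of $D$ with respect to the hyperplane-section cycle, which is supported away from the orbits $x_i$ after a small isotopy). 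But $D$ vanishes on each $\HFL{}^G(x_i)$ by the local result, so the projection of $D(w)$ must be zero — contradiction, giving $\s_{D(w)}<\s_w$. The delicate technical points are: (i) that the hyperplane-section cycle can genuinely be placed in ``general position'' relative to the images of the finitely many orbits at level $a$, so that the naturality square commutes on the nose at the chain level; (ii) equivariance bookkeeping — making the perturbation $S^1$-equivariant and controlling the $\CP^\infty$-direction so that the filtered equivariant complex and its local pieces behave as stated; and (iii) verifying that $D$ really does commute with the connecting/quotient maps in the filtration exact sequence rather than only up to homotopy, or else tracking the homotopy carefully. Finally, Theorem~\ref{thm:LS1-intro} follows from Theorem~\ref{thm:LS-Floer-intro} by the usual limiting argument over a cofinal family of admissible Hamiltonians, together with the identification of the limit of the Floer spectral invariants with $\s_w(\alpha)$ and the compatibility of $D$ with the continuation maps — so the only genuinely new work is the Floer-level strict inequality just sketched.
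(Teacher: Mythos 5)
Your overall architecture coincides with the paper's: pass to the hyperplane-section model of $D$, reduce the strict inequality to a statement localized at the (finitely many, isolated) orbits at the critical action level, and conclude because $D$ dies locally. The soft inequality, the quotient-complex/short-exact-sequence reduction, and the naturality of $D_\Sigma$ with respect to the action filtration are all correctly identified and would go through essentially as you describe. The problem is that you treat the decisive ingredient as a citation: you invoke Proposition \ref{prop:D-local} / Corollary \ref{cor:D-local} ("$D$ vanishes on the local equivariant Floer homology of an isolated non-constant orbit") as if it were an independent prior result, whereas in the paper this proposition is explicitly \emph{established as part of the proof of Theorem \ref{thm:LS-Floer}} — it is the content of \eqref{eq:D=0-2}. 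So, as written, your argument is circular within the paper's logic, and the actual geometric mechanism is never supplied. What is needed is the following: after a transversely non-degenerate perturbation $\tH$, an isolated non-constant orbit $x$ produces critical $G$-orbits clustered near the lens space $C_x\cong S^{2m+1}/\Gamma$ in $\Lambda\times_G S^{2m+1}$; the a priori energy and gradient bounds (\eqref{eq:L-infty}, \eqref{eq:L2}, \eqref{eq:C1}) force the connecting Floer cylinders for this block to stay in a small tubular neighborhood of $C_x$, so the evaluation map lifts to $C_x$ and the hyperplane cycle $\Sigma\subset\CP^m$ pulls back to a codimension-two cycle $\tilde\Sigma\subset C_x$; since $\H^2(C_x;\Q)=0$ for a lens space, $\tilde\Sigma$ is rationally null-homologous in $C_x$, and the standard cap-action argument then shows the induced operator on $\HF_*^G(x;\Q)$ vanishes. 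None of this appears in your sketch, and it is exactly where the hypotheses "non-constant" (so $C_x$ is a lens space, not $\CP^m$), "isolated", and "coefficients in $\Q$" enter.

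A second, related omission: your decomposition of the action-window subquotient into $\bigoplus_i$ of local pieces, preserved by $\p^G$ and by $D_\Sigma$, is not automatic when several geometrically distinct orbits share the action value $a$. The paper justifies it by the uniform threshold estimate \eqref{eq:treshold}: any parametrized Floer trajectory joining generators born from geometrically distinct orbits of $H$ drops the action by at least some $e>0$ depending only on $H$ (not on $\tH$), so choosing the window shorter than $e$ makes both the differential and $D_\Sigma$ block-diagonal. This same uniformity is what lets the strict inequality survive the limit $\tH\to H$ in the degenerate case (the drop is bounded below by $\gap(H)-O(\|H-\tH\|_{C^0})$, cf.\ Corollary \ref{cor:LS-Floer}); since your filtered complex for a degenerate $H$ only exists after perturbation, without such a uniform lower bound a limit of strict inequalities could degenerate to equality. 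Your technical caveats (i)--(iii) about general position and equivariance are real but secondary; the confinement/threshold estimates and the lens-space vanishing are the heart of the proof and must be argued, not cited.
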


This theorem is proved in Section \ref{sec:LS-Floer-pf}. Note also
that in both Theorems \ref{thm:LS1-intro} and
\ref{thm:LS-Floer-intro}, the non-strict inequalities hold without any
additional assumptions on the periodic orbits and readily follow from
the definitions.

As has been mentioned in the introduction, to prove Theorem
\ref{thm:LS-Floer-intro} we give an alternative definition of $D$ for
which the proof of the Lusternik--Schnirelmann inequality is more
natural and show in Section \ref{sec:LS-Floer-D} that this definition
is equivalent to the one from \cite{BO:Gysin}. Another consequence of
the proofs of Theorems \ref{thm:LS1-intro} and
\ref{thm:LS-Floer-intro} is that the operator $D$ vanishes in the
local symplectic or Floer homology over $\Q$ of an isolated
non-constant periodic orbit; see Sections \ref{sec:local-MB} and
\ref{sec:LS-SH-result}.

Next, let us apply these results to the equivariant symplectic
homology of $S^{2n-1}$ and $ST^*S^n$.  Let $\alpha$ be a contact form
on $M=S^{2n-1}$ supporting the standard contact structure. Then
$(M,d\alpha)$ can be symplectically embedded as a hypersurface in
$\R^{2n}$ bounding a star-shaped domain $W$. As is well known, there
exists a sequence of elements $w_k\in \SH_{n+2k-1}^{G,+}(W)$,
$k\in\N$, such that $Dw_{k+1}=w_k$. Set $\s_k:=\s_{w_k}$ and denote
by $\hmu(y)$ the mean index of $y\in\PP(\alpha)$ and, when $y$ is
isolated, by $\SH_*^G(y;\Q)$ the local equivariant symplectic homology
of $y$; see Section \ref{sec:LS-SH-result}.  Applying Theorem
\ref{thm:LS1-intro} to the classes $w_k$, we obtain
 
\begin{Theorem}
\label{thm:LS2-intro}
Assume that all closed Reeb orbits of $\alpha$ are isolated.  Then
$$
\s_1(\alpha)<\s_2(\alpha)<\s_3(\alpha)<\cdots.
$$
As a consequence, there exists an injection
$$
\psi\colon \N\to \PP(\alpha),\quad k\mapsto y_k
$$
called a \emph{carrier map} such that 
$$
\CA_{\alpha}(y_1)<\CA_{\alpha}(y_2)<\CA_{\alpha}(y_3)<\cdots
$$
and $\SH_*^G(y_k;\Q)\neq 0$ in degrees $*=n+2k-1$. In particular,
$$
|\hmu(y_k)-(n+2k-1)|\leq n-1.
$$
\end{Theorem}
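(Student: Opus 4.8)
The plan is to deduce Theorem~\ref{thm:LS2-intro} from Theorem~\ref{thm:LS1-intro} essentially formally, using the algebraic structure of the positive equivariant symplectic homology of a star-shaped domain $W\subset\R^{2n}$ together with standard properties of spectral invariants and local homology. The three assertions to establish are: (i) the strict monotonicity $\s_1<\s_2<\cdots$; (ii) the existence of a carrier map $\psi\colon\N\to\PP(\alpha)$ realizing these values with strictly increasing actions; and (iii) the concentration of the local equivariant symplectic homology $\SH^G_*(y_k;\Q)$ in degree $n+2k-1$, with the resulting mean-index estimate.

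First I would record the algebraic input: for a star-shaped $W$, one has $\SH^{G,+}_*(W;\Q)\cong \H_*(\CP^\infty;\Q)$ shifted so that $\SH^{G,+}_{n+2k-1}(W;\Q)\cong\Q$ for $k\geq 1$ and vanishes otherwise, and the shift operator $D$ is, up to sign, the identification $\SH^{G,+}_{n+2k+1}\to\SH^{G,+}_{n+2k-1}$; this is exactly the statement quoted just before the theorem that there exist $w_k\in\SH^{G,+}_{n+2k-1}(W)$ with $Dw_{k+1}=w_k$. Since every closed Reeb orbit of $\alpha$ is assumed isolated, the hypothesis of Theorem~\ref{thm:LS1-intro} holds, so applying it to $w=w_{k+1}$ gives $\s_{k+1}(\alpha)=\s_{w_{k+1}}(\alpha)>\s_{D(w_{k+1})}(\alpha)=\s_{w_k}(\alpha)=\s_k(\alpha)$, which is (i).

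For (ii), each $\s_k(\alpha)$ lies in the action spectrum $\CS(\alpha)$, so it is realized as $\CA_\alpha(y_k)$ for some $y_k\in\PP(\alpha)$; choosing one such orbit for each $k$ defines $\psi(k)=y_k$, and strict monotonicity of the $\s_k$ forces $\CA_\alpha(y_1)<\CA_\alpha(y_2)<\cdots$, which in particular makes $\psi$ injective. The carrier property that $\SH^G_*(y_k;\Q)\neq 0$ in degree $n+2k-1$ should follow from the standard spectral-invariant mechanism: the class $w_k$ is represented at the action level $\s_k=\CA_\alpha(y_k)$ and is not killed by lowering the action window, so in the action-filtration spectral sequence (or directly, by comparing $\SH^{G,(\,\cdot\,,\,\s_k+\eps)}$ with $\SH^{G,(\,\cdot\,,\,\s_k-\eps)}$) a generator in degree $n+2k-1$ of the local equivariant symplectic homology of the carrier must survive; since $\s_k$ is attained only at orbits of action exactly $\s_k$ and these are isolated, this pins the non-vanishing to $y_k$ (if several orbits share the action $\s_k$, one of them carries the class, and we take that one). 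Finally, for (iii): if $y$ is an isolated closed Reeb orbit with $\SH^G_*(y;\Q)\neq 0$ in some degree $d$, the support of $\SH^G_*(y;\Q)$ is contained in the interval $[\hmu(y)-(n-1),\,\hmu(y)+(n-1)]$ — this is the equivariant analogue of the fact that the local Floer homology of an isolated orbit is supported in an interval of radius $n-1$ about the mean index — and with $d=n+2k-1$ this yields $|\hmu(y_k)-(n+2k-1)|\leq n-1$.

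The routine algebra and the spectral-invariant bookkeeping are standard; the main obstacle, and the place I would be most careful, is step (iii) — namely establishing (or citing in the right form) that the local equivariant symplectic homology of an isolated Reeb orbit is concentrated within distance $n-1$ of its mean index, and that a nonzero spectral invariant $\s_k$ in a prescribed degree forces the \emph{local} homology of its carrier to be nonzero in exactly that degree rather than merely somewhere in a range. This is where one must invoke the persistence/robustness of spectral invariants together with the structure of local equivariant homology; once that is in place, the mean-index estimate and hence the whole theorem follow immediately.
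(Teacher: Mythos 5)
Your proposal is correct and follows essentially the same route as the paper: the paper deduces the theorem (as Corollary \ref{cor:sphere}) by applying Theorem \ref{thm:LS-SH} to the standard chain $w_k$ with $Dw_{k+1}=w_k$, defines the carrier orbits via the standard action-carrier mechanism (citing \cite{CGG}) exactly as you describe, and obtains the mean-index bound from Proposition \ref{prop:local-support}. The step you flagged as the main obstacle is indeed the only place requiring care, but both ingredients are already in place in the paper: the localization of the filtered complex near a critical value into a direct sum of local complexes of the (isolated) orbits with that action pins the nonvanishing of $\SH^G_{n+2k-1}(y_k;\Q)$, and the support statement for local equivariant homology is exactly Proposition \ref{prop:local-support}.
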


This theorem is a symplectic homology analog of the
Lusternik--Schnirelmann inequalities for Clarke's dual action
functional; see \cite{Ek,Lo} and references therein. Its slightly more
general version is proved in Section \ref{sec:displ} as Corollary
\ref{cor:sphere}.

A similar result holds for $ST^*S^n$ but the chain of inequalities has
length $n$. Namely, let $\alpha$ be a contact form supporting the
standard contact structure on $M=ST^*S^n$. We can treat $(M,d\alpha)$
as the boundary of a fiber-wise star-shaped domain $W$ in $T^*S^n$. By
Proposition~\ref{prop:D-STS}, for every $j\in\N$ there exist $n$
non-zero elements $w_i\in \SH_{2i+(2j-1)(n-1)}^{G,+}(W;\Q)$,
$i=0,\,\ldots,\,n-1$, such that $Dw_{i+1}=w_i$.

\begin{Theorem}
\label{thm:STSn-intro}
Assume that all orbits in $\PP(\alpha)$ are isolated. Then, for every
$j\in \N$, there exist $n$ periodic orbits $y_0,\ldots,y_{n-1}$ of the
Reeb flow of $\alpha$ such that $\s_{w_i}(\alpha)=\CA_\alpha(y_i)$ and
$\SH_*^G(y_i;\Q)\neq 0$ in degrees $*=(2j-1)(n-1)+2i$. In particular,
$$
\CA_{\alpha}(y_0)< \CA_{\alpha}(y_1)<\cdots< \CA_{\alpha}(y_{n-1}),
$$
and
$$
\big|\hmu(y_i)-\big((2j-1)(n-1)+2i\big)\big|\leq n-1.
$$
\end{Theorem}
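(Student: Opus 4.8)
The plan is to deduce Theorem~\ref{thm:STSn-intro} from the Lusternik--Schnirelmann inequality (Theorem~\ref{thm:LS1-intro}) exactly as Theorem~\ref{thm:LS2-intro} is deduced, only now using the length-$n$ chain of classes provided by Proposition~\ref{prop:D-STS} instead of the infinite chain in the sphere case. First I would recall that, by Proposition~\ref{prop:D-STS}, for each fixed $j\in\N$ we have non-zero classes $w_i\in\SH^{G,+}_{2i+(2j-1)(n-1)}(W;\Q)$ for $i=0,\ldots,n-1$ with $Dw_{i+1}=w_i$, so in particular $w_i\neq 0$ and $Dw_i=w_{i-1}$ for $1\le i\le n-1$. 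Since all orbits in $\PP(\alpha)$ are isolated by hypothesis, Theorem~\ref{thm:LS1-intro} applies to each $w_i$ with $i\ge 1$ and gives $\s_{w_i}(\alpha)>\s_{Dw_i}(\alpha)=\s_{w_{i-1}}(\alpha)$. Chaining these $n-1$ strict inequalities yields
$$
\s_{w_0}(\alpha)<\s_{w_1}(\alpha)<\cdots<\s_{w_{n-1}}(\alpha).
$$

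Next I would identify the spectral invariants with actions of genuine Reeb orbits and extract the local homology information. Because each $w_i$ is non-zero, $\s_{w_i}(\alpha)$ lies in the action spectrum $\CS(\alpha)$, hence equals $\CA_\alpha(y_i)$ for some closed Reeb orbit $y_i\in\PP(\alpha)$. The carrier-map mechanism used for Theorem~\ref{thm:LS2-intro} (proved there via Corollary~\ref{cor:sphere} and the displacement-type argument of Section~\ref{sec:displ}) shows that one can choose $y_i$ so that its local equivariant symplectic homology $\SH^G_*(y_i;\Q)$ is non-zero in the degree where $w_i$ sits, namely $*=(2j-1)(n-1)+2i$; the key input here is that $w_i$ is supported in that single degree and that a non-trivial spectral invariant in a given degree must be "carried" by an orbit whose local homology is non-zero in that degree. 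Once $\SH^G_*(y_i;\Q)\neq 0$ in degree $(2j-1)(n-1)+2i$, the standard bound relating the support of the local equivariant symplectic homology of an isolated orbit to its mean index — the same estimate $|\hmu(y)-*|\le n-1$ invoked in Theorem~\ref{thm:LS2-intro} — gives
$$
\big|\hmu(y_i)-\big((2j-1)(n-1)+2i\big)\big|\le n-1 .
$$
Finally, the strict inequalities on spectral invariants translate directly into $\CA_\alpha(y_0)<\CA_\alpha(y_1)<\cdots<\CA_\alpha(y_{n-1})$.

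The main obstacle, and the only place where real work beyond bookkeeping is needed, is Proposition~\ref{prop:D-STS}: one must know that for $ST^*S^n$ the positive equivariant symplectic homology contains, for every $j$, a chain of exactly $n$ non-zero classes linked by $D$ in the stated degrees. This reflects the structure of $\SH^{G,+}_*(ST^*S^n;\Q)$, which differs from the sphere case: rather than a single infinite $D$-tower it decomposes (roughly, for $n$ even versus odd) into blocks of length $n$, and one needs the explicit computation of this module together with the action of $D$ on it. Granting that proposition, the argument above is a routine transcription of the sphere-case proof. I would therefore present the proof of Theorem~\ref{thm:STSn-intro} as a short corollary of Theorem~\ref{thm:LS1-intro} and Proposition~\ref{prop:D-STS}, with the carrier-map and mean-index estimates imported verbatim from the treatment of Theorem~\ref{thm:LS2-intro} in Section~\ref{sec:displ}.
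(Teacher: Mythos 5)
Your proposal is correct and follows essentially the same route as the paper: the paper derives this statement (as Corollary \ref{cor:STSn}, the slightly more general form) directly from Proposition \ref{prop:D-STS} combined with the Lusternik--Schnirelmann inequality of Theorem \ref{thm:LS-SH}, with the carrier orbits and the mean-index bound obtained exactly as in Corollary \ref{cor:sphere} via the argument of \cite{CGG} and Proposition \ref{prop:local-support}. Nothing essential differs from the paper's treatment.
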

This result is proved, in a slightly more general form, in Section
\ref{sec:T^*S^n-1} as Corollary~\ref{cor:STSn}.  The non-strict
inequalities in both of the theorems hold without any assumptions on
the orbits. Furthermore, these theorems and the multiplicity results
below readily extend with straightforward modifications to several
other classes of contact manifolds and Liouville domains. Among these
are, for instance, displaceable (e.g., subcritical) Liouville domains;
see Section \ref{sec:displ} and Remark \ref{rmk:displ-mult}.

Let us now turn to the multiplicity results for simple closed Reeb
orbits. Although our main focus is on the general setting where the
contact form can be degenerate and Lusternik--Schnirelmann theory is
essential, we also consider for the sake of completeness the
non-degenerate case where stronger results can usually be obtained
by other methods.

We start with lower bounds on the number of simple closed Reeb orbits
on $S^{2n-1}$.

\begin{Theorem}
\label{thm:mult-intro}
Let $\alpha$ be a dynamically convex contact form on $S^{2n-1}$
supporting the standard contact structure.  Then the Reeb flow of
$\alpha$ has at least $r=\lceil n/2\rceil +1$ simple closed
orbits. When $\alpha$ is non-degenerate, $r=n$. Moreover, assume in
addition that there are only finitely many geometrically distinct
closed orbits. Then these $r$ simple orbits $x$ can be chosen so that
the ratios $\CA(x)/\hmu(x)$ are the same for all of them (the
resonance relations) and, when $\alpha$ is non-degenerate, all $r$
simple orbits are even.
\end{Theorem}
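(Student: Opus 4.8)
The plan is to combine the chain of inequalities from Theorem~\ref{thm:LS2-intro} (the Lusternik--Schnirelmann ingredient) with the index recurrence property of Section~\ref{sec:IR} (the combinatorial ingredient) to force the carrier orbits $y_k$ to include enough geometrically distinct simple orbits. First I would argue by contradiction: assume the Reeb flow of $\alpha$ has only finitely many geometrically distinct closed orbits, say the simple ones are $x_1,\ldots,x_\ell$ with $\ell < r = \lceil n/2\rceil + 1$; in particular every closed orbit is a finite iterate $x_j^m$ of some $x_j$, and all orbits in $\PP(\alpha)$ are automatically isolated. Apply Theorem~\ref{thm:LS2-intro} to get the carrier map $\psi\colon\N\to\PP(\alpha)$, $k\mapsto y_k$, with strictly increasing actions $\CA_\alpha(y_k)$, with $\SH^G_*(y_k;\Q)\neq 0$ in degree $n+2k-1$, and hence $|\hmu(y_k)-(n+2k-1)|\le n-1$, i.e. $\hmu(y_k)\ge 2k$. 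Each $y_k$ is an iterate $x_{j(k)}^{m_k}$, and since $\hmu$ grows linearly in the iteration number and the actions are strictly increasing (so the $y_k$ are pairwise distinct orbits), the iterates $m_k\to\infty$ along each color class $j$. The point of the local homology condition $\SH^G_*(y_k;\Q)\neq 0$ in degree $n+2k-1$ is that it pins the mean index window tightly: the support of the local equivariant symplectic homology of an iterate $x_j^m$ sits in a window of width $\le 2(n-1)$ around $\hmu(x_j^m)=m\,\hmu(x_j)$ (this is the standard mean-index/local-homology estimate, used already to derive the last displayed inequality in Theorem~\ref{thm:LS2-intro}), so for each $k$ we have $m_k\,\hmu(x_{j(k)})$ within $2(n-1)$ of $n+2k-1$.

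Next I would invoke the index recurrence theorem (Theorem~\ref{sec:IRT} / Theorem~\ref{thm:IRT2}) in the form of the common-jump statement: given the finitely many mean indices $\hmu(x_1),\ldots,\hmu(x_\ell)$ (and the associated Conley--Zehnder/mean-index data of the simple orbits), there is a density-one, or at least syndetic, set of integers $N$ — together with iteration multiplicities $m_1(N),\ldots,m_\ell(N)$ with $m_j(N)\hmu(x_j)$ close to $2N$ — at which \emph{all} of the orbits $x_j$ have a common index jump, meaning the relevant local equivariant homology of $x_j^{m_j(N)}$ is concentrated in a single narrow band around $2N$. Feeding such an $N$ (with $2N+n-1 = n+2k-1$, i.e. $k=N$) into the carrier picture: the degrees $n+2k-1$ for $k$ in a short block of consecutive integers around $N$ must be realized by the local homologies $\SH^G_*(y_k;\Q)$, but the common-jump mechanism shows that for each color $j$ only an interval of degrees of length $<2r$ (controlled by the mean-index window, which for a dynamically convex form on $S^{2n-1}$ has width governed by $n$) can be supplied by iterates of $x_j$ near that index range. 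Since the degrees $n+2k-1$ increase in steps of $2$, a block of $r$ consecutive carrier values needs $r$ distinct colors; with only $\ell<r$ available this is a contradiction, yielding $r\ge \lceil n/2\rceil+1$ simple orbits. In the non-degenerate case, the local equivariant homology of each iterate is one-dimensional in a single even degree (Morse--Bott/clean arguments for the $S^1$-action), the mean-index windows collapse, the common-jump theorem of \cite{LZ,DLW} applies in its sharp form, and the same counting gives the classical lower bound $r=n$; moreover the single-degree constraint forces each carrier orbit to have even Conley--Zehnder index, hence all $r$ chosen simple orbits are even.

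Finally, for the resonance relations under the finiteness hypothesis, I would use that along each color class $j$ the actions and mean indices of the iterates satisfy $\CA_\alpha(x_j^m)=m\,\CA_\alpha(x_j)$ and $\hmu(x_j^m)=m\,\hmu(x_j)$, while the carrier sequence has $\CA_\alpha(y_k)\to\infty$ and $\hmu(y_k)=2k+O(1)$; comparing the asymptotic slope $\CA_\alpha(y_k)/\hmu(y_k)$ along the carrier with the per-orbit ratio $\CA_\alpha(x_j)/\hmu(x_j)$ forces all colors that appear infinitely often in the carrier to share one common ratio $\CA(x)/\hmu(x)$, and a pigeonhole/density argument (again from the index recurrence) shows every one of the $r$ chosen orbits appears infinitely often, giving the stated resonance relations; this part generalizes \cite{Gu:pr} essentially verbatim. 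The existence of at least one elliptic (in the non-degenerate case, even) orbit follows because a hyperbolic simple orbit contributes local homology only in a single fixed degree and cannot sustain the linearly growing sequence of nonzero degrees $n+2k-1$ forced by the carrier, so at least one carrier orbit must be elliptic.

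The main obstacle I anticipate is the bookkeeping in the counting step: making precise, from the index recurrence / common-jump theorem, exactly how many consecutive carrier degrees $n+2k-1$ a single simple orbit (through its iterates) can cover within the controlled mean-index window, and checking that this number is strictly less than $2r$ so that $r$ colors are genuinely needed. Getting the constant right — so that $\lceil n/2\rceil+1$ (not something smaller) comes out — requires carefully tracking the width $n-1$ of the mean-index window together with the step $2$ in the degrees, and is where the dynamical convexity hypothesis $\mu_-\ge n+1$ is used. This is precisely the combinatorial heart of Section~\ref{sec:pf-main}, and I would organize it as a standalone counting lemma before assembling the contradiction.
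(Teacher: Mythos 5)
Your overall strategy (carrier map from Corollary \ref{cor:sphere} plus index recurrence, then a window-counting argument) is indeed the paper's, and your counting step, once cleaned up (by Corollary \ref{cor:CGT} only the single iterate $x_i^{k_{ij}}$ of each simple orbit can have index support in the window around $d_j$, and the carrier is injective, so each simple orbit carries at most one degree there), gives exactly the number of admissible degrees in that window. But there is a genuine gap: this count is $\lfloor n/2\rfloor+1$, not $\lceil n/2\rceil+1$. In the degenerate case the window supplied by Corollary \ref{cor:CGT} is one-sided, $L=[d_j-1,\,d_j+n]$, because the correction term $b_+-b_-\leq\nu$ in Theorem \ref{thm:IRT2}(iii) only lets one push $\mu_+(x_i^{k_{ij}-\ell})$ down to $d_j-2$; no bookkeeping of the width-$(n-1)$ mean-index window repairs this, and for odd $n$ the two constants differ by one. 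The paper's extra orbit for odd $n$ comes from an ingredient your proposal omits entirely: looking at the degree $d_{\max}=d_j-2$, either it is carried by some $x_i^{k_{ij}}$ (which then does not contribute to the window and frees an extra color), or it is carried by $x_i^{k_{ij}-1}$, in which case dynamical convexity together with \eqref{eq:bpm} forces $x_i$ to be totally degenerate with $\mu_-(x_i)=\hmu(x_i)=n+1$, i.e.\ a symplectically degenerate maximum, and the theorem of \cite{GHHM} (a simple SDM orbit implies infinitely many simple closed Reeb orbits) gives the contradiction. Without this SDM/GHHM step your argument proves only the $\lfloor n/2\rfloor+1$ bound of \cite{LZ,Wa16}-type, not the stated one.

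Two further points. In the non-degenerate case, ``even'' means an even number of Floquet multipliers in $(-1,0)$, not even Conley--Zehnder index; the paper gets it by using the freedom in Theorem \ref{thm:IRT1} to take the common-jump iterations $k_{ij}$ even (and divisible by the degrees of the roots of unity among the Floquet multipliers), so that the carrier iterates $x_i^{k_{ij}}$, having nonzero local homology over $\Q$, are good, which forces each $x_i$ to be even; your ``single-degree constraint forces even Conley--Zehnder index'' does not yield this and conflates the two notions. Finally, the resonance relations cannot be read off from a naive asymptotic-slope comparison, since occurrences of different simple orbits interleave in the carrier; the paper's Theorem \ref{thm:RR} runs an inductive threshold argument over the finitely many ratio values (using monotonicity of the actions and the estimate $\CA_\alpha(\psi(d))\approx\hs(z)d$ up to a bounded error) to show that an orbit with strictly smaller ratio eventually cannot represent any degree, hence is not reoccurring. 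Your sketch is in the right spirit but needs this argument to be complete.
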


This theorem is an immediate consequence of Theorem \ref{thm:mult2}
(multiplicity) and Theorem \ref{thm:RR} (resonance relations). The
part of the theorem on the resonance relations generalizes the
relations for perfect Reeb flows from \cite[Thm.\ 1.2]{Gu:pr}.

Except for the resonance relations, the non-degenerate case of the
theorem is not new and included only for the sake of completeness and
because the proof requires no extra work. For strictly convex
hypersurfaces in $\R^{2n}$ the result goes back to \cite{LZ} and
\cite{Lo}. (Here convexity is understood in a very strong sense: the
(outward) second fundamental form is negative definite at every
point.)  For dynamically convex hypersurfaces, the non-degenerate case
of the theorem is proved in \cite{AM,GK} under the slightly less
restrictive index condition that $\mu\geq n-1$. More recently,
in \cite{DLLW}, this index requirement is relaxed even further.

The degenerate case of the theorem is new. For convex hypersurfaces, a
similar result is proved in \cite{LZ} with the same lower bound when
$n$ is even and the lower bound $\lfloor n/2\rfloor +1$ when $n$ is
odd, and the case of a convex hypersurface in $\R^{6}$ is treated in
\cite{WHL}. Recently, again for convex hypersurfaces, the lower bound
exactly as in Theorem \ref{thm:mult-intro} has been established in
\cite{Wa16} for all $n$. The existence of at least two distinct orbits
in the dynamically convex case is proved in \cite{AM}.

To the best of our understanding, Theorem \ref{thm:mult-intro}
incorporates essentially all, but one, results to date on the
multiplicity of simple closed Reeb orbits on convex or dynamically
convex hypersurfaces in $\R^{2n}$ without additional assumptions such
as non-degeneracy, pinching or symmetry. The exception is a theorem
from \cite{Wa} asserting the existence of four closed characteristics
on a convex hypersurface in $\R^{8}$. This is the first dimension
where the lower bound from Theorem \ref{thm:mult-intro}, which for
$n=4$ is three, is below the multiplicity conjecture lower bound $n$.

Next, let us turn to contact forms $\alpha$ on $M=ST^*S^n$ supporting
the standard contact structure. The pair $(M,\alpha)$ is then the
boundary of a fiber-wise star-shaped domain in $T^*S^n$.  In this
setting, the right analog of the dynamical convexity condition is the
requirement that $\mu_-\geq n-1$ for all closed Reeb orbits of
$\alpha$. For the unit sphere bundle of a Riemannian or Finsler
metric, this requirement is satisfied, for instance, when the metric
meets certain curvature pinching conditions; see, e.g.,
\cite{AM:elliptic, DLW, HP, Ra:pinching, Wa12}. Along the lines of
Theorem \ref{thm:mult-intro}, we have the following result.

\begin{Theorem}
\label{thm:mult4-intro}
Let $\alpha$ be a contact form on $M=ST^*S^n$ supporting the standard
contact structure.  Assume that $\mu_-(y)\geq n-1$ for every closed
Reeb orbit $y$ on $M$.  Then $M$ carries at least $r$ simple closed
orbits, where $r=\lfloor n/2\rfloor-1$.  When $\alpha$ is
non-degenerate, we can take $r=n$ if $n$ is even and $r=n+1$ if $n$ is
odd.
\end{Theorem}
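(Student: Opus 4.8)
The plan is to combine the Lusternik--Schnirelmann input (Theorem \ref{thm:STSn-intro}) with the combinatorial index-recurrence machinery of Section \ref{sec:IR}, exactly along the lines of the proof of Theorem \ref{thm:mult-intro}. The starting point is Theorem \ref{thm:STSn-intro}: for every $j\in\N$ it produces $n$ isolated closed Reeb orbits $y_0^{(j)},\ldots,y_{n-1}^{(j)}$ with strictly increasing actions and with $\hmu(y_i^{(j)})$ within $n-1$ of $(2j-1)(n-1)+2i$; moreover their local equivariant symplectic homology is nonzero in the corresponding degree. First I would argue by contradiction: assume there are only finitely many simple closed Reeb orbits $x_1,\ldots,x_N$ with $N\le r-1 = \lfloor n/2\rfloor -2$ (so in particular $N$ is strictly smaller than the conjectural bound). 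Then each $y_i^{(j)}$ is an iterate $x_\ell^{m}$ of one of the simple orbits, and, since the actions within a fixed $j$-block are distinct, a pigeonhole argument over the $n$ slots $i=0,\ldots,n-1$ forces, for infinitely many $j$, two of these orbits to be iterates of the \emph{same} simple orbit $x_\ell$. This is where the finiteness hypothesis is converted into a statement about iterates of a single orbit.

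The heart of the argument is then the index-recurrence property (Theorems \ref{sec:IRT} and \ref{thm:IRT2}) together with the hypothesis $\mu_-\ge n-1$. The mean index of an iterate satisfies $\hmu(x^m)=m\,\hmu(x)$, and the local symplectic homology of $x^m$ is nonzero only in a window of width $n-1$ around $m\,\hmu(x)$ (this is the degree constraint, via the analog of the estimate $|\hmu(y)-*|\le n-1$, and is essentially where the condition $\mu_-(y)\ge n-1$ on $ST^*S^n$ enters, replacing dynamical convexity). So the degrees $(2j-1)(n-1)+2i$ hit by the carrier map, restricted to iterates of $x_\ell$, must all lie near multiples $m\,\hmu(x_\ell)$; comparing the arithmetic progression $(2j-1)(n-1)+2i$ (step $2$ in $i$, step $2(n-1)$ in $j$) against the lattice $\{m\,\hmu(x_\ell)\}$ forces $\hmu(x_\ell)$ to be rational and severely constrains how many of the $n$ slots a single simple orbit can occupy — quantitatively, at most a bounded number, which for $n$ large is $< n/(N+1)$, contradicting the pigeonhole conclusion. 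The common-jump/index-recurrence theorem is precisely the tool that makes this counting uniform in $j$: it guarantees a common value of $j$ (equivalently a common iteration pattern) at which all the relevant orbits jump simultaneously, so that the local-homology degree windows of all iterates of all $x_\ell$ can be controlled at once.

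Assembling: fix a large $j$ supplied by the index-recurrence theorem, look at the $n$ orbits $y_0^{(j)},\ldots,y_{n-1}^{(j)}$, express each as an iterate of some $x_\ell$, and bound the number of slots attributable to each $x_\ell$ by the width of its admissible degree window divided by the spacing $2$ — this is at most roughly $(n-1)$-independent but in fact, after using that consecutive slots differ in action and the local homology is nonzero in a single degree, at most about $2$ per simple orbit in the relevant range. Summing over $\ell=1,\ldots,N$ gives fewer than $n$ slots filled, a contradiction, hence $N\ge r=\lfloor n/2\rfloor-1$. For the non-degenerate case one runs the same scheme but now the local homology is one-dimensional in a single degree determined by the Conley--Zehnder index, the carrier map slots are filled injectively, and the sharper counting (as in the non-degenerate case of Theorem \ref{thm:mult-intro}, cf.\ \cite{DLLW,DLW,GK,LZ}) yields the full conjectural bound $r=n$ for $n$ even and $r=n+1$ for $n$ odd. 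The main obstacle I anticipate is the precise bookkeeping in the combinatorial step: matching the two-parameter family of target degrees $(2j-1)(n-1)+2i$ against iterates of finitely many simple orbits and extracting the bound $\lfloor n/2\rfloor-1$ requires carefully tracking parities and the $n-1$ shift coming from the $ST^*S^n$ normalization, and ensuring the index-recurrence theorem is applied with the correct hypotheses (non-triviality of local homology in the prescribed degree, which is what lets us invoke Theorems \ref{sec:IRT}--\ref{thm:IRT2}).
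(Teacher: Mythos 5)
Your high-level plan (combine Corollary \ref{cor:STSn}/Theorem \ref{thm:STSn-intro} with the index recurrence results of Section \ref{sec:IR}) is indeed the route the paper takes, but the combinatorial core of your argument has a genuine gap. The contradiction you set up — pigeonhole forcing two slots of one $j$-block onto iterates of the same simple orbit — is not a contradiction at all: in the degenerate case the local homology $\SH^G_*(x^m;\Q)$ is supported in a window of width up to $2(n-1)$ around $m\hmu(x)$, not in a single degree, so a single simple orbit can legitimately carry several of the $n$ slots of a block; this is precisely why the degenerate lower bound is only $\lfloor n/2\rfloor-1$ rather than $n$. Likewise, nothing in the comparison of the progression $(2j-1)(n-1)+2i$ with $\{m\hmu(x_\ell)\}$ forces $\hmu(x_\ell)$ to be rational, and your ``at most about $2$ slots per simple orbit'' is unjustified (and, even if granted, would not yield $\lfloor n/2\rfloor-1$ by the counting you describe). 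The actual mechanism is different: one applies Theorem \ref{thm:IRT2} (via Corollary \ref{cor:jump}) to the linearized return maps of the finitely many simple orbits to produce special degrees $d_j=2j(n-1)$ and iterations $k_{ij}$ with $\mu_-(x_i^{k_{ij}+\ell})\geq d_j+\mu_-(x_i^\ell)\geq d_j+n-1$ and $\mu_+(x_i^{k_{ij}-\ell})\leq d_j-\mu_-(x_i^\ell)+\nu(x_i^\ell)\leq d_j$ for all $\ell\geq 1$. Hence every degree of parity $n-1$ in the \emph{open half-window} $(d_j,\,d_j+n-1)$ can only be carried by an orbit of the form $x_i^{k_{ij}}$, i.e., each simple orbit accounts for at most \emph{one} protected slot, and counting these slots gives exactly $\lfloor n/2\rfloor-1$. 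The $\nu$-correction term in the common jump inequality — which your sketch never addresses — is what shrinks the protected window to half its non-degenerate size.

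The non-degenerate case is also more than ``the slots are filled injectively.'' There one uses Proposition \ref{prop:quotient-SH} together with the dimension count of $\SH^{G,+}_*(W;\Q)$ from Proposition \ref{prop:D-STS} over the full window $(d_j-(n-1),\,d_j+(n-1))$ (now protected because $\nu=0$), and this only yields $r_0=r-2$ simple even orbits; reaching $r=n$ ($n$ even) or $r=n+1$ ($n$ odd) requires producing two additional simple orbits by a separate case analysis at the endpoint degrees $d_j\pm(n-1)$, where the homology is two-dimensional, distinguishing orbits of the form $x_i^{k_{ij}}$ from $x_i^{k_{ij}\pm\ell}$ and handling the parity/odd-orbit subcases. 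This step is the bulk of the paper's proof of the non-degenerate bound and cannot be dismissed by citing sharper counting.
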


This theorem is restated as Theorem \ref{thm:mult4} and proved in
Section \ref{sec:T^*S^n-2}. Again, the non-degenerate case of the
theorem is not new and included only for the sake of completeness. A
much more general result is established in \cite{AM}. (However, the
argument in Section \ref{sec:T^*S^n-2} is self-contained.)

There are three main ingredients to the proofs of Theorems
\ref{thm:mult-intro} and \ref{thm:mult4-intro}. The first and the
major one is, of course, Lusternik--Schnirelmann theory discussed
extensively above. The second one, having considerable overlap with
the results in \cite{DLW,LZ,Lo}, is the combinatorial index analysis
from Section \ref{sec:IR}. These are sufficient to prove Theorem
\ref{thm:mult4-intro} and generalize the results of \cite{LZ} to
dynamically convex hypersurfaces, but not to refine those results.
The refinement in Theorem \ref{thm:mult-intro} comes from an
application of one of the key theorems from \cite{GHHM}. Namely, it
turns out that in every second dimension (odd $n$) a dynamically
convex Reeb flow on $S^{2n-1}$ with exactly $\lfloor n/2\rfloor +1$
geometrically distinct closed orbits must have a simple closed orbit
of a particular type, the so-called symplectically degenerate maximum
(SDM).  A Reeb flow with a simple SDM orbit necessarily has infinitely
many periodic orbits, \cite{GHHM}. Hence, for an odd $n$, there must
be at least $\lfloor n/2\rfloor +2=\lceil n/2\rceil +1$ closed Reeb
orbits.

There are several ways, completely within the scope of our methods, to
generalize Theorems \ref{thm:mult-intro} and \ref{thm:mult4-intro} and
other results of this type, which, although of interest, are either
not considered here at all or only discussed very briefly. For
instance, one can generalize Theorem \ref{thm:mult-intro} by making
the lower bound depend on the ``degree of non-degeneracy''. This is
the approach taken in \cite{LZ,Lo}. Secondly, the dynamical convexity
condition $\mu_-\geq n+1$ in Theorem \ref{thm:mult-intro} and the
condition $\mu_-\geq n-1$ in Theorem \ref{thm:mult4-intro} can also be
quantified and replaced by the condition $\mu_-\geq q$ yielding, in
general, weaker lower bounds on the number of simple closed
orbits. (See Theorems \ref{thm:mult3} and \ref{thm:mult5}. For
$S^{2n-1}$ and $q=n-1$, we recover \cite[Thm.\ 1.4]{GK}. In the
non-degenerate case, conceptually stronger results are now available;
see \cite{DLLW,DLW}.)

In the setting of Theorems \ref{thm:mult-intro} and
\ref{thm:mult4-intro}, one can draw some conclusions on the existence
of simple closed orbits of specific type (e.g., elliptic) when the
number of simple closed orbits is finite; cf.\ \cite{AM,GK,LZ,Lo}. We
mainly leave this question aside; see, however, Remark
\ref{rmk:elliptic}.  Generalizations of these theorems to some other
contact manifolds are noted in Remark \ref{rmk:displ-mult}.  Finally,
we touch upon the Ekeland--Lasry theorem from the perspective of
Lusternik--Schnirelmann theory for the shift operator; see Section
\ref{sec:generalizations} and, in particular, Corollary \ref{prop:EL}.

\subsection{Organization of the paper}
\label{sec:organization}
The rest of the paper is organized as follows.  In Section
\ref{sec:conventions} we specify in detail our conventions and
notation.

In Section \ref{sec:LS} we outline the construction of the equivariant
Floer homology and prove the Lusternik--Schnirelmann inequality for
the shift operator. This section carries the most technical and
conceptual load in the paper. We start by discussing the shift
operator and the Lusternik--Schnirelmann inequality in the equivariant
homology for $S^1$-actions in Section \ref{sec:LS-Morse}. Formally
speaking, the results and constructions from this section are never
used in the rest of the paper. However, on the conceptual level the
difference between Morse theory and Floer theory is purely technical
in this setting and we often refer to Section \ref{sec:LS-Morse} when
dealing with its Floer theoretic counterpart. In Section
\ref{sec:LS-Floer} we define equivariant Floer homology and the shift
operator for autonomous Hamiltonians and state some of our main
results.  Section \ref{sec:local-MB} is a digression on local
equivariant Floer homology and other localization constructions. The
main objective of Section \ref{sec:LS-Floer-pf} is the proof of the
Lusternik--Schnirelmann inequality in Floer homology.  This is the key
technical part of the paper. Finally, in Section
\ref{sec:quotient-Floer}, we show that the equivariant Floer homology
for Hamiltonians with non-constant periodic orbits can be interpreted
as the homology of a complex generated by periodic orbits by proving
that the underlying Morse--Bott spectral sequence collapses in the
$E^2$-term. This construction comes handy in the proofs of
multiplicity results in the non-degenerate case.

The main objective of Section \ref{sec:LS2} is to define the shift
operator in the equivariant symplectic homology and prove the
Lusternik--Schnirelmann inequalities. We do this in Section
\ref{sec:LS-sympl} where we also briefly discuss local equivariant
symplectic homology and its properties. Examples and applications to
$S^{2n-1}$, the unit cotangent bundle $ST^*S^n$ and some other contact
manifolds are worked out in Section~\ref{sec:LS-SH-examples}.

Sections \ref{sec:LS-Floer} and \ref{sec:LS-sympl} comprise an
introduction to equivariant Floer and symplectic homology, following
mainly \cite{BO:12,BO:Gysin} with some modifications.

In Section \ref{sec:index} we introduce some basic concepts from the
Conley--Zehnder index theory and (re)prove several auxiliary
results. For instance, in Section \ref{sec:index:DC} we give a simple
proof of the fact that convexity implies dynamical convexity. In
Section \ref{sec:IR} we establish the index recurrence theorem and a
variant of the common jump theorem, \cite{DLW,LZ}, which are both
essential for deriving multiplicity results from the
Lusternik--Schnirelmann inequalities. Sections \ref{sec:index} and
\ref{sec:IR} can be read independently of the rest of the paper, and
Section \ref{sec:index} is intended to be a reasonably self-contained,
although concise, introduction to index theory.

Finally, in Section \ref{sec:pf-main}, we prove the multiplicity
theorems for simple closed Reeb orbits and the action-index resonance
relations. We also discuss other related results including the
Ekeland--Lasry theorem.

\subsection{Conventions and notation}
\label{sec:conventions}
In this section we spell out the conventions and notation used
throughout the paper.

We usually assume that the underlying symplectic manifold
$(V^{2n},\omega)$ is symplectically aspherical, i.e.,
$[\omega]|_{\pi_2(V)}=0=c_1(TV)|_{\pi_2(V)}$, and either compact or
$V$ is the \emph{symplectic completion} of a Liouville domain
$(W,\omega=d\alpha)$. To be more specific, in the latter case, we have
$$
V=\widehat{W}=W\cup_{M} \big(M\times [1,\infty)\big)
$$ 
with the symplectic form $\omega$ extended to the cylindrical part as
$d(r\alpha)$, where $\alpha$ is a contact primitive of $\omega$ on
$M=\p W$ and $r$ is the coordinate on $[1,\infty)$.  (See Section
\ref{sec:SH-conditions} for more details.) As noted in Remarks
\ref{rmk:general-V} and~\ref{rmk:general-W}, these conditions can be
modified or relaxed.

We denote by $\PP(\alpha)$ the set of contractible in $W$ periodic
orbits of the Reeb flow on $(M^{2n-1},\alpha)$ and by $\CS(\alpha)$
the \emph{period or action spectrum}, i.e., the collection of their
periods or, equivalently, \emph{contact actions}.

The circle $S^1=\R/\Z$ plays several different roles throughout the
paper. We denote it by $G$ when we want to emphasize the role of the
group structure on~$S^1$.

The Hamiltonians $H$ on $V$ are always required to be one-periodic in
time, i.e., $H\colon S^1\times V\to \R$. In fact, most of the time the
Hamiltonians we are actually interested in are \emph{autonomous},
i.e., independent of time. The (time-dependent) Hamiltonian vector
field $X_H$ of $H$ is given by the Hamilton equation
$i_{X_H}\omega=-dH$. For instance, on the cylindrical part
$M\times [1,\infty)$ with $\omega=d(r\alpha)$ the Hamiltonian vector
field of $H=r$ is the Reeb vector field of $\alpha$.

We focus on contractible one-periodic orbits (or $k$-periodic with
$k\in\N$) of $H$. Such orbits can be identified with the critical
points of the \emph{action functional} $\CA_H\colon \Lambda\to\R$ on
the space $\Lambda$ of contractible loops $x$ in $V$ given by
\begin{equation}
\label{eq:action-0}
\CA_{H}(x) =\CA(x)-\int_{S^1} H(t,x(t))\,dt,
\end{equation}
where $\CA(x)$ is the symplectic area bounded by $x\in \Lambda$, i.e.,
the integral of $\omega$ over a disk with boundary $x$. (Modifications
needed to work with non-contractible orbits are discussed in Remarks
\ref{rmk:general-V} and \ref{rmk:general-W}.) When $V=\widehat{W}$, we
always require $H$ to be \emph{admissible}, i.e., to have the form
$H=\kappa r+c$, where $\kappa\not\in\CS(\alpha)$, outside a compact
set. Under this condition the Floer homology of $H$ is defined; see,
e.g., \cite{Vi:GAFA}. Note, however, that the homology depends on
$\kappa$.

The \emph{action spectrum} of $H$, i.e., the collection of action
values for all contractible one-periodic orbits of $H$, will be
denoted by $\CS(H)$.  When $H$ is autonomous, a one-periodic orbit $y$
is said to be a \emph{reparametrization} of $x$ if $y(t)=x(t+\theta)$
for some $\theta\in G=S^1$.  Two one-periodic orbits are said to be
\emph{geometrically distinct} if one of them is not a
reparametrization of the other. We denote by $\PP(H)$ the collection
of all geometrically distinct contractible one-periodic orbits of $H$
and by $\PP(H,I)$ the collection of such orbits with action in an
interval $I$.

With our sign conventions in the definitions of $X_H$ and $\CA_H$, the
Hamiltonian actions converge to the contact actions in the
construction of the symplectic homology; see Section
\ref{sec:SH-conditions}. In particular, $\CS(H)\to\CS(\alpha)$.

We normalize the \emph{Conley--Zehnder index}, denoted throughout the
paper by $\mu$, by requiring the flow for $t\in [0,\,1]$ of a small
positive definite quadratic Hamiltonian $Q$ on $\R^{2n}$ to have index
$n$. More generally, when $Q$ is small and non-degenerate, the flow has
index equal to $(\sgn Q)/2$, where $\sgn Q$ is the signature of $Q$;
see Section \ref{sec:index-prop}. In other words, the Conley--Zehnder
index of a non-degenerate critical point of a $C^2$-small autonomous
Hamiltonian $H$ on $V^{2n}$ is equal to $n-\MUM$, where $\MUM=\MUM(H)$
is the Morse index of $H$. The Floer homology and symplectic homology
are graded by the Conley--Zehnder index.

These action and index conventions differ by sign for both the action
and the index from the conventions in most of our recent papers (see,
e.g., \cite{GG:wm,GG:gap,GG:hyperbolic}) and to our taste are rather
awkward to use when $V$ is closed, although we still have
$\HF_*(H)\cong \H_{*+n}(V)$ in this case. (The reason is that $\CA_H$
restricted to the space $V\subset \Lambda$ of constant loops is
$-H$. Thus, for a $C^2$-small autonomous Hamiltonian $H$, the Floer
complex of $H$ graded by the Conley--Zehnder index is the Morse
complex of $-H$ graded by $n-\MUM(H)=\MUM(-H)-n$; see \cite{SZ}.)
However, we find these conventions convenient when $V$ is open, which
is the main focus of this paper.

The differential in the Floer or Morse complex is defined by counting
the downward Floer or Morse trajectories. As a consequence, a monotone
increasing homotopy of Hamiltonians induces a continuation map
preserving the action filtration in homology. (Clearly, $H\geq K$ on
$S^1\times V$ if and only if $\CA_H\leq \CA_K$ on $\Lambda$.) In many
instances in this paper, the coefficient ring in Floer or symplectic
homology has to have zero characteristic and then we take it to be
$\Q$. When the coefficient ring is immaterial, it is omitted from the
notation.

Our choice of signs in \eqref{eq:action-0} also has effect on the
Floer equation. The Floer equation is the $L^2$-anti-gradient flow
equation for $\CA_H$ on $\Lambda$ with respect to a metric
$\left<\cdot\,,\cdot\right>$ compatible with $\omega$:
$$
\p_s u =-\nabla_{L^2}\CA_H(u),
$$
where $u\colon \R\times S^1\to V$ and $s$ is the coordinate on
$\R$. Explicitly, this equation has the form
\begin{equation}
\label{eq:Floer-0}
\p_s u + J\p_t u=\nabla H,
\end{equation}
where $t$ is the coordinate on $S^1$. Here the almost complex
structure $J$ is defined by the condition
$\left<\cdot\,,\cdot\right>=\omega(J\cdot\,,\cdot)$ making $J$ act on
the first argument in $\omega$ rather than the second one, which is
more common, to ensure that the left hand side of the Floer equation
is still the Cauchy--Riemann operator $\bar\p_J$. (In other words,
$J=-J_0$, where $J_0$ is defined by acting on the second argument in
$\omega$, and $X_H=-J\nabla H$.) Note, however, that now the right
hand side of \eqref{eq:Floer-0} is $\nabla H$ with positive sign.

\subsection*{Acknowledgments}
The authors are grateful to Alberto Abbondandolo, Fr\'ederic
Bourgeois, River Chiang, Jean Gutt, Umberto Hryniewicz, Ely Kerman,
Leonardo Macarini, Marco Mazzucchelli, Yaron Ostrover, Jeongmin Shon,
and Otto van Koert for useful discussions and remarks. This project greatly
benefited from the Workshop on Conservative Dynamics and Symplectic
Geometry held at IMPA, Rio de Janeiro in August 2015. The authors
would like to thank the organizers (Henrique Bursztyn, Leonardo
Macarini, and Marcelo Viana) of the workshop.  A part of this work was
carried out while the first author was visiting National Cheng Kung
University, Taiwan, and he would like to thank NCKU for its warm
hospitality and support.

\section{Shift operator: Morse
  and Floer homology}
\label{sec:LS}

Our goal in this and the next sections is to develop
Lusternik--Schnirelmann theory for the shift operator in equivariant
Floer and symplectic homology. We do this with some redundancy in
several steps starting with Morse theory and then moving on to Floer
theory and, in Section \ref{sec:LS2}, to symplectic homology.

Recall that we denote the circle $S^1=\R/\Z$ by $G$ when it is treated
as a group rather than a manifold. Furthermore, unless the coefficient
ring is specified, a homology group can be taken with arbitrary
coefficients.  However, the choice of coefficients is essential in our
strict Lusternik--Schnirelmann inequality results: Theorems
\ref{thm:LS-Morse}, \ref{thm:LS-Floer}, \ref{thm:LS-SH}, and Corollary
\ref{cor:sphere}. In this case, the coefficient field must have zero
characteristic and we take to be $\Q$.

\subsection{Shift operator in equivariant Morse theory}
\label{sec:LS-Morse}

To set the stage for studying the shift operator in equivariant Floer
and symplectic homology, let us start with the toy model of Morse
theory and ordinary equivariant homology.

\subsubsection{Lusternik--Schnirelmann theory for the shift operator in
  equivariant homology}
\label{sec:LS-Morse-1}
Recall that for a $G=S^1$-principal bundle $\pi\colon P\to B$ we have
the Gysin exact sequence
\begin{equation}
\label{eq:Gysin}
\ldots \to
\H_*(P) \stackrel{\pi_*}{\to}
\H_*(B) \stackrel{D}{\to}
\H_{*-2}(B) \stackrel{\pi^!}{\to}
\H_{*-1}(P)
\to \ldots.
\end{equation}
Here the operator $\pi^!$ sends a class $[C]$ in $B$, where $C$ is a
cycle, to the class $[\pi^{-1}(C)]$ in $P$ and the operator $D$, the
one we are interested in, is given by the paring with $-c_1(\pi)$, the
negative first Chern class of the principle $G$-bundle $\pi$. In other
words, on the level of cycles, $D$ is the intersection product with
the Poincar\'e dual of $-c_1(\pi)$. (The negative sign here is not
particularly important; its role is to make this definition of $D$
equivalent to its standard Morse theoretic counterpart; see Section
\ref{sec:LS-Morse-2}.)

Next, consider a closed manifold $Y$ with an action of the circle
$G$. The equivariant homology of $Y$ is by definition the homology
$\H_*^G(Y)=\H_*(Y\times_G EG)$ where $Y\times_G EG:=(Y\times EG)/G$
and the product is equipped with the diagonal circle action; see,
e.g., \cite[Appen. C]{GGK} for more details.  We can take the limit of
odd-dimensional spheres $S^{2m+1}$ with the Hopf action as $EG$ and
$BG=\CP^\infty$. Since $EG$ is contractible, the Gysin exact sequence
\eqref{eq:Gysin} for the principal bundle
$\pi\colon Y\times EG\to Y\times_G EG$ takes the form
\begin{equation}
\label{eq:Gysin2}
\ldots \to
\H_*(Y) \stackrel{\pi_*}{\to}
\H_*^G(Y) \stackrel{D}{\to}
\H_{*-2}^G(Y) \stackrel{\pi^!}{\to}
\H_{*-1}(Y)
\to \ldots.
\end{equation}
As is easy to see, $c_1(\pi)$ is the pull-back of
the first Chern class of the universal bundle $EG\to BG=\CP^{\infty}$,
the generator of $\H^2(\CP^\infty)$, under the natural projection
$Y\times_G EG\to BG$, and $D$ is the pairing with $-c_1(\pi)$. We call
$D$ the \emph{shift operator in equivariant
  homology}. Alternatively, one can replace here the universal bundle
by the Hopf bundle $S^{2m+1}\to \CP^m$ and then pass to the limit as
$m\to\infty$.

To every non-zero $w\in \H_*^G(Y)$ and a smooth $G$-invariant function
$f\colon Y\to \R$ one can associate a \emph{spectral invariant} or a
\emph{critical value selector} $\s_w(f)$. This is done exactly as in
the non-equivariant case. For instance, we can set
$$
\s_w(f)=\inf\{a\in\R\mid w\in \im(i^a_*)\},
$$
where $i^a\colon \{f\leq a\}\to Y$ is the natural
inclusion. Alternatively, $\s_w(f)$ can be defined using the standard
minimax construction for cycles representing $w$. It is not hard to
show that $\s_w(f)$ is a critical value and $\s_w$ has all the
expected properties of critical value selectors; see, e.g.,
\cite[Sect. 6.1]{GG:gaps}. Finally, let us set $\s_0(f)=-\infty$.

\begin{Theorem}
\label{thm:LS-Morse}
Assume that the $G=S^1$-action on $Y$ is locally fee, i.e., the action
has no fixed points, and that the critical sets of $f$ are isolated
$G$-orbits and $w\neq 0$ in $\H^G_*(Y;\Q)$. Then
\begin{equation}
\label{eq:LS-Morse}
\s_w(f)>\s_{D(w)}(f).
\end{equation}
\end{Theorem}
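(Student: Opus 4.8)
The plan is to reduce the statement to the classical Lusternik--Schnirelmann inequality for the intersection (cap) product on the quotient orbifold $X = Y/G$, using the fact that the $G$-action is locally free. First I would recall that, rationally, the Borel construction $Y\times_G EG$ is rationally homotopy equivalent to $X = Y/G$: the projection $Y\times_G EG \to X$ has fibers homotopy equivalent to $BG_y = B(\text{finite group})$, which are $\Q$-acyclic, so $\H_*^G(Y;\Q)\cong \H_*(X;\Q)$, and this isomorphism is compatible with the spectral invariants $\s_w$, since the descended function $\barf\colon X\to\R$ of the $G$-invariant function $f$ has the same critical value selectors. (One has to be slightly careful here because $X$ is only an orbifold; but one can work with a fine enough smooth $G$-CW structure, or equivalently with the finite-dimensional approximations $Y\times_G S^{2m+1}$, and pass to the limit as in the construction of $\s_w$.) The next point is to identify $D$ under this isomorphism with the cap product with the class $e\in \H^2(X;\Q)$ that is the image of $-c_1(\pi)\in\H^2(Y\times_G EG;\Z)$; this is just functoriality of the Gysin/cap-product construction under the rational equivalence $Y\times_G EG\simeq_\Q X$.

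With this translation in hand, the inequality $\s_w(f)>\s_{D(w)}(f)$ becomes an instance of the classical statement: for a smooth function $\barf$ on a closed manifold (orbifold) with isolated critical points, and any $a\in\H^k(X;\Q)$ with $k>0$ and any $0\neq w\in\H_*(X;\Q)$ with $a\cap w\neq 0$, one has $\s_{a\cap w}(\barf)<\s_w(\barf)$. I would prove this by the standard argument: let $c=\s_w(\barf)$ and let $K\subset\{\barf=c\}$ be the (finite) set of critical points at level $c$. Choose a neighborhood $U$ of $K$ with $\H^k(U;\Q)=0$ for $k>0$ (possible since $K$ is a finite set of points — or a finite union of orbifold points). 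Using a pseudo-gradient flow, push a cycle representing $w$ down into $\{\barf\leq c+\eps\}$ and decompose it as a chain supported in $\{\barf\leq c-\eps\}$ plus a chain supported in $U$; applying the cohomology class $a$ kills the $U$-part on the nose (by $\H^{>0}(U)=0$ together with the naturality of cap products and excision), so $a\cap w$ is represented by a cycle in $\{\barf\leq c-\eps\}$, giving $\s_{a\cap w}(\barf)\le c-\eps<c$.

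The one genuine subtlety — and the step I expect to be the main obstacle — is handling the orbifold/local-freeness issue cleanly enough that the ``$\H^{>0}(U)=0$ near an isolated critical orbit'' claim is rigorous. There are two ways to deal with it: either work directly in the finite-dimensional models $Y\times_G S^{2m+1}$, where the isolated critical orbit of $f$ contributes a smooth submanifold of critical points (a lens-space bundle over a point, essentially $S^{2m+1}/G_y$), whose neighborhood has trivial reduced \emph{rational} cohomology in positive degrees below $2m$ because $G_y$ is finite — this is exactly where the hypothesis ``$w\neq 0$ over $\Q$'' and the zero-characteristic coefficient field enter; or, equivalently, invoke the local equivariant homology of an isolated critical orbit and note that, over $\Q$, it carries no nontrivial action of the positive-degree part of $\H^*(BG;\Q)=\Q[u]$, which is the Morse-theoretic shadow of the same fact. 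Either way, the heart of the matter is that a finite isotropy group is invisible to rational cohomology, so an isolated locally-free critical orbit behaves, for the purposes of Lusternik--Schnirelmann theory over $\Q$, exactly like an isolated critical point on a manifold. Once that is set up, the inequality \eqref{eq:LS-Morse} follows verbatim from the classical argument, and $\s_0(f)=-\infty$ takes care of the degenerate case $D(w)=0$.
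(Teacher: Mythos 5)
Your argument is correct, and at its core it runs on the same engine as the paper's proof: finite isotropy is invisible to rational cohomology, so the degree-two class defining $D$ dies near an isolated critical orbit, forcing the strict drop of the spectral invariant. The packaging differs, though. The paper never passes to the quotient: it works on the smooth finite-dimensional Borel approximation $B=Y\times_G S^{2m+1}$, where the descended function $\barf$ has Morse--Bott critical manifolds that are lens spaces $S^{2m+1}/\Gamma$, identifies $D(w)=-w\cap\PD(c_1(\pi))$, and then invokes the restriction lemma (Lemma \ref{lemma:LS-Morse}, stated without proof with references) together with $\H^2(S^{2m+1}/\Gamma;\Q)=0$. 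You instead descend rationally to $X=Y/G$, where the critical sets become genuinely isolated points, and you prove the classical Lusternik--Schnirelmann deformation step directly (push the cycle to level $c+\eps$, split off the part in a $\Q$-acyclic neighborhood of $K$, cap with the class); your ``option (a)'' fallback, working in $Y\times_G S^{2m+1}$ with lens-space critical sets, is precisely the paper's route. What your version buys is a self-contained proof of the key lemma the paper only cites, at the cost of having to justify the orbifold smooth structure, the descended class $e\in\H^2(X;\Q)$ and the pseudo-gradient deformation on $X$ — points you flag, and which the paper sidesteps entirely by staying on the smooth manifold $B$ and using Alexander--Spanier cohomology of the (possibly non-smooth) critical set $K$ in the lemma. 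Both treatments handle $D(w)=0$ via $\s_0(f)=-\infty$, and both use the $\Q$-coefficients hypothesis in exactly the same place.
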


This is the Lusternik--Schnirelmann inequality for the shift operator
in the equivariant homology.

\begin{Remark}
  The conditions that the action is locally free and that the critical
  sets of $f$ are isolated $G$-orbits are essential; as is the
  assumption that the homology is taken over $\Q$. Also note that here
  only the strict inequality requires a proof. For the non-strict
  inequality $\s_w(f)\geq \s_{D(w)}(f)$ is a general feature of
  critical value selectors, which holds without any restrictions on
  $f$ or on the coefficient ring; see \cite[Sect. 6.1]{GG:gaps}.
\end{Remark}

\begin{proof}
  First, note that since the sequence of graded vector spaces
  $\H_*(Y\times_G S^{2m+1})$ converging to $\H_*^G(Y)$ stabilizes in 
  every degree $*$, we can replace $EG$ in the construction by a
  sphere $S^{2m+1}$ of sufficiently high dimension. Let us pull-back
  $f$ to $P=Y\times S^{2m+1}$ and then push forward the resulting
  function to the smooth manifold $B=P/G$. We denote the push-forward
  by $\barf$. Clearly, $\barf$ is a smooth function on $B$ and the
  critical manifolds of $\barf$ are lens spaces. (A critical circle
  $S$ of $f$ gives rise to the critical manifold
  $C=S\times_G S^{2m+1}$ of $\barf$ diffeomorphic to the lens space
  $S^{2m+1}/\Gamma$, where $\Gamma$ is the stabilizer of the orbit $S$
  of $G$.)

  To prove \eqref{eq:LS-Morse}, it is sufficient to show that
\begin{equation}
\label{eq:LS-Morse2}
\s_w(\barf)>\s_{D(w)}(\barf)
\end{equation}
for any $w\in \H_*(B;\Q)$.  

\begin{Lemma}
\label{lemma:LS-Morse}
Let $\barf$ be a smooth function on a closed manifold $B$ such
that
$$
\s_w(\barf)=\s_{w\cap \PD(v)}(\barf)
$$
for some non-zero $w\in \H_*(B)$ and $v\in \H^{*>0}(B)$, where $\PD$
stands for the Poincar\'e duality map and $\cap$ is the intersection
product. Then the restriction of $v$ to the critical set $K$ of
$\barf$ with critical value $\s_w(\barf)$ is non-zero in
$\H^*(K)$. (If $K$ is not smooth, $\H^*(K)$ is the Alexander--Spanier
cohomology of $K$.)
\end{Lemma}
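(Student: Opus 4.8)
The statement is essentially the contrapositive of the classical Lusternik--Schnirelmann lemma: if the minimax value does not drop when we cap $w$ with a positive-degree class $v$, then $v$ must "survive" on the critical set at that level. I would argue by contradiction. Suppose $v|_K=0$ in $\H^*(K)$, where $K$ is the critical set at level $c:=\s_w(\barf)$. The idea is to use excision/continuity of Alexander--Spanier cohomology to upgrade $v|_K=0$ to a statement about a neighborhood of $K$, and then "push down" past the critical level to derive $\s_{w\cap\PD(v)}(\barf)<c$, contradicting the hypothesis.

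\emph{Step 1: Localize the obstruction near $K$.}  Since $\barf$ is smooth on the closed manifold $B$ and $K=\{\barf=c\}\cap\operatorname{Crit}(\barf)$ is a compact subset, by continuity (\v{C}ech/Alexander--Spanier) cohomology commutes with the inverse limit over closed neighborhoods of $K$; hence $v|_K=0$ implies $v|_N=0$ for some closed neighborhood $N$ of $K$ in $B$. Choose $\eps>0$ small enough that $\{c-\eps\le\barf\le c+\eps\}$ deformation retracts (by the negative gradient flow of $\barf$, away from $\operatorname{Crit}\barf$) into $\{\barf\le c-\eps\}\cup N$; this is the standard deformation lemma, using that there are no critical values in $(c-\eps,c+\eps)$ other than $c$ and that $N$ is a neighborhood of the critical set at level $c$.

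\emph{Step 2: Represent the minimax and cap with $v$.}  Pick a cycle $Z$ representing $w$ with $Z\subset\{\barf\le c+\eps\}$, which exists by the definition of $\s_w(\barf)=c$. The class $w\cap\PD(v)$ is represented by the intersection of $Z$ with a cycle Poincar\'e dual to $v$; the key point is that the capping can be performed within $\{\barf\le c+\eps\}$, and the resulting cycle $Z'$ representing $w\cap\PD(v)$ can be pushed, via Step 1's retraction, into $\{\barf\le c-\eps\}\cup N$. Now because $v|_N=0$, the component of $Z'$ lying in $N$ is a boundary \emph{relative to} $N$ --- more precisely, the intersection with a $\PD(v)$-cycle can be chosen to miss $N$ after a homotopy, or is nullhomologous there --- so $Z'$ is homologous to a cycle supported in $\{\barf\le c-\eps\}$. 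This yields $w\cap\PD(v)\in\operatorname{im}\big(\H_*(\{\barf\le c-\eps\})\to\H_*(B)\big)$, hence $\s_{w\cap\PD(v)}(\barf)\le c-\eps<c=\s_w(\barf)$, the desired contradiction.

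\emph{The main obstacle.}  The delicate point is Step 2: making precise the claim that ``$v|_N=0$ lets one slide the capped cycle below the critical level.'' One must be careful because $K$ need not be a submanifold (it can be a union of lens spaces of varying dimensions, as in the proof's setup), so the intersection product must be interpreted either via the cup product on cohomology with $\PD$, or via a transversality argument for a smooth cycle dual to $v$ chosen generically with respect to the stratification near $K$. The cleanest route is probably cohomological: work with the commutative square relating $\H^*(B,\{\barf\le c-\eps\})$, $\H^*(B)$, cup product with $v$, and the restriction to a Thom-class-type neighborhood of the sublevel-set ``jump,'' and observe that the obstruction to lifting $\PD(w)\smile v$ to the relative group vanishes precisely because $v$ dies on $N$. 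I would phrase the argument entirely in terms of the critical-value selector axioms (subadditivity under the cap product, the ``stability'' property, and the spectrality) together with the one genuinely new input --- that a class which restricts to zero on the critical set at the minimax level cannot obstruct the descent --- which is exactly the content isolated by this lemma and then fed into the proofs of Theorems \ref{thm:LS-Morse} and, later, \ref{thm:LS-Floer}.
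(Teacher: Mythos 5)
The paper itself does not prove this lemma --- it is quoted as known, with references to \cite{Vi97} and \cite{GG:gaps} --- so the comparison is with the classical Lusternik--Schnirelmann argument, which is indeed the route you take; as written, however, there are two genuine gaps. First, in Step 1 you justify the deformation of $\{\barf\le c+\eps\}$ into $\{\barf\le c-\eps\}\cup N$ ``using that there are no critical values in $(c-\eps,c+\eps)$ other than $c$''. The lemma assumes nothing of the sort: $\barf$ is an arbitrary smooth function and its critical values may accumulate at $c=\s_w(\barf)$. The repair is a compactness remark: any sequence of critical points whose values tend to $c$ has a subsequence converging to a point of $K$, so for every neighborhood $N$ of $K$ there is $\eps>0$ such that \emph{all} critical points with values in $[c-\eps,c+\eps]$ lie in $N$; the (second) deformation lemma then deforms $\{\barf\le c+\eps'\}$ into $\{\barf\le c-\eps'\}\cup N$ for suitable $\eps'>0$, with no isolatedness hypothesis.

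Second, and more importantly, Step 2 --- where the content of the lemma actually lies --- is not carried out: the assertion that the capped cycle ``can be chosen to miss $N$ after a homotopy, or is nullhomologous there'' is precisely what has to be proved, and at the cycle level the capped class naturally lands in a group relative to $N$, not in $\H_*(\{\barf\le c-\eps\})$, so an argument is still required. The clean completion, which you gesture at but with the wrong relative pair (you propose $\H^*(B,\{\barf\le c-\eps\})$), is dual: since $\s_w(\barf)=c$, for every $\eps>0$ the class $\PD(w)$ lifts to $\H^*(B,\{\barf>a\})$ for a regular value $a\in(c,c+\eps)$ (Poincar\'e--Lefschetz duality for the sublevel set); since $v|_N=0$, the class $v$ lifts to $\H^*(B,N)$; the relative cup product therefore lifts $\PD(w)\smile v=\pm\,\PD\big(w\cap\PD(v)\big)$ to $\H^*\big(B,\{\barf>a\}\cup N\big)$. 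Flowing \emph{up} the gradient and using the same compactness remark, $\{\barf>c-\eps'\}$ deforms into $\{\barf>a\}\cup N$, so $\PD\big(w\cap\PD(v)\big)$ lifts to $\H^*(B,\{\barf>c-\eps'\})$, i.e.\ $\s_{w\cap\PD(v)}(\barf)\le c-\eps'<c$ by duality again, the desired contradiction. With these two repairs your argument is complete and agrees with the cited proofs; as submitted, the key step is missing, and the closing sentence --- that the needed input ``is exactly the content isolated by this lemma'' --- is circular rather than a proof.
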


This lemma is central to Lusternik--Schnirelmann theory. For instance,
as a consequence of the lemma, we have
$\s_w(\barf)>\s_{w\cap \PD(v)}(\barf)$ when the critical points of
$\barf$ are isolated, and hence in this case the number of critical
values is strictly greater than the cup-length of $B$. The lemma is
not new (see, e.g.,~\cite[Thm.\ 1.1]{Vi97} or \cite[Rmk.\
6.8]{GG:gaps}) and we omit its proof.

In the setting of Theorem \ref{thm:LS-Morse}, we have
$D(w)=-w\cap \PD(c_1(\pi))$, where $c_1(\pi)\in \H^2(B;\Q)$ is the
first Chern class of the principal $G$-bundle $\pi\colon P\to B$. As
has been mentioned above, the critical manifolds of $\barf$ are the
lens spaces $C$, and hence $K$ is a disjoint union of several such
lens spaces. Since $\H^2(C;\Q)=0$, we have $\H^2(K;\Q)=0$, and
therefore $c_1(\pi)|_K=0$. Thus the strict inequality in
\eqref{eq:LS-Morse2} follows from the lemma.
\end{proof}

\begin{Remark}
  The condition that the manifold $Y$ is closed can be
  relaxed. Namely, it is sufficient to assume that $Y$ is a manifold
  with boundary and the function $f\colon Y\to I=[a,\,b]$ is locally
  constant on $\p Y$ and $f(\p Y)\subset \{a,b\}$. In other words,
  without any assumption on $Y$, the results from this section hold
  for any proper function $f$ on $Y$ and the filtered Morse homology
  of $f$ with a range of values $I$, i.e., for
  $\H_*(f^{-1}(I),\{f=a\};\Q)$, such that all critical sets of $f$
  within $I$ are isolated $G$-orbits with finite stabilizer.
\end{Remark}

A Floer theoretic analog of Lemma \ref{lemma:LS-Morse} for closed
symplectic manifolds has been proved in \cite{How,How:thesis} by using
the pair-of-pants product. With the definition of the product from
\cite{AS}, the argument readily extends to admissible Hamiltonians on
the symplectic completion of a Liouville domain. Alternatively -- and
this would be a simpler approach -- one can employ the action of
(quantum) homology on the filtered Floer homology; see, e.g.,
\cite{LO}, and also \cite[Rmk.\ 12.3.3]{MS} and \cite[Sect.\
2.3]{GG:hyperbolic} and \cite{Li,Vi:Floer}.  

One difficulty in extending Theorem \ref{thm:LS-Morse} to the Floer
theory setting lies in that the definition of the operator $D$ in
Floer homology, given in \cite{BO:Gysin}, is based on the standard
Morse theoretic description of $D$ which is different from the one
used above. Thus, before turning to the Floer theoretic setting, let
us discuss the two definitions of the operator $D$ in Morse theory in
more detail. The results from the next section are nowhere directly
used in the paper and serve only as an illustration and motivation for
the Floer theoretic arguments in Section \ref{sec:LS-Floer}.

\subsubsection{Two definitions of the shift operator in
  equivariant Morse theory}
\label{sec:LS-Morse-2} 
Let, as in Section \ref{sec:LS-Morse-1}, $\pi\colon P\to B$ be a
principal $G=S^1$-bundle. We assume that $B$ is a closed manifold and
$F\colon B\to \R$ is a Morse function on $B$ with critical points
$x_i$. Then $\tF=F\circ \pi$ is a Morse-Bott function on $P$ with
critical sets $S_i=\pi^{-1}(x_i)$.  We denote by $\MUM(S_i)$ the
transverse Morse index of $\tF$ at $S_i$, i.e., the Morse index of $F$
at $x_i$. Let us recall the description of the Morse (or rather
Morse--Bott) homology $\HM_*(\tF)$ of $\tF$ using broken trajectories.
On each circle $S_i$ we fix a Morse function $g_i$ with exactly one
maximum $S_i^+$ and one minimum $S_i^{-}$. Let us also fix a
$G$-invariant metric on $P$. To work over $\Z$ or an arbitrary ring,
for every $x_i$ we also need to pick a co-orientation of the unstable
manifold of $x_i$. The graded module $\CCM_*(\tF)$, over any ground
ring, is generated by $S^\pm_i$ with grading $\MUM$ determined by
setting $\MUM(S^+_i)=\MUM(S_i)+1$ and $\MUM(S^-_i)=\MUM(S_i)$.

The differential $\p\colon \CCM_*(\tF)\to \CCM_{*-1}(\tF)$ decomposes
as $\p=\p_1+\p_2$.  The term $\p_1$ counts two-stage (or one-stage)
broken Morse trajectories from $S_i$ to $S_j$ with
$\mum(S_j)=\mum(S_i)-1$, connecting minima to minima or maxima to
maxima. For instance, a coefficient $\left<S_i^+,S_j^+\right>$ in
$\p S_i^+$ is the number, with signs, of broken trajectories made of
an anti-gradient trajectory $\eta\colon (-\infty, 0]\to S_i$ of $g_i$
starting at $S_i^+$ and then the anti-gradient trajectory of $\tF$
from $\eta(0)$ to $S_j^+$. The term $\p_2$ counts one-stage
trajectories of $\tF$ connecting minima to maxima with
$\mum(S_j)=\mum(S_i)-2$.

To be more precise, when $\mum(S_j)=\mum(S_i)-2$, we have
$\p_2S_i^+=0$ and
$$
\p_2 S_i^-=\sum_j\left<S_i^-,S_j^+\right>S_j^+,
$$
where $\left<S_i^-,S_j^+\right>$ is the number, with signs, of
anti-gradient trajectories of $\tF$ from $S_i^-$ to $S_j^+$.

Note that, strictly speaking, the decomposition of $\p$ should also
include the term $\p_0$ coming from the standard Morse differential
for $g_i$, but this term is obviously zero since the Morse functions
$g_i$ are perfect.

It is a standard fact that $\p^2=0$ and that the homology of
$(\CCM_*(\tF),\p)$, called the Morse or Morse--Bott homology of $\tF$
is canonically isomorphic to $\H_*(P)$. Note that that the module
$\CCM_*(\tF)$ is completely determined by $\tF$, but the differential
$\p$ depends on the auxiliary data including the functions $g_i$ and
the metric on $P$.

Let $\Cc^\pm$ be the submodules of $\CCM_*(\tF)$ generated by
$S_i^\pm$.  Then $\p_1\colon \Cc^\pm\to\Cc^\pm$, i.e., the
decomposition $\CCM_*(\tF)=\Cc^+\oplus \Cc^-$ is preserved by
$\p_1$. Furthermore, $\p_2(\Cc^+)=0$ and $\p_2\colon \Cc^-\to \Cc^+$.
Hence, $(\Cc^+,\p_1)$ is a subcomplex of $\CCM_*(\tF)$ and the
quotient complex $\CCM_*(\tF)/\Cc^+$ is also isomorphic to
$(\Cc^-,\p_1)$. Essentially by definition, the homology of
$(\Cc^+,\p_1)$ and $(\Cc^-,\p_1)$ is isomorphic to the Morse homology
of $F$, i.e., to $\H_*(B)$, up to the shift of grading. Then the exact
sequence of complexes
$$
0\to(\Cc^+,\p_1)\to (\CCM_*(\tF),\p)\to (\Cc^-,\p_1)\to 0
$$
gives rise to a long exact sequence in homology with connecting map
$\delta$ induced by $\p_2\colon \Cc^-\to \Cc^+$ on the homology of
these complexes. This is the Gysin sequence \eqref{eq:Gysin}, although
now we used a different definition of the connecting map.

\begin{Lemma}
\label{lemma:D=delta}
The two definitions of the shift operator are equivalent: $D=\delta$.
\end{Lemma}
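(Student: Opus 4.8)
The plan is to compare the two presentations of the Gysin sequence of $\pi\colon P\to B$ side by side and show that the connecting homomorphisms agree up to the usual identifications. On the one hand, the sequence \eqref{eq:Gysin} is the Gysin sequence of the sphere bundle, whose connecting map $D$ is cap product with $-c_1(\pi)$; on the other, the short exact sequence $0\to(\Cc^+,\p_1)\to(\CCM_*(\tF),\p)\to(\Cc^-,\p_1)\to 0$ produces a long exact sequence whose connecting map $\delta$ is induced by $\p_2\colon\Cc^-\to\Cc^+$. Since both sequences fit into commuting ladders with the maps $\pi_*$ and $\pi^!$ (or their Morse-theoretic incarnations $\Cc^\pm\hookrightarrow\CCM_*(\tF)$ and the projection), and since the first Chern class and hence $D$ is characterized by the property that \eqref{eq:Gysin} is exact, it suffices to check that the ladder relating the two sequences commutes; then $D=\delta$ follows from the five lemma, or more directly from uniqueness of the connecting map once the other two vertical arrows are pinned down.

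First I would make the identification of $\H_*(\Cc^\pm,\p_1)$ with $\H_*(B)$ (up to a grading shift) completely explicit. The point is that a coefficient $\langle S_i^+,S_j^+\rangle$ in $\p_1$ is, by the definition recalled above, a count of broken trajectories consisting of a $g_i$-flowline from $S_i^+$ followed by a $\tF$-flowline landing at $S_j^+$; projecting such a broken trajectory to $B$ and using that $\tF=F\circ\pi$, this count equals the count of $F$-trajectories from $x_i$ to $x_j$, i.e.\ the Morse differential of $F$. So $(\Cc^+,\p_1)\cong\CCM_*(F)$ and likewise for $\Cc^-$, with the gradings related by $\MUM(S_i^+)=\MUM(x_i)+1$, $\MUM(S_i^-)=\MUM(x_i)$; this accounts for the degree shifts in \eqref{eq:Gysin}. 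Next I would identify $\pi_*$ and $\pi^!$: the inclusion $(\Cc^-,\p_1)\hookrightarrow$ (or rather the projection $\CCM_*(\tF)\to\Cc^-$) realizes $\pi^!$, because $\pi^!$ sends $[C]$ to $[\pi^{-1}(C)]$, whose Morse--Bott representative is built from the maxima $S_i^-$ sitting over the cells of $C$; similarly the subcomplex inclusion $(\Cc^+,\p_1)\hookrightarrow\CCM_*(\tF)$ realizes $\pi_*$ up to sign. With these three identifications in place, the two exact sequences are literally the same sequence, so their connecting maps coincide.

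The one genuinely non-formal point — and the step I expect to be the main obstacle — is to see that the connecting map $\delta$, induced by $\p_2$, really is cap product with $-c_1(\pi)$ rather than some other degree-$(-2)$ operation. For this I would unwind $\p_2$: a coefficient $\langle S_i^-,S_j^+\rangle$ counts $\tF$-anti-gradient trajectories from the maximum $S_i^-$ of $g_i$ on the fiber circle over $x_i$ to the minimum $S_j^+$ of $g_j$ over $x_j$, where now $\MUM(x_j)=\MUM(x_i)$; these are exactly the rigid flowlines of $\tF$ that stay over (a neighborhood of) $x_i=x_j$ and wind around the fiber, and the signed count of how they wrap is precisely the Euler number of $\pi$ restricted over the relevant cell, i.e.\ the local contribution to $\langle c_1(\pi),-\rangle$. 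This is the standard fact that the obstruction to lifting a Morse complex through an $S^1$-bundle is the Euler class; see the references to \cite{BO:Gysin} and to the treatment of the Gysin sequence there, as well as \cite[Appen.\ C]{GGK}. The sign $-1$ is absorbed into the normalization chosen in Section \ref{sec:LS-Morse-1} exactly so that the two definitions match, which is why the parenthetical remark there flags it. Once this identification of $\p_2$ with the Euler-class cap product is made, commutativity of the ladder and hence $D=\delta$ is immediate.
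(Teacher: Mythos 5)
Your reduction to the formal comparison of the two exact sequences does not work as stated: exactness of the Gysin sequence does \emph{not} characterize the connecting map, so neither the five lemma nor ``uniqueness of the connecting map'' can finish the argument once $\pi_*$ and $\pi^!$ are matched. (For instance, replacing $D$ by $-D$, or composing $D$ with any automorphism preserving the relevant kernels and images, still yields an exact sequence.) The identifications of $\H_*(\Cc^\pm,\p_1)$ with the Morse homology of $F$ and of the inclusion/projection with $\pi_*$ and $\pi^!$ are fine and are exactly what the paper takes as the starting point; but after that, the whole content of the lemma is the chain-level statement that the count $\left<S_i^-,S_j^+\right>$ entering $\p_2$ agrees with a chain-level realization of the cap product with $-c_1(\pi)$, namely $\left<x_i,x_j\right>_\Sigma$, the intersection index of the evaluation map $\ev\colon\CM(x_i,x_j)\to B$ with a cycle $\Sigma$ Poincar\'e dual to $-c_1(\pi)$. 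Your proposal never introduces such a chain-level model of $D$, so there is nothing concrete to compare $\p_2$ with.

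Moreover, your unwinding of $\p_2$ is incorrect. Here $S_i^+$ is the maximum and $S_i^-$ the minimum of $g_i$, and $\p_2$ counts one-stage $\tF$-trajectories from $S_i^-$ to $S_j^+$ with $\MUM(x_j)=\MUM(x_i)-2$, not $\MUM(x_j)=\MUM(x_i)$; these trajectories project to honest Morse trajectories of $F$ joining two \emph{distinct} critical points and sweeping out a one-dimensional family of unparametrized trajectories, rather than ``rigid flowlines staying over a neighborhood of $x_i=x_j$ and winding around the fiber.'' So the ``local Euler number over a cell'' picture does not apply, and the appeal to ``the standard fact that the obstruction is the Euler class'' is precisely the assertion to be proved. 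The missing idea, which is the heart of the paper's proof, is to choose the auxiliary data compatibly: represent $-c_1(\pi)$ by a two-form $\sigma$ supported near $\Sigma$, fix a connection on $\pi$ with curvature $-2\pi\sigma$, and lift the metric horizontally, so that $\tF$-trajectories are horizontal lifts of $F$-trajectories. Then, as an unparametrized trajectory $u$ varies through a component $\widehat{\CM}$ of $\CM(x_i,x_j)/\R$, the endpoint of the lift starting at $S_i^-$ rotates in the circle $S_j$ exactly by the holonomy, i.e.\ by the number of (signed) intersections of $\ev(u)$ with $\Sigma$, which identifies $\left<S_i^-,S_j^+\right>$ with $\int_{\CM(x_i,x_j)}\ev^*\sigma=\left<x_i,x_j\right>_\Sigma$. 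Without this (or an equivalent computation), your argument establishes only that $\delta$ is \emph{some} degree $-2$ operation fitting into an exact sequence, not that it is $D$.
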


\begin{proof} Let us first translate the definition of $D$ from
  Section \ref{sec:LS-Morse-1} to the language of Morse homology. Fix
  a Morse--Smale metric on $B$ and denote by $\CM(x_i,x_j)$ the moduli
  space of anti-gradient trajectories $u$ of $F$ from $x_i$ to $x_j$.
  This is a smooth manifold of dimension $\mum(x_i)-\mum(x_j)$. We
  orient the moduli spaces $\CM(x_i,x_j)$ using coherent orientations.
  Denote by $\ev\colon \CM(x_i,x_j)\to B$ the evaluation map
  $u\mapsto u(0)$ and let $\Sigma$ be a cycle Poincar\'e dual to
  $-c_1(\pi)$.  (Since this class has degree two, $\Sigma$ can be
  taken to be a co-oriented submanifold of $B$.) For a generic choice
  of $\Sigma$, this cycle is transverse to all evaluation maps $\ev$
  and the number of intersections of $\ev(\CM(x_i,x_j))$ with $\Sigma$
  is finite when $\mum(x_j)=\mum(x_i)-2$. Let
  $\left<x_i,x_j\right>_\Sigma\in \Z$ be the intersection index of
  $\ev$ and $\Sigma$, and
\begin{equation}
\label{eq:D-Morse}
  D_\Sigma x_i=\sum_j\left<x_i,x_j\right>_\Sigma x_j.
\end{equation}
Then $D_\Sigma$ commutes with the Morse differential on $\CCM_*(F)$,
and the operator $D$ induced by $D_\Sigma$ on the Morse homology is
the shift operator from the Gysin sequence \eqref{eq:Gysin}. In what
follows, it will also be convenient to pick $\Sigma$ so that every
un-parametrized trajectory intersects $\Sigma$ at most once; clearly
this is true for a generic choice of $\Sigma$.

Since $\delta$ is induced by $\p_2$, to prove the lemma, it suffices
to show that
\begin{equation}
\label{eq:intersect-M1}
\left<x_i,x_j\right>_\Sigma=\left<S_i^-,S_j^+\right>
\end{equation}
for a suitable lift of the metric to $P$.

Consider a closed two-form $\sigma$ supported in a small tubular
neighborhood $U$ of $\Sigma$ and ``Poincar\'e dual'' to $\Sigma$. (In
particular, $[\sigma]=-c_1(\pi)$.) When $\mum(x_j)=\mum(x_i)-2$, the
pull back $\ev^*\sigma$ has compact support and
\begin{equation}
\label{eq:intersect-M2}
\left<x_i,x_j\right>_\Sigma=\int_{\CM(x_i,x_j)}\ev^*\sigma.
\end{equation}
Note that here we need to take $U$ sufficiently small since for a
single point $x_i$ the evaluation map $\ev$ need not in any sense be a
cycle.)

Therefore, to finish the proof of the lemma it remains to show that
the right hand sides of \eqref{eq:intersect-M1} and
\eqref{eq:intersect-M2} are equal. To this end, fix a connection on
$\pi$ with curvature $-2\pi\sigma$ and consider the standard lift of
the metric from $B$ to $P$ using this connection. (The authors are
aware of and apologize for this clash of $\pi$'s; see, e.g.,
\cite[Appen. A]{GGK} for a discussion of the relevant curvature
conventions.) Then the anti-gradient trajectories of $\tF$ are the
horizontal, i.e., tangent to the connection, lifts of the
anti-gradient trajectories of $F$. Let $\CM$ be a connected component
of $\CM(x_i,x_j)$. When $U$ is sufficiently small, $\ev^*\sigma|_\CM$
is supported in small disjoint disks $\Bb_k$ centered at the inverse
images of the intersections $\ev(\CM)\cap \Sigma$ and the integral of
$\ev^*\sigma$ over $\Bb_k$ is $\pm 1$ with the sign determined by the
sign of the intersection. Let $\widehat{\CM}=\CM/\R$ be the space of
un-parametrized trajectories. Since every un-parametrized trajectory
intersects $\Sigma$ at most once, the images $\widehat{\Bb}_k$ of the
disks $\Bb_k$ in $\widehat{\CM}$ do not overlap if $U$ is small.

For every un-parametrized trajectory $u\in \widehat{\CM}$, denote by
$\tu$ its horizontal lift to $P$ starting at $S_i^-$. We can view
$\widehat{\CM}$ as a one-parameter family of trajectories and, as $u$
varies through this family, the end-point $\tu(\infty)$ will traverse
the circle $S_j$. When $u$ passes through $\widehat{\Bb}_k$, the
end-point will make exactly one revolution in $S_j$ in the direction
given by the sign of the intersection. When $u$ is outside the union
of the intervals $\widehat{\Bb}_k$, the end-point $u(\infty)$ does not
move because the connection is flat outside $U$. Furthermore, for such
a connection chosen generically, this point is different from $S_j^+$.
Thus the number, with signs, of un-parametrized anti-gradient
trajectories of $\tF$ in $\widehat{\CM}$ connecting $S_i^-$ to $S_j^+$
is equal to the integral of $\ev^*\sigma$ over $\CM$. In other words,
this integral is exactly the contribution of $\CM$ to the right hand
side $\left<S_i^-,S_j^+\right>$ of \eqref{eq:intersect-M1}. Taking the
sum over all connected components of all two-dimensional moduli
spaces, we obtain~\eqref{eq:intersect-M1}.
\end{proof}

\begin{Remark}
  \label{rmk:D=delta} 
  The requirements that every trajectory intersects $\Sigma$ only once
  and that the intervals $\widehat{\Bb}_k$ do not overlap are imposed
  only to make the proof geometrically more transparent and are not
  really essential. By tracking the end-point $\tu(\infty)\in S_j$ as
  $u$ varies through $\widehat{\CM}$, it is not hard to show that even
  without these conditions the contribution of $\widehat{\CM}$ to
  $\left<S_i^-,S_j^+\right>$ is equal to the integral of $\ev^*\sigma$
  over $\CM$. It is essential, however, that the critical points of
  $F$ are outside $U$ and hence $\ev^*\sigma$ is compactly
  supported. (The parts of the broken trajectories in the
  compactification of $\widehat{\CM}$ have relative index one and thus
  do not intersect $\Sigma$ by transversality.)
\end{Remark}

The definitions of $D$ and $\delta$, and Lemma \ref{lemma:D=delta}
carry over by continuity to the setting when $F$ is not necessarily
Morse and the homology of $B$ and $P$ is replaced by the filtered
Morse homology.  Furthermore, applying the construction of $\delta$
and the lemma to the principal $G$-bundle
$$
\pi \colon P=Y\times S^{2m+1}\to Y\times_G S^{2m+1}=B
$$
with $F=\bar{f}$ in the notation of the proof of Theorem
\ref{thm:LS-Morse}, we arrive at the Morse theoretic definition of the
shift operator in the equivariant homology $\H_*^G(f)\cong \H_*^G(Y)$
and a Morse theoretic proof of the Gysin sequence \eqref{eq:Gysin2}.

\subsubsection{Equivariant homology as the Morse homology on the
  quotient}
\label{sec:quotient-Morse}
When the $G$-action on $Y$ is locally free, the quotient $Y/G$ is an
orbifold and as is well known $\H_*^G(Y;\Q)\cong \H_*(Y/G;\Q)$; see,
e.g., \cite{GGK}. Furthermore, assume that the critical sets $S$ of a
$G$-invariant function $f$ are isolated Morse--Bott non-degenerate
$G$-orbits. Then the equivariant Morse homology of $f$ over $\Q$ is
isomorphic to $\H_*^G(Y;\Q)$ and can be interpreted as the Morse
homology of the push-forward of $f$ to the orbifold $Y/G$. In other
words, the homology $\H_*^G(Y;\Q)\cong \H_*(Y/G;\Q)$ can be viewed as
the homology of a certain complex generated by the critical sets $S$
of $f$.

To be more specific, consider a critical set $S$. By our assumptions,
$S$ is a $G=S^1$-orbit $G/\Gamma$ where $\Gamma$ is a cyclic
subgroup. Let $\nu$ be the determinant line bundle (i.e., the top
wedge) of the unstable bundle of $S$. Clearly, $G$ acts on $\nu$, and
hence we have a representation $\Gamma\to\Z_2=\{\pm 1\}$ on the fiber
of $\nu$. Let us call $S$ \emph{good} if this representation is
trivial and \emph{bad} otherwise. For instance, $S$ is automatically
good when $|\Gamma|$ is odd. These are proto-good/bad orbits in
equivariant symplectic or contact homology.

\begin{Proposition}
\label{prop:quotient-Morse}
The equivariant homology $\H_*^G(Y;\Q)$ is equal to the homology of a
certain complex generated by the good critical sets $S$ of $f$, graded
by the Morse index, i.e., the dimension of the unstable manifold of
$S$, and filtered by $f$.
\end{Proposition}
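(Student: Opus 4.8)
The plan is to mimic exactly the Morse--Bott package built in Section~\ref{sec:LS-Morse-2}, but now running it over $\Q$ and tracking which generators survive. Recall that when the $G$-action is locally free and the critical sets $S$ of $f$ are isolated Morse--Bott non-degenerate $G$-orbits, choosing $EG=S^{2m+1}$ of sufficiently high dimension and pushing $f$ forward through $\pi\colon P=Y\times S^{2m+1}\to B=Y\times_G S^{2m+1}$ gives a function $\barf$ on the closed manifold $B$ whose critical sets are the lens spaces $C=S\times_G S^{2m+1}\cong S^{2m+1}/\Gamma$, where $\Gamma$ is the (finite cyclic) stabilizer of $S$. So $\H_*^G(Y;\Q)\cong\H_*(B;\Q)$ is computed by a Morse--Bott complex whose generators, for each critical set $S$, come from a choice of Morse data on the critical manifold $C$.

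First I would set up the spectral sequence (or the iterated-cone description) of the Morse--Bott complex of $\barf$: the $E^1$-page in the "$S$-block'' is $\H_*(C;\Q)$ shifted by the transverse Morse index $\mum(S)$, with the local system given by the orientation bundle of the unstable normal bundle of $C$. The key computation is $\H_*(C;\Q)=\H_*(S^{2m+1}/\Gamma;\Q)$. Over $\Q$, a lens space $S^{2m+1}/\Gamma$ with $|\Gamma|$ finite has the rational homology of $S^{2m+1}$ if the $\Gamma$-action on $S^{2m+1}$ is orientation-preserving — but more relevantly, with twisted coefficients along the orientation bundle $\nu$ of the unstable manifold, $\H_*(C;\nu\otimes\Q)$ is either rank one (concentrated in degree $0$, i.e.\ bottom of the block) when the representation $\Gamma\to\Z_2$ on the fiber of $\nu$ is trivial — the \emph{good} case — or is zero when that representation is non-trivial — the \emph{bad} case. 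This is the transfer argument: $\H_*(S^{2m+1}/\Gamma;\Q)=\H_*(S^{2m+1};\Q)^{\Gamma}$ and the $\Gamma$-invariants of the twisted coefficient sheaf vanish precisely when $\Gamma$ acts non-trivially on the twist. (Here $m$ is taken large enough that the top cell of $C$ is above the degree range we care about, so effectively each good $S$ contributes a single generator in degree $\mum(S)$.)

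Hence, over $\Q$, the Morse--Bott complex of $\barf$ is quasi-isomorphic to a complex $\big(\CCM_*^{\mathrm{good}},\partial\big)$ with one generator $[S]$ in degree $\mum(S)$ for each good critical set $S$ of $f$, filtered by the critical value $f(S)$ (which descends to $\barf(C)$), and the bad orbits simply drop out. The differential $\partial$ is the induced one — counting, with rational weights coming from the degrees of the relevant attaching maps between lens spheres, the (broken) Morse trajectories of $\barf$, equivalently the $G$-invariant Morse trajectory data of $f$ downstairs on $Y/G$. Taking homology gives $\H_*^G(Y;\Q)\cong\H_*(B;\Q)$, which is exactly the assertion of Proposition~\ref{prop:quotient-Morse}. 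The filtration-by-$f$ statement is immediate because the push-forward respects $\barf$-sublevel sets and the quasi-isomorphism is filtered.

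The main obstacle — and the only genuinely substantive point — is the rational (co)homology computation for the twisted coefficient system on the lens space $C$, i.e.\ proving that good orbits contribute $\Q$ in a single degree and bad orbits contribute $0$. Everything else is the standard Morse--Bott-to-Morse reduction (collapse of the Morse--Bott spectral sequence onto the $E^1$-terms that survive) together with the classical identification $\H_*^G(Y;\Q)\cong\H_*(Y/G;\Q)$ for a locally free action, both of which I would simply cite. A secondary technical care-point is keeping the dimension $2m+1$ of $EG$ large enough, uniformly on the relevant finite range of indices, so that the high-degree (co)homology of the lens spaces $C$ never interferes — but since $\H_*^G(Y;\Q)$ stabilizes in each degree, this is routine.
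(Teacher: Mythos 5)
Your setup is the same as the paper's: replace $EG$ by $S^{2m+1}$, pass to $\barf$ on $B=Y\times_G S^{2m+1}$, observe that the critical manifolds are the lens spaces $C=S\times_G S^{2m+1}$, and compute the $E^1$-page of the Morse--Bott spectral sequence over $\Q$ with coefficients twisted by the orientation bundle of the unstable bundle, so that good orbits contribute a single class in degree $\mum(S)$ (plus a top-degree class that escapes to infinity as $m\to\infty$) and bad orbits contribute nothing in the relevant range. That computation, and the uniform-in-degree stabilization as $m\to\infty$, are correct and agree with the paper.

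The gap is in the step you call the ``standard Morse--Bott-to-Morse reduction (collapse of the Morse--Bott spectral sequence onto the $E^1$-terms that survive)'' and propose to simply cite. The spectral sequence of the filtration by critical values does \emph{not} degenerate at $E^1$ or $E^2$ in general: the pieces of the sought differential joining good generators at non-adjacent critical levels are exactly the higher differentials $\p_r$, $r\geq 2$, and if one only kept $(E^1,d_1)$ one would not compute $\H_*^G(Y;\Q)$. So the existence of a single filtered complex on the good generators is not a citable collapse statement; it is precisely the content of the paper's algebraic Lemma \ref{lemma:single_complex}, which over a characteristic-zero field reassembles \emph{all} pages of a finitely convergent spectral sequence into one differential $\bp$ on $E^1$ with $\H_*(E^1,\bp)=E^\infty$ (via an inner product and harmonic splittings). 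Relatedly, your explicit description of the differential as a weighted count of broken trajectories of $\barf$ cannot hold for a complex with exactly one generator per good orbit: any geometric Morse--Bott/cascade model carries extra generators from the auxiliary Morse functions on the lens spaces (which admit no two-critical-point Morse functions), and the transferred differential necessarily contains higher ``zig-zag'' terms rather than direct trajectory counts -- the paper deliberately makes no such claim (cf.\ Remark \ref{rmk:diff-M}). Since Proposition \ref{prop:quotient-Morse} only asserts the existence of \emph{some} complex with the stated generators, grading and filtration, the mis-identified differential is not fatal by itself, but the quasi-isomorphism you assert is the heart of the proof and is exactly what is missing; either invoke and prove a reassembly lemma of the above type, or construct a genuine orbifold (weighted) Morse complex on $Y/G$, which is a different and more delicate argument.
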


In Section \ref{sec:quotient-Floer} we will establish, by a rather
similar argument, a Floer theoretic analog of this result.  The proof
of the proposition hinges on the following general, purely algebraic
lemma that, roughly speaking, asserts that a spectral sequence
starting with $E^{r_0}$ can be reassembled into a single complex.

\begin{Lemma}
\label{lemma:single_complex}
Let $E^{r}$ with $r\geq r_0$ be a spectral sequence of complexes over
a field of zero characteristic converging in a finite number of
steps. Then there exists a differential $\bp$ on $E^{r_0}$ such that
$\H_*(E^{r_0},\bp)=E^{\infty}$. Moreover, fixing an inner product on
$E^{r_0}$ makes the choice of $\bp$ canonical and $E^\infty$ is then
identified with the space of harmonic representatives for
$\H_*(E^{r_0},\bp)$.
\end{Lemma}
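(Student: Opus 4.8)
The plan is to make literal sense of the naive candidate ``$\bp = \sum_{r\ge r_0} d_r$''. As written this is meaningless, since $d_r$ is a differential on $E^r$ rather than on $E^{r_0}$; the remedy is that a choice of inner product identifies each $E^r$ canonically with a subspace of $E^{r_0}$ — its iterated harmonic subspace — along which $d_r$ can be transported. I would argue by induction on $N-r_0$, where $N\ge r_0$ is any index with $d_r=0$ for all $r\ge N$ (one exists by the convergence hypothesis).

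The single input from linear algebra is the Hodge decomposition of a finite-dimensional complex $(E,d)$ over a field of zero characteristic equipped with a positive-definite inner product: setting $\Delta=dd^*+d^*d$ one has $E=\mathcal{H}\oplus\im d\oplus\im d^*$ with $\mathcal{H}=\ker\Delta=\ker d\cap\ker d^*$, the projection $\ker d=\mathcal{H}\oplus\im d\to\mathcal{H}$ induces a canonical isomorphism $H_*(E,d)\cong\mathcal{H}$, and $d$ restricts to an isomorphism $\im d^*\xrightarrow{\ \sim\ }\im d$ (because $\im d^*=(\ker d)^\perp$).

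Now the induction. If $N=r_0$, i.e.\ $d_r=0$ for all $r\ge r_0$, then $E^{r_0}=E^{\infty}$ and $\bp:=0$ works. Otherwise, fix the given inner product on $E^{r_0}$, form the Hodge decomposition $E^{r_0}=\mathcal{H}\oplus\im d_{r_0}\oplus\im d_{r_0}^*$ for $d_{r_0}$, and equip $\mathcal{H}\cong E^{r_0+1}$ with the restricted inner product. The truncated spectral sequence $(E^r)_{r\ge r_0+1}$ has the same $E^{\infty}$ and satisfies the hypothesis with $N-(r_0+1)<N-r_0$, so by induction there is a canonical differential $\bp'$ on $\mathcal{H}=E^{r_0+1}$ with $H_*(\mathcal{H},\bp')=E^{\infty}$ realized as the $\bp'$-harmonic subspace. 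Define $\bp$ on $E^{r_0}$ to be $\bp'$ on $\mathcal{H}$, to be $d_{r_0}$ on $\im d_{r_0}^*$ (landing isomorphically in $\im d_{r_0}$), and to be $0$ on $\im d_{r_0}$. Each block has degree $-1$ for the total grading, so $\bp$ does too. Since $\bp$ preserves $\mathcal{H}$ and sends $\im d_{r_0}^*$ into $\im d_{r_0}\subseteq\ker\bp$, there are no cross terms and $\bp^2=(\bp')^2\oplus 0=0$; computing summand-wise, $H_*(E^{r_0},\bp)=H_*(\mathcal{H},\bp')=E^{\infty}$, while the $\bp$-Laplacian is positive-definite on $\im d_{r_0}\oplus\im d_{r_0}^*$ and equals $\Delta_{\bp'}$ on $\mathcal{H}$, so its kernel — the $\bp$-harmonic subspace of $E^{r_0}$ — is the $\bp'$-harmonic subspace $E^{\infty}$. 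Every choice was canonical given the inner product, closing the induction. Equivalently, without induction: iterating the Hodge decomposition exhibits $E^{r_0}$ as the orthogonal sum of $E^{\infty}$ with the pairs $(\im d_r^*,\im d_r)$, one pair inside each iterated harmonic subspace, and one sets $\bp:=\bigoplus_r d_r$ on these pairs and $0$ on $E^{\infty}$.

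The one point requiring care — the only place the argument could break — is the identity $\bp^2=0$. It is forced by the ``mapping cone'' packaging of the inductive step: at each stage one adjoins to the previously built $\bp'$ only the single block $d_{r_0}\colon\im d_{r_0}^*\to\im d_{r_0}$, and the two pieces interact trivially because $\bp'$ preserves the harmonic subspace whereas the new block lands in $\im d_{r_0}$, which $\bp$ annihilates. In the non-inductive formulation the same fact is the statement that the pairs $(\im d_r^*,\im d_r)$ for distinct $r$ are mutually orthogonal, which is exactly what the iterated Hodge decomposition delivers.
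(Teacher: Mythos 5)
Your proof is correct and is essentially the paper's own argument in different packaging: the paper identifies $E^{r+1}$ with the orthogonal complement of $\im \p_r$ in $\ker \p_r$ (your harmonic subspace) and sets $V_r=(\ker\p_r)^\perp$ (your $\im d_r^*$), arriving at exactly your iterated orthogonal decomposition of $E^{r_0}$ into $E^\infty$ plus the pairs $(V_r,\im\p_r)$, with $\bp$ equal to the transported $\p_r$ on each pair and zero on $E^\infty$. Your inductive, Hodge-theoretic formulation and your non-inductive summary coincide with this construction, including the verification of $\bp^2=0$ and the identification of $E^\infty$ with the harmonic subspace.
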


\begin{proof}
  Let us fix an inner product on $E^{r_0}$. Then we can identify
  $E^{r_0+1}$ with the orthogonal complement to $\im \p_{r_0}$ in
  $\ker \p_{r_0}$ and, proceeding inductively, $E^{r+1}$ with the
  orthogonal complement to $\im \p_{r}$ in $\ker \p_{r}$. Thus we have
  a nested sequence of bi-graded spaces
$$
E^{r_0}\supset E^{r_0+1}\supset\ldots\supset E^k,
$$
where we have assumed that $E^r$ converges in $k-r_0$ steps, i.e.,
$E^{k}=E^\infty$. The differential $\p_r$ is a map $E^r\to
E^r$ such that $E^{r+1}\subset \ker \p_r$ and, moreover, 
$$
\ker \p_r=\im \p_r\oplus E^{r+1}.
$$ 

Denote by $V_r$ the orthogonal complement to $\ker \p_r$ in
$E^r$. We have the following orthogonal decomposition
\begin{equation}
\label{eq:decomp}
E^{r_0}=\big(V_{r_0}\oplus \im \p_{r_0}\big)\oplus\ldots
\oplus \big(V_{k-1}\oplus \im \p_{k-1}\big)\oplus E^k.
\end{equation}

Let us extend $\p_r$ to $\bp_r\colon E^{r_0}\to E^{r_0}$ by setting
$\bp_r\equiv 0$ on the orthogonal complement to $E^r$. Then 
$$
\bp_r\colon V_r\stackrel{\cong}{\longrightarrow} \im \p_r
$$
is an isomorphism and $\bp_r\equiv 0$ on all terms of the
decomposition \eqref{eq:decomp} other than $V_r$. Let  
$$
\bp=\bp_{r_0}+\ldots+\bp_{k-1}.
$$
As is clear from \eqref{eq:decomp}, $\bp^2=0$
and $\H_*(E^{r_0},\bp)=E^k$. Furthermore, $E^k$ is the space of harmonic
representatives for $\H_*(E^{r_0},\bp)$.
\end{proof}

\begin {proof}[Proof of Proposition \ref{prop:quotient-Morse}]
  Recall that given a Morse--Bott function $F$ on some manifold $P$
  with critical sets $C$, its Morse--Bott homology can be defined by
  fixing a Morse function $g_C$ on every $C$ and then taking a
  complex, called the \emph{Morse--Bott complex}, generated by the
  critical points of $g_C$. The differential on this complex is
  obtained by counting broken anti-gradient trajectories of the
  functions $g_C$ and $F$ as in Section \ref{sec:LS-Morse-2}, although
  now the manifolds $C$ need not be circles. The resulting complex is
  graded by the total Morse index, i.e., by the sum of the Morse--Bott
  index of $F$ at $C$ and the Morse index of a critical point of
  $g_C$.

  Furthermore, the complex is filtered by the value of $F$. To be more
  precise, let us fix a sequence of regular values $a_p$ of $F$ such
  that every interval $(a_{p-1},\,a_p)$ contains exactly one critical
  value. Then the filtration is given by the Morse--Bott complexes of
  $F$ for the intervals $(-\infty,\,a_p)$.  We will call the resulting
  spectral sequence $(E^r_{p,q},\p_r)$ associated with this filtration
  the \emph{Morse--Bott spectral sequence} of $F$. By construction,
  this spectral sequence converges to $E^\infty=\H_*(P;\Q)$ in a finite
  number of steps. (For the range of $p$ is finite.) Its $E^1$-page is
  the direct sum of the homology spaces $\H_*(C;\CN)$ with the local
  coefficient system $\CN$ given by the determinant line bundle of the
  unstable bundle of $C$. In other words,
$$
E^1_{p,q}=\bigoplus_C \H_{p+q}(C;\CN),
$$
where the sum is taken over all critical manifolds of $F$ with
critical value in $(a_{p-1},\,a_p)$. We note that this construction is
different from that in Section \ref{sec:LS-Morse-2} where we used the
index filtration rather than the filtration by the value of $F$; see
Remark \ref{rmk:ind_vs_value}.

In the setting of the proposition, let us consider the Morse--Bott
spectral sequence of the function $F$ induced on
$B=Y\times_G S^{2m+1}$ by $f$. Then the critical sets $C$ are the lens
spaces $S\times_G S^{2m+1}=S^{2m+1}/\Gamma$. The local coefficient
system $\CN$ (over $\Q$) is trivial if and only if the critical set
$S$ is good. Thus we see that the contribution of $S$ to the
$E^1$-page, over $\Q$, is concentrated in degrees $0$ and $2m+1$ if
$S$ is good and only in degree $2m+1$ when $S$ is bad. Let us pass to
the limit as $m\to\infty$. It is not hard to see that the limit
spectral sequence, still denoted by $E^r$, exists for a suitable
``coherent'' choice of the Morse--Bott data for $F$ (the metric and
the functions $g_C$) for all $m\in\N$; cf.\
\cite[Appendix]{GHHM}. Furthermore, the limit sequence converges to
$\H_*^G(Y;\Q)$ in a finite number of steps and has the $E^1$-term
generated by the good critical sets $S$ of $f$. We take an inner
product on $E^1$ for which this collection of critical sets is an
orthonormal basis and apply Lemma \ref{lemma:single_complex} with
$r_0=1$. Then $(E^1,\bp)$ is the required complex, graded by $\mum$
and filtered by $f$. Alternatively, one could apply the lemma to $E^1$
for a finite value of $m$ and then pass to the limit as $m\to\infty$.
\end{proof}

We conclude the section by several remarks elaborating on various
aspects of these constructions.

\begin{Remark}
\label{rmk:diff-M}
The differential $\bp$ on $E^1$ is completely determined by the
Morse--Bott data for $F$ (the metric and the functions $g_C$ for all
$m\in\N$). Furthermore, it is clear from the construction that the
differential is ``natural'', i.e., a monotone decreasing homotopy from
$f_0$ to $f_1$, gives rise to a homomorphism of the filtered
complexes.  Likewise, different choices of the Morse--Bott data result
in isomorphic complexes. Note also that rather than working with the
Morse--Bott complex in the proof of Proposition
\ref{prop:quotient-Morse} we could have taken a non-degenerate
perturbation of $F$, resulting in exactly the same complex $(E^1,\bp)$
for a suitable choice of the perturbation.
\end{Remark}

\begin{Remark}
  The key idea of the constructions from Lemma
  \ref{lemma:single_complex} and the proof of Proposition
  \ref{prop:quotient-Morse} is that the complex $(E^{r_0},\bp)$ can be
  much smaller and more convenient to work with than the original
  filtered complex giving rise to the spectral sequence. It is worth
  keeping in mind that applying the lemma to the $E^0$-term, which is
  isomorphic to the original complex as a vector space, may result in
  a complex $(E^0,\bp)$ different from the original one. For instance,
  consider the filtered Morse complex of a Morse function. This
  complex comes with a preferred basis and hence is canonically
  isomorphic to $E^0$. Yet, the differential $\bp$ on $E^0$ need not
  be equal to the Morse differential. Likewise, in the setting of
  Proposition \ref{prop:quotient-Morse} when the $G$-action on $Y$ is
  free, the complex $(E^1,\bp)$ is not necessarily equal to the Morse
  complex of the function $\bar{f}$ induced by $f$ on the quotient
  $Y/G$. In fact, for a suitable choice of the Morse--Bott data, the
  complex $(E^1,\bp)$ is isomorphic to the complex $(E^0,\bp)$ for
  $\bar{f}$.
\end{Remark}

\begin{Remark}
\label{rmk:ind_vs_value}
In the construction of the Morse--Bott complex it would be more
convenient to use the Morse--Bott index of $C$, rather than the value
of $F$, to produce a filtration as in Section \ref{sec:LS-Morse-2}. In
other words, we would set $p$ to be the Morse--Bott index of $C$ and
$q$ to be the Morse index of a critical point of $g_C$ in $E^0_{p,q}$.
However, this is not always possible, for the differential can
increase the Morse--Bott index. There are, however, two relevant particular
cases when the Morse--Bott index does give rise to a filtration. One
is when $\dim C\leq 1$ for all $C$ as, for instance, when the critical
manifolds are circles. The second one is when $F$ is the pull-back of
a Morse function on the base to a fiber bundle and the metric is a
lift of a metric on the base. Both cases include the pull-back of a
Morse function to a principle $S^1$-bundle considered in Section
\ref{sec:LS-Morse-2}.
\end{Remark}

\subsection{Shift operator in equivariant Floer theory}
\label{sec:LS-Floer}
In this section we extend the constructions from Section
\ref{sec:LS-Morse} to equivariant Floer homology.

\subsubsection{The setting and the main result}
\label{sec:LS-Floer-result}
Let $(V^{2n},\omega)$ be a symplectic manifold. As in Section
\ref{sec:conventions}, we assume that $V$ is symplectically aspherical
and that $V$ is either closed or $V$ is the symplectic completion
$\widehat{W}$ of a compact symplectic manifold $W^{2n}$ with contact
type boundary. The condition that $V$ is symplectically aspherical can
be significantly relaxed but is sufficient for our purpose and
simplifies the exposition; see Remark \ref{rmk:general-V}.

Fix an interval $I=[a,\,b]$ and let $H$ be a Hamiltonian on $V$ such
that the end-points of $I$ are outside the action spectrum $\CS(H)$ of
$H$. (The interval $I$ can be infinite or semi-infinite. For instance,
if $I=\R$, we set $a=-\infty$ and $b=\infty$.) When $V=\widehat{W}$ we
also require $H$ to be admissible at infinity in the standard sense;
see Section \ref{sec:LS-sympl} for the definition. The filtered Floer
homology $\HF_*^I(H)$ is defined by continuity even though the
periodic orbits of $H$ can be degenerate. Namely, we set
$\HF_*^I(H):=\HF_*^I(H')$ where $H'$ is $C^\infty$-small
non-degenerate perturbation of $H$. Since by our assumptions $\CS(H)$
is nowhere dense and the end-points of $I$ are outside $\CS(H)$, the
homology spaces $\HF_*^I(H')$ are canonically isomorphic for all
sufficiently small perturbations $H'$ of~$H$.

Assume in addition that $H$ is autonomous. Then we also have the
equivariant filtered Floer homology $\HF_*^{G,I}(H)$, where $G=S^1$,
defined for contractible one-periodic orbits of $H$ with action in
$I$; see \cite{BO:Gysin,Vi:GAFA}. We will recall in more detail the
definition and some other relevant constructions in Section
\ref{sec:LS-Floer-def}.  As is proved in \cite{BO:Gysin}, the homology
spaces $\HF_*^{G,I}(H)$ fit into the exact sequence
\begin{equation}
\label{eq:Gysin-F}
\ldots \to
\HF_*^I(H) \to
\HF_*^{G,I}(H) \stackrel{D}{\to}
\HF_{*-2}^{G,I}(H) \to
\HF_{*-1}^I(H)
\to \ldots,
\end{equation}
which is a Floer theoretic analog of the Gysin sequence
\eqref{eq:Gysin2}. We emphasize that no non-degeneracy assumption on
$H$ is needed here.

Finally, for a non-zero element $w\in \HF_*^{G,I}(H)$ the
\emph{spectral invariant} or the \emph{action selector} $\s_w(H)$ is
defined in the standard way. Namely, we set
\begin{equation}
\label{eq:s-inv}
\s_w(H)=\inf\big\{b'\in I\setminus \CS(H)\mid w\in \im\big(i^{b'}\big)\big\}
\end{equation}
where $i^{b'}$ is the natural map
$\HF_{*}^{G,I'}(H)\to \HF_{*}^{G,I}(H)$ for
$[a,\,b']=I'\subset I=[a,\,b]$.  We also set $\s_0(H)=a$. As in the
non-equivariant setting, $\s_w(H)$ is monotone and Lipschitz (with
Lipschitz constant equal to one) in $H$.

Abusing terminology, we say that a one-periodic orbit $x$ of an
autonomous Hamiltonian is \emph{isolated} if all one-periodic orbits
$y$ intersecting its sufficiently small neighborhood in $V$ are its
reparametrizations, i.e., $y(t)=x(t+\theta)$ for some $\theta\in G$.
Denote by $\PP(H,I)$ the collection of geometrically distinct
contractible one-periodic orbits of $H$ with action in $I$.  The key
result of this section is

\begin{Theorem}
\label{thm:LS-Floer}
Assume that all one-periodic orbits in $\PP(H,I)$ are isolated and
non-constant. Then, for any non-zero element
$w\in \HF_{*}^{G,I}(H;\Q)$, we have
\begin{equation}
\label{eq:LS-Floer}
\s_w(H)>\s_{D(w)}(H).
\end{equation}
\end{Theorem}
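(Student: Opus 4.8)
The non-strict inequality $\s_w(H)\ge \s_{D(w)}(H)$ is immediate from the fact that $D$ is a chain-level operator which does not increase the action filtration, so the whole content is the strict inequality. The strategy mimics the Morse-theoretic argument of Section~\ref{sec:LS-Morse}, but carried out on the Floer side: one wants a Floer analog of Lemma~\ref{lemma:LS-Morse} saying that if $\s_w(H)=\s_{D(w)}(H)=:c$, then $D$ acts non-trivially on some piece of Floer homology localized near the action level $c$; and then one shows this is impossible because that local piece is concentrated on isolated non-constant orbits and $D$ vanishes there. Concretely, I would first set up the alternative description of $D$ as an intersection index with a ``hyperplane-section'' cycle (the Floer analog of the cycle $\Sigma$ Poincar\'e dual to $-c_1(\pi)$ in the proof of Lemma~\ref{lemma:D=delta}), whose equivalence with the definition from \cite{BO:Gysin} is established in Section~\ref{sec:LS-Floer-D}. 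This description is what makes the geometric monotonicity-type argument below run.

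\textbf{Key steps, in order.} First I would argue by contradiction: suppose $\s_w(H)=\s_{D(w)}(H)=:c$, so $c\in\CS(H)$. Since the orbits with action in $I$ are isolated, only finitely many lie at the level $c$, say $x_1,\dots,x_m$, each non-constant. Second, I would localize: choose $\eps>0$ small so that $(c-\eps,c+\eps)$ contains no point of $\CS(H)$ other than $c$, and work with the subquotient complex $\HF_*^{G,(c-\eps,c+\eps)}(H)$, which by the Morse--Bott / localization discussion in Section~\ref{sec:local-MB} (and Section~\ref{sec:quotient-Floer}) decomposes as a direct sum of the local equivariant Floer homologies $\HF_*^{G,\mathit{loc}}(x_j;\Q)$ of the orbits at level $c$. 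Third, I would invoke the key localization fact, announced right after the statement of Theorem~\ref{thm:LS-Floer} and proved via the same hyperplane-section picture, that $D\equiv 0$ on the local equivariant Floer homology over $\Q$ of an isolated \emph{non-constant} periodic orbit --- geometrically, the marked point defining the hyperplane section can be pushed off the (finitely many, isolated) orbits at level $c$, so short Floer cylinders confined near those orbits cannot meet the section. Fourth, I would translate the assumption $\s_w(H)=\s_{D(w)}(H)=c$ into the statement that a nonzero class represented at the level-$c$ subquotient is mapped by $D$ to a nonzero class at that same level --- i.e., $D$ is non-trivial on $\HF_*^{G,(c-\eps,c+\eps)}(H;\Q)$ --- contradicting step three. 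Assembling: if $\s_w(H)>c-\eps$ but $w$ survives to $\HF^{G,(c-\eps,b]}$, then $D(w)$ either drops below $c-\eps$ in action (giving the strict inequality) or its image in the level-$c$ subquotient is nonzero, which step three forbids; so $\s_{D(w)}(H)<c=\s_w(H)$.

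\textbf{Main obstacle.} The delicate point is step four: making precise that ``$\s_w=\s_{D(w)}=c$ forces $D$ to act non-trivially on the level-$c$ local homology'' requires a careful diagram chase with the action-filtration subquotients, exactly the kind of bookkeeping encoded in Lemma~\ref{lemma:LS-Morse} in the Morse setting. One must be careful that $D$ is only naturally defined on filtered homology (not on the subquotient a priori) and use naturality of $D$ with respect to the inclusion/quotient maps $\HF^{G,I'}\to\HF^{G,I}$; the argument is that if $D(w)$ had action $\ge c$ then, looking at the commutative square relating $\HF^{G,(a,c-\eps]}$, $\HF^{G,(a,b]}$ and their quotient $\HF^{G,(c-\eps,b]}$, the image of $D(w)$ in the quotient would be $D$ applied to the image of $w$, which is a nonzero class supported exactly at level $c$ --- nonzero because $\s_w=c$ --- and this lands in the direct sum of local homologies $\bigoplus_j\HF_*^{G,\mathit{loc}}(x_j;\Q)$ on which $D=0$. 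The second, more technical obstacle is the one flagged in the introduction: one cannot run this last argument with the original definition of $D$ from \cite{BO:Gysin}, so the proof genuinely depends on first carrying out Section~\ref{sec:LS-Floer-D}, i.e., on having the hyperplane-section description of $D$ available, together with the transversality/compactness package needed to place the marked point generically and to confine the relevant Floer cylinders near the level-$c$ orbits.
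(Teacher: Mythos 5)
Your overall architecture coincides with the paper's proof: both use the hyperplane-section description of $D$ from Section~\ref{sec:LS-Floer-D} (Lemma~\ref{lemma:D=delta-F}), localize to a short action window $\CI$ around each critical value $c$, and reduce the strict inequality to the vanishing of $D$ on the homology of that window, which splits as a direct sum of local equivariant Floer homologies of the isolated non-constant orbits at level $c$. The filtration diagram chase in your step four is essentially the paper's (unstated) reduction of \eqref{eq:action-shift} to \eqref{eq:D=0}; the only organizational difference is that the paper works with a transversely non-degenerate perturbation $\tH$, proves the quantitative action shift \eqref{eq:action-shift}, and then passes to the limit $\tH\to H$, $m\to\infty$.

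However, there is a genuine gap in your step three. You cannot simply invoke Proposition~\ref{prop:D-local}: in the paper that proposition is established \emph{as part of this very proof} (it is \eqref{eq:D=0-2}), so a self-contained argument must prove it, and the geometric justification you offer is not the right mechanism. Placing the hyperplane section $\Sigma\subset\CP^m$ away from the projections of the level-$c$ orbits does not make the intersection count vanish: the $S^{2m+1}$-component of a low-energy Floer cylinder still moves in $\CP^m$ and can cross $\Sigma$, and there is no monotonicity-lemma type energy obstruction here as in the ECH setting. The actual reasons are, first, analytic confinement: the a priori estimates \eqref{eq:L-infty}--\eqref{eq:C1} show that all connecting cylinders between the perturbed critical circles born from a single orbit $x$ stay in a small tubular neighborhood of the lens space $C_x=Gx\times_G S^{2m+1}$, while \eqref{eq:treshold} provides the uniform action threshold separating geometrically distinct orbits, which is also what legitimizes the direct-sum decomposition of the window complex that you use; and second, a homological fact: the pullback $\tilde{\Sigma}$ of $\Sigma$ to $C_x$ is null-homologous over $\Q$ because $\H^2(C_x;\Q)=0$ for a lens space, so the induced operation on $\HF_*^G(x;\Q)$ vanishes by the standard argument for intersection-type actions on Floer homology. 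This is exactly where the zero-characteristic coefficients and the non-constancy and isolatedness of the orbits enter; your sketch neither supplies the estimates nor identifies the null-homology argument, so as written the central vanishing step would not go through.
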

This is Theorem \ref{thm:LS-Floer-intro} from the
introduction. We prove it in Section~\ref{sec:LS-Floer-pf}.

\begin{Remark}
\label{rmk:non-strict}
As in the case of equivariant Morse theory, the non-trivial point of
the theorem is that the inequality is strict. By \eqref{eq:par-Fl},
the non-strict inequality $\s_w(H)\geq \s_{D(w)}(H)$ holds without any
assumptions on $H$ and with any coefficients.
\end{Remark}

We emphasize again that in this theorem we impose no non-degeneracy
requirements on $H$. (When $H$ is non-degenerate, the result easily
follows from the definitions and is not particularly useful.)
However, under the conditions of the theorem, the collection
$\PP(H,I)$ is finite.  Similarly to the finite-dimensional case, the
requirements that all orbits in $\PP(H,I)$ are non-constant and
isolated are essential.

Let $\gap(H)>0$ be the minimal positive action gap in $\CS(H)\cap I$,
i.e.,
\begin{equation}
\label{eq:gap}
\gap(H)=\min \big|\CA_H(x)-\CA_H(y)\big|,
\end{equation}
where the minimum is taken over all pairs of geometrically distinct
orbits $x$ and $y$ in $\PP(H,I)$. Then, since $\s_w(H)$ and
$\s_{D(w)}(H)$ are both elements of $\CS(H)$, as an immediate
consequence of the theorem, we have

\begin{Corollary}
\label{cor:LS-Floer}
Under the conditions of Theorem \ref{eq:LS-Floer},
$$
\s_w(H)\geq \s_{D(w)}(H)+\gap(H)> \s_{D(w)}(H).
$$
\end{Corollary}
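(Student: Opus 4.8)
The plan is to read the corollary off from Theorem \ref{thm:LS-Floer} together with two elementary facts about the action selector. First I would recall the spectrality of $\s_v(H)$: for any non-zero class $v$ the infimum in \eqref{eq:s-inv} is attained and is a point of $\CS(H)$, because the filtered equivariant Floer homology $\HF_*^{G,I'}(H)$ depends on the right end-point $b'$ of $I'$ only through which component of $I\setminus\CS(H)$ contains $b'$, so $v$ cannot enter the image of $i^{b'}$ at a value $b'\notin\CS(H)$. Applying this to $v=w$ and to $v=D(w)$, and using that $\s_v(H)\in I$ by construction, I obtain that $\s_w(H)$ and $\s_{D(w)}(H)$ both lie in $\CS(H)\cap I$. (Here, as in the statement itself, we implicitly take $D(w)\neq 0$; when $D(w)=0$ one has $\s_{D(w)}(H)=a\notin\CS(H)$, and the corollary is then understood as the content-free inequality $\s_w(H)>a$.)

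Second, I would apply Theorem \ref{thm:LS-Floer} to get the strict inequality $\s_w(H)>\s_{D(w)}(H)$, so that $\s_w(H)$ and $\s_{D(w)}(H)$ are two \emph{distinct} elements of $\CS(H)\cap I$. Since $\CS(H)\cap I$ is precisely the set of action values of the orbits of $\PP(H,I)$, a finite set under the hypotheses of the theorem, any two of its distinct elements differ by at least $\gap(H)$, directly from the definition \eqref{eq:gap}. Combining, $\s_w(H)\geq\s_{D(w)}(H)+\gap(H)>\s_{D(w)}(H)$, which is the assertion.

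There is no genuine obstacle here: the whole mathematical content lies in Theorem \ref{thm:LS-Floer}, and what remains is bookkeeping. The one point worth stating carefully is the step from ``distinct elements of $\CS(H)\cap I$'' to ``differ by at least $\gap(H)$'': the minimum in \eqref{eq:gap}, being the \emph{positive} action gap, runs over pairs of orbits with $\CA_H(x)\neq\CA_H(y)$, so $\gap(H)$ is indeed a uniform lower bound for the distance between any two distinct points of $\CS(H)\cap I$.
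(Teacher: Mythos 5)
Your argument is correct and is essentially the paper's: the paper dismisses the corollary as an immediate consequence of Theorem \ref{thm:LS-Floer} together with the fact that $\s_w(H)$ and $\s_{D(w)}(H)$ lie in $\CS(H)$, which combined with the definition \eqref{eq:gap} of $\gap(H)$ gives the stated inequality. Your added care about spectrality of the selector, the degenerate case $D(w)=0$, and the meaning of the \emph{positive} gap is just bookkeeping the paper leaves implicit.
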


\begin{Remark}[Generalizations and variations, I]
\label{rmk:general-V}
In both cases, when $V$ is closed and when $V=\widehat{W}$, the
results from this section carry over to the equivariant Floer homology
for periodic orbits in a fixed, possibly non-trivial, free homotopy
class of loops in $V$.  To be more precise, let us fix such a class
$\ff$ and, if $\ff\neq 1$, also require $V$ to be atoroidal. Under
this assumption, the index and the action of a periodic orbit of $H$
are defined by fixing a reference loop with a reference trivialization
of $TV$ along the loop; see, e.g., \cite{BO:Gysin,Gu:nc}. With this in
mind, one can extend word-for-word the proofs to the non-contractible
setting.

In particular, when $V=\widehat{W}$ is exact and $c_1(TW)=0$, one has
the equivariant Floer homology defined for all free homotopy classes,
naturally filtered by the action and graded by the free homotopy
class. It is clear that the analogs of Theorem \ref{thm:LS-Floer} and
Corollary \ref{cor:LS-Floer} hold in this case.

Furthermore, one can significantly relax our assumptions on $V$. For
instance, when $\ff=1$, it would be sufficient to require $V$ to be
weakly monotone (see \cite{HS}) and rational. The latter condition is
needed to ensure that spectral invariants take values in $\CS(H)$; see
\cite{Us}. Now, however, the Floer homology becomes a module over a
Novikov ring and periodic orbits must be equipped with cappings,
making the geometrical meaning of the results somewhat less
clear. Finally, even the weak monotonicity assumption can be dropped
at the expense of relying on the virtual cycle machinery or one of its
variants such as polyfolds.
\end{Remark}

Before turning to the proof of Theorem \ref{thm:LS-Floer} in Section
\ref{sec:LS-Floer-pf}, we need to recall, following \cite{BO:Gysin},
the construction of the equivariant Floer homology $\HF_{*}^{G}(H)$,
define the shift operator $D$ and also establish in Section
\ref{sec:LS-Floer-D} a Floer theoretic analog of Lemma
\ref{lemma:D=delta}.

\subsubsection{Equivariant Floer homology}
\label{sec:LS-Floer-def}
Throughout this section the symplectic manifold $V$ and the
Hamiltonian $H$ are as in Section \ref{sec:LS-Floer-result}. However,
in contrast with Theorem \ref{thm:LS-Floer}, we do not impose any
additional assumption on one-periodic orbits of $H$.  The action
interval $I$ plays only a superficial role, and hence, for the sake of
brevity, we will suppress $I$ in the notation while keeping in mind
that all the orbits of $H$ considered here are required to be in
$\PP(H,I)$. For instance, we will write $\PP(H)$ and $\HF_{*}^{G}(H)$
for $\PP(H,I)$ and $\HF_{*}^{G,I}(H)$, etc.

Our first goal is to define the equivariant Floer homology
$\HF_*^G(H)$. To this end, consider parametrized Hamiltonians
$\tH\colon S^1\times V\times S^{2m+1}\to \R$ invariant with respect to
the diagonal $G=S^1$-action, i.e., such that
$\tH(t+\theta,z,\theta\cdot\zeta)=\tH(t,z,\zeta)$ where
$(t,z,\zeta)\in S^1\times V\times S^{2m+1}$ and the dot denotes the
Hopf action of $\theta\in G$ on $\zeta$. (When $V=\widehat{W}$, the
Hamiltonians $\tH$ are required to meet a certain admissibility
condition at infinity. For instance, it is sufficient to assume that
at infinity $\tH$ is independent of $(t,\zeta)$ and admissible in the
standard sense; see Section \ref{sec:SH-conditions}.)  We will
sometimes write $\tH_\zeta(t,z)$ for $\tH(t,z,\zeta)$.  Let $\Lambda$
be the space of contractible loops $S^1\to V$. The Hamiltonian $\tH$
gives rise to the action functional
\begin{gather*}
\CA_{\tH} \colon \Lambda\times S^{2m+1}\to\R,\\
\CA_{\tH}(x,\zeta) =\CA(x)-\int_{S^1} \tH(t,x(t),\zeta)\,dt,
\end{gather*}
where $\CA(x)$ is the symplectic area bounded by $x\in \Lambda$. This
functional is just a parametrized version of the standard action
functional \eqref{eq:action-0}.
  
The critical points of $\CA_{\tH}$ are the pairs $(x,\zeta)$
satisfying the condition:
$$
\textrm{the loop $x$ is a one-periodic orbit of $\tH_\zeta$ and }
\int_{S^1} \nabla_\zeta\tH_\zeta(t,x(t))\,dt=0.
$$

One can introduce the notion of transverse non-degeneracy for such
critical points and then show that transversely non-degenerate
parametrized Hamiltonians form a second Baire category set in the
space of all parametrized Hamiltonians (admissible at infinity); see
\cite{BO:Trans}. Here we only point out that due to the $G$-invariance
of $\tH$ the critical points of $\CA_{\tH}$ come in families, even
when $\tH$ is non-degenerate. Each family $S$ is an orbit of $G$,
called a \emph{critical orbit}, and hence the Hessian $d^2\CA_{\tH}$
necessarily has a kernel. Thus this notion of transverse
non-degeneracy, requiring the kernel to be one-dimensional, is more
similar to the \emph{maximal}, in the obvious sense, non-degeneracy of
non-constant periodic orbits of autonomous Hamiltonians rather than
the non-degeneracy of time-dependent Hamiltonians. Under our
assumptions on $V$ and $H$, there are only finitely many critical
orbits.

By the transverse non-degeneracy condition, the equivariant (or rather
the invariant) Floer complex $\CF_*^G(\tH)$ of $\tH$ is generated by
the critical orbits $S$ and the differential $\p^G$ is defined via the
parametrized Floer equation \eqref{eq:par-Fl}.  This
complex is filtered by the action functional $\CA_{\tH}$ and graded by
a variant of the Conley--Zehnder index $\mu(S)$ of $S$; see, e.g.,
\cite{BO:index}. The homology $\HF_*^G(\tH)$ of $(\CF_*^G(\tH),\p^G)$
is by definition the equivariant Floer homology of $\tH$,
\cite{BO:Gysin,Vi:GAFA}.

Before proceeding to the definition of the operator $D$, let us give
some more details on this construction and revisit spectral
invariants. Recall that a point in $V$ is denoted by $z$ and $\zeta$
stands for a point in $S^{2m+1}$. Fix a $(t,\zeta)$-dependent and
$G$-invariant $\omega$-compatible almost complex structure $J$ on $V$
and a $G$-invariant metric on $S^{2m+1}$, and consider pairs of
functions $\tu:=(u,\lambda)$, where $u\colon \R\times S^1\to V$ and
$\lambda\colon \R\to S^{2m+1}$, satisfying the parametrized Floer
equation
\begin{equation}
\label{eq:par-Fl}
\begin{aligned}
\p_s u+J\p_t u - \nabla_z \tH_\lambda &=0,\\
\dot{\lambda} - \int_{S^1} \nabla_\zeta\tH_\lambda(t,x(t))\,dt &=0
\end{aligned}
\end{equation}
and asymptotic in the standard sense to some critical points
$(x,\zeta)\in S$ and $(x',\zeta')\in S'$ of $\CA_{\tH}$ on
$\Lambda\times S^{2m+1}$ as $s\to\pm\infty$. One should think of these
equations as a parametrized version of the standard Floer equation
\eqref{eq:Floer-0}. When certain natural transversality conditions are
met -- and this is the case generically -- the moduli space of
solutions, modulo the $\R\times G$-action, has dimension
$\mu(S)-\mu(S')-1$; see \cite{BO:Trans,BO:Gysin}. (Here the critical
orbits $S$ and $S'$ are fixed, but the points $(x,\zeta)\in S$ and
$(x',\zeta')\in S'$ are allowed to vary.)  Then, as in all other
versions of the oriented Floer or Morse theory,
\begin{equation}
\label{eq:p^G}
\p^G S=\sum \left<S,S'\right> S',
\end{equation}
where $\left<S,S'\right>$ is the
number of points in such a moduli space, taken with signs given by
coherent orientations, when $\mu(S)-\mu(S')=1$.

The original autonomous Hamiltonian $H$ can also be viewed as a
parametrized Hamiltonian independent of $t$ and $\zeta$.  Let us pick
a transversely non-degenerate perturbation $\tH$ sufficiently
$C^\infty$-close to $H$. (When $V=\widehat{W}$, we can furthermore
assume that $\tH\equiv H$ at infinity.)  The equivariant Floer
homology $\HF_*^G(H)$ is by definition the limit of the homology
spaces $\HF_*^G(\tH)$ as $\tH\to H$ and then $m\to\infty$. In the
first of these limits, the graded homology stabilizes due to the
assumption that the end-points of $I$ are outside $\CS(H)$. In the
second limit, the graded spaces stabilize in every fixed
degree. Hence, in what follows, we will assume that $m$ is fixed and
set, abusing notation, $\HF_*^G(H):=\HF_*^G(\tH)$ where $\tH$ is
sufficiently close to $H$. This truncated homology is literally equal
to $\HF_*^G(H)$ for any finite range of degrees when $m$ is large
enough.

Having this definition settled, let us take a second look at the
spectral invariants $\s_w(H)$. For a chain
$\Cc=\sum \lambda_S S\in\CF_*(\tH)$, set
\begin{equation}
\label{eq:action-chain}
\CA_{\tH}(\Cc)=\max_S \big\{\CA_{\tH}(S)\,\big|\,\lambda_S\neq 0\big\}.
\end{equation}
Then, for $w\in \HF_*^G(H)=\HF_*^G(\tH)$, we have 
\begin{equation}
\label{eq:s-inv2}
\s_w(\tH):=\min_{[\Cc]=w} \CA_{\tH}(\Cc)\text{ and } \s_w(H)=\lim_{\tH\to
  H} \s_w(\tH).
\end{equation}
It is easy to see that this ``minimax'' definition of $\s_w(H)$ is
equivalent to \eqref{eq:s-inv}. (The spectral invariant $\s_w(\tH)$
can also be defined similarly to \eqref{eq:s-inv}.)

\begin{Example}
  When $V$ is a closed symplectically aspherical manifold, the global
  (i.e., for $I=\R$) equivariant Floer homology $\HF_*^G(H)$ is not a
  very interesting object. The homology is independent of $H$ and
  hence $\HF_*^G(H)\cong \H_{*+n}(V)\otimes \H_*(\CP^\infty)$. (To
  prove this, it suffices to take $H$ autonomous and $C^2$-small.) The
  shift map $D$ is the pairing with the generator of
  $\H^2(\CP^\infty)$ on the second factor.
\end{Example}

The construction of continuation maps extends to equivariant Floer
homology word-for-word. Namely, let $H_s$, $s\in [0,\,1]$, be a
monotone increasing (i.e., $\p_s H_s\geq 0$) homotopy of autonomous
Hamiltonians on $V$. When $V=\widehat{W}$, we also assume that all
$H_s$ are admissible at infinity. Note that with our conventions
$\CA_{H_s}$ is a monotone decreasing family of functionals on
$\Lambda$. Then the homotopy $H_s$ gives rise to a homomorphism
$\HF_*^{G,I}(H_0)\to \HF_*^{G,I}(H_1)$ as long as the end points of
$I$ are outside $\CS(H_0)\cup\CS(H_1)$. This homomorphism is, in fact,
independent of the homotopy $H_s$ with the initial and terminal
Hamiltonians $H_0\leq H_1$ fixed. Thus, for instance, one can take the
linear homotopy $H_s=(1-s)H_0+sH_1$. When a homotopy is not
increasing, it still gives rise to a map in the filtered Floer
homology with an action shift depending on the homotopy in the same
way as for the standard Floer homology; see, e.g., \cite[Sect.\
3.2.2]{Gi:coiso}.

\subsubsection{The shift operator via the Floer trajectory count}
\label{sec:shift-def1}
Let us next briefly recall the definition of the shift operator $D$
from \cite{BO:Gysin}. That definition is nearly identical to the
definition of the operator $\delta$ in Section \ref{sec:LS-Morse-2}
and we deliberately recycle the argument and notation to emphasize the
similarity. We apologize for the redundancy.

Consider a general non-degenerate parametrized Hamiltonian $\tH$, not
necessarily $C^\infty$-close to $H$, and denote by $S_i$ its critical
$G$-orbits. Now we treat $\tH$ as a Morse-Bott Hamiltonian and want to
define its non-equivariant Morse-Bott Floer complex and homology with
the differential counting broken trajectories.  On each circle $S_i$
we fix a Morse function $g_i$ with exactly one maximum $S_i^+$ and one
minimum $S_i^{-}$. Consider the vector space $\CF_*(\tH)$ generated by
$S^\pm_i$ and graded, with our conventions in mind, by setting
$\mu(S^+_i)=\mu(S_i)+1$ and $\mu(S^-_i)=\mu(S_i)$. The differential
$\p\colon \CF_*(\tH)\to \CF_{*-1}(\tH)$ comprises two terms $\p_1$ and
$\p_2$. The term $\p_1$ counts two-stage (or one-stage) broken
trajectories from $S_i$ to $S_j$ with $\mu(S_j)=\mu(S_i)-1$ connecting
$S_i^-$ to $S_j^-$ or $S_i^+$ to $S_j^+$ and the term $\p_2$ counts
one-stage Floer trajectories connecting $S_i^-$ to $S_j^+$ with
$\mu(S_j)=\mu(S_i)-2$.

To be more specific, when $\mu(S_j)=\mu(S_i)-1$,
we have 
$$
\p_1 S_i^\pm=\sum_j\left<S_i^\pm,S_j^\pm\right>S_j^\pm.
$$
Here $\left<S_i^+,S_j^+\right>$ counts the number of elements, with
signs, in the moduli space of broken trajectories made of an
anti-gradient trajectory $\eta\colon (-\infty, 0]\to S_i$ of $g_i$
starting at $S_i^+$ and then the Floer trajectory, i.e., a solution
$\tu=(u,\lambda)$ of \eqref{eq:par-Fl} from $S_i$ to $S_j$ such that
the line $\tu(\cdot,0)$ connects $\eta(0)$ and $S_j^+$. Likewise,
$\left<S_i^-,S_j^-\right>$ counts the number of elements, with signs,
in the moduli space of broken trajectories made of a Floer trajectory
from $S_i$ to $S_j$ with $\tu(s,0)\to S_i^-$ as $s\to-\infty$ and an
anti-gradient trajectory of $g_j$ connecting
$\lim_{s\to\infty} \tu(s,0)$ to $S_j^-$. Furthermore, when
$\mu(S_j)=\mu(S_i)-2$, we set
$$
\p_2S_i^+=0 \textrm{ and }
\p_2 S_i^-=\sum_j\left<S_i^-,S_j^+\right>S_j^+,
$$
where $\left<S_i^-,S_j^+\right>$ is the number of Floer trajectories
from $S_i$ to $S_j$ with the line $\tu(\cdot,0)$ connecting $S_i^-$ to
$S_j^+$.

Let $\p=\p_1+\p_2$. (As in the Morse theoretic case, the differential
$\p$ should in general have an additional term $\p_0$ counting
anti-gradient trajectories of $g_i$ within each $S_i$, but due to our
choice of $g_i$ as perfect Morse functions this term is obviously
zero; cf.\ \cite[Prop.\ 5.2]{BO:Gysin}). It is useful to keep in mind
that, while the vector space $\CF_*(\tH)$ is essentially determined by
$\tH$, the differential $\p$ depends on the almost complex structure
$J$, the metric on $S^{2m+1}$, the metrics on $S_i$ and the Morse
functions $g_i$.  We have $\p^2=0$ and the homology $\HF_*(\tH)$ of
the complex $(\CF_*(\tH),\p)$ is the ordinary Floer homology of $\tH$,
canonically isomorphic to the standard Floer homology $\HF_*(H)$ when
$\tH$ is a sufficiently small perturbation of an autonomous
Hamiltonian $H$ on $V$; we refer the reader to \cite{BO:Duke} and
\cite{BO:Gysin} for the proofs of these facts. Note that $\HF_*(H)$ is
the filtered Floer homology with an action interval $I$ suppressed in
the notation and hence this space depends on~$H$.

Consider the vector space decomposition $\CF_*(\tH)=\Cc^+\oplus \Cc^-$
with $\Cc^\pm$ generated by $S_i^\pm$. This decomposition is preserved
by $\p_1$, i.e., $\p_1\colon \Cc^\pm\to\Cc^\pm$ while $\p_2(\Cc^+)=0$
and $\p_2\colon \Cc^-\to \Cc^+$. Hence, $(\Cc^+,\p_1)$ is a subcomplex
of $\CF_*(\tH)$ and the quotient complex $\CF_*(\tH)/\Cc^+$ is
isomorphic to $(\Cc^-,\p_1)$. By the definitions of $\p_1$ and $\p^G$,
the homology of $(\Cc^+,\p_1)$ or $(\Cc^-,\p_1)$ is $\HF_*^G(\tH)$, up
to the shift of grading by $+1$ in the former case. Then, as in the
Morse theoretic case, the exact sequence of complexes
$$
0\to(\Cc^+,\p_1)\to (\CF_*(\tH),\p)\to (\Cc^-,\p_1)\to 0
$$
gives rise to a long exact sequence in homology which is nothing else
but the Gysin sequence \eqref{eq:Gysin-F}. The connecting map, i.e.,
shift operator $D$ as defined in \cite{BO:Gysin}, is the map induced
by $\p_2\colon \Cc^-\to \Cc^+$ on the homology of these
complexes. This construction applies to any finite range of degrees
when $m$ is sufficiently large. To define $D$ for $\HF_*^G(\tH)$ for
all degrees $*$, it remains to take the limit as $m\to\infty$. The
construction extends by continuity to all autonomous Hamiltonians~$H$.

As an immediate consequence of this definition and
\eqref{eq:s-inv2}, we see that $D$ is action decreasing, although not
necessarily strictly, i.e.,
$$
\s_w(H)\geq \s_{D(w)}(H),
$$
without any assumptions on $H$ and for any coefficient ring.

\subsubsection{An alternative definition of the shift operator}
\label{sec:LS-Floer-D} 

The definition of the shift operator from the previous section is
parallel to the definition of the operator $\delta$ in Section
\ref{sec:LS-Morse-2}. In this section we give an alternative
definition of $D$ similar to the one in Section \ref{sec:LS-Morse-1}
via the intersection index and prove an analog of Lemma
\ref{lemma:D=delta} showing that the two definitions are
equivalent. We keep the notation and conventions from Section
\ref{sec:LS-Floer-def}. In particular, we suppress the interval $I$ in
the notation.

Let $\tH\colon S^1\times V\times S^{2m+1}\to \R$ be a transversely
non-degenerate $G$-invariant parametrized Hamiltonian. Fix a
$G$-invariant metric on $S^{2m+1}$ and a $(t,\zeta)$-dependent
$G$-invariant almost complex structure $J$ on $V$ such that all the
transversality requirements are met. Then the moduli space
$\CM(S_i,S_j)$ of solutions $\tu=(u,\lambda)$ of the parametrized
Floer equation \eqref{eq:par-Fl}, taken up to the natural $G$-action
and asymptotic to $S_i$ and $S_j$, has dimension $\mu(S_i)-\mu(S_j)$
and the evaluation map
$$
\ev \colon \CM(S_i,S_j)\to \CP^m, \quad \tu\mapsto \lambda(0)
$$ 
is well defined. (Here $\lambda(0)$ is understood as a point in
$\CP^m$ rather than in $S^{2m+1}$.) Recall also that every critical
set $S_i$ of $\CA_{\tH}$ projects to one point in $\CP^m$. Let
$\Sigma\subset \CP^m$ be a closed codimension-two co-oriented
submanifold, Poincar\'e dual to the negative first Chern class of the
Hopf bundle $\pi\colon S^{2m+1}\to \CP^m$. In addition, we require
$\Sigma$ not to meet any of the projections of $S_i$ to $\CP^m$ and be
transerve to all evaluation maps $\ev$ for moduli spaces of dimensions
up to three. It is not hard to show that such a submanifold $\Sigma$
exists. When $\mu(S_j)=\mu(S_i)-2$, let
$\left<S_i,S_j\right>_\Sigma\in\Z$ be the intersection index of $\ev$
with $\Sigma$. Similarly to \eqref{eq:D-Morse}, we set
$$
  D_\Sigma S_i=\sum_j\left<S_i,S_j\right>_\Sigma S_j.
$$
The same argument as in the standard Floer theory (see, e.g.,
\cite{LO} and also \cite{GG:hyperbolic,Li,Vi:Floer} and \cite[Rmk.\
12.3.3]{MS}) shows that $D_\Sigma$ commutes with $\p^G$ and the
resulting operator, which we temporarily denote by
$$
[D_\Sigma]\colon \HF_*^G(\tH)\to \HF_{*-2}^G(\tH),
$$ 
is independent of the auxiliary data and a particular choice of
$\Sigma$. Passing to the limit as $\tH\to H$ and $m\to\infty$, where
$H$ is an autonomous Hamiltonian on $V$, we obtain an operator
$[D_\Sigma]$ on the filtered homology $\HF_*^G(H)$.

\begin{Lemma}
\label{lemma:D=delta-F}
The two definitions of the shift operator in equivariant Floer
theory are equivalent: $D=[D_\Sigma]$.
\end{Lemma}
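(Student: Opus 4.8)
The plan is to mimic the proof of Lemma \ref{lemma:D=delta} from the Morse case, transferring the argument line-by-line to the parametrized Floer setting. Both definitions of $D$ in the excerpt are deliberately set up in parallel: $\delta$ (equivalently, the $\p_2$-based definition of Section \ref{sec:shift-def1}) counts one-stage Floer trajectories connecting a minimum $S_i^-$ to a maximum $S_j^+$ when $\mu(S_j)=\mu(S_i)-2$, using auxiliary perfect Morse functions $g_i$ on the critical circles and a metric on $S^{2m+1}$; while $[D_\Sigma]$ counts intersections of the evaluation map $\ev\colon\CM(S_i,S_j)\to\CP^m$, $\tu\mapsto\lambda(0)$, with a codimension-two cycle $\Sigma$ Poincar\'e dual to $-c_1(\pi)$. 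Since both operators commute with $\p^G$ (the first essentially by definition, the second by the standard argument cited with \cite{LO,Li,Vi:Floer} and \cite[Rmk.\ 12.3.3]{MS}), and both are already known to be independent of the auxiliary choices, it suffices to compute them on a single convenient choice of data and show the coefficients $\left<S_i,S_j\right>_\Sigma$ and $\left<S_i^-,S_j^+\right>$ agree for every pair with $\mu(S_j)=\mu(S_i)-2$.

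First I would replace $\Sigma$ by a closed two-form $\sigma$ on $\CP^m$ supported in a small tubular neighborhood $U$ of $\Sigma$ and representing $-c_1(\pi)=[\sigma]$, so that for a moduli space component $\CM\subset\CM(S_i,S_j)$ (where $\ev^*\sigma$ is compactly supported, the critical points projecting outside $U$) one has $\left<S_i,S_j\right>_\Sigma=\int_{\CM}\ev^*\sigma$; this is the Floer analog of \eqref{eq:intersect-M2}. Next I would choose the metric on $S^{2m+1}$ adapted to a connection on the Hopf bundle $\pi\colon S^{2m+1}\to\CP^m$ with curvature $-2\pi\sigma$, flat outside $\pi^{-1}(U)$. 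The point is that for a solution $\tu=(u,\lambda)$ of the parametrized Floer equation \eqref{eq:par-Fl}, the second equation $\dot\lambda=\int_{S^1}\nabla_\zeta\tH_\lambda\,dt$ governs the motion of $\lambda$ in $S^{2m+1}$; with $\tH$ chosen $G$-invariant and, near $H$, essentially independent of $\zeta$ away from $U$, the component $\lambda$ descends to essentially a constant path in $\CP^m$ for the $[D_\Sigma]$ count, and the actual $S^{2m+1}$-valued lift picks up holonomy exactly as $\lambda(0)$ crosses $\Sigma$. Tracking the endpoint $\tu(\infty)\in S_j$ along the one-parameter family $\widehat{\CM}=\CM/(\R\times G)$ — and arranging, as in the Morse proof, that generic trajectories meet $\Sigma$ at most once so the relevant sub-intervals do not overlap — each crossing of $\Sigma$ produces exactly one full revolution of $\tu(\infty)$ around $S_j$ in the direction of the sign of the intersection, and between crossings the endpoint is frozen because the connection is flat. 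Hence the signed count of trajectories with $\tu(\cdot,0)$ landing on $S_j^+$ equals $\int_{\CM}\ev^*\sigma$, which is precisely the contribution of $\CM$ to $\left<S_i^-,S_j^+\right>$. Summing over all components of all two-dimensional moduli spaces and then passing to the limit $\tH\to H$, $m\to\infty$ gives $D=[D_\Sigma]$ on $\HF_*^G(H)$.

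The main obstacle, and where real care is needed beyond the Morse argument, is that the ``parameter'' path $\lambda$ is not an independent gradient flow on $S^{2m+1}$ pulled back from the base, but is coupled to $u$ through \eqref{eq:par-Fl}: unlike in Section \ref{sec:LS-Morse-2}, the anti-gradient trajectories of $\tH$ are not literally horizontal lifts of trajectories downstairs. One must verify that, for $\tH$ sufficiently $C^\infty$-close to the $(t,\zeta)$-independent Hamiltonian $H$ and supported-in-$U$ $\zeta$-dependence, the $\lambda$-component still behaves like a near-constant path in $\CP^m$ away from $U$ (so that the endpoint-freezing argument applies) and that the compactified boundary strata of $\widehat{\CM}$ contribute nothing — the broken trajectories have relative index one and hence, by transversality with $\Sigma$ (chosen transverse to all $\ev$ on moduli spaces up to dimension three), do not meet $\Sigma$, exactly as in Remark \ref{rmk:D=delta}. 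As in that remark, the conditions ``each trajectory meets $\Sigma$ once'' and ``the intervals do not overlap'' are only for geometric transparency and can be dropped by a more careful bookkeeping of the winding of $\tu(\infty)\in S_j$. Everything else — coherent orientations, gluing, the $\p_0$-term vanishing since $g_i$ is perfect — transfers verbatim from the constructions already in place in Sections \ref{sec:LS-Morse-2} and \ref{sec:LS-Floer-def}.
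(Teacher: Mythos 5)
Your overall route is the same as the paper's: represent $-c_1(\pi)$ by a closed two-form $\sigma$ supported in a small neighborhood $U$ of $\Sigma$, choose a connection on the Hopf bundle with curvature $-2\pi\sigma$, lift the metric from $\CP^m$ to $S^{2m+1}$, reduce the lemma to the identity $\left<S_i^-,S_j^+\right>=\int_{\CM}\ev^*\sigma$ for each component $\CM$ of a two-dimensional moduli space by tracking the winding of the trajectory's endpoint in $S_j$ (flatness of the connection outside $U$ freezing the endpoint, broken trajectories excluded by index/transversality), and then pass to the limits $\tH\to H$ and $m\to\infty$.

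However, there is a genuine gap at precisely the point you single out as ``the main obstacle''. Your proposed fix --- choosing $\tH$ ``essentially independent of $\zeta$ away from $U$'' so that $\lambda$ is ``near-constant'' in $\CP^m$ --- is not available and would not suffice anyway. The critical circles $S_i$ project to points of $\CP^m$ lying \emph{outside} $U$ (one requires $\Sigma$ to miss these projections), and transverse non-degeneracy forces $\tH$ to depend genuinely on $\zeta$ exactly there: if $\tH$ were $\zeta$-independent on the region over $\CP^m\setminus U$, the equation $\int_{S^1}\nabla_\zeta\tH_\zeta(t,x(t))\,dt=0$ would hold identically in $\zeta$ and the critical sets would remain the Morse--Bott families $C_x$ rather than isolated circles, so the complex $\CF_*^G(\tH)$ would not even be defined. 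Moreover, a ``near-constant'' statement cannot produce the exact integer identity needed for the winding count. The point you are missing --- and it is available within the very setup you chose --- is that no approximation is required: since the metric on $S^{2m+1}$ is the connection lift of a metric on $\CP^m$ and, $\tH$ being $G$-invariant, the right-hand side of the second equation in \eqref{eq:par-Fl} is the gradient of a $G$-invariant function on $S^{2m+1}$, this vector field is horizontal, and hence the $\lambda$-component of every Floer trajectory is \emph{exactly} the horizontal lift of its projection to $\CP^m$. With this, the endpoint is literally frozen wherever the connection is flat (and over $\CP^m\setminus U$ one has the $G$-equivariant trivialization $S^{2m+1}\setminus\tilde{U}\cong(\CP^m\setminus U)\times S^1$ used to read off the position in $S_j$), so the total winding equals the holonomy, i.e.\ $\int_{\CM}\ev^*\sigma$, and the rest of your argument goes through as in the Morse case. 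Two minor corrections: the moduli space $\CM(S_i,S_j)$ is already taken modulo the $G$-action, so $\widehat{\CM}=\CM/\R$; and, as in Remark \ref{rmk:D=delta}, the ``each trajectory meets $\Sigma$ at most once'' normalization is unnecessary and in fact is not imposed in the Floer-theoretic argument.
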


\begin{proof} We reason as in the proof of Lemma
  \ref{lemma:D=delta}. However, the logical structure of the argument
  is somewhat different now and first we need to specify the auxiliary
  data.  Let as above $\Sigma$ be a closed codimension-two co-oriented
  submanifold in $\CP^m$, Poincar\'e dual to the negative
  first Chern class of the Hopf bundle $\pi$. (For instance, we can
  take a generic projective hyperplane $\CP^{m-1}$ as $\Sigma$ at this
  stage.)  Furthermore, let $\sigma$ be a closed two-form ``Poincar\'e
  dual'' to $\Sigma$ and supported in a small tubular neighborood $U$
  of $\Sigma$. (Thus, in particular, $[\sigma]=-c_1(\pi)$.) Let us fix
  a Riemannian metric on $\CP^m$ and a connection on the Hopf bundle
  with curvature $-2\pi\sigma$. Then we equip $S^{2m+1}$ with the
  standard $G$-invariant lift of the metric from $\CP^m$ to $S^{2m+1}$
  using this connection.

  Examining the argument from \cite[Sect.\ 7]{BO:Trans}, one can see
  that the transversality conditions are satisfied for a generic
  choice of the almost complex structure $J$, provided that $\tH$ also
  meets some further generic requirements. Moreover, we can ensure
  that all evaluation maps $\ev$ for all moduli spaces are transverse
  to $\Sigma$ by replacing $\Sigma$ with its $C^\infty$-small
  perturbation without changing $U$ or $\sigma$.

  The connection on the Hopf bundle is flat outside $U$ and thus can be
  used to fix a $G$-equivariant trivialization
\begin{equation}
\label{eq:triv}
S^{2m+1}\setminus \tilde{U} 
\cong (\CP^m\setminus U)\times S^1,
\end{equation}
where $\tilde{U}$ is the inverse image of $U$. All critical sets $S_i$
lie in $S^{2m+1}\setminus \tilde{U}$ and for each of them the
projection onto $S^1$ is a $G$-equivariant
diffeomorphism. Furthermore, for each solution $\tu$ of the Floer
equation \eqref{eq:par-Fl} its $S^{2m+1}$-component $\lambda(s)$ is
the horizontal lift of its projection to $\CP^m$. (This follows
readily from the observation that the integral of $\tH_\zeta(t,x(t))$
over $S^1$ on the right hand side of \eqref{eq:par-Fl} is a
$G$-invariant function on $S^{2m+1}$.)

Assume now that $\mu(S_j)=\mu(S_i)-2$. Then the pull-back
$\ev^*\sigma$ has compact support in $\CM(S_i,S_j)$ and, to prove the
lemma for $\tH$ and a fixed $m$, it suffices to show that
\begin{equation}
\label{eq:S_i-to-S_j}
\left<S_i^-,S_j^+\right>=\int_{\CM(S_i,S_j)}\ev^*\sigma.
\end{equation}
For, similarly to \eqref{eq:intersect-M2}, the right hand side is
easily seen to be equal to $\left<S_i,S_j\right>_\Sigma$; cf.\ Remark
\ref{rmk:D=delta}. Let $\CM$ be a connected component of
$\CM(S_i,S_j)$ and $\widehat{\CM}=\CM/\R$ be the space of
un-parametrized trajectories.

For every element $\tu\in\CM$ let us fix its unique lift to a map
$\R\times S^1\to V\times S^{2m+1}$, still denoted by
$\tu=(u,\lambda)$, satisfying the Floer equation \eqref{eq:par-Fl} and
such that $\tu(s,0)\to S_i^-$ as $s\to-\infty$. Then
$\tu(\infty,0):=\lim_{s\to\infty} \tu(s,0)$ is a well-defined point in
$S_j$. Denote by $\lambda(\infty)$ the projection of this point to the
fiber $S^1$ via the identification \eqref{eq:triv}. Viewing
$\widehat{\CM}$ as a one-parameter space of un-parametrized
trajectories, we obtain a map $\widehat{\CM}\to S^1$ sending (the
equivalence class of) $\tu$ to $\lambda(\infty)$. Since $\lambda$ as a
map to $S^{2m+1}$ is a horizontal lift of its projection to $\CP^m$
and since the connection is flat outside $U$, this map is constant
outside the image of $\supp(\ev^*\sigma)$ in
$\widehat{\CM}$. Generically, the value of this map outside the image
is a point different from the projection of $S_j^+$ to $S^1$ by
\eqref{eq:triv}. The number of times the end-point $\lambda(0)$
traverses $S^1$ as $\tu$ varies through $\widehat{\CM}$ is equal to
the integral of $\ev^*\sigma$ over $\widehat{\CM}$. Since the map
$S_j\to S^1$ is one-to-one, this is also equal to the number of
revolutions $\tu(\infty,0)$ makes in $S_j$ which is obviously equal to
the contribution of $\widehat{\CM}$ to
$\left<S_i^-,S_j^+\right>$. Summing up over all connected components
of $\CM(S_i,S_j)$, we obtain \eqref{eq:S_i-to-S_j}. This proves the
lemma for $\tH$ and a fixed $m$.

Finally, to show that $D=[D_\Sigma]$ on the filtered equivariant Floer
homology of an autonomous Hamiltonian $H$ on $V$, it remains to pass
to the limit as $\tH\to H$ and $m\to\infty$.
\end{proof}

\subsection{Local homology}
\label{sec:local-MB}

In this section we examine two local variants of equivariant Floer
homology: the equivariant local Floer homology of an isolated orbit
and the equivariant Morse--Bott Floer homology.

\subsubsection{Equivariant local Floer homology}
\label{sec:local}
Just as in the non-equivariant case (cf., e.g., \cite{Gi:CC,GG:gap}),
the constructions from the previous sections can be easily adapted to
define the \emph{local equivariant Floer homology} and the shift
operator on it. Namely, let $x$ be an isolated one-periodic orbit of
an autonomous Hamiltonian $H$. (We do not need to impose any
assumptions on $V$; in fact, it suffices to have $H$ defined only on a
neighborhood of $x$.) Under a $C^2$-small perturbation $\tH$ of $H$,
as above, the orbit $x$ breaks down into a collection of transversely
non-degenerate critical $G$-orbits $S_i$ lying close to
$C=Gx\times_G S^{2m+1}$ where $Gx$ is the orbit of $x$ in $\Lambda$.
Let $\CF_*^G(x)$ be the (relatively) graded vector space or module
generated by $S_i$. In Section \ref{thm:LS-Floer}, we will show that
the connecting Floer trajectories $\tu$ between $S_i$ and $S_j$ stay
close to $C$. As a consequence, \eqref{eq:p^G} gives rise to a
differential on $\CF_*^G(x)$.  The resulting local equivariant Floer
homology of $x$, denoted by $\HF_*^G(x)$ or $\HF_*^G(x,H)$ when we
need to keep track of $H$, is independent of the perturbation $\tH$.
(This construction readily extends from one isolated orbit to a
compact, connected, isolated set of one-periodic orbits.)

\begin{Example}[Simple non-degenerate orbits]
\label{ex:local-nondeg-1}
Assume that $x$ is a non-degenerate (or rather maximally
non-degenerate in the obvious sense) non-constant orbit of an
autonomous Hamiltonian. Then, when $x$ is simple, $\HF_*^G(x)$ has
rank one when $*$ is equal to the Conley--Zehnder index $\mu(x)$ and
zero otherwise. We leave a detailed proof of this fact as an exercise
for the reader. Here we only mention that one can use a $C^2$-small
Morse function $f$ on the sphere $C$ to construct a perturbation
$\tH$; see \cite{BO:12} and Remark \ref{rmk:alt-MB}. Then the
resulting homology is isomorphic to $\HM_*(f)$ up to a shift of degree
by $\mu(x)$. This Morse homology is non-zero only in degrees $*=0$
and $*=2m-1$. Passing to the limit as $m\to\infty$, we see that the
homology has rank one in degree $*=\mu(x)$ and $\HF_*^G(x)=0$
otherwise.

\end{Example}

\begin{Example}[Iterated non-degenerate orbits]
\label{ex:local-nondeg-2}
When $x$ is a non-constant maximally non-degenerate iterated orbit,
the situation is somewhat more involved.  Recall that a simple orbit
$y$ is said to be \emph{odd} if $y$ has an odd number of Floquet
multipliers in $(-1,0)$ and \emph{even} if the number of such Floquet
multipliers is even. An iterated orbit $x=y^k$ is called \emph{bad} if
$y$ is odd and $k$ is even; otherwise, $x=y^k$ is \emph{good}. The
calculation from Example \ref{ex:local-nondeg-1} still holds over $\Q$
when $x$ is iterated and good: $\HF_*^G(x;\Q)=0$ unless $*=\mu(x)$
when the homology is $\Q$. When $x$ is bad, $\HF_*^G(x;\Q)=0$ in all
degrees. The reason is that in this case $\HF_*^G(x;\Q)$ is equal to
the homology of the lens space $C$ with coefficients in a non-trivial
local coefficient system with fiber $\Q$ and this homology is zero.
\end{Example}

Denote by $\hmu(x)$ the mean index of $x$. As in the non-equivariant
case, the equivariant local Floer homology is supported close to
$\hmu(x)$ with an error not exceeding $\dim V/2$; see, e.g.,
\cite{Gi:CC,GG:gap,McL}. More precisely, let $\mu_\pm(x)$ be the upper
and lower Conley--Zehnder indices of an orbit $x$; see Section
\ref{sec:index}. Then we have

\begin{Proposition}
\label{prop:local-support}
Let $x$ be an isolated non-constant one-periodic orbit of an
autonomous Hamiltonian. Then 
$$
\supp \HF_*^G(x)\subseteq [\mu_-(x),\,\mu_+(x)]
\subseteq [\hmu(x)-n+1,\,\hmu(x)+n-1]
$$ 
i.e., $\HF_*^G(x)$ can only be non-trivial for $*$ in this range of
degrees. Moreover, if at least one of the Floquet multipliers of $x$
is different from $1$, the end-points of the second interval can be
excluded.
\end{Proposition}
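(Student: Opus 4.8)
The statement naturally splits into two parts. The second inclusion, $[\mu_-(x),\mu_+(x)]\subseteq[\hmu(x)-n+1,\,\hmu(x)+n-1]$, together with the refinement when some Floquet multiplier of $x$ is different from $1$, is purely index-theoretic: since $x$ is non-constant its linearized flow carries a canonical $1$-eigenvalue block coming from the direction of the orbit, and splitting this block off by symplectic reduction removes one unit from the a priori deviation $|\mu_\pm-\hmu|\le n$ on each side; a nontrivial block among the remaining Floquet data removes a further (half- or full-) unit, which is exactly what pushes the end-points out. I would simply cite the normal-form computations of Section \ref{sec:index} for this. Hence the real content is the first inclusion, $\supp\HF_*^G(x)\subseteq[\mu_-(x),\,\mu_+(x)]$.

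The plan is to deduce this from its non-equivariant counterpart $\supp\HF_*^{\mathit{loc}}(x)\subseteq[\mu_-(x),\,\mu_+(x)]$, which is standard (see, e.g., \cite{Gi:CC,GG:gap,McL}): a $C^2$-small perturbation breaks $x$ into finitely many transversely non-degenerate orbits $y_j$ whose linearized return maps are $C^0$-close to that of $x$, so by the very definition of $\mu_\pm(x)$ --- equivalently, by the lower/upper semicontinuity of the Conley--Zehnder index --- one has $\mu(y_j)\in[\mu_-(x),\mu_+(x)]$, and the local Floer complex is generated by the $y_j$ with no shift of degree. To transfer this to $\HF_*^G(x)$ I would localize the Gysin sequence \eqref{eq:Gysin-F} to a neighbourhood of $C=Gx\times_G S^{2m+1}$; since $D\equiv 0$ on $\HF_*^G(x;\Q)$ by Proposition \ref{prop:D-local}, the resulting long exact sequence breaks into short exact sequences $0\to\HF_{*-1}^G(x)\to\HF_*^{\mathit{loc}}(x)\to\HF_*^G(x)\to 0$, where $\HF_*^{\mathit{loc}}(x)$ is here the non-equivariant Morse--Bott local homology, supported in $[\mu_-(x),\mu_+(x)+1]$ after passing to the limit $m\to\infty$; the desired support bound for $\HF_*^G(x)$ then falls out of these sequences directly. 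Equivalently, and perhaps more transparently, in the limit $m\to\infty$ --- after collapsing the Morse--Bott spectral sequence as in the Floer analog of Proposition \ref{prop:quotient-Morse}, via Lemma \ref{lemma:single_complex} --- $\HF_*^G(x;\Q)$ is computed by a complex generated by the \emph{good} orbits $y_j$, placed in degrees $\mu(y_j)\in[\mu_-(x),\mu_+(x)]$, the bad orbits contributing nothing; compare Examples \ref{ex:local-nondeg-1} and \ref{ex:local-nondeg-2}.

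The one genuinely delicate point --- the only work beyond the index theory of Section \ref{sec:index} and the standard non-equivariant bound --- is the localization itself: both routes require that the equivariant Floer trajectories (and the Morse--Bott broken trajectories) relevant to $\HF_*^G(x)$ stay within a prescribed neighbourhood of $C$, so that the Gysin sequence and the spectral sequence genuinely localize, and so that the high-degree generators contributed by $H_*(S^{2m+1})$ recede to $+\infty$ and leave $\HF_*^G(x)$ unaffected in every fixed degree. This confinement is precisely the trajectory estimate carried out in Section \ref{sec:LS-Floer-pf} and anticipated in Section \ref{sec:local}; once it is in hand, everything else is bookkeeping, and the refinement about the Floquet multipliers is just the index-theoretic sharpening of the second inclusion supplied by Section \ref{sec:index}.
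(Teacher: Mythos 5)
Your second route is, in substance, the paper's own proof: the argument in the text is the one-line observation that, after a small non-degenerate perturbation, $x$ splits into orbits whose Conley--Zehnder indices lie in $[\mu_-(x),\mu_+(x)]$ by the very definition of $\mu_\pm$, so the local equivariant complex (over $\Q$, the collapsed Morse--Bott complex generated by the good orbits, cf.\ Remark \ref{rmk:diff} and Examples \ref{ex:local-nondeg-1}, \ref{ex:local-nondeg-2}) has generators only in those degrees, and the second inclusion is then read off from \eqref{eq:mu-del}. Your first route --- localizing the Gysin sequence and using $D\equiv 0$ (Proposition \ref{prop:D-local}) to split it into short exact sequences --- is a legitimate alternative (no circularity, since Proposition \ref{prop:D-local} is established in Section \ref{sec:LS-Floer-pf} independently of this statement), but it is heavier than necessary, works only over $\Q$, and the input you feed it, the bound $[\mu_-(x),\mu_+(x)+1]$ for the non-equivariant Morse--Bott local homology of the circle $Gx$, is not literally the statement of the non-equivariant references you cite but the same generator-degree bookkeeping one would use directly. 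You are right that the only analytic content is the confinement of the relevant Floer trajectories near $C$, which is exactly what Section \ref{sec:LS-Floer-pf} supplies and what makes the local homology and its grading well defined; the paper's proof relies on this implicitly in the same way.

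The point you should repair is your justification of the second inclusion. In this proposition $\mu_\pm(x)$ and $\hmu(x)$ are the indices of the transverse (Poincar\'e return) linearization, a path in $\TSp(2(n-1))$; this is the convention that matches the grading of $\HF_*^G(x)$ (a maximally non-degenerate orbit sits in degree equal to its transverse index) and the later applications to Reeb orbits in Corollary \ref{cor:sphere}. With that reading, $[\mu_-,\mu_+]\subseteq[\hmu-(n-1),\hmu+(n-1)]$ is literally \eqref{eq:mu-del} with $m=n-1$, and the endpoint exclusion is the strictness of those inequalities for a weakly non-degenerate path (both inequalities are in fact strict, as one sees from the decomposition into a non-degenerate part and a totally degenerate part); no splitting-off argument is needed. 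Your heuristic that ``splitting off the eigenvalue-one block by symplectic reduction removes one unit from the deviation on each side'' is not a correct argument for the full $2n$-dimensional indices: the split-off block is itself a path with totally degenerate end-point and can contribute a full unit to $\mu_+-\hmu$ or to $\hmu-\mu_-$ (a shear contributes one unit on one side, an identity block one unit on both), so for the full indices the deviation can still reach $n$ --- e.g.\ for identity monodromy --- and the claimed inclusion would fail. So either state explicitly that you are using the transverse indices, in which case the second inclusion and the Floquet-multiplier refinement come straight from \eqref{eq:mu-del}, or you would need a genuinely different argument than reduction alone.
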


The proposition is an immediate consequence of the fact that, by
\eqref{eq:mu-del}, under a small non-degenerate perturbation $x$
splits into periodic orbits with Conley--Zehnder index in
$[\mu_-(x),\,\mu_+(x)]$.

In general, when $x$ is isolated and non-constant, one can expect
$\HF_*^G(x)$ to be isomorphic to the equivariant local Floer homology
$\HF_*^\Gamma(\varphi)$, where $\varphi$ is the Poincar\'e return map
of $x$ in the level containing $x$ and $\Gamma=\Z_k$ is the stabilizer
of $x$; cf. \cite{GHHM}.

The operator $D$ vanishes in the equivariant local Floer homology. To
be more precise, we have the following result established as a part of
the proof of Theorem \ref{thm:LS-Floer} in Section
\ref{sec:LS-Floer-pf}; see \eqref{eq:D=0-2} and its proof.

\begin{Proposition}
\label{prop:D-local}
Let $x$ be an isolated non-constant one-periodic orbit of an
autonomous Hamiltonian. Then
$$
D\equiv 0 \textrm{ in } \HF^{G}_*(x;\Q).
$$
\end{Proposition}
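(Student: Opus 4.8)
The plan is to show that $D\equiv 0$ on $\HF^G_*(x;\Q)$ by computing $D$ via the intersection-index definition $[D_\Sigma]$ of Section~\ref{sec:LS-Floer-D} and choosing the cycle $\Sigma\subset\CP^m$ disjoint from all relevant evaluation images. By Lemma~\ref{lemma:D=delta-F}, $D=[D_\Sigma]$, so it suffices to argue that $D_\Sigma$ can be made identically zero on the chain level of the local equivariant Floer complex $\CF^G_*(x)$. The key geometric input, which (as the statement notes) is established in Section~\ref{sec:LS-Floer-pf} as part of the proof of Theorem~\ref{thm:LS-Floer}, is a localization/compactness fact: for a $C^2$-small transversely non-degenerate perturbation $\tH$ of $H$ near $x$, the critical $G$-orbits $S_i$ all lie in a small neighborhood of $C=Gx\times_G S^{2m+1}$, and every connecting Floer trajectory $\tu=(u,\lambda)$ between two such $S_i,S_j$ stays in that neighborhood. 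In particular, the $\lambda$-component of every such trajectory, and hence the image of every evaluation map $\ev\colon\CM(S_i,S_j)\to\CP^m$, lies inside an arbitrarily small neighborhood $N$ in $\CP^m$ of the common point $p\in\CP^m$ onto which all the $S_i$ project (they all project to the single point $p$ because in the $C^2$-small limit $\tH$ is essentially $\zeta$-independent).

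First I would make precise that all the $S_i$ associated to the broken-up orbit $x$ project to one and the same point $p\in\CP^m$; this is immediate since the perturbation is $C^\infty$-close to the $\zeta$-independent Hamiltonian $H$, so the defining condition $\int_{S^1}\nabla_\zeta\tH_\zeta\,dt=0$ is satisfied (to leading order, hence after a small further adjustment exactly) along the whole fiber, and the critical orbits sit in a single fiber over $p$. Next I would invoke the localization statement from Section~\ref{sec:LS-Floer-pf}: there is a neighborhood $N\ni p$ in $\CP^m$ such that $\ev(\CM(S_i,S_j))\subset N$ for every pair $(S_i,S_j)$ entering the definition of $D_\Sigma$ (those with $\mu(S_i)-\mu(S_j)=2$), and moreover the moduli spaces of dimension up to three that are needed for the chain-homotopy invariance of $[D_\Sigma]$ also evaluate into $N$. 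Then I would choose $\Sigma\subset\CP^m$ to be a closed codimension-two co-oriented submanifold Poincaré dual to $-c_1(\pi)$ with $\Sigma\cap N=\emptyset$; this is possible because $N$ can be taken as small as we like (e.g.\ take $\Sigma$ to be a projective hyperplane $\CP^{m-1}$ missing the point $p$, perturbed if necessary to achieve transversality away from $N$). With this choice, every intersection index $\left<S_i,S_j\right>_\Sigma$ is $0$, so $D_\Sigma\equiv 0$ as a chain map, hence $[D_\Sigma]=0$ in $\HF^G_*(x;\Q)$, and therefore $D=0$ by Lemma~\ref{lemma:D=delta-F}.

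Finally I would pass to the limit $\tH\to H$ and $m\to\infty$: the local equivariant Floer homology $\HF^G_*(x)$ is by definition this limit, and the vanishing of $D$ is preserved since at each finite stage $D$ is represented by the zero chain map for the above choice of $\Sigma$, compatibly with the continuation/stabilization maps. I expect the main obstacle to be purely a matter of bookkeeping rather than of substance: one must make sure that the localization of trajectories (the statement that connecting trajectories and the auxiliary moduli spaces of dimension $\le 3$ stay near $C$) is exactly the version proved in Section~\ref{sec:LS-Floer-pf}, and that it is uniform enough to place $\Sigma$ away from all of them simultaneously while still being Poincaré dual to $-c_1(\pi)$ and transverse to what is needed. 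Once that localization is granted — which the statement explicitly defers to that section — the argument is just "put the hyperplane where the curves aren't," exactly as in the ECH picture recalled in the introduction where $D$ is computed by a point placed off the orbits.
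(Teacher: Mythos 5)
There is a genuine gap, and it sits at the central step: the claim that all the critical orbits $S_{x,i}$ into which $x$ breaks project to one point $p\in\CP^m$, so that the evaluation maps land in a small neighborhood $N$ of $p$ which a hyperplane $\Sigma$ can avoid. This is false, and in fact backwards. For the $\zeta$-independent Hamiltonian $H$ the condition $\int_{S^1}\nabla_\zeta\tH_\zeta\,dt=0$ holds identically, which is precisely why the critical set is the whole lens space $C_x=Gx\times_G S^{2m+1}$; a transversely non-degenerate perturbation breaks $C_x$ into finitely many circles located at the critical points of (roughly) the averaged perturbation on $C_x$, and these are necessarily spread over $C_x$, whose projection to $\CP^m$ is surjective -- not contained in a single fiber (cf.\ Example~\ref{ex:local-nondeg-1}, where the $S_{x,i}$ correspond to the critical points of a Morse function on the lens space). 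Likewise, the localization established in Section~\ref{sec:LS-Floer-pf} (see \eqref{eq:C1}) confines the connecting trajectories to a tubular neighborhood $\CU$ of $C_x$, i.e.\ it localizes the $\Lambda$-component near $Gx$; it says nothing about the $S^{2m+1}$-component, and the $\lambda$-parts of the trajectories do travel across $\CP^m$ between the projections of the various $S_{x,i}$. Hence there is no small $N\subset\CP^m$ containing all evaluation images, and no admissible $\Sigma$ disjoint from them.

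A sanity check shows the approach cannot be repaired: if some $\Sigma$ missed all the relevant moduli spaces, then $D_\Sigma$ would vanish on the local chain complex over \emph{any} coefficient ring, whereas for an iterated orbit the shift operator on the local equivariant homology over $\Z$ is in general non-zero. For instance, for a good nondegenerate iterated orbit $x=y^k$ the equivariant local homology over $\Z$ is that of $B\Z_k$ (shifted by $\mu(x)$), non-trivial in infinitely many degrees, while the non-equivariant local homology sits in a single degree; the Gysin sequence then forces $D$ to be an isomorphism $\Z_k\to\Z_k$ in infinitely many degrees. The vanishing in the proposition is a rational phenomenon, and your proposal never uses $\Q$ in an essential way. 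The paper's proof keeps your framework (compute $D$ as $[D_\Sigma]$ via Lemma~\ref{lemma:D=delta-F}, use the localization near $C_x$) but trades ``disjointness in $\CP^m$'' for ``null-homologous in $C_x$ over $\Q$'': since the trajectories stay in $\CU$, the evaluation map lifts to $\widetilde{\ev}\colon\CM(S_i,S_j)\to C_x$ and $\left<S_i,S_j\right>_\Sigma$ equals the intersection number with the preimage $\tilde{\Sigma}\subset C_x$ of $\Sigma$; because $C_x$ is a lens space, $\H^2(C_x;\Q)=0$, so $\tilde{\Sigma}$ is null-homologous over $\Q$, and the standard argument that the intersection action depends only on the homology class of the cycle kills the induced map on homology -- not on the chain level.
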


Proposition \ref{prop:D-local} fits in a larger pattern mentioned in
the introduction that under reasonable assumptions (co)homology
operations (such as $D$ or the pair-of-pants product) vanish in the
local homology of an isolated orbit; cf.\ \cite[Prop.\ 5.3]{GG:gap}
and~\cite{GoHi}.

\subsubsection{Equivariant Morse--Bott Floer homology}
\label{sec:MB}
We will now consider another case of localization of equivariant Floer
homology. Let as above $H$ be an autonomous Hamiltonian on $V$ and let
$P$ be a connected Morse--Bott non-degenerate manifold of one-periodic
orbits of $H$ in the sense of \cite{Poz}. Since $H$ is autonomous, $P$
carries a natural $G=S^1$-action by reparametrizations. Clearly, the
action functional $\CA_H$ is constant on $P$. Set $c:=\CA_H(P)$ and
assume that all one-periodic orbits of $H$ with action $c$ are
contained in $P$.

\begin{Proposition}
\label{prop:MB} 
Let $I$ be any interval such that $c$ is the only point of $\CS(H)$ in
$I$. Then, for any coefficient ring,
$$
\HF_*^{G,I}(H)\cong \H_*^G(P)
$$
up to a shift of degrees, and the isomorphism intertwines the operator
$D$ in the equivariant Floer homology and in $\H_*^G(P)$. In
particular, when the $G$-action on $P$ is locally free and all orbits
have the same stabilizer $\Z_k$, we have
$$
\HF_*^{G,I}(H;\Q)\cong \H_*(P/G;\Q),
$$ 
where now $D$ on the right comes from the Gysin exact sequence of the
$G/\Z_k$-principal bundle $P\to P/G$. (The same is true without the
stabilizer assumption, but then one needs to replace an ordinary
principal bundle by an orbi-bundle.)
\end{Proposition}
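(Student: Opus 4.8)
The plan is to deduce the proposition from a Morse--Bott localization of the equivariant Floer homology together with the intersection-theoretic description of the shift operator from Section \ref{sec:LS-Floer-D}. First I would fix $m$ and pick a transversely non-degenerate $G$-invariant parametrized Hamiltonian $\tH$ on $S^1\times V\times S^{2m+1}$ that is $C^\infty$-close to $H$, the latter viewed as independent of $(t,\zeta)$. Since $H$ is $\zeta$-independent and $\CA_H$ is constant on $P$, the critical set of the unperturbed functional $\CA_H$ on $\Lambda\times S^{2m+1}$ is exactly $P\times S^{2m+1}$, carrying the diagonal $G$-action; it is Morse--Bott non-degenerate in the parametrized sense because the $P$-directions are Morse--Bott non-degenerate for $\CA_H$ by hypothesis and the $S^{2m+1}$-directions lie entirely in the critical set. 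Writing $C_m:=P\times_G S^{2m+1}$, the confinement argument used in the proof of Theorem \ref{thm:LS-Floer} in Section \ref{sec:LS-Floer-pf} --- a parametrized version of the Morse--Bott localization theorem of \cite{Poz} --- shows that, for $\tH$ close enough to $H$, all critical $G$-orbits $S_i$ of $\tH$ and all connecting Floer trajectories with action in $I$ remain in a small neighbourhood of $C_m$; here one uses that $c$ is the only point of $\CS(H)$ in $I$, so nothing else can enter the window.

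Next I would identify the localized complex. The Morse--Bott model for the equivariant Floer homology from \cite{BO:12,BO:Gysin}, applied to the single critical manifold $C_m$ (equivalently, the equivariant Morse--Bott spectral sequence of Section \ref{sec:quotient-Floer} together with Lemma \ref{lemma:single_complex}), identifies the localized complex with the Morse complex of a $C^2$-small Morse function on $C_m$; hence, up to an overall degree shift by the transverse index of $P$, one obtains $\HF_*^{G,I}(\tH)\cong\H_*(C_m)$, over any coefficient ring and with the appropriate determinant-line local system, as in \cite{BO:12}. Choosing the Morse--Bott data coherently in $m$ --- exactly as in \cite[Appendix]{GHHM} and in the proof of Proposition \ref{prop:quotient-Morse} --- and letting $m\to\infty$, so that $C_m$ approximates $P\times_G EG$, yields $\HF_*^{G,I}(H)\cong\H_*^G(P)$ up to a shift of degrees.

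For the compatibility with $D$ I would use the description $D=[D_\Sigma]$ from Lemma \ref{lemma:D=delta-F}: the shift operator is computed by intersecting the evaluation maps $\ev\colon\CM(S_i,S_j)\to\CP^m$, $\tu\mapsto\lambda(0)$, with a codimension-two submanifold $\Sigma\subset\CP^m$ Poincar\'e dual to $-c_1$ of the Hopf bundle. Under the localization of the previous step, $\CM(S_i,S_j)$ corresponds to a moduli space of broken anti-gradient trajectories inside $C_m$ and $\ev$ corresponds to post-composition of the Morse evaluation map with the bundle projection $\rho\colon C_m=P\times_G S^{2m+1}\to S^{2m+1}/G=\CP^m$; hence $[D_\Sigma]$ becomes the intersection product with $\rho^{-1}(\Sigma)$, i.e.\ the cap product with $\PD(-\rho^*h)$, where $h$ generates $\H^2(\CP^m)$. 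Since $P\times S^{2m+1}\to C_m$ is the pull-back of the Hopf bundle under $\rho$, the class $-\rho^*h$ is $-c_1$ of this principal $G$-bundle, so $[D_\Sigma]$ is precisely the shift operator on $\H_*^G(P)\cong\H_*(C_m)$ from the Gysin sequence \eqref{eq:Gysin2}. Passing again to the limit $m\to\infty$ finishes the first assertion.

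The ``in particular'' statement is the Floer counterpart of the discussion in Sections \ref{sec:LS-Morse-1} and \ref{sec:quotient-Morse}: when the $G$-action on $P$ is locally free with constant stabilizer $\Z_k$, the quotient $P/G$ is a closed manifold, $P\to P/G$ is a principal $G/\Z_k\cong S^1$-bundle, and $\H_*^G(P;\Q)\cong\H_*(P/G;\Q)$; under this isomorphism the shift operator constructed above --- the pairing with $-c_1$ of $P\times EG\to P\times_G EG$ --- goes over to the pairing with $-c_1$ of $P\to P/G$, that is, to $D$ in the Gysin sequence of $P\to P/G$ (and to its orbi-bundle version when the stabilizers are not assumed constant). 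I expect the main obstacle to be technical rather than conceptual: making the confinement estimates for the parametrized Floer equation uniform enough, and arranging a genuinely coherent choice of Morse--Bott data for all $m$ so that the limits of the localized complexes and of the operators $D_\Sigma$ on them exist and compute $\H_*^G(P)$ together with its shift operator. Both are variants of arguments already needed for Theorem \ref{thm:LS-Floer} and Proposition \ref{prop:quotient-Morse}.
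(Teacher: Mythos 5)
Your outline is correct and is essentially the argument the paper has in mind: the authors omit the proof as ``standard,'' pointing to Po\'zniak's Morse--Bott localization and to \cite{Bo:thesis,BO:Duke}, and your route --- parametrized Morse--Bott localization onto $C_m=P\times_G S^{2m+1}$ within the action window where $c$ is the only spectral value, identification of the localized complex with the homology of $C_m$ (coherently in $m$, then $m\to\infty$, giving $\H_*^G(P)$ up to the transverse index shift), and the intersection-theoretic description of $D$ from Lemma \ref{lemma:D=delta-F} to match the shift operators --- is exactly that. The only caveat worth recording concerns your last step: for constant stabilizer $\Z_k$ one has, over $\Q$, $F^*c_1(P\to P/G)=k\,c_1\big(P\times EG\to P\times_G EG\big)$ under the rational equivalence $F\colon P\times_G EG\to P/G$, so the equivariant shift operator agrees with the Gysin-sequence $D$ of $P\to P/G$ only up to a nonzero factor --- a normalization the paper itself acknowledges (``up to a factor'') in the proof of Proposition \ref{prop:D-STS}, and which is harmless for all applications since the relevant generators can be rescaled.
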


The proof of the proposition is standard and we omit it for the sake
of brevity. The non-equivariant case goes back to \cite{Poz}; see also
\cite{Bo:thesis,BO:Duke} for a different approach.

\subsection{Proof of the Lusternik--Schnirelmann inequality in Floer
  homology}
\label{sec:LS-Floer-pf} 
In this section we prove Theorem \ref{thm:LS-Floer} and hence Theorem
\ref{thm:LS-Floer-intro}.  Throughout the proof we keep the notation
and convention from Section \ref{sec:LS-Floer-def}. In particular, we
continue suppressing the interval $I$ in the notation.  We will use
the definition of the operator $D$ from Section \ref{sec:LS-Floer-D}
rather than the original definition from \cite{BO:Gysin}. The two
definitions are equivalent by Lemma~\ref{lemma:D=delta-F}.

Consider a Hamiltonian $H$ as in the statement of the theorem. In
other words, $H$ is autonomous and, when $V=\widehat{W}$, admissible
at infinity, and all orbits in $\PP(H)$ are non-constant and
isolated. Our goal is to show that for a non-degenerate small
perturbation $\tH$ of $H$ the operator $D$ on $\HF_*^G(\tH)$ decreases
the action by at least $\gap(H)-O\big(\|H-\tH\|_{C^0}\big)$. To be
more specific, we will prove that, in the notation from \eqref{eq:gap}
and \eqref{eq:s-inv2},
\begin{equation}
\label{eq:action-shift}
\s_w(\tH)\geq \s_{D(w)}(\tH) + \gap(H)-O\big(\|H-\tH\|_{C^0}\big)
\end{equation}
for every $w\neq 0$ in $\HF_*^G(\tH)=\HF_*^G(\tH)$. Then the theorem
will follow by passing to the limit as $\tH\to H$ and $m\to\infty$.

When we view $H$ as a degenerate parametrized Hamiltonian, the
critical sets of $\CA_H$ in $\Lambda\times_G S^{2m+1}$ have the form
$Gx\times_G S^{2m+1}$, where $x\in\PP(H)$ and $Gx$ is the orbit of $x$
in $\Lambda$. We denote these sets by $C_x$ or just $C$ when the role
of $x$ is inessential. Note that $C_x=C_y$ when $y\in Gx$, i.e., $x=y$
up to a reparametrization. As in the finite-dimensional case
considered in Section \ref{sec:LS-Morse}, $C$ is diffeomorphic to the
lens space $S^{2m+1}/\Gamma$ where $\Gamma\subset G$ is the stabilizer
of $x$, i.e., $\Gamma=\Z_k$ when $x$ is the $k$th iteration of a
$1/k$-periodic orbit.

Let $\tH\colon S^1\times V\times S^{2m+1}\to\R$ be a parametrized,
transversely non-degenerate $G$-invariant perturbation of $H$. (When
$V=\widehat{W}$, we can take $\tH\equiv H$ at infinity.)  We require
$\tH$ to be sufficiently $C^\infty$-close to $H$. Under this
perturbation, the critical sets $C=C_x$ of $H$ break down into finite
collections of one-dimensional families $S_{x,i}$ of critical points
of $\CA_{\tH}$. The action functional $\CA_{\tH}$ is constant on
$S_{x,i}$ and hence we set $\CA_{\tH}(S_{x_,i})$ to be its value at an
arbitrary point of $S_{x,i}$. Each $S_{x,i}$ is an isolated orbit of
$G$ lying close to $C$ in $\Lambda\times S^{2m+1}$.  In particular,
when $(y,\zeta)\in S_{x,i}$, the loop $y$ is $C^\infty$-close to $x$,
up to a parametrization, and $\CA_{\tH}(S_{x,i})$ is close to
$\CA_H(x)$. Furthermore, recall that the spectrum $\CS(H)$ (or, to be
more precise, $\CS(H)\cap I$) is finite due to our requirements on
$H$. For every $c\in\CS(H)$ pick a short interval $\CI_c$ centered at
$c$ so that the intervals $\CI_c$ and $\CI_{c'}$ do not overlap when
$c\neq c'$.  Thus, when $\tH$ is sufficiently close to $H$, the action
values $\CA_{\tH}(S_{x,i})$ are in $\CI_c$ where $c=\CA_H(x)$.

The complex $\CF^G_*(\tH)$ is generated by $S_{x,i}$, where $x$ runs
through all geometrically distinct one-periodic orbits of $H$, and
graded by $\CA_{\tH}(S_{x,i})$. Let $\CI$ be one of the intervals
$\CI_c$.  The filtered Floer homology $\HF^{G,\CI}_*(\tH)$ is the
homology of the complex $(\CF^{G,\CI}_*(\tH),\p^G)$ generated by the
critical points $S_{x,i}$ with action in $\CI$.  The operator
$D_\Sigma$ is defined on $\CF^{G,\CI}_*(\tH)$ and $D$ is defined on
$\HF^{G,\CI}_*(\tH)$.

It is not hard to see that to prove \eqref{eq:action-shift} it
suffices to show that
\begin{equation}
\label{eq:D=0}
D\equiv 0 \textrm{ on } \HF^{G,\CI}_*(\tH;\Q)=\HF^{G,\CI}_*(H;\Q).
\end{equation}

Before proving \eqref{eq:D=0}, we need to recall several standard
facts about the behavior of Floer trajectories, drawing from
\cite[Sect.\ 1.5]{Sa:notes}. For a solution $\tu=(u,\lambda)$ of the
parametrized Floer equation \eqref{eq:par-Fl}, consider the following
two energy integrals
$$
E(u)=\int_{-\infty}^{\infty}\|\p_s u(s)\|^2_{L^2}\,ds
\quad\textrm{and}\quad
E(\tu)=E(u)+\int_{-\infty}^{\infty}\|\dot{\lambda}(s)\|^2\,ds,
$$
where we set $u(s)=u(\cdot,s)$. (Here and below it is convenient to
interpret $\tu$ as literally a map to $V\times S^{2m+1}$, i.e.,
without taking a quotient by $G$; this should cause no confusion.)
Then, when $\tu$ is asymptotic to $S_{x,i}$ at $-\infty$ and to
$S_{y,j}$ at $\infty$, we have
\begin{equation}
\label{eq:energies}
E(u)\leq E(\tu)=\CA_{\tH}(S_{x,i})-\CA_{\tH}(S_{y,j}).
\end{equation}

Since $V$ is aspherical, $\|\p_s u\|$ is point-wise bounded from above
by a constant independent of $\tH$ as along as $\tH$ is $C^2$-close to
$H$. Moreover, when $E(u)$ is sufficiently small, we have a sharper
point-wise bound
\begin{equation}
\label{eq:L-infty}
\|\p_s u\|\leq O\big(E(u)^{1/4}\big)\leq 
O\Big(\big(\CA_{\tH}(S_{x,i})-\CA_{\tH}(S_{y,j})\big)^{1/4}\Big).
\end{equation}
This is a consequence of the fact that the energy density
$\varphi=\|\p_s u\|^2$ satisfies the differential inequality
$\Delta\varphi\geq -a$ for some $a\geq 0$ independent of $\tH$ as long
as $\tH$ is $C^2$-close to $H$. Indeed, first observe that
$$
\Delta\varphi\geq
-a'(1+\varphi^2)\big(1+\|\nabla_s\dot{\lambda}\|\big)
$$ 
with $a'\geq 0$. This inequality can be proved exactly in the same way
as its counterpart (without the term $\|\nabla_s\dot{\lambda}\|$) for
the ordinary Floer equation; see \cite[p.\ 137--138]{Sa:MT}. As we
have already seen, $\|\p_s u\|$ and hence $\varphi$ are a priori
bounded. Likewise, $\|\nabla_s\dot{\lambda}\|$ is also a priori
bounded as a consequence of \eqref{eq:par-Fl}, and therefore
$\Delta\varphi$ is a priori bounded from below. Then
\eqref{eq:L-infty} follows from this lower bound by a standard
argument; see, e.g., \cite[Sect.\ 5]{Sa:MT} or \cite[Sect.\
1.5]{Sa:notes}.

Furthermore, recall that for any map $z\colon S^1\to V$ and for some
$\eps>0$ independent of $z$, we have
\begin{equation}
\label{eq:L2}
\int_{S^1}\big\|\dot{z}(t)-X_H(z(t))\big\|^2\,dt>\eps,
\end{equation}
unless $z$ is sufficiently $C^0$-close to one of the periodic
solutions in $\PP(H)$; see \cite[Exercise 1.22]{Sa:notes}. Clearly,
\eqref{eq:L2} also holds with $\tH$ in place of $H$.

These facts have two consequences important for our proof.  First of
all, we claim that for any two geometrically distinct orbits $x$ and
$y$ such that there exists a solution $\tu$ asymptotic to $S_{x,i}$
and $S_{y,j}$, we have
\begin{equation}
\label{eq:treshold}
\CA_{\tH}(S_{x,i})-\CA_{\tH}(S_{y,j})\geq e>0,
\end{equation}
where the lower bound $e$ depends only on $H$. (We assume here that
the background auxiliary structure is sufficiently close to a fixed
almost complex structure on $V$ and a fixed metric on $S^{2m+1}$.)
Indeed, by \eqref{eq:L-infty} and \eqref{eq:L2},
$$
E(u)\geq e>0,
$$
where $e$ depends only on $H$ as long as $\tH$ is sufficiently close
to $H$ and on the auxiliary structure. This, combined with
\eqref{eq:energies}, implies \eqref{eq:treshold}.

Secondly, for every $\tu$ asymptotic to $S_{x,i}$ and $S_{x,j}$ we
have
\begin{equation}
\label{eq:C1}
d_{C^1}(u(s),x)\to 0 \textrm{ as }\|H-\tH\|_{C^2}\to 0
\end{equation}
uniformly in $s\in\R$ and $\tH$, where $d_{C^1}$ stands for the
$C^1$-distance in the loop space $\Lambda$. Indeed, it readily follows
from \eqref{eq:L-infty} and \eqref{eq:L2} that $u(s)$ is $C^0$-close
to $x$ when $\|H-\tH\|_{C^2}$ is small. Then, by the Floer equation
and again by \eqref{eq:L-infty}, $u(s)$ is also $C^1$-close to $x$.

We are now in a position to prove \eqref{eq:D=0} and thus finish the
proof of the theorem.

Pick $c\in \CS(H)$ and let as above $\CI=\CI_c$. We have a natural
decomposition of vector spaces
$$
\CF_*^{G,\CI}(\tH)=\bigoplus_x \CF_*^{G}(x),
$$
where the sum is taken over all geometrically distinct periodic orbits
$x$ of $H$ with $\CA_H(x)=c$ and $\CF_*^{G}(x)$ stands for the
subspace generated by $S_{x,i}$. Let us choose interval $\CI$ so short
that its length is less than $e$ from \eqref{eq:treshold}. Then, by
\eqref{eq:treshold}, this is also a decomposition of complexes, i.e.,
$\p^G\colon \CF_*^{G}(x)\to \CF_*^{G}(x)$, and moreover this
decomposition is also obviously preserved by $D$. (One can use here
either of the definitions of $D$.) The homology $\HF_*^{G}(x)$ of
$\CF_*^G(x)$ is the local Floer homology of $x$ discussed in Section
\ref{sec:local}, and the above argument fills in the technical details
of the definition of $\HF_*^{G}(x)$ omitted in that section.

Now we see that it is sufficient to prove that
\begin{equation}
\label{eq:D=0-2}
D\equiv 0 \textrm{ in } \HF^{G}_*(x).
\end{equation}

To this end, we will employ the second definition of $D$. Fix a
$C^1$-small tubular neighborhood $\CU$ of $C=C_x$ in
$\Lambda\times_G S^{2m+1}$ with $\rho\colon \CU\to C$ being the smooth
tubular neighborhood projection. More explicitly, one can show that
there exists a $G$-invariant $C^1$-small neighborhood $\widehat{\CU}$
of $Gx$ in $\Lambda$ and a smooth, in the obvious sense,
$G$-equivariant projection $\hat{\rho}\colon \widehat{\CU}\to Gx$.
Then $\CU=\widehat{\CU}\times _G S^{2m+1}$ and $\rho$ is induced by
$\hat{\rho}$ on the first factor.

Suppressing $x$ in the notation, let us write $S_i$ for $S_{x,i}$. By
\eqref{eq:C1}, a solution $\tu$ asymptotic to $S_i$ and $S_j$ is
entirely contained in $\CU$. Thus we have a well-defined evaluation
map
$$
\widetilde{\ev}\colon \CM(S_i,S_j)\to C,\quad
\widetilde{\ev}(\tu)=\rho\big((u(0),\lambda(0))\big).
$$
The composition of $\widetilde{\ev}$ with the natural projection
$C\to\CP^m$ is the evaluation map $\ev$ from Section
\ref{sec:LS-Floer-D}. Let us fix a submanifold $\Sigma\subset \CP^m$
as in Section \ref{sec:LS-Floer-D} and let $\tilde{\Sigma}$ be its
inverse image in $C$ under the submersion $C\to\CP^m$. Clearly,
$\tilde{\Sigma}$ is a closed, smooth, co-oriented submanifold of $C$
of codimension two, which is transverse to $\widetilde{\ev}$ if and
only if $\Sigma$ is transverse to $\ev$.  Therefore,
$\left<S_i,S_j\right>_\Sigma$ can also be evaluated as the
intersection index $\left<S_i,S_j\right>_{\tilde{\Sigma}}$ of
$\widetilde{\ev}$ with $\tilde{\Sigma}$. Hence, in the complex
$\CF_*^G(x)$, we have
$$
  D_\Sigma S_i=\sum_j\left<S_i,S_j\right>_{\tilde{\Sigma}} S_j.
$$
The cycle $\tilde{\Sigma}$ is homologous to zero in $C$ over $\Q$,
since $C$ is a lens space and thus $\H^2(C;\Q)=0$. Now the standard
argument showing that the ``intersection'' action of the ordinary
homology on the Morse or Floer homology is well-defined (see, e.g.,
\cite{LO}) also shows that the operator $D$ induced by $D_\Sigma$ on
the Floer homology $\HF_*^G(x)$ over $\Q$ is zero. This concludes the
proof of \eqref{eq:D=0-2} and of Theorem \ref{thm:LS-Floer} (and hence of
Theorem \ref{thm:LS-Floer-intro}).

\subsection{``Quotient'' construction of equivariant Floer homology}
\label{sec:quotient-Floer} 
Throughout this section we keep the notation and conventions from
Section \ref{sec:LS-Floer-def}. In particular, we suppress the
interval $I$ in the notation.

One shortcoming of the construction of equivariant Floer homology is
that even when $H$ is, say, autonomous and its orbits are maximally
non-degenerate, $\HF_*^G(H)$ is not defined as the homology of a
complex generated by one-periodic orbits of $H$. This creates some
inconvenience, admittedly minor, when the equivariant Floer or
symplectic homology is used to produce lower bounds on the number of
periodic orbits. A simple remedy for this is the following Floer
theoretic analog of Proposition~\ref{prop:quotient-Morse}.

\begin{Proposition}
\label{prop:quotient-Floer}
Let $H$ be an autonomous Hamiltonian such that all one-periodic
orbits of $H$ with action in $I$ are non-constant and maximally
non-degenerate. Then $\HF_*^G(H;\Q)$ is the homology of a certain
complex $\CF_*(H)^G$ generated over $\Q$ by the good one-periodic
orbits of $H$, graded by the Conley--Zehnder index, and filtered by
the action.
\end{Proposition}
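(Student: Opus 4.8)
The plan is to mimic the proof of Proposition~\ref{prop:quotient-Morse} using the Morse--Bott spectral sequence, now in the Floer setting. Start with the autonomous Hamiltonian $H$ all of whose one-periodic orbits in $I$ are non-constant and maximally non-degenerate. Each geometrically distinct orbit $x$ gives a Morse--Bott critical manifold in the loop space, namely the orbit $Gx\cong S^1/\Gamma_x$ of reparametrizations; after taking the Borel construction with $S^{2m+1}$ this becomes the lens space $C_x=Gx\times_G S^{2m+1}=S^{2m+1}/\Gamma_x$, exactly as in Section~\ref{sec:LS-Floer-pf}. Following \cite{BO:12,Poz} (cf.\ Remark~\ref{rmk:alt-MB} and Proposition~\ref{prop:MB}), there is an equivariant Morse--Bott Floer complex whose associated spectral sequence, filtered by the action (using a sequence of regular values $a_p$ of $\CA_H$ separating the finitely many action values in $I$), has
$$
E^1_{p,q}=\bigoplus_x \H_{p+q}(C_x;\CN_x),
$$
the sum over orbits $x$ with $\CA_H(x)\in(a_{p-1},a_p)$, where $\CN_x$ is the local system given by the determinant line of the ``unstable'' (negative) normal bundle of $Gx$ — precisely the orientation local system on the lens space $C_x$ that is trivial over $\Q$ iff $x$ is good (in the sense of Example~\ref{ex:local-nondeg-2}). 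This spectral sequence converges to $\HF_*^G(H;\Q)$ in finitely many steps since there are finitely many orbits.

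Next I would pass to the limit $m\to\infty$. As in the proof of Proposition~\ref{prop:quotient-Morse}, for a coherent choice of Morse--Bott auxiliary data across all $m$ (cf.\ \cite[Appendix]{GHHM}) the limit spectral sequence $E^r$ exists, still converges in finitely many steps to $\HF_*^G(H;\Q)$, and has $E^1$-term $\bigoplus_x\H_*(C_x;\CN_x\otimes\Q)$. In the limit $\H_*(S^{2m+1}/\Gamma;\Q)$ with the nontrivial orientation local system vanishes entirely, while with trivial coefficients it is $\Q$ in degree $0$ only. Hence the limit $E^1$-term, over $\Q$, is freely generated by the good one-periodic orbits $x$ of $H$, each sitting in a single degree which one computes to be the Conley--Zehnder index $\mu(x)$ (the Morse--Bott index of $Gx$ shifted appropriately, matching the grading conventions of Section~\ref{sec:LS-Floer-def} and the $m\to\infty$ computation in Example~\ref{ex:local-nondeg-1}). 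The action filtration descends to this $E^1$-term.

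Finally I would invoke Lemma~\ref{lemma:single_complex} with $r_0=1$: fixing the inner product on $E^1$ for which the good orbits form an orthonormal basis produces a canonical differential $\bp$ on $E^1$ with $\H_*(E^1,\bp)=E^\infty=\HF_*^G(H;\Q)$. Setting $\CF_*(H)^G:=(E^1,\bp)$ gives the desired complex, generated over $\Q$ by the good one-periodic orbits of $H$, graded by $\mu$, and filtered by $\CA_H$ (the differential respects the action filtration since each $\bp_r$ is built from the $\p_r$'s, which strictly decrease the action). Alternatively, as in the Morse case, one can apply the lemma for finite $m$ and then pass to the limit.

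The main obstacle I anticipate is purely technical rather than conceptual: setting up the equivariant Morse--Bott Floer spectral sequence rigorously and, in particular, justifying the $m\to\infty$ limit with a coherent choice of auxiliary data so that the limiting spectral sequence and its convergence make sense — this is where one must lean on \cite{BO:12,Poz} and the appendix of \cite{GHHM}. A secondary, more bookkeeping-type point is verifying that the surviving generator of $\H_*(C_x;\Q)$ for a good orbit indeed lands in degree exactly $\mu(x)$ under the chosen grading normalization; this is a sign/shift computation entirely parallel to Examples~\ref{ex:local-nondeg-1} and~\ref{ex:local-nondeg-2} and to Remark~\ref{rmk:ind_vs_value} (the case $\dim C_x=\dim(\text{lens space})$, where one uses that $F$ is pulled back to the principal-bundle-like tower, so the Morse--Bott index does give a filtration in the limit). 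Given the parallel with Proposition~\ref{prop:quotient-Morse} already worked out in the excerpt, I would present the proof fairly briefly, emphasizing only the points where the Floer setting genuinely differs (existence of the Morse--Bott complex, behavior of trajectories near $C_x$ which is already handled in Section~\ref{sec:LS-Floer-pf}, and the good/bad dichotomy for the orientation local system).
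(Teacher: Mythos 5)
Your proposal is correct and follows essentially the same route as the paper: a $G$-invariant Morse--Bott complex built from the lens-space critical manifolds $C_x$, the action-filtration spectral sequence whose $E^1$-page is $\bigoplus_x\H_*(C_x;\CN_x)$ with the good/bad dichotomy over $\Q$, and Lemma~\ref{lemma:single_complex} with $r_0=1$, followed by the $m\to\infty$ limit (the paper applies the lemma at finite $m$ and then passes to the limit of the complexes, the alternative order you also note).
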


\begin{proof} We will argue as in the proof of Proposition
  \ref{prop:quotient-Morse}. To this end, it is convenient to slightly
  modify the definition of $\HF_*^G(H)$ and use the standard
  Morse--Bott approach. Namely, denote by $x$ the periodic orbits of
  $H$ with action in $I$. Fixing $m$, we will treat $H$ as a
  $G$-invariant Hamiltonian $S^1\times V\times S^{2m+1}\to \R$. As in
  the proof of Theorem \ref{thm:LS-Floer}, every $x$ gives rise to a
  critical set $C_x=Gx\times_G \Lambda$ of $\CA_H$ in
  $\Lambda\times_G S^{2m+1}$ diffeomorphic to the lens space
  $S^{2m+1}/\Gamma$ where $\Gamma\subset G$ is the stabilizer of $x$,
  i.e., $\Gamma=\Z_k$ when $x$ is the $k$th iteration of a
  $1/k$-periodic orbit. (Furthermore, two orbits which differ by a
  parametrization give rise to the same critical set.) Moreover, since
  the orbits $x$ are maximally degenerate, the critical sets $C_x$ are
  now Morse--Bott non-degenerate.

  Next, rather than taking a transversely non-degenerate perturbation
  $\tH$ of $H$, we can use the Morse--Bott construction to account for
  the contributions of the critical manifolds $C_x$ along the lines
  of, e.g., \cite{BO:Duke}. Namely, let us equip each manifold $C_x$
  with a Morse function $g_x$ and a Riemannian metric. Denoting the
  critical points of $g_x$ by $S_{x,i}$, we obtain a complex generated
  by $S_{x,i}$, for all $x$ and all $i$, equipped with the
  differential counting broken Morse--Floer trajectories in a
  manner similar to the definition of the Floer differential in
  Section \ref{sec:shift-def1}, but with critical manifolds being lens
  spaces rather than circles. This complex is graded by the sum of the
  Conley--Zehnder index of $x$ and the Morse index of $S_{x,i}$ in
  $C_x$ and filtered by the action of $x$. A standard (but
  non-trivial) argument shows that the homology of this complex
  converges to $\HF_*^G(H)$ as $m\to\infty$; cf.\
  \cite{Bo:thesis,BO:Duke}. 

  Consider the spectral sequence associated with the action
  filtration. Its $E^1$-page is the direct sum of the Morse homology
  spaces of the functions $g_x$ with possibly twisted
  coefficients. For every closed orbit $x$, this is the homology over
  $\Q$ of $C_x$ with a trivial local coefficient system $\Q$ when $x$
  is good and non-trivially twisted local coefficients when $x$ is
  bad; see Example \ref{ex:local-nondeg-2}. Therefore, every good $x$
  contributes $\Q$ to the $E^1$-page in total degree $p+q=0$ and
  $p+q=2m+1$ and every bad $x$ contributes only in degree $p+q=2m+1$.
  Clearly, the spectral sequence converges in a finite number of steps
  bounded from above by the length of the action spectrum (and thus
  independent of $m$) and, for a suitable choice of auxiliary data,
  stabilizes as $m\to\infty$ in any finite $(p,q)$-region.

  Let us apply Lemma \ref{lemma:single_complex} with $r_0=1$ to this
  spectral sequence. As a result we obtain a complex $(E^1,\bp)$ whose
  homology is isomorphic to $E^\infty$. The $E^1$-term comes with a
  preferred set of generators, and hence the complex is canonically
  determined by the spectral sequence. Since the spectral sequence
  stabilizes as a function of $m$ in every finite $(p,q)$-range, the
  sequence of complexes $(E^1,\bp)$ converges as $m\to\infty$.
  Passing to the limit, we obtain a single complex
  $\CF_*(H)^G:=\lim E^1$ equipped with the limit differential, still
  denoted by $\bp$, such that its homology is
  $\lim E^\infty =\HF_*^G(H;\Q)$. (Here we use the fact that the
  direct limit and the homology commute.) The complex $\CF_*(H)^G$ is
  generated by the good orbits of $H$, graded by the Conley--Zehnder
  index and filtered by the action.
\end{proof}

\begin{Remark}
\label{rmk:diff}
As is clear from the proof, the differential $\bp$ can be described
explicitly. However, such a description does not seem to be
particularly useful; for the differential, as in all Floer-type
constructions, would be very difficult to calculate except when it is
obviously zero due to, say, lacuna in the complex.  It is also clear
that although the differential depends on the auxiliary data,
different choices of such data result in isomorphic
complexes. Furthermore, the differential is ``natural'' in the same
sense as the Floer or Morse differential: a monotone increasing
homotopy from $H_0$ to $H_1$ gives rise to a homomorphism of
complexes; cf.\ Remark \ref{rmk:diff-M}. Finally, this description of
equivariant Floer homology carries over word-for-word to the local
case, i.e., $\HF_*^G(x;\Q)$ is isomorphic to the homology of a certain
complex graded by the Conley--Zehnder index and generated by the good
orbits which $x$ breaks down into under a non-degenerate perturbation
of $H$.
\end{Remark}

\begin{Remark}
  With Proposition \ref{prop:quotient-Floer} in mind one can replace
  contact homology by equivariant symplectic homology without
  affecting the rest of the argument everywhere in the proofs from
  \cite{Gu:pr} and in some other instances.
\end{Remark}

\begin{Remark}
\label{rmk:alt-MB}
Our choice to work with the Morse--Bott complex in the proof of
Proposition \ref{prop:quotient-Floer} is mainly determined by
expository considerations. Instead, we could have worked with a
transversely non-degenerate parametrized perturbation of $H$ resulting
in exactly the same complex $(\CF_*(H)^G,\bp)$; cf.\ Remark
\ref{rmk:diff-M}. Furthermore, Proposition \ref{prop:quotient-Floer}
and its proof also enable one to replace a part of the proof of
Theorem \ref{thm:LS-Floer} by a purely Morse-theoretic argument as in
Section \ref{sec:LS-Morse} with some minor simplifications,
although conceptually the proof would remain the same. However, then
the definition of $D$ via the Floer trajectory count in Section
\ref{sec:shift-def1} requires an awkward from our perspective
two-level Morse--Bott construction.
\end{Remark}

\section{Shift operator: symplectic homology}
\label{sec:LS2}

\subsection{Shift operator in equivariant symplectic  homology}
\label{sec:LS-sympl} 
In this section we prove, essentially by passing to the limit, an
analog of Theorem \ref{thm:LS-Floer} for equivariant symplectic
homology. To state the result, let us briefly recall the relevant
definitions. Note that our conventions differ slightly from, e.g.,
\cite{BO:Gysin}, although the resulting definitions are equivalent to
the standard ones.

\subsubsection{Conventions and requirements} 
\label{sec:SH-conditions}
We will assume that $(W^{2n},\omega)$ is a compact exact symplectic
manifold with a contact type boundary $(M,\alpha)$, i.e., $M=\p W$ and
$d\alpha=\omega|_M$, and in addition the orientation
$\alpha \wedge (d\alpha)^{n-1}$ agrees with the boundary orientation
of $M$, i.e., the Liouville vector field along $M$ points outward.  In
other words, $(W,\omega)$ is a \emph{strong symplectic filling} of
$(M,\alpha)$. Furthermore, we will require that
$c_1(TW)|_{\pi_2(W)}=0$.  The symplectic completion $V=\widehat{W}$ is
the union
$$
\widehat{W}=W\cup_{M} \big(M\times [1,\infty)\big)
$$ 
with the symplectic form $\omega$ extended as $d(r\alpha)$ to the
cylindrical part. (Here $r$ is the coordinate on $[1,\infty)$.) 

We will concentrate on contractible in $W$ closed Reeb orbits of
$\alpha$ and denote by $\PP(\alpha)$ the collection of such orbits and
by $\CS(\alpha)$ the action or period spectrum of $(M,\alpha)$, i.e.,
$$
\CS(\alpha)=\Big\{\CA_\alpha(x):=\int_x\alpha\,\Big|\, x\in\PP(\alpha)\Big\}.
$$
Recall also that the Hamiltonian flow of $H\equiv r$ on the
cylindrical part coincides with the Reeb flow.

Let us consider autonomous Hamiltonians $H$ on $\widehat{W}$ meeting
the following requirements:
\begin{itemize}
\item[(i)] $H$ is $C^2$-small and negative on $W$;
\item[(ii)] $H=h(r)$, where $h\colon [1,\infty)\to \R$ is convex
  (i.e., $h''\geq 0$) on the cylindrical part $M\times [1,\infty)$;
\item[(iii)] $H=\kappa r-c$ outside a compact set (i.e., when $r\geq r_0$
  for some $r_0$), where $\kappa\not \in \CS(\alpha)$ is positive.
\end{itemize}
We call such Hamiltonians \emph{admissible}. (Note that for such
Hamiltonians we necessarily have $c>0$ in (iii).) When $H$ satisfies
only (iii), we call it \emph{admissible at infinity}. We emphasize
that we do not impose here any non-degeneracy conditions on $H$. When
$H$ is admissible at infinity, its Hamiltonian flow has no
one-periodic orbits in the region $r\geq r_0$ and the global Floer
homology of $H$ is defined and independent of $H$ as long as $\kappa$
is fixed.

Let $I=[a,\,b]\subset \R$ be a fixed interval with end-points outside
$\CS(\alpha)$.  This interval can be finite or semi-finite or infinite
and in the last two cases the infinite end-points are not included in
$I$.  For any two Hamiltonians $H_0\leq H_1$ admissible at infinity
and such that the end points of $I$ are outside $\CS(H_0)$ and
$\CS(H_1)$, we have a well-defined continuation map
$$
\HF_*^{G,I}(H_0)\to \HF_*^{G,I}(H_1)
$$
and we set
\begin{equation}
\label{eq:SH}
\SH_*^{G,I}(W):=\varinjlim_H \HF_*^{G,I}(H),
\end{equation}
where the limit is taken over all admissible Hamiltonians $H$ such
that the end-points of $I$ are not in $\CS(H)$. For instance, when
$I=[\delta,\infty)$ for a sufficiently small $\delta>0$, we obtain the
standard positive equivariant symplectic homology $\SH_*^{G,+}(W)$. In
fact, in the situation we are interested in we can always assume that
$a>0$.

Finally, note that by passing to the limit we obtain the shift operator
$$
D\colon \SH_*^{G,I}(W)\to \SH_{*-2}^{G,I}(W).
$$ 
Likewise, we have the spectral invariants $\s_w(\alpha)$ for
$w\in \SH_*^{G,I}(W)$, which can be defined in two ways. First of all,
as in \eqref{eq:s-inv}, we can set
\begin{equation}
\label{eq:spec-inv}
\s_w(\alpha)=\inf\big\{b'\in I\setminus \CS(\alpha)\,\big|\, w\in
\im\big(i^{b'}\big)\big\}
\end{equation}
where $i^{b'}$ is the natural map
$\SH_{*}^{G,I'}(W)\to \SH_{*}^{G,I}(W)$ for
$[a,\,b']=I'\subset I=[a,\,b]$. (By definition, $\s_0(\alpha)=a$.)
Alternatively, we can take a cofinal sequence of Hamiltonians $H_j$
and a sequence $w_j\in \HF_*^{G,I}(H_j)$ converging to $w$ in the
obvious sense, and set
$$
\s_w(\alpha)=\lim_{j\to\infty} \s_{w_j}(H_j).
$$
It is not hard to show that these definitions are equivalent; cf.\ the
proof of Theorem~\ref{thm:LS-SH}.

The shift map $D$ and the spectral invariants are ``functorial''. To
be more specific, consider a symplectic cobordism $Z$ with
$\p Z=M_0\cup M_1$ from $(M_0,\alpha_0)=\p W_0$, in obvious notation,
to $(M_1,\alpha_1)=\p W_1$, where $W_1=W_0\cup Z$, such that $W_1$ is
exact and $c_1(TW_1)|_{\pi_2(W_1)}=0$. Then $Z$
gives rise to a \emph{cobordism map}, also known as the transfer map,
$$
\Phi_Z\colon \SH_*^{G,I}(W_1)\to \SH_*^{G,I}(W_0),
$$ 
induced, before passing to the limit, by continuation maps in
equivariant Floer homology; see Section \ref{sec:LS-Floer-def}. This
map was originally introduced in \cite{Vi:GAFA} in a slightly different
setting and then studied in detail in \cite{Gut15}.

\begin{Proposition}
\label{prop:cobordism}
The shift operator $D$ and the cobordism map $\Phi_Z$ commute and
$\s_{\Phi_Z(w)}(\alpha_0)\leq \s_w(\alpha_1)$.
\end{Proposition}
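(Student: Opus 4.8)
The plan is to prove both assertions at the level of filtered equivariant Floer homology, before passing to the symplectic-homology limit, since the cobordism map $\Phi_Z$ is by definition a direct limit of continuation-type maps. First I would recall the construction of $\Phi_Z$: given admissible Hamiltonians, one builds a Hamiltonian on the completion $\widehat{W}_1$ that is (a copy of) an admissible Hamiltonian on $\widehat{W}_0$ on the $W_0$-part and is linear with a large slope in the cobordism region $Z$ and beyond; the transfer map then arises from a monotone homotopy together with a ``neck-stretching'' or ``sandwiching'' argument identifying the low-action part of $\HF^{G,I}_*$ of this Hamiltonian with $\HF^{G,I}_*$ of an admissible Hamiltonian for $W_0$. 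Concretely, $\Phi_Z$ is, on the chain level, induced by a composition of continuation maps associated to monotone increasing homotopies of $G$-invariant parametrized Hamiltonians.

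The commutation $D\circ\Phi_Z = \Phi_Z\circ D$ is then a formal consequence of the naturality of the alternative definition of $D$ from Section \ref{sec:LS-Floer-D}. Indeed, using the cycle $\Sigma\subset\CP^m$ Poincar\'e dual to $-c_1(\pi)$, the operator $D_\Sigma$ commutes with the equivariant Floer differential \emph{and} with continuation maps: this is the same argument that shows the intersection action of homology on Floer homology is natural with respect to continuation maps (see, e.g., \cite{LO} and \cite[Rmk.\ 12.3.3]{MS}), because one can choose the same $\Sigma$ for all the moduli spaces appearing in the homotopy and the relevant parametrized moduli spaces (now with an extra homotopy parameter) remain transverse to $\ev^{-1}(\Sigma)$ for generic data. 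Hence every continuation map commutes with $[D_\Sigma]=D$ in the sense of equivariant Floer homology, and the same then holds after passing to the limit in $m$ and over the cofinal family of Hamiltonians, giving $D\circ\Phi_Z=\Phi_Z\circ D$ on $\SH^{G,I}_*$.

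For the spectral inequality, the key point is that continuation maps induced by monotone \emph{increasing} homotopies of Hamiltonians (equivalently, monotone decreasing homotopies of action functionals, with our conventions) do not increase the action filtration: if $w_1\in\HF^{G,I}_*(H_1)$ and its image under the relevant homotopy is $w_0$, then $\s_{w_0}(H_0)\le \s_{w_1}(H_1)$, directly from the minimax description \eqref{eq:s-inv2} together with \eqref{eq:action-chain}. Composing such maps and passing to the cofinal limit that defines $\Phi_Z$ and the spectral invariants $\s_w(\alpha_i)$ (using that, with our normalization, the Hamiltonian actions converge to the contact actions, so the filtration levels are preserved in the limit), one obtains $\s_{\Phi_Z(w)}(\alpha_0)\le \s_w(\alpha_1)$. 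The step I expect to be the main obstacle is verifying cleanly that the transfer map $\Phi_Z$ really is action-filtration non-increasing in the appropriate sense: the construction of $\Phi_Z$ involves a Hamiltonian with a steep slope over $Z$ and the identification of a low-action subcomplex, and one must check that the auxiliary homotopies used there (which are not all globally monotone, because of the bending near $\p W_0$) nonetheless induce maps respecting the action window $I$ up to an error that vanishes in the limit — this is essentially the content of the sandwiching argument in \cite{Vi:GAFA, Gut15}, which I would invoke rather than reprove.
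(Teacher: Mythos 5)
Your proposal is correct in substance, but note that the paper itself does not supply an argument here: it declares the proofs of Proposition \ref{prop:cobordism} and its corollary ``absolutely standard'' and omits them, so there is no official proof to match against. Judged on its own, your outline is the expected one and works. Two remarks on economy. First, for the inequality $\s_{\Phi_Z(w)}(\alpha_0)\leq \s_w(\alpha_1)$ you do not need the chain-level minimax description at all: since the continuation maps defining $\Phi_Z$ preserve the action filtration (monotone increasing homotopies of Hamiltonians decrease the action functional with the paper's conventions), $\Phi_Z$ commutes with the window-inclusion maps $i^{b'}$ of \eqref{eq:spec-inv}, so $w\in\im\big(i^{b'}\big)$ forces $\Phi_Z(w)\in\im\big(i^{b'}\big)$ and the inequality follows directly from the definition; this also sidesteps most of the ``sandwiching'' worry you raise, which is anyway absorbed into the construction of $\Phi_Z$ in \cite{Vi:GAFA,Gut15} rather than something you must re-verify. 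Second, for the commutation $D\circ\Phi_Z=\Phi_Z\circ D$ your route through the intersection-theoretic model $[D_\Sigma]$ of Section \ref{sec:LS-Floer-D} (with the same $\Sigma\subset\CP^m$ for source and target, and the usual chain-homotopy count of index-one continuation solutions through $\Sigma$) is fine, but it is not forced: the original definition of $D$ in \cite{BO:Gysin} as the connecting map of the Gysin sequence is natural with respect to continuation maps, since these induce morphisms of the short exact sequences of complexes $(\Cc^+,\p_1)\to(\CF_*,\p)\to(\Cc^-,\p_1)$, and naturality of connecting homomorphisms then gives the commutation without invoking Lemma \ref{lemma:D=delta-F}. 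Either way, passing to the limits in $m$ and over the cofinal family of admissible Hamiltonians is unproblematic, so your argument is complete modulo these standard inputs.
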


\begin{Corollary}
The spectral invariant $\s_w(\alpha)$ is Lipschitz (with Lipschitz
constant equal to one) in $\alpha$.
\end{Corollary}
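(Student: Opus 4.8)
The plan is to deduce the corollary from Proposition \ref{prop:cobordism} by a ``sandwich and rescale'' argument, the contact analogue of the $C^0$-Lipschitz estimate for $\s_w(H)$. Fix the contact manifold $(M,\xi)$ and the topological type of the filling $W$, and let $\alpha_0,\alpha_1$ be contact forms on $M$ with $\ker\alpha_0=\ker\alpha_1=\xi$. Write $\alpha_1=g\alpha_0$ with $g\colon M\to(0,\infty)$, put $g_-=\min g$ and $g_+=\max g$, and measure the distance between $\alpha_0$ and $\alpha_1$ by $\|\log g\|_{C^0}=\max(\log g_+,\,-\log g_-)$. Letting $W_i$ be the completed filling with boundary contact form $\alpha_i$, there is a canonical ``invariance'' isomorphism $\Theta\colon \SH_*^{G,+}(W_0)\to \SH_*^{G,+}(W_1)$, and the goal is to prove that for every $w\neq 0$, with $v=\Theta(w)$, one has $\big|\log\s_v(\alpha_1)-\log\s_w(\alpha_0)\big|\le\|\log g\|_{C^0}$; this is exactly $1$-Lipschitz dependence of $\s$ on $\alpha$.

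First I would arrange the geometry inside the symplectization $\big(M\times(0,\infty),\,d(r\alpha_0)\big)$. The level $\{r=1\}$ carries $\alpha_0$, the graph $\{r=g(x)\}$ carries $\alpha_1$ (since $r\alpha_0$ restricts to $g\alpha_0=\alpha_1$ on it), and $\{r=g\}$ lies between the levels $\{r=g_-\}$ and $\{r=g_+\}$, which carry $g_-\alpha_0$ and $g_+\alpha_0$. The region between $\{r=g_-\}$ and $\{r=g\}$, attached to the filling of $(M,g_-\alpha_0)$, is a genuine symplectic cobordism $A$ from $(M,g_-\alpha_0)$ to $(M,\alpha_1)$, still exact and with $c_1$ vanishing on $\pi_2$; likewise one obtains a cobordism $B$ from $(M,\alpha_1)$ to $(M,g_+\alpha_0)$, and $A\cup B$ is the radial cobordism $M\times[g_-,g_+]$. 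I would also record that scaling a contact form by a constant $c>0$ multiplies all Reeb periods by $c$ and hence gives a tautological isomorphism $\phi_c\colon \SH_*^{G,+}(W_0)\to \SH_*^{G,+}(W^{(c\alpha_0)})$ with $\s_{\phi_c(w)}(c\alpha_0)=c\,\s_w(\alpha_0)$; moreover $\phi_c$ is the transfer map of the corresponding radial cobordism, so the transfer map of $A\cup B$ equals $\phi_{g_-}\circ\phi_{g_+}^{-1}$.

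Next I would apply Proposition \ref{prop:cobordism} twice. Set $v':=\Phi_B\big(\phi_{g_+}(w)\big)\in \SH_*^{G,+}(W_1)$. Proposition \ref{prop:cobordism} for $B$ gives $\s_{v'}(\alpha_1)\le\s_{\phi_{g_+}(w)}(g_+\alpha_0)=g_+\,\s_w(\alpha_0)$. Applying it to $A$ with the class $v'$ and using functoriality of cobordism maps, $\Phi_A(v')=\Phi_{A\cup B}\big(\phi_{g_+}(w)\big)=\phi_{g_-}\phi_{g_+}^{-1}\phi_{g_+}(w)=\phi_{g_-}(w)$, so $g_-\,\s_w(\alpha_0)=\s_{\phi_{g_-}(w)}(g_-\alpha_0)\le\s_{v'}(\alpha_1)$. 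Combining, $g_-\,\s_w(\alpha_0)\le\s_{v'}(\alpha_1)\le g_+\,\s_w(\alpha_0)$, which rearranges to $\big|\log\s_{v'}(\alpha_1)-\log\s_w(\alpha_0)\big|\le\max(\log g_+,\,-\log g_-)=\|\log g\|_{C^0}$. Finally I would identify $v'$ with $\Theta(w)$: since $B$ is a symplectization-type cobordism, $\Phi_B$ is an isomorphism independent of the auxiliary data, and $\Theta:=\Phi_B\circ\phi_{g_+}$ is the standard invariance isomorphism (running the argument with $\alpha_0$ and $\alpha_1$ interchanged yields the reverse inequality and the compatibility of the two identifications). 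This proves the Lipschitz property for $\SH^{G,+}_*$; the same argument with $I$ rescaled together with the contact form covers the general filtered groups $\SH^{G,I}_*$.

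The one step requiring care — everything else being routine — is the homological bookkeeping: that the transfer map of a radial cobordism $M\times[c_0,c_1]$ is literally the rescaling isomorphism $\phi_{c_0}\circ\phi_{c_1}^{-1}$, that transfer maps compose functorially under gluing of cobordisms, and that the transfer along a symplectization-type inclusion $W^{(c_0\alpha_0)}\hookrightarrow W^{(c_1\alpha_0)}$ is an isomorphism, so that $v'=\Theta(w)$ is indeed canonical. These are standard properties of equivariant symplectic homology and its cobordism/transfer maps (see \cite{Vi:GAFA,Gut15}), and I expect making them fully precise to be the main obstacle in a detailed write-up, whereas the geometric sandwich and the two invocations of Proposition \ref{prop:cobordism} are immediate.
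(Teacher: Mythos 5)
The paper does not actually spell out a proof here -- it declares the proofs of Proposition \ref{prop:cobordism} and of this corollary ``absolutely standard'' and omits them -- and your sandwich argument is precisely that standard argument: interpret the statement as $|\log\s_{\Theta(w)}(\alpha_1)-\log\s_w(\alpha_0)|\le\|\log g\|_{C^0}$ for $\alpha_1=g\alpha_0$, trap the graph of $g$ between the radial levels $g_\pm$, and apply Proposition \ref{prop:cobordism} twice together with conformality $\s_{\phi_c(w)}(c\alpha)=c\,\s_w(\alpha)$ and the fact that the transfer map of a trivial (radial) cobordism is the rescaling isomorphism. The background facts you defer to -- composition of transfer maps under gluing and invariance/conformality for trivial cobordisms -- are indeed the right ones and are established in \cite{Gut15,Vi:GAFA}, so your outline is correct.

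One small point to patch: when $g_-<1$ the level $\{r=g_-\}$ need not exist in $\widehat{W_0}=W_0\cup\big(M\times[1,\infty)\big)$, since the inward Liouville collar of $\p W_0$ may be short and the Liouville field can have zeros inside $W_0$. This is harmless: either first replace $\alpha_1$ by $\alpha_1/g_-$ (which, by conformality, shifts $\log\s$ by the constant $\log g_-$ and makes the conformal factor $\ge 1$, so the entire sandwich lives in the cylindrical end and one gets $\log\s(\alpha_1)-\log\s(\alpha_0)\in[\log g_-,\log g_+]$), or subdivide the path $e^{sf}\alpha_0$ into small steps and use that the log-distance is a length metric. With that adjustment the write-up goes through as you describe.
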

The proofs of the proposition and the corollary are absolutely
standard and we omit them.  Furthermore, as a consequence of
Proposition \ref{prop:quotient-Floer} we obtain, by passing to the
limit, the following result.

\begin{Proposition}
\label{prop:quotient-SH}
Assume that $I\subset (0,\infty)$ and that all contractible closed
Reeb orbits of $\alpha$ with action in $I$ are non-degenerate.  Then
$\SH_*^{G,I}(W;\Q)$ is the homology of a certain complex generated
over $\Q$ by the good closed Reeb orbits with action in $I$, graded by
the Conley--Zehnder index, and filtered by the action.
\end{Proposition}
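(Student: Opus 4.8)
The plan is to deduce this from Proposition \ref{prop:quotient-Floer} together with the definition \eqref{eq:SH} of $\SH^{G,I}_*(W)$ as a direct limit over admissible Hamiltonians. Since $I\subset(0,\infty)$, the contribution of the constant one-periodic orbits, which have action close to $0$ for a $C^2$-small Hamiltonian, is automatically suppressed, and the whole argument takes place in the ``positive'' part of the theory.

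First I would fix a cofinal sequence of admissible Hamiltonians $H_j$ with $H_j\leq H_{j+1}$, of the standard form: $C^2$-small and negative on $W$, equal to $h_j(r)$ with $h_j$ convex on the cylindrical part, and linear of slope $\kappa_j\notin\CS(\alpha)$ with $\kappa_j\to\infty$ at infinity. After a routine adjustment of the profile functions (which does not affect non-degeneracy), one arranges that every one-periodic orbit of $H_j$ with action in $I$ is non-constant, lies in the cylindrical part, and belongs to a Morse--Bott circle of such orbits in correspondence with a closed Reeb orbit $\gamma$ of $\alpha$ of period $<\kappa_j$; the Hamiltonian action of this circle converges to $\CA_\alpha(\gamma)$ as $j\to\infty$. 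Because $\gamma$ is non-degenerate by hypothesis, the associated critical manifold of $\CA_{H_j}$ in $\Lambda\times_G S^{2m+1}$ --- a lens space $S^{2m+1}/\Gamma$ --- is Morse--Bott non-degenerate, so the hypotheses of Proposition \ref{prop:quotient-Floer} are met for each $H_j$ and the interval $I$.

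Next, applying Proposition \ref{prop:quotient-Floer} to each $H_j$ yields a complex $\CF_*(H_j)^G$ over $\Q$, generated by the good one-periodic orbits of $H_j$ with action in $I$, graded by the Conley--Zehnder index and filtered by the action, with homology $\HF^{G,I}_*(H_j;\Q)$. Since the endpoints of $I$ lie outside $\CS(\alpha)$ and the Hamiltonian actions converge to the contact actions, for $j$ large (or, when $b=\infty$, in each fixed range of degrees) the generating set is canonically identified with the set of good closed Reeb orbits of $\alpha$ with action in $I$; under this identification the grading becomes the Conley--Zehnder index of the Reeb orbit, the action filtration becomes the contact action, and the good/bad dichotomy is unchanged, both notions being determined by the Floquet multipliers. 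By the naturality of the construction with respect to monotone homotopies (cf.\ Remark \ref{rmk:diff}), the continuation maps $\HF^{G,I}_*(H_j;\Q)\to\HF^{G,I}_*(H_{j+1};\Q)$ are induced by morphisms $\CF_*(H_j)^G\to\CF_*(H_{j+1})^G$ compatible with these identifications. Thus the $\CF_*(H_j)^G$ form a directed system of chain complexes whose direct limit is a single complex $\CF_*(W)^G$, with limit differential $\bp$, generated over $\Q$ by the good closed Reeb orbits of $\alpha$ with action in $I$, graded by the Conley--Zehnder index and filtered by the contact action.

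Finally, since the direct limit is exact,
$$
\SH^{G,I}_*(W;\Q)=\varinjlim_j \HF^{G,I}_*(H_j;\Q)
=\varinjlim_j \H_*\big(\CF_*(H_j)^G\big)
=\H_*\big(\CF_*(W)^G,\bp\big),
$$
which is the assertion. The only genuinely delicate point is the one already present in the proof of Proposition \ref{prop:quotient-Floer}: choosing the Morse--Bott (or transversely non-degenerate) auxiliary data for the successive $H_j$, and for all $m$, coherently enough that the $\CF_*(H_j)^G$, and not merely their homologies, assemble into a directed system. Once this standard coherence argument is carried out, in the spirit of \cite[Appendix]{GHHM}, the remaining work is routine bookkeeping with the grading and the action filtration.
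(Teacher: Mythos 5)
Your proposal is correct and follows essentially the same route as the paper: a cofinal sequence of admissible Hamiltonians as in the proof of Theorem \ref{thm:LS-SH}, Proposition \ref{prop:quotient-Floer} applied to each $H_j$, identification of the generators with the good closed Reeb orbits for large $j$, naturality of the continuation maps, and passage to the direct limit using that homology commutes with direct limits (the paper treats a finite interval first and handles infinite $I$ by a diagonal process, and notes explicitly that for large $j$ the continuation maps are isomorphisms of complexes and that the Hamiltonian and contact action filtrations are equivalent, but these are the same points you make in slightly different packaging).
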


We will give a detailed proof of Proposition \ref{prop:quotient-SH} in
Section \ref{sec:LS-SH-result}. It is not clear to us if the complex
from the proposition is necessarily isomorphic to the filtered contact
homology complex of $(M,\alpha)$ when (if) the contact homology is
defined; see \cite{BO:12}. However, in any event, the nature of the
differential is not really essential for our purposes. As we have
already pointed out in Remark \ref{rmk:diff}, even if the differential
is defined explicitly, it can rarely be calculated beyond some obvious
cases.  Furthermore, as in the case of equivariant Morse or Floer
homology (see Remark \ref{rmk:diff-M}), although the differential
depends on the auxiliary data, different choices of such data result
in isomorphic complexes. The differential is ``natural'' in the same
sense as the Floer or Morse differential, i.e., a symplectic cobordism
gives rise to a homomorphism of complexes.

\subsubsection{The Lusternik--Schnirelmann inequality in equivariant symplectic
  homology}
\label{sec:LS-SH-result}
With general definitions in place, we are ready to (re)state our main
result, Theorem \ref{thm:LS1-intro} from the introduction. Recall that
a closed Reeb orbit $x$ is called isolated if there exists a tubular
neighborhood $U$ of $x$ and an interval
$\CI=(\CA_\alpha(x)-\eps, \,\CA_\alpha(x)+\eps)$ such that no periodic
orbit with action in $\CI$ enters $U$.
 
\begin{Theorem}
\label{thm:LS-SH}
Assume that $I=[a,\,b]\subset (0,\,\infty)$ and that all closed Reeb
orbits of $\alpha$ with action in $I$ are isolated.  Then, for any
non-zero element $w\in \SH_{*}^{G,I}(W;\Q)$, we have
\begin{equation}
\label{eq:LS-SH}
\s_w(\alpha)>\s_{D(w)}(\alpha).
\end{equation}
\end{Theorem}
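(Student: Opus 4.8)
The plan is to deduce this from the Floer-theoretic result, Theorem~\ref{thm:LS-Floer}, or more precisely from the vanishing of $D$ on local equivariant Floer homology established in its proof (see \eqref{eq:D=0}, \eqref{eq:D=0-2} and Proposition~\ref{prop:D-local}). First, the non-strict inequality $\s_w(\alpha)\geq\s_{D(w)}(\alpha)$ holds with no hypotheses on $\alpha$: it passes to the direct limit \eqref{eq:SH} from the non-strict Floer inequality, which itself is immediate from \eqref{eq:s-inv2}. Also, if $D(w)=0$ then $\s_{D(w)}(\alpha)=a$, while $\s_w(\alpha)\in\CS(\alpha)\cap I$ and hence $\s_w(\alpha)>a$ since the endpoints of $I$ lie outside $\CS(\alpha)$; so the strict inequality is automatic. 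Thus we may assume $D(w)\neq 0$ and, arguing by contradiction, that $\s_w(\alpha)=\s_{D(w)}(\alpha)=:c$, which then lies in the open interval $(a,\,b)$.

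Next I would record two facts about the level $c$. Since the space of closed Reeb orbits of $\alpha$ with action in any compact subinterval of $I$ is compact and, by hypothesis, consists of isolated orbits, it is finite; hence $\CS(\alpha)\cap I$ is locally finite, and we may fix $\eps>0$ with $[c-2\eps,\,c+2\eps]\subset(a,\,b)$ and $[c-2\eps,\,c+2\eps]\cap\CS(\alpha)=\{c\}$. Put $J=(c-\eps,\,c+\eps)$. The second fact is that $D\equiv 0$ on the local homology $\SH_*^{G,J}(W;\Q)$. Indeed, $\SH_*^{G,J}(W)=\varinjlim_k\HF_*^{G,J}(H_k)$ over a cofinal family of admissible Hamiltonians $H_k$; normalizing the functions $h_k$ so that the $H_k$-actions of periodic orbits converge to the corresponding Reeb periods, one sees that for large $k$ the one-periodic orbits of $H_k$ with action in $J$ are exactly the reparametrization classes of the period-$c$ closed Reeb orbits, hence isolated and non-constant ($c>0$, whereas constant orbits have action below $a$). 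The argument of Section~\ref{sec:LS-Floer-pf} then gives $D\equiv 0$ on $\HF_*^{G,J}(H_k;\Q)$, which is exactly \eqref{eq:D=0}, and passing to the limit gives $D\equiv 0$ on $\SH_*^{G,J}(W;\Q)$.

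To conclude, I would run the standard Lusternik--Schnirelmann maneuver. Set $I_-=[a,\,c-\eps]$ and $I_0=[a,\,c+\eps]$, both contained in $I$ and with endpoints outside $\CS(\alpha)$, and use the long exact sequence of the pair
\begin{equation*}
\cdots\to\SH_*^{G,I_-}(W)\stackrel{\iota}{\to}\SH_*^{G,I_0}(W)\stackrel{q}{\to}\SH_*^{G,J}(W)\to\SH_{*-1}^{G,I_-}(W)\to\cdots,
\end{equation*}
all of whose maps commute with $D$ and with the natural maps to $\SH_*^{G,I}(W)$. Since $\s_w(\alpha)=c<c+\eps$, the class $w$ lifts to some $\tilde w\in\SH_*^{G,I_0}(W)$, and then $D(\tilde w)$ is a lift of $D(w)$ to $\SH_*^{G,I_0}(W)$. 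On the other hand $\s_{D(w)}(\alpha)=c>c-\eps$ means $D(w)$ is not in the image of $\SH_*^{G,I_-}(W)\to\SH_*^{G,I}(W)$, and therefore $D(\tilde w)\notin\im\iota$, so by exactness $q(D(\tilde w))\neq 0$ in $\SH_*^{G,J}(W;\Q)$. But $q(D(\tilde w))=D(q(\tilde w))=0$ by the vanishing just established --- a contradiction, which proves $\s_w(\alpha)>\s_{D(w)}(\alpha)$.

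The step I expect to be the main obstacle is the bookkeeping in the passage to the limit underlying the second fact: one must choose the cofinal family $\{H_k\}$ and normalize the cutoffs so that, simultaneously, $\HF_*^{G,\,\cdot\,}(H_k)$ computes $\SH_*^{G,\,\cdot\,}(W)$ in the relevant degrees, the hypotheses of Theorem~\ref{thm:LS-Floer} (isolated, non-constant orbits) hold on the window $J$, and the action values of $H_k$ inside $J$ collapse to $\{c\}$, so that $\SH_*^{G,J}(W;\Q)$ genuinely is the local homology of the period-$c$ Reeb orbits and $D$ annihilates it. Once that is arranged, the compactness input (local finiteness of $\CS(\alpha)\cap I$) and the exact-sequence chase are routine.
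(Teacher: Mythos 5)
Your argument is correct, but it takes a different route than the paper. The paper proves Theorem \ref{thm:LS-SH} by choosing the cofinal family of admissible Hamiltonians $H_j$, checking that the Hamiltonian action spectra and the minimal gaps converge, $\gap(H_j)\to\gap(\alpha)>0$ (via $f_j'(T)=\rho_j(T)\to 1$ uniformly, see \eqref{eq:gap_conv}), and then applying the quantitative Floer-level statement of Corollary \ref{cor:LS-Floer} so that the strict inequality survives the limit with a uniform positive drop; there is no homological argument at the symplectic homology level at all. You instead argue by contradiction directly in $\SH^{G}$: you localize at the putative common spectral value $c$, transport the local vanishing $D\equiv 0$ (this is exactly \eqref{eq:D=0}, i.e.\ Proposition \ref{prop:D-local}/Corollary \ref{cor:D-local}, pushed through the same kind of cofinal family) to the window homology $\SH^{G,J}_*(W;\Q)$, and finish with a long-exact-sequence chase for nested action windows. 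Both routes rest on the same hard input, the vanishing of $D$ on local homology proved in Section \ref{sec:LS-Floer-pf}, and the cofinal-family bookkeeping you flag as the main obstacle is essentially the computation the paper performs (using $h_j\le 0$ and $f_j(T)\ge T$ one sees that for large $j$ the only $H_j$-orbits with action in $J$ come from the period-$c$ Reeb orbits, and they share the single Hamiltonian action value $f_j(c)$, so \eqref{eq:D=0} applies). The trade-off: the paper's argument yields the sharper quantitative bound $\s_w(\alpha)\ge\s_{D(w)}(\alpha)+\gap(\alpha)$ essentially for free and needs no exact triangles in equivariant symplectic homology, whereas your argument avoids the gap-convergence statement as such but requires the (routine, yet unproved in the paper's own proof) compatibility of $D$ with the action-window exact sequences and with the maps $i^{b'}$; your ``second fact'' is in effect Corollary \ref{cor:D-local} combined with the identification of $\SH^{G,J}_*(W;\Q)$ with the sum of local homologies of the period-$c$ orbits.
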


We emphasize again that the main new point of this theorem is that the
inequality is strict. The non-strict inequality holds without any
assumptions on the orbits and for any coefficient ring; cf.\ Remark
\ref{rmk:non-strict}. Furthermore, it is essential that in this
theorem, as in Theorem \ref{thm:LS-Floer}, we make no non-degeneracy
assumptions on $\alpha$.

\begin{proof}
  First, observe that it is sufficient to prove the theorem for an
  interval $I$ with a finite upper end-point $b$. (If the upper end of
  the interval is $\infty$, we can replace it by any
  $b> \s_w(\alpha)$.) Then $\CS(\alpha)\cap I$ is a finite set and
  hence $\gap(\alpha)$, which is by definition the infimum of positive
  action gaps in $I$, is strictly positive.

  Consider a cofinal sequence of admissible Hamiltonians $H_j$ with
  the following properties:
\begin{itemize}
\item $H_j=\kappa_jr -c_j$ on $M\times [r_j,\infty)$ where $r_j\to 1$
  and $\kappa_j\to\infty$,
\item $h_j\leq 0$ on $[1,r_j]$.
\end{itemize}
Every $x\in\PP(\alpha)$ occurs as a one-periodic orbit of $H_j$ for a
sufficiently large $j$ (depending on $x$) exactly once. This orbit,
denoted by $x_j$, lies on the level $r=\rho_j\in (1,\,r_j)$, where
$\rho_j=\rho_j(T)$ is uniquely determined by the condition $h'_j(\rho_j)=T$ with
$T:=\CA_\alpha(x)$. The Hamiltonian action of the resulting orbit
$x_j$ is
\begin{equation}
\label{eq:two-actions}
\CA_{H_j}(x_j)=\rho_j(T) T-h_j(\rho_j)=:f_j(T).
\end{equation}
Clearly, $\rho_j\to 1$ and $h_j(\rho_j)\to 0$ since the sequence $H_j$
is cofinal. Thus
$$
\CA_{H_j}(x_j)\to\CA_\alpha(x)\textrm{ as } j\to\infty.
$$
As a consequence, when $j$ is large enough, $\CS(H_j)\cap I$ is a
finite set converging to $\CS(\alpha)\cap I$ as
$j\to\infty$. 

Furthermore, we also have convergence of the minimal
action gaps:
\begin{equation}
\label{eq:gap_conv}
\gap(H_j)\to \gap(\alpha)>0,
\end{equation}
where in both cases we have intersected the action spectrum with
$I$. (To prove this, it is enough to guarantee that
$\gap(H_j)\not\to 0$.) To establish \eqref{eq:gap_conv}, observe first
that, by \eqref{eq:two-actions}, $\CS(H_j)=f_j\big(\CS(\alpha)\big)$,
where $f_j(T)$ for any $T\in I$ is given by \eqref{eq:two-actions}
with $\rho_j$ determined via $h'_j(\rho_j)=T$. A direct calculation
shows that $f'_j(T)=\rho_j(T)\to 1$ uniformly on $I$ as $j\to \infty$,
which implies \eqref{eq:gap_conv}.

Finally note that all one-periodic orbits of $H_j$ with action
in $I$ are non-constant since $a>0$. Now the theorem follows from Corollary
\ref{cor:LS-Floer}.
\end{proof}

\begin{proof}[Proof of Proposition \ref{prop:quotient-SH}] Throughout
  the proof we keep the notation from the proof of Theorem
  \ref{thm:LS-SH}.  Assume first that the interval $I$ is finite and
  its end points are outside $\CS(\alpha)$. Let $H_j$ be a cofinal,
  increasing sequence of admissible Hamiltonians as in the proof of
  Theorem \ref{thm:LS-SH}. Then it is not hard to see that when $j$ is
  large enough the generators $x_j$ of the complex $\CF_*(H_j)^G$ from
  Proposition \ref{prop:quotient-Floer} are naturally in one-to-one
  correspondence with the good closed Reeb orbits $x$ with action in
  $I$. Furthermore, again when $j$ is large enough, a monotone
  homotopy from $H_j$ to $H_{j+1}$ induces an isomorphism
  $\CF_*(H_j)^G\to \CF_*(H_{j+1})^G$.  (This isomorphism has the form
  $id+\Phi$, where $\Phi$ is strictly Hamiltonian action decreasing.)
  Furthermore, when the functions $h_j$ are concave (i.e.,
  $h''_j\leq 0$), the Hamiltonian action filtration and the contact
  action filtration are equivalent:
  $\CA_{H_j}(x_j)\geq \CA_{H_j}(x_j')$ if and only if
  $\CA_{\alpha}(x)\geq \CA_{\alpha}(x')$.

  Thus we have a well-defined complex
  $\CF_*(\alpha)^G:=\lim \CF_*(H_j)^G$, graded by the Conley--Zehnder
  index and filtered by the contact action, with homology equal to
  $\SH_*^{G,I}(W)$. In fact, this complex with its grading and
  filtration is isomorphic to $\CF_*(H_j)^G$ for a large $j$. For any,
  not necessarily finite, interval $I$, the complex $\CF_*(\alpha)^G$
  with the required properties is constructed by exhausting $I$ by
  finite intervals and applying the diagonal process. (Here we again
  use the fact that the homology functor and the direct limit functor
  commute.)
\end{proof}

The proof of Theorem \ref{thm:LS-SH} also lends itself readily for the
definition of the local equivariant symplectic homology. Namely, let
$x$ be an isolated closed Reeb orbit on $M$. Then the corresponding
orbit $x_j$ of $H_j$ is also isolated, although the size of the
isolating neighborhood goes to zero in the $r$-direction as
$j\to\infty$. It is not hard to see that for any fixed degree $*$ the
equivariant Floer homology $\HF_*^G(x_j, H_j)$ stabilizes as
$j\to\infty$.

\begin{Definition}
  The \emph{equivariant local symplectic homology} $\SH_*^G(x)$ of $x$
  is by definition the equivariant local Floer homology
  $\HF_*^G(x_j, H_j)$ where $j$ is large enough.
\end{Definition}

\begin{Example}[Non-degenerate orbits]
  Assume that $x$ is non-degenerate. Then by Examples
  \ref{ex:local-nondeg-1} and \ref{ex:local-nondeg-2}, $\SH_*^G(x;\Q)$
  is one-dimensional and concentrated in degree equal to the
  Conley--Zehnder $\mu(x)$ when $x$ is good and zero when
  $x$ is bad.
\end{Example}

By Proposition \ref{prop:local-support}, $\SH_*^G(x;\Q)$ is supported
in $[\mu_-(x),\,\mu_+(x)]$, which in turn is contained in
$[\hmu(x)-n+1,\,\hmu(x)+n-1]$. The operator $D$ obviously descends to
$\SH_*^G(x)$. However, by Proposition \ref{prop:D-local}, the
resulting operator is trivial.

\begin{Corollary}
\label{cor:D-local}
The shift operator in the local equivariant symplectic homology of an
isolated orbit is identically zero: $D \equiv 0$ in $\SH_*^G(x;\Q)$.
\end{Corollary}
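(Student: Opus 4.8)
The plan is to reduce the statement directly to its Floer-homological counterpart, Proposition \ref{prop:D-local}, using the very definition of $\SH_*^G(x)$. Recall that, by the Definition preceding the corollary, the equivariant local symplectic homology $\SH_*^G(x)$ is by definition the equivariant local Floer homology $\HF_*^G(x_j,H_j)$ (in the sense of Section \ref{sec:local}) for $j$ large, where $H_j$ is the cofinal sequence of admissible Hamiltonians constructed in the proof of Theorem \ref{thm:LS-SH} and $x_j$ is the one-periodic orbit of $H_j$ corresponding to the closed Reeb orbit $x$, lying on the level $r=\rho_j\in(1,\,r_j)$. The shift operator on $\SH_*^G(x)$ is, by construction, the one on $\HF_*^G(x_j,H_j)$ — the local analog of the Floer trajectory count / intersection-index definition from Sections \ref{sec:shift-def1} and \ref{sec:LS-Floer-D} — which is compatible with the stabilization as $j\to\infty$ by naturality of $D$ under the monotone continuation maps $\HF_*^G(x_j,H_j)\to\HF_*^G(x_{j+1},H_{j+1})$.

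First I would observe that the orbit $x_j$ is non-constant: since $x$ is a closed Reeb orbit, its contact action $\CA_\alpha(x)$ is positive, hence for $j$ large, when $\CA_{H_j}(x_j)$ is close to $\CA_\alpha(x)$, the orbit $x_j$ has positive Hamiltonian action and so cannot be one of the constant orbits of $H_j$, which sit in $W$ where $H_j$ is $C^2$-small and negative. Moreover, $x_j$ is isolated in the extended phase space: this is exactly the remark made just before the Definition, namely that the corresponding orbit $x_j$ of $H_j$ is isolated, with isolating neighborhood shrinking in the $r$-direction as $j\to\infty$ but nonempty for each fixed $j$.

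Then I would invoke Proposition \ref{prop:D-local}, applied to the isolated non-constant one-periodic orbit $x_j$ of the autonomous Hamiltonian $H_j$ on $\widehat{W}$: it yields $D\equiv 0$ on $\HF^G_*(x_j;\Q)=\HF^G_*(x_j,H_j;\Q)$. Since this group is $\SH_*^G(x;\Q)$ and the identification intertwines the two shift operators, we conclude $D\equiv 0$ on $\SH_*^G(x;\Q)$, which is the claim.

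As for the main obstacle: there is essentially none — all the real work is already contained in Proposition \ref{prop:D-local}, itself proved as part of the proof of Theorem \ref{thm:LS-Floer} via \eqref{eq:D=0-2}, using that the ``hyperplane-section'' cycle $\tilde{\Sigma}$ on the lens space $C=C_x$ is $\Q$-homologically trivial because $\H^2(C;\Q)=0$. The only point requiring a word of care is the compatibility of $D$ with the limit over $j$, i.e., that the shift operator on the stabilized groups $\HF_*^G(x_j,H_j)$ is well defined and agrees with $D$ on $\SH_*^G(x)$; this follows from the naturality of $D$ under continuation maps established in Section \ref{sec:LS-Floer-def}, exactly as in the passage to the limit in the proof of Theorem \ref{thm:LS-SH}.
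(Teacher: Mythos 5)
Your proposal is correct and follows essentially the same route as the paper: the corollary is obtained directly from Proposition \ref{prop:D-local} applied to the isolated non-constant orbit $x_j$ of the admissible Hamiltonian $H_j$, using that $\SH_*^G(x;\Q)$ is by definition $\HF_*^G(x_j,H_j;\Q)$ for large $j$ and that $D$ descends to this local homology. Your extra remarks on the non-constancy of $x_j$ and the compatibility of $D$ with the stabilization in $j$ are accurate details that the paper leaves implicit.
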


The argument from \cite {BO:12} readily translates to the proof of the
fact that $\SH_*^G(x;\Q)$ is isomorphic, up to a shift of degree, to
the local contact homology $\HC_*(x)$ introduced in \cite{HM}.
Alternatively, Proposition \ref{prop:quotient-SH} carries over to the
local case and $\SH_*^G(x;\Q)$ is isomorphic to the homology of a
certain complex graded by the Conley--Zehnder index and generated by
the good orbits that $x$ splits into under a non-degenerate
perturbation (cf.\ Remark \ref{rmk:diff}).  When $x$ is simple,
$\HC_*(x)$ is isomorphic to the local Floer homology of the Poincar\'e
return map $\varphi$ in $M$; see \cite{HM,GHHM}. When $x=y^k$, with
$y$ simple, $\HC_*(x)$ is expected to be isomorphic to the equivariant
local Floer homology $\HF_*^{\Z_k}(\varphi)$; see \cite{GHHM}.

\begin{Remark}[Generalizations and variations, II]
\label{rmk:general-W}
The results of this section readily extend to non-contractible
periodic orbits, although the conditions on $W$ cannot be relaxed to
the same degree as in Remark \ref{rmk:general-V}. To be more specific,
one can focus on closed Reeb orbits in a fixed free homotopy class
$\ff$ of loops in $W$. Then, for any $\ff$, the condition that
$\omega$ is exact can be replaced by that $\omega$ is aspherical and,
for instance, $\pi_1(M)\to \pi_1(W)$ is a monomorphism. (The role of
this condition is to ensure that the contact action in $M$ is equal to
the symplectic area in $W$ giving rise to the filtration in symplectic
homology. In general, the symplectic area can differ from the contact
action as can be seen from the example of the pre-quantization disk
bundle over a surface of genus $g\geq 2$.)  Furthermore, when
$\ff\neq 1$ we need to assume $c_1(TW)$ to be atoroidal. Note also
that the condition $I\subset (0,\,\infty)$ can be dropped in
Proposition \ref{prop:quotient-SH} and Theorem \ref{thm:LS-SH} when
$\ff\neq 1$.

When $W$ is exact and $c_1(TW)=0$, the equivariant symplectic homology
of $W$ is defined for all free homotopy classes. This homology is
naturally filtered by the action and graded by the free homotopy
class. It is clear that the analogs of the results from this section
including Theorem \ref{thm:LS-SH} hold in this case; cf.\ Remark
\ref{rmk:general-V}.
\end{Remark}

\subsection{Examples and applications}
\label{sec:LS-SH-examples} 
In this section, having our main applications to dynamics in mind, we
consider some simplest cases where Theorem \ref{thm:LS-SH} can be
utilized to produce non-obvious results: the standard contact sphere
$S^{2n-1}$, the boundary of a displaceable Liouville domain, and the
boundary of a Liouville domain in $T^*S^n$ containing the zero
section, e.g., the standard unit cotangent bundle $ST^*S^n$.

\subsubsection{The standard contact $S^{2n-1}$ and displaceable
  Liouville domains}
\label{sec:displ}
Let $\alpha$ be a contact form on $M=S^{2n-1}$ supporting the standard
contact structure. Then, as is well known, $(M,d\alpha)$ can be
embedded as a hypersurface in $\R^{2n}$ bounding a star-shaped domain
$W$. We take $I=[\delta,\infty)$ and work with the standard positive
equivariant symplectic homology $\SH_*^{G,+}(W;\Q)$, where $G=S^1$.
The homology is concentrated and equal to $\Q$ in every second degree
starting with $n+1$:
$$ 
\SH_*^{G,+}(W;\Q) =
\begin{cases}
  \Q & \text{for $*=n+1,\, n+3,\ldots $,} \\
   0 & \text{otherwise};
\end{cases}
$$
and moreover the shift operator 
$$
D\colon \Q=\SH_*^{G,+}(W;\Q) \to \SH_{*-2}^{G,+}(W;\Q)=\Q
$$ 
is an isomorphism for $*=n+3,\, n+5,\ldots$; see, e.g.,
\cite{BO:Gysin} and references therein. Therefore, there
exists a sequence of non-zero homology classes $w_k\in
\SH_{n+2k-1}^{G,+}(W;\Q)$, $k\in \N$, such that $Dw_{k+1}=w_k$. 

In a similar vain but slightly more generally, we may assume that
$(M,\alpha)$ is a restricted contact type hypersurface in $\R^{2n}$
bounding a region $W$. (This is automatically the case when, e.g.,
$W$ is a simply connected Liouville domain in $\R^{2n}$.)  Then we
have $\lambda W\subset B_r\subset W \subset B_R$, for two balls in
$\R^{2n}$ and some $\lambda>0$. Thus the cobordism map
$\SH^{G,+}_*(B_R;\Q)\to \SH^{G,+}_*(W;\Q)$ is an isomorphism, and we
conclude that the image of $w_k$ in $\SH^{G,+}_*(W;\Q)$ is
non-zero. Hence, as above, we have a well-defined sequence of non-zero
elements, which we still denote by $w_k$, of degree $n+2k-1$ such that
$Dw_{k+1}=w_k$. Set $\s_k=\s_{w_k}$, where $\s_{w_k}$ is defined
by~\eqref{eq:spec-inv}.

As an immediate consequence of Theorem
\ref{thm:LS-SH}, we obtain

\begin{Corollary}
\label{cor:sphere} 
Let $(M,\alpha)$ be a restricted contact type hypersurface in
$\R^{2n}$. Then there exists a \emph{carrier map}
$$
\psi\colon \N\to \PP(\alpha),\quad k\mapsto y_k
$$
such that $\s_k(\alpha)=\CA_\alpha(y_k)$ and
\begin{equation}
\label{eq:action<}
\CA_{\alpha}(y_1)\leq \CA_{\alpha}(y_2)\leq \CA_{\alpha}(y_3)\leq
\cdots
\quad\textrm{and}\quad
\mu_-(y_k)\leq n+2k-1\leq \mu_+(y_k).
\end{equation}
In particular, 
$$
\big|\hmu(y_k)-(n+2k-1)\big|\leq n-1.
$$
Furthermore, assume that all orbits in $\PP(\alpha)$ are
isolated. Then $\psi$ is an injection, $\SH_*^G(y_k)\neq 0$ in degrees
$*=n+2k-1$, and
$$
\s_1(\alpha)<\s_2(\alpha)<\s_3(\alpha)<\cdots.
$$
or, in other words,
$$
\CA_{\alpha}(y_1)<\CA_{\alpha}(y_2)<\CA_{\alpha}(y_3)<\cdots.
$$
Finally, when $\alpha$ is non-degenerate, the orbits $y_k$ are good.
\end{Corollary}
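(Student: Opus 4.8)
The plan is to derive the corollary from Theorem~\ref{thm:LS-SH} applied to the distinguished classes $w_k$, together with the structural input recalled just above: $\SH^{G,+}_*(W;\Q)=\Q$ for $*=n+1,n+3,\ldots$ and $0$ otherwise, with $D$ an isomorphism in that range, so that $Dw_{k+1}=w_k$. First I would set $\s_k:=\s_{w_k}(\alpha)$; since $w_k\neq 0$, the value $\s_k$ lies in $\CS(\alpha)$, hence equals $\CA_\alpha(y_k)$ for some $y_k\in\PP(\alpha)$, and this is the carrier map $\psi$ (the precise choice of $y_k$ will be adjusted below so that the index estimates hold). The weak monotonicity $\CA_\alpha(y_1)\leq\CA_\alpha(y_2)\leq\cdots$ is then immediate: $D$ is action non-increasing for any coefficient ring and without any hypothesis on $\alpha$, so $\s_k=\s_{D(w_{k+1})}(\alpha)\leq\s_{w_{k+1}}(\alpha)=\s_{k+1}$.

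Next I would establish the index estimate, first in the non-degenerate case. There Proposition~\ref{prop:quotient-SH} presents $\SH^{G,+}_*(W;\Q)$ as the homology of a complex generated over $\Q$ by the good closed Reeb orbits, graded by the Conley--Zehnder index and filtered by the action; by the minimax description of spectral invariants (cf.\ \eqref{eq:s-inv2}) applied to this filtered complex, and the discreteness of $\CS(\alpha)$ on a compact manifold, $\s_k=\CA_\alpha(z)$ for some good orbit $z$ with $\mu(z)=n+2k-1$, and we take $y_k:=z$, so that $\mu_-(y_k)=\mu_+(y_k)=n+2k-1$ trivially. For a general, possibly degenerate, $\alpha$ I would approximate: choose a $C^\infty$-cofinal sequence of non-degenerate contact forms $\alpha_\nu\to\alpha$, pick good $\alpha_\nu$-orbits $y_k^\nu$ with $\mu(y_k^\nu)=n+2k-1$ and $\CA_{\alpha_\nu}(y_k^\nu)=\s_{w_k}(\alpha_\nu)$, and use the Lipschitz dependence of $\s_{w_k}$ on the contact form to get $\CA_{\alpha_\nu}(y_k^\nu)\to\s_k$. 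The $y_k^\nu$ then have uniformly bounded periods, so a subsequence converges to a closed $\alpha$-Reeb orbit $y_k\in\PP(\alpha)$ with $\CA_\alpha(y_k)=\s_k$; by the semicontinuity of the Conley--Zehnder index under such limits (a nearby non-degenerate orbit has index inside $[\mu_-,\mu_+]$ of the limit, cf.\ \eqref{eq:mu-del} and the proof of Proposition~\ref{prop:local-support}), $\mu_-(y_k)\leq n+2k-1\leq\mu_+(y_k)$, whence $|\hmu(y_k)-(n+2k-1)|\leq n-1$ by Proposition~\ref{prop:local-support}. The real work of the whole argument is in this compactness step: one must check that the limit of the $y_k^\nu$ is a genuine (possibly multiply covered) closed Reeb orbit of $\alpha$, not a broken configuration, and that the index inequality survives the limit — standard, but not wholly routine.

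Finally, suppose all orbits in $\PP(\alpha)$ are isolated. Then Theorem~\ref{thm:LS-SH} applies to $w_{k+1}$ and yields $\s_{k+1}=\s_{w_{k+1}}(\alpha)>\s_{D(w_{k+1})}(\alpha)=\s_k$, so the actions are strictly increasing; in particular the $y_k$ have pairwise distinct actions and $\psi$ is injective. To get $\SH^G_*(y_k;\Q)\neq 0$ in degree $n+2k-1$ I would use the long exact sequence of the pair. Writing $c=\s_k$ and choosing $\eps>0$ so small that $(c-\eps,c+\eps)\cap\CS(\alpha)=\{c\}$ (possible since isolatedness makes $\CS(\alpha)$ locally finite, as in the proof of Theorem~\ref{thm:LS-SH}), the definition of $\s_k$ shows that a preimage of $w_k$ in $\SH^{G,[\delta,c+\eps]}_{n+2k-1}(W)$ does not lift to $\SH^{G,[\delta,c-\eps]}_{n+2k-1}(W)$, hence has non-zero image in the window homology $\SH^{G,(c-\eps,c+\eps)}_{n+2k-1}(W)$; and by the action-level decomposition $\CF^{G,\CI_c}_*(\tH)=\bigoplus_x\CF^G_*(x)$ from the proof of Theorem~\ref{thm:LS-Floer}, passed to the symplectic homology limit along the cofinal family from the proof of Theorem~\ref{thm:LS-SH}, one gets $\SH^{G,(c-\eps,c+\eps)}_*(W)\cong\bigoplus_{x:\,\CA_\alpha(x)=c}\SH^G_*(x)$. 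So some orbit $x$ with $\CA_\alpha(x)=c$ satisfies $\SH^G_{n+2k-1}(x)\neq 0$; redefining $y_k$ to be this $x$ keeps $\CA_\alpha(y_k)=\s_k$ and, via Proposition~\ref{prop:local-support}, re-proves the index estimate without the approximation argument. When $\alpha$ is moreover non-degenerate, $\SH^G_*(y_k;\Q)\neq 0$ forces $y_k$ to be good, since $\SH^G_*(\cdot;\Q)$ vanishes on bad orbits (Example on non-degenerate orbits after the Definition of local equivariant symplectic homology), which completes the proof.
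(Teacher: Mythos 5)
Your proposal is correct and follows essentially the same route as the paper's proof: spectral invariants attached to the chain $w_k$ with $Dw_{k+1}=w_k$, the strict inequality of Theorem \ref{thm:LS-SH} in the isolated case, carrier orbits detected by non-vanishing local equivariant symplectic homology in degree $n+2k-1$ (the paper delegates this step to ``arguing as in \cite{CGG}'', which is precisely your action-window decomposition into local homologies), the index bounds via Proposition \ref{prop:local-support}, and a limiting argument for the general case. The only cosmetic difference is the order: you first treat the general case by non-degenerate approximation and compactness and then localize in the isolated case, whereas the paper does the isolated case first and obtains the general case ``by continuity''.
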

This corollary is a minor generalization of Theorem
\ref{thm:LS2-intro} from the introduction.  Note that, in general, the
carrier map is not unique. However, it becomes unique when, for
instance, all closed characteristics on $M$ have different actions.

\begin{proof} Assume first that all orbits in $\PP(\alpha)$ are
  isolated. It is easy to show by arguing as in, e.g., \cite{CGG} that
  for every $k\in \N$ there exists an orbit $y_k$ such that
  $\CA_{\alpha}(y_k)=\s_k(\alpha)$ and $\SH_{n+2k-1}^G(y_k;\Q)\neq 0$.
  (If such an orbit is not unique, we just pick one of them.)  Then
  the map $k\mapsto y_k$ is an injection due to the fact that the
  inequalities in Corollary \ref{cor:sphere} are strict. The second
  inequality relating $\hmu(y_k)$ and $k$ holds by
  Proposition \ref{prop:local-support}. The general case follows by
  continuity, but, of course, the strict inequalities and hence the
  injectivity of $\psi$ are lost in the process.
\end{proof}

\begin{Example}[Ellipsoids]
\label{ex:ellipsoids}
Let $M$ be the ellipsoid 
$$
\sum_j\frac{|z_j|^2}{r_j^2}=1
$$
in $\C^n$ with the standard contact form $\alpha$. Let us combine $n$
sequences $\pi r_j^2 k$, $k\in\N$, into one monotone increasing
sequence $\s_1\leq \s_2\leq \cdots $. Then this is exactly the
sequence of spectral invariants $\s_k(\alpha)$. The Reeb orbits on $M$
are isolated if and only if $r_j^2=q r_{j'}^2$ with $q\in\Q$ only when
$j=j'$, i.e., the sequences $\pi r_j^2 k$ do not overlap or,
equivalently, the sequence $\s_k$ is strictly increasing.
\end{Example}

\begin{Remark} When $W$ is convex, the spectral invariants
  $\s_k(\alpha)$ are believed to be equal to the Ekeland--Hofer
  capacities of $W$, \cite{EH}. In this case, variants of the
  corollary are known in a form not relying on the machinery of Floer
  or symplectic homology; see \cite{Ek,Lo}.
\end{Remark}

The only feature of the domain $W$ essential for Corollary
\ref{cor:sphere} is that there exists an infinite chain of non-zero
classes $w_k$ such that $Dw_{k+1}=w_k$. Thus, whenever $W$ is a
Liouville domain with such a chain and $c_1(TW)|_{\pi_2(W)}=0$, the
corollary holds (for the trivial free homotopy class) with $n+2k-1$ in the
index bounds replaced by the degree of~$w_k$. When the free homotopy
class is non-trivial, it suffices to require in addition that
$c_1(TW)=0$; cf.\ Remark \ref{rmk:general-W}.


For instance, let $W$ be a subcritical Stein manifold with
$c_1(TW)=0$. Then the above results extend to $W$ word-for-word for
the trivial free homotopy class. Indeed, by \cite[Cor.\
1.3]{BO:Gysin}, there exists a sequence of non-zero homology classes
$w_k\in \SH_{n+2k-1}^{G,+}(W;\Q)$, $k\in \N$, such that $Dw_{k+1}=w_k$
and thus Corollary \ref{cor:sphere} holds in this case exactly as
stated. In fact, every non-zero element in $\H_d(W,\p W;\Q)$ gives
rise to such a sequence starting with degree $d+1-n$. However, it is
not clear to us how to make use of these multiple sequences.

More generally, Corollary \ref{cor:sphere} holds when $W$ is a
Liouville domain displaceable in $\widehat{W}$ or even when $W$ is a
displaceable Liouville subdomain in some other Liouville manifold,
provided that $c_1(TW)=0$. Indeed, as is well known, then the full
ordinary symplectic homology of $W$ vanishes; see, e.g.,
\cite{CFO,Ri}. As a consequence, we also have $\SH^G_*(W)=0$. Now,
exactly as for $\R^{2n}$, the Gysin exact sequence implies that there
exists a sequence of non-zero homology classes
$w_k\in \SH_{n+2k-1}^{G,+}(W;\Q)$, $k\in \N$, such that
$Dw_{k+1}=w_k$; cf.\ \cite{BO:Gysin}.

\subsubsection{Simple Reeb orbits on $ST^*S^n$} 
\label{sec:T^*S^n-1}
Our next objective is to extend some of the results from Section
\ref{sec:displ} to Liouville domains in $T^*S^n$ containing the zero
section $S^n$. For the sake of simplicity we will asume throughout
this section that $n\geq 3$.

Let $W$ be a such a domain with smooth boundary and $M=\p W$. In other
words, $M$ is a restricted contact type hypersurface $M\subset T^*S^n$
enclosing $S^n$.  For instance, $M$ can be the unit cotangent bundle
$ST^*S^n$ with respect to the round metric and $W$ is then the unit
disk bundle $W_1$, or we can take as $W$ any compact fiberwise
star-shaped domain with smooth boundary.

\begin{Proposition}
  \label{prop:D-STS} 
  We have
$$
\dim\SH_{k}^{G,+}(W;\Q)=
\begin{cases}
0 & \textrm{for $k\equiv n\pmod 2$,}\\
2 & \textrm{for $k=j(n-1)$ for all $j>1$ when $n$ is odd,}\\ 
2 & \textrm{for $k=j(n-1)$ for odd $j>1$ when $n$ is even,}\\
1 & \textrm{for all other $k\equiv n-1\pmod 2$.}
\end{cases}
$$
For every $j\in\N$, there exist $n$ non-zero elements
$w_i\in \SH_{2i+(2j-1)(n-1)}^{G,+}(W;\Q)$, $i=0,\,\ldots,\,n-1$, such
that $Dw_{i+1}=w_i$.
\end{Proposition}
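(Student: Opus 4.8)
The plan is to reduce the computation to the round unit cotangent bundle, to compute the equivariant symplectic homology there by a Morse--Bott argument, and then to read off the $D$--chains from the resulting direct--sum decomposition.

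\emph{Reduction to the round metric.} Since $M=\p W$ encloses the zero section and $W$ is compact, there are constants $0<\lambda<\mu$ with $\lambda W_1\subset W\subset\mu W_1$ (and also $W\subset\mu W_1\subset\mu'W$ for some large $\mu'$), where $W_1\subset T^*S^n$ is the unit disk cotangent bundle of the round metric; all these domains are exact with $c_1=0$, since $c_1\bigl(T(T^*S^n)\bigr)=0$. Combining the cobordism maps of Proposition \ref{prop:cobordism}, the Liouville rescaling isomorphisms $\SH_*^{G,+}(\lambda W_1)\cong\SH_*^{G,+}(\mu W_1)$, and the usual sandwich argument over the two chains of inclusions above, one gets that the cobordism maps $\SH_*^{G,+}(\mu W_1)\to\SH_*^{G,+}(W)\to\SH_*^{G,+}(\lambda W_1)$ are isomorphisms commuting with $D$. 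Hence it suffices to prove the proposition for $W=W_1$.

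\emph{Morse--Bott computation for $W_1$.} For the round metric the Reeb flow on $ST^*S^n$ is the $2\pi$--periodic geodesic flow, so along a cofinal family of admissible Hamiltonians $H_\ell=h_\ell(r)$ (as in the proof of Theorem \ref{thm:LS-SH}) the one--periodic orbits at the $k$--th action level fill out a Morse--Bott nondegenerate manifold $P_k$ of $k$--fold iterated great circles. As a manifold $P_k\cong ST^*S^n$ (a parametrized $k$--periodic geodesic is determined by its initial unit covector), the reparametrization $G=S^1$--action on $P_k$ is locally free with all stabilizers $\Z_k$, and $P_k/G$ is the oriented Grassmannian $\widetilde{\Gr}_2(\R^{n+1})$, i.e.\ the complex quadric $Q_{n-1}$ of complex dimension $n-1$; thus $\H_*^G(P_k;\Q)\cong\H_*(Q_{n-1};\Q)$, which is concentrated in even degrees $0,2,\dots,2(n-1)$ and is two--dimensional in the middle degree $n-1$ exactly when $n$ is odd. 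By Proposition \ref{prop:MB}, applied at each action level and passed to the limit over $\ell$ and $m$, $\SH_*^{G,+}(W_1;\Q)$ carries an action filtration with associated graded $\bigoplus_{k\geq1}\H_*^G(P_k;\Q)$, the $k$--th summand shifted into symplectic degrees $(2k-1)(n-1)+2i$, $i=0,\dots,n-1$; the value of the shift comes from the classical index of the $k$--th iterate of a great circle on the round $S^n$ in the normalization of Section \ref{sec:conventions}. Because every summand lives in degrees $\equiv n-1\pmod2$ while the action--filtration differential changes degree by one, that spectral sequence degenerates, so $\SH_*^{G,+}(W_1;\Q)$ is exactly the above direct sum. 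Reading off $b_{2i}(Q_{n-1})$ in each block — and noting that consecutive blocks overlap precisely in degree $(2j+1)(n-1)=(2(j+1)-1)(n-1)$, while for $n$ odd the middle class contributes a second generator in degrees $2l(n-1)$ — gives the dimension formula of the proposition; in particular no generator ever has degree $\equiv n\pmod2$.

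\emph{The $D$--chains.} By Proposition \ref{prop:MB}, $D$ preserves each block and acts on $\H_*^G(P_k;\Q)\cong\H_*(Q_{n-1};\Q)$ as the Gysin shift of the orbi-bundle $P_k\to Q_{n-1}$, i.e.\ as capping with $-e$, where $e\in\H^2(Q_{n-1};\Q)$ is its Euler class. From the Gysin sequence of $S^{n-1}\to ST^*S^n\to S^n$ one checks that $e$ is a nonzero multiple of the ample generator $h$ of $\H^2(Q_{n-1};\Q)$, so $e^{\,n-1}\neq0$ in $\H^{2(n-1)}(Q_{n-1};\Q)$. For fixed $j\in\N$ set $w_i:=(-e)^{\,n-1-i}\cap[Q_{n-1}]\in\H_{2i}(Q_{n-1};\Q)\cong\H_{2i}^G(P_j;\Q)$, regarded as an element of $\SH_{2i+(2j-1)(n-1)}^{G,+}(W_1;\Q)$ via the degeneration above, for $i=0,\dots,n-1$. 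Then $w_i\neq0$ (since $e^{\,n-1-i}\neq0$) and $Dw_{i+1}=w_i$ by construction, which is the required chain; transporting back along the isomorphisms of the first step proves the proposition for the original $W$. (This also agrees with the known description of $\SH_*^{G,+}(ST^*S^n;\Q)$ in terms of the $S^1$--equivariant homology of the free loop space of $S^n$ relative to constant loops.)

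\emph{Main obstacle.} The delicate step is the middle one: making Proposition \ref{prop:MB} operate simultaneously at all action levels along a cofinal family, controlling the limits over the exhausting family and over $m$ so that one genuinely recovers the action--filtration spectral sequence of $\SH_*^{G,+}(W_1)$, and pinning down the index shift $(2k-1)(n-1)$ in the paper's sign conventions. Granting this, the parity argument collapsing the spectral sequence is immediate, and the rest is bookkeeping with the cohomology ring of $Q_{n-1}$.
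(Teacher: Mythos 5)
Your proposal is correct and follows essentially the same route as the paper: reduce to the round unit disk bundle via the cobordism (sandwich) maps, compute $\SH_*^{G,+}(W_1;\Q)$ by the Morse--Bott decomposition with $P_k\cong ST^*S^n$, $P_k/G=\Gr^+(2,n+1)$ and index shift $(2k-1)(n-1)$, and realize $D$ as capping with the first Chern (Euler) class of the circle orbi-bundle, whose powers give the chain $w_i$. Your extra details (identifying the base with the quadric $Q_{n-1}$, verifying nontriviality of the Euler class via the Gysin sequence of $S^{n-1}\to ST^*S^n\to S^n$, and the explicit parity argument for degeneration of the action spectral sequence) are elaborations of the same argument rather than a different method.
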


\begin{Remark}
\label{rmk:prequant-1}
The first part of the proposition is standard and included only for
the sake of completeness; see, e.g., \cite{KvK}.  The second part of
Proposition \ref{prop:D-STS} holds in a more general setting than
considered here. Namely, with suitable modifications, it holds when
$M$ is a pre-quantization $S^1$-bundle over a closed symplectic
manifold $B^{2n}$ and $W$ is the disk bundle. Note also that in
general $\SH_{*}^{G,+}(W;\Q)$ may have a much longer sequence of
non-vanishing classes with $Dw_{i+1}=w_i$ than the sequences of length
$n$ coming from the proposition. For instance, $S^{2n-1}$ admits such
an infinite sequence; see Section \ref{sec:displ}. We do not know how
long a sequence for $ST^*S^n$ can actually be.
\end{Remark}

\begin{proof}
  First, we note that it is enough to prove the proposition for the
  unit disk bundle $W_1$. Indeed, it is easy to see from the
  inclusions $\lambda W\subset W_R\subset W \subset W_{R'}$, where
  $W_R$ and $W_{R'}$ are disk bundles and $\lambda>0$ is sufficiently
  small, that by Proposition \ref{prop:cobordism}
$$
\SH_{*}^{G,+}(W;\Q)\cong \SH_{*}^{G,+}(W_1;\Q).
$$

The homology $\SH_{*}^{G,+}(W_1;\Q)$ can be easily calculated using
the Morse--Bott techniques. Indeed, recall that $ST^*S^n$ is a
$G=S^1$-principal bundle whose base $B$ is the Grassmannian
$\Gr^+(2,n+1)$ of oriented two-planes in $\R^{n}$. The Reeb flow on
$ST^*S^n$ is the geodesic flow on $S^n$ with respect to the round
metric. Hence, the flow is Morse--Bott in the sense of
\cite{Bo:thesis,Es,Poz} and its ``critical sets'' $P_j$ are formed by
$j$-iterated geodesics for all $j\in\N$. The set $P_j$ is
equivariantly diffeomorphic to $ST^*S^n$ with the $G$-action obtained
by combining the standard action with the $j$-fold covering $G\to G$.
By Proposition \ref{prop:MB} and since $\H_*(B;\Q)$ vanishes in odd
degrees, $\SH_{*}^{G,+}(W_1;\Q)$ breaks down into the sum of infinite
number of terms $\H_*^G(P_j;\Q)$ up to a shift of degree. The
$G$-action on $P_j$ is locally free and $P_j/G=B$. Thus,
$\H_*^G(P_j;\Q)=\H_*(B;\Q)$.

In fact, $\H_*(B;\Q)$ has one generator $w'_i\neq 0$ in every even
degree $i=0,2,\ldots, 2n-2$ and also, when $n-1$ is even, one extra
generator in degree $n-1$ (the middle of the range); see, e.g.,
\cite{KvK} and references therein. In all other degrees the homology
is zero.

With our conventions, the shift for $P_j$ is equal to $-(n-1)+j\Delta$
where $\Delta$ is the mean index of a closed Reeb orbit in $P_1$ (a
simple geodesic) or, equivalently,
$\Delta=2\left<c_1(ST^*S^n\to B),u\right>$, where $u$ is a suitably
oriented generator of $\pi_2(B)\cong\Z$; cf.\ \cite[Ex.\ 8.2]{Es}. We
have $\Delta=2(n-1)$ and thus
\begin{equation}
\label{eq:coh-STS}
\SH_{*}^{G,+}(W_1;\Q)=\bigoplus_j \H_{*-(2j-1)(n-1)}(B;\Q),
\end{equation}
where every term in the sum contributes to the homology in degrees of
the same parity as $n-1$ in the interval
$$
\big[(2j-1)(n-1),\ldots,(2j+1)(n-1)\big]
$$
centered at $2(n-1)j$; see \cite[Sect.\ 5.6]{KvK}. 

The operator $D$ on $\H_*^G(P_j;\Q)=\H_*(B;\Q)$ is Poincar\'e dual (up
to a factor) to the multiplication by $c_1(ST^*S^n\to B)$, i.e., by
the cohomology class of the standard symplectic structure on
$\Gr^+(2,n+1)$. It follows that $Dw'_{i+1}=w'_i$ for a suitable choice
of the generators $w'_i\in\H_{2i}(B;\Q)$.  Now, fixing $j\in\N$, we
let $w_i$ be the image of $w'_i$ under the identification
\eqref{eq:coh-STS}. Then, by Proposition \ref{prop:MB},
$Dw_{i+1}=w_i\neq 0$.
\end{proof}

As an immediate consequence of Proposition \ref{prop:D-STS}, we obtain
the following analog of Corollary \ref{cor:sphere} generalizing
Theorem \ref{thm:STSn-intro} from the introduction.

\begin{Corollary}
\label{cor:STSn}
Let $(M,\alpha)$ be a restricted contact type hypersurface in
$T^*S^n$ enclosing the zero section and such that all periodic orbits
of the Reeb flow are isolated. Then, in the notation of Proposition
\ref{prop:D-STS}, for every $j\in \N$, there exist $n$ periodic orbits
$y_0,\ldots,y_{n-1}$ such that $\s_{w_i}(\alpha)=\CA_\alpha(y_i)$ and
$\SH_*^G(y_i;\Q)\neq 0$ in degrees $*=(2j-1)(n-1)+2i$. In particular,
\begin{equation}
\label{eq:STSn-action}
\CA_{\alpha}(y_0)< \CA_{\alpha}(y_1)<\cdots< \CA_{\alpha}(y_{n-1}),
\end{equation}
and
\begin{equation}
\label{eq:STSn-index}
\big|\hmu(y_i)-\big((2j-1)(n-1)+2i\big)\big|\leq n-1.
\end{equation}
\end{Corollary}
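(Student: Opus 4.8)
The plan is to derive Corollary~\ref{cor:STSn} from Proposition~\ref{prop:D-STS} by exactly the same route used to prove Corollary~\ref{cor:sphere} from Theorem~\ref{thm:LS-SH}, so the argument is essentially bookkeeping with the chain of classes $w_0,\dots,w_{n-1}$ replacing the infinite chain $w_k$. First I would fix $j\in\N$ and consider the $n$ non-zero classes $w_i\in \SH_{2i+(2j-1)(n-1)}^{G,+}(W;\Q)$, $i=0,\dots,n-1$, with $Dw_{i+1}=w_i$ provided by Proposition~\ref{prop:D-STS}. Since all closed Reeb orbits of $\alpha$ are assumed isolated, Theorem~\ref{thm:LS-SH} applies to each step of the chain, giving
$$
\s_{w_0}(\alpha)<\s_{w_1}(\alpha)<\cdots<\s_{w_{n-1}}(\alpha).
$$
Here I use $I=(0,\infty)$ (or any finite interval with upper endpoint exceeding $\s_{w_{n-1}}(\alpha)$) so that the hypotheses of Theorem~\ref{thm:LS-SH} are met; the cobordism isomorphism $\SH_*^{G,+}(W;\Q)\cong\SH_*^{G,+}(W_1;\Q)$ from the proof of Proposition~\ref{prop:D-STS} guarantees the classes $w_i$ remain non-zero in $\SH_*^{G,+}(W;\Q)$ for the hypersurface $M=\p W$ at hand.

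Next I would produce the carrier orbits $y_i$. As in the proof of Corollary~\ref{cor:sphere}, a standard localization argument — arguing as in \cite{CGG} — shows that for each $i$ there is a closed Reeb orbit $y_i\in\PP(\alpha)$ with $\CA_\alpha(y_i)=\s_{w_i}(\alpha)$ and $\SH_*^G(y_i;\Q)\neq 0$ in degree $*=(2j-1)(n-1)+2i$; if several such orbits exist, pick one. Concretely: the spectral invariant $\s_{w_i}(\alpha)$ lies in $\CS(\alpha)$, so it is the action of \emph{some} isolated orbit; the part of the filtered homology straddling that action level must contain $w_i$ (otherwise one could lower $\s_{w_i}$), and by the direct-sum decomposition $\CF_*^{G,\CI}\cong\bigoplus_x \CF_*^G(x)$ over orbits at that action value (established in the proof of Theorem~\ref{thm:LS-Floer}, Section~\ref{sec:LS-Floer-pf}), the class $w_i$ must have a non-trivial component in the local homology $\SH_*^G(y_i;\Q)$ of one of those orbits. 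Then \eqref{eq:STSn-action} is immediate from the strict monotonicity of $\s_{w_i}(\alpha)$, and the index bound \eqref{eq:STSn-index} follows from $\SH_*^G(y_i;\Q)\neq 0$ in degree $(2j-1)(n-1)+2i$ together with Proposition~\ref{prop:local-support}, which confines the support of $\SH_*^G(y_i;\Q)$ to $[\hmu(y_i)-n+1,\,\hmu(y_i)+n-1]$.

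The one point needing a touch of care is that $n$ distinct degrees $(2j-1)(n-1)+2i$, $i=0,\dots,n-1$, genuinely appear in $\SH_*^{G,+}(W;\Q)$ with the chain relation $Dw_{i+1}=w_i$ holding at each stage and no $w_i$ being killed; but this is precisely the content of the second assertion of Proposition~\ref{prop:D-STS}, which I am allowed to invoke. I expect the main (mild) obstacle to be phrasing the extraction of the carrier orbits cleanly — ensuring the chosen $y_i$ really supports the class in the correct degree rather than merely having the right action — but this is handled verbatim by the argument already given for Corollary~\ref{cor:sphere}, and the strictness of the inequalities (hence the geometric distinctness of $y_0,\dots,y_{n-1}$ for a fixed $j$) comes for free from Theorem~\ref{thm:LS-SH}. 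I would close by remarking, as the statement does, that the non-strict inequalities in \eqref{eq:STSn-action} hold with no assumption on the orbits, by continuity from the non-degenerate case exactly as in the last line of the proof of Corollary~\ref{cor:sphere}.
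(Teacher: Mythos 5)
Your proposal is correct and follows exactly the route the paper intends: the paper states Corollary \ref{cor:STSn} as an immediate consequence of Proposition \ref{prop:D-STS}, obtained by running the proof of Corollary \ref{cor:sphere} (Theorem \ref{thm:LS-SH} for the finite chain $Dw_{i+1}=w_i$, carrier-orbit extraction as in \cite{CGG}, and Proposition \ref{prop:local-support} for the index bound) verbatim for the chain of length $n$. Your handling of the interval $I$ and of the distinctness of the $y_i$ via the strict inequalities matches the paper's argument.
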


\begin{Remark}
Without the assumption that the closed Reeb orbits of $\alpha$ are
isolated, we still have $n$ orbits $y_i$ such that
$\s_{w_i}(\alpha)=\CA_\alpha(y_i)$ and \eqref{eq:STSn-index} is
satisfied, but now the action inequalities \eqref{eq:STSn-action}
are not necessarily strict:
$$
\CA_{\alpha}(y_0)\leq \CA_{\alpha}(y_1)\leq\cdots\leq \CA_{\alpha}(y_{n-1}).
$$
\end{Remark}

\begin{Remark}[Action carriers]
\label{rmk:carier}
Putting the constructions from this section and Section
\ref{sec:displ} in a more formal context, we could have introduced
action carriers for spectral invariants in the equivariant Floer or
symplectic homology similarly to the carriers in \cite{CGG}. Thus,
for instance, when $W$ is exact with $c_1(TW)|_{\pi_2(W)}=0$ and all
orbits in $\PP(\alpha)$ are isolated, we would have a map
$\SH_*^{G,+}(W;\Q)\setminus\{0\}\to \PP(\alpha)$ such that $w$ and
$Dw\neq 0$ are never mapped to the same orbit.
\end{Remark}

\section{Index theory}
\label{sec:index}
\subsection{Preliminaries: definitions and basic facts}
\label{sec:index-basics}
In this section we recall for the reader's convenience some basic
properties of the mean and Conley--Zehnder indices. We refer the
reader to, e.g., \cite{Lo} or \cite[Sect.\ 3]{SZ} for a more thorough
treatment; see also \cite{Ab,Gut} and, for a very quick introduction,
\cite[Sect.\ 2.4]{Sa:notes}. 

\subsubsection{Definitions}
\label{sec:index-def}
To every continuous path $\Phi\colon [0,\,1]\to\Sp(2m)$ beginning at
$\Phi(0)=I$, one can associate the \emph{mean index}
$\hmu(\Phi)\in \R$, a homotopy invariant of the path with fixed
end-points.  To give a formal definition, recall first that a map
$\hmu$ from a Lie group to $\R$ is said to be a quasimorphism if it
fails to be a homomorphism only up to a constant, i.e.,
$$
\big|\hmu(\Phi\Psi)-\hmu(\Phi)-\hmu(\Psi)\big|<\const,
$$ 
where the constant is independent of $\Phi$ and $\Psi$.  One can prove
that there is a unique quasimorphism
$\hmu\colon \widetilde{\Sp}(2m)\to\R$, on the universal covering
$\TSp(2m)$ of the symplectic group, which is continuous and
homogeneous (i.e., $\hmu(\Phi^k)=k\hmu(\Phi)$) and satisfies the
normalization condition:
$$
\hmu(\Phi_0)=2\quad \textrm{for}\quad \Phi_0(t)= \exp\big(2\pi \sqrt{-1}
t\big)\oplus I_{2m-2}
$$ 
with $t\in [0,\,1]$, in the self-explanatory notation; see
\cite{BG}. In particular, $\hmu$ restricts to an isomorphism
$\pi_1(\Sp(2m))\to 2\Z$.  The quasimorphism $\hmu$ is the mean
index. The continuity requirement holds automatically and is not
necessary for the characterization of $\hmu$, although this is not
immediately obvious. Furthermore, $\hmu$ is also automatically
conjugation invariant, as a consequence of the homogeneity.

The mean index $\hmu(\Phi)$ measures the total rotation angle of
certain unit eigenvalues of $\Phi(t)$ and can be explicitly defined as
follows. Following \cite{SZ}, for an elliptic transformation
$A\in\Sp(2)$, let us say that an eigenvalue
$\exp(\sqrt{-1}\theta)\in S^1$ of $A$ is of the first kind if
$0\leq\theta\leq \pi$ and $A$ is conjugate to the rotation in $\theta$
counterclockwise or if $-\pi<\theta<0$ and $A$ is conjugate to the
rotation in $\theta$ clockwise. (This rule unambiguously picks one of
the two eigenvalues of an elliptic tranformation $A\in \Sp(2)$.) We
set $\rho(A)$ to be equal its eigenvalue of the first kind when $A$ is
elliptic and $\rho(A)=\pm 1$ when $A$ is hyperbolic with the sign
determined by the sign of the eigenvalues of $A$. Then
$\rho\colon \Sp(2)\to S^1$ is a Lipschitz (but not $C^1$) function,
which is conjugation invariant and equal to $\det$ on $\U(1)$. A
matrix $A\in\Sp(2m)$ with distinct eigenvalues, can be written as the
direct sum of matrices $A_j\in\Sp(2)$ and a matrix with complex
eigenvalues not lying on the unit circle. We set $\rho(A)$ to be the
product of $\rho(A_j)\in S^1$.  Again, $\rho$ extends to a continuous
function $\rho\colon \Sp(2m)\to S^1$, which is conjugation invariant
(and hence $\rho(AB)=\rho(BA)$) and restricts to $\det$ on $\U(n)$;
see, e.g., \cite{SZ}. Finally, given a path
$\Phi\colon [0,\,1]\to \Sp(2m)$, there is a continuous function
$\theta(t)$ such that
$\rho(\Phi(t))=\exp\big(\sqrt{-1}\theta(t)\big)$, measuring the total
rotation of the eigenvalues of the first type, and we set
$$
\hmu(\Phi)=\frac{\theta(1)-\theta(0)}{\pi}.
$$ 
It is clear from the definition that $\hmu(\Phi_s)=\const$ for a
family of paths $\Phi_s$ as long as the eigenvalues of $\Phi_s(1)$
remain constant.

Assume now that the path $\Phi$ is \emph{non-degenerate}, i.e., by
definition, all eigenvalues of the end-point $A=\Phi(1)$ are different
from one. We denote the set of such matrices $A\in\Sp(2m)$ by
$\Sp^*(2m)$ and also denote the part of $\TSp(2m)$ lying over
$\Sp^*(2m)$ by $\TSpn(2m)$. It is not hard to see that $A$ can
be connected to a symplectic transformation with elliptic part equal
to $-I$ (if non-trivial) by a path $\Psi$ lying entirely in
$\Sp^*(2m)$. Concatenating this path with $\Phi$, we obtain a new path
$\Phi'$. By definition, the \emph{Conley--Zehnder index}
$\mu(\Phi)\in\Z$ of $\Phi$ is $\hmu(\Phi')$. One can show that
$\mu(\Phi)$ is well-defined, i.e., independent of $\Psi$. The function
$\mu\colon \TSpn(2m)\to\Z$ is locally constant, i.e., constant on
connected components of $\TSpn(2m)$. In other words,
$\mu(\Phi_s)=\const$ for a family of paths $\Phi_s$ as long as
$\Phi_s(1)\in\Sp^*(2m)$.

Furthermore, let us call $\Phi$ \emph{weakly non-degenerate} if at
least one eigenvalue of $\Phi(1)$ is different from one and
\emph{totally degenerate} otherwise. A path is \emph{strongly
  non-degenerate} if all its ``iterations'' $\Phi^k$ are
non-degenerate, i.e., none of the eigenvalues of $\Phi(1)$ is a root
of unity. The multiplicity of the generalized eigenvalue one of
$\Phi(1)$ is an even number, which we denote by $2\nu(\Phi)$
and call $\nu(\Phi)$ the \emph{nullity} of $\Phi$.

Following, e.g., \cite{Lo90,Lo97}, we define the \emph{upper and lower
  Conley--Zehnder indices} as
$$
\mu_+(\Phi):=\limsup_{\tPhi\to \Phi}\mu(\tPhi)
\quad\textrm{and}\quad
\mu_-(\Phi):=\liminf_{\tPhi\to \Phi}\mu(\tPhi),
$$
where in both cases the limit is taken over $\tPhi\in \TSpn(2m)$
converging to $\Phi\in \TSp(2m)$. In fact, $\mu_+(\Phi)$ is simply
$\max\mu(\tPhi)$, where $\tPhi\in \TSpn(2m)$ is sufficiently close to
$\Phi$ in $\TSp(2m)$; likewise, $\mu_-(\Phi)=\min \mu(\tPhi)$. (In
terms of actual paths, rather than their homotopy classes, $\tPhi$ can
be taken $C^r$-close to $\Phi$ for any $r\geq 0$; the resulting
definition of $\mu_\pm(\Phi)$ is independent of $r$ and equivalent to
the one above due to homotopy invariance of $\mu$.)  Clearly,
$\mu(\Phi)=\mu_\pm(\Phi)$ when $\Phi$ is non-degenerate. As readily
follows from the definition, the indices $\mu_\pm$ are the upper
semi-continuous and, respectively, lower semi-continous extensions of
$\mu$ from $\TSpn(2m)$ to $\TSp(2m)$. The upper and lower
Conley--Zehnder indices are quasimorphisms $\TSp(2m)\to\Z$. These
indices are of particular interest to us because they bound the
support of the local Floer homology of an isolated periodic orbit;
cf.\ Proposition \ref{prop:local-support} and \cite {GG:gap}.

\subsubsection{Basic properties}
\label{sec:index-prop}
Let us now list the properties of the Conley--Zehnder type indices,
which are essential for our purposes. Most of these properties readily
follow from the definitions and are well-known; see, e.g.,
\cite{Lo,SZ}. In what follows, all paths are required to begin at $I$
and are taken up to homotopy, i.e., as elements of
$\TSp(2m)$. Furthermore, we will tacitly assume the paths to be
parametrized by $[0,\,1]$ unless this is obviously not the case.

We start with three specific examples. For the path
$\Phi(t)=\exp\big(2\pi\sqrt{-1}\lambda t\big)$, $t\in [0,\,1]$, in
$\Sp(2)$ we have
\begin{equation}
\label{eq:sp2}
\hmu(\Phi)=2\lambda\textrm{ and } 
\mu(\Phi)=\sign(\lambda)\big(2\lfloor|\lambda|\rfloor +1\big)
\textrm{ when $\lambda\not\in\Z$. }
\end{equation}
Next, let $H$ be a non-degenerate quadratic form on $\R^{2m}$
with eigenvalues in the range $(-\pi,\,\pi)$. (Here, as is customary
in Hamiltonian dynamics, the eigenvalues of a quadratic form $H$ on a
symplectic vector space are by definition the eigenvalues of its
Hamiltonian vector field $X_H=J \nabla H$, where $J$ is the matrix of
the symplectic form; see, e.g., \cite{Ar}.) The path
$\Phi(t)=\exp(JHt)$, $t\in [0,\,1]$, is the linear autonomous
Hamiltonian flow generated by $H$. Then, with our conventions,
\begin{equation}
\label{eq:sgn}
\mu(\Phi)=\frac{1}{2}\sgn(H),
\end{equation}
where $\sgn(H)$ is the signature of $H$, i.e., the number of positive
squares minus the number of negative squares in the diagonal form of
$H$ with $\pm 1$ and $0$ on the diagonal. In addition, when $\Phi(1)$
is hyperbolic, we have
$$
\mu(\Phi)=\hmu(\Phi).
$$

Let us now list some ``additive and multiplicative properties''
which, combined with the examples above, allow one to calculate the
indices in many cases.  We start by observing that
$$
\mu(\Phi^{-1})=-\mu(\Phi)
$$
for any non-degenerate path $\Phi$ and hence, in general,
\begin{equation}
\label{eq:inv}
\mu_\pm(\Phi^{-1})=\mp\mu_\mp(\Phi).
\end{equation}
When $\varphi$ is a loop, we have
\begin{equation}
\label{eq:loop}
\mu_\pm(\varphi\Phi)=\hmu(\varphi)+\mu_\pm(\Phi).
\end{equation}
Finally, as readily follows from the definitions, $\hmu$ and $\mu$
are additive under direct sum. Namely, for $\Phi\in\TSp(2m)$ and
$\Psi\in\TSp(2m')$, we have
\begin{equation}
\label{eq:add}
\hmu(\Phi\oplus\Psi)=\hmu(\Phi)+\hmu(\Psi)
\quad\textrm{and}\quad
\mu(\Phi\oplus\Psi)=\mu(\Phi)+\mu(\Psi),
\end{equation}
where in the second identity we assumed that both paths are
non-degenerate. We will extend this additivity property to $\mu_\pm$
in Lemma \ref{lemma:additivity}.

The mean index and the upper and lower indices are related by the
inequalities
\begin{equation}
\label{eq:mu-del}
\hmu(\Phi)-m\leq\mu_-(\Phi)\leq \mu_+(\Phi)\leq \hmu(\Phi)+m,
\end{equation}
where $\Phi\in\TSp(2m)$. (Moreover, at least one of the inequalities
is strict when $\Phi$ is weakly non-degenerate.) As a consequence,
$$
\lim_{k\to\infty}\frac{\mu_\pm(\Phi^k)}{k}=\hmu(\Phi),
$$
 hence the name ``mean index'' for $\hmu$.

 The Conley--Zehnder index can also be evaluated as an intersection
 index with the discriminant $\Sigma=\{A\in\Sp(2m)\mid \det(A-I)=0\}$,
 leading to another extension of $\mu$ from $\TSpn(2m)$ to $\TSp(2m)$
 known as the \emph{Robbin--Salamon index}. This index, introduced in
 \cite{RS}, is a quasimorphism $\TSp(2m)\to\frac{1}{2}\Z$. Let us
 briefly recall its definition.  For a path $\Phi$ in $\Sp(2m)$,
 beginning at $I$ and generated by a quadratic time-dependent
 Hamiltonian $H_t$, i.e., $\dot{\Phi}(t)=JH_t\circ\Phi(t)$, let us
 call $\tau\in [0,\,1]$ a \emph{crossing} if $\Phi(\tau)\in\Sigma$. A
 crossing is non-degenerate if the quadratic form
 $Q_\tau:=H_\tau|_{V_\tau}$, where $V_\tau:=\ker(\Phi(\tau)-I)$, is
 non-degenerate. Geometrically, that means that $\dot{\Phi}(\tau)$ is
 not tangent to the stratum of $\Sigma$ through
 $\Phi(\tau)$. (Generically, all crossings $\tau$ are non-degenerate
 and the interior crossings $\tau\in (0,\,1)$ are also \emph{simple},
 i.e., $\dim V_\tau=1$.) Non-degenerate crossings are isolated. Now,
 for a path $\Phi$ with only non-degenerate crossings, we set
\begin{equation}
\label{eq:RS}
\MURS(\Phi)=\frac{1}{2}\sgn Q_0+\sum_{0<\tau<1}\sgn Q_\tau 
+ \frac{1}{2}\sgn Q_1,
\end{equation}
where the last term is dropped when $1$ is not a crossing. Then
$\MURS$ is homotopy invariant, $\MURS(\Phi)=\mu(\Phi)$ when
$\Phi\in\TSpn(2m)$ (see \cite{RS}), and $\mu_-\leq \MURS\leq \mu_+$.
(In fact, $\MURS=(\mu_++\mu_-)/2$, but we do not need this relation.)

\begin{Example}[Positive definite Hamiltonians]
\label{ex:pos-def}
Assume that $H_t>0$ on $\R^{2m}$, i.e., $H_t$ is positive definite,
for all $t\in [0,\,\infty)$, and let $\Phi_t$ be generated by
$H_t$. Then, as readily follows from \eqref{eq:RS},
$\MURS\big(\Phi|_{[0,\,T]}\big)$ and $\hmu\big(\Phi|_{[0,\,T]}\big)$
are increasing functions of $T>0$ and
$\mu_-\big(\Phi|_{[0,\,T]}\big)\geq m$ for all $T>0$.
\end{Example}

\subsubsection{Further details: totally degenerate paths, additivity of
  $\mu_\pm$,  and signature multiplicities $b_{0,\pm}$}
\label{sec:tot-deg}
In this subsection, we establish two additional properties of the
indices $\mu_\pm$, somewhat less standard than the facts from the
previous subsection, although still well-known to the experts. For the
sake of completeness we provide detailed proofs. We also
discuss some finer invariants of symplectic paths and their relations
with the indices $\mu_\pm$.

\begin{Lemma}
\label{lemma:iterate} 
Assume that $A=\Phi(1)$ is totally degenerate, i.e., all eigenvalues of
$A$ are equal to one. Then $\Phi$ is homotopic to the product
of a loop and a path $\Psi$ such that all eigenvalues of $\Psi(t)$ for
all $t$ are equal to one. Furthermore,
\begin{equation}
\label{eq:iterate}
\mu_\pm(\Phi^k)=(k-1)\hmu(\Phi)+\mu_\pm(\Phi) \quad\textrm{for all $k\in\N$}.
\end{equation}
\end{Lemma}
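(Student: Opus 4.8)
The plan is to prove the two assertions in turn, extracting the index identity from the structural decomposition of $\Phi$.

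\emph{Step 1: the structural decomposition.} Since $A:=\Phi(1)$ is unipotent, $A-I$ is nilpotent, so the principal logarithm $N:=\log A=\sum_{j\ge 1}(-1)^{j+1}(A-I)^j/j$ is a finite sum and is itself nilpotent. I would first check that $N\in\spl(2m)$: from $A^{\mathsf T}JA=J$ one gets $A^{\mathsf T}J=JA^{-1}$, i.e.\ $\exp(N^{\mathsf T})=\exp(-JNJ^{-1})$; both exponents are nilpotent and $\exp$ is injective on nilpotent matrices (its inverse being the polynomial $\log$), so $N^{\mathsf T}=-JNJ^{-1}$, which is precisely $N\in\spl(2m)$. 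Then $\Psi(t):=\exp(tN)$, $t\in[0,1]$, is a path in $\Sp(2m)$ from $I$ to $A$ all of whose values $\exp(tN)$ are unipotent, and $\varphi:=\Phi*\overline{\Psi}$ (the concatenation of $\Phi$ with the reversed path of $\Psi$) is a loop at $I$ with $\Phi\simeq\varphi*\Psi$ rel endpoints. This proves the first claim.

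\emph{Step 2: reduction of \eqref{eq:iterate}.} Regard $\varphi$ as an element of $\pi_1(\Sp(2m))\subset\TSp(2m)$, which is central, so $[\Phi]=\varphi\Psi$ and $[\Phi^k]=\varphi^k\Psi^k$ in $\TSp(2m)$. Using \eqref{eq:loop} and the homogeneity of $\hmu$ on loops, $\mu_\pm(\Phi^k)=k\hmu(\varphi)+\mu_\pm(\Psi^k)$, $\mu_\pm(\Phi)=\hmu(\varphi)+\mu_\pm(\Psi)$, and $\hmu(\Phi)=\hmu(\varphi)+\hmu(\Psi)$. A direct substitution shows \eqref{eq:iterate} is equivalent to the two statements $\hmu(\Psi)=0$ and $\mu_\pm(\Psi^k)=\mu_\pm(\Psi)$ for all $k\in\N$, so these are what remains.

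\emph{Step 3: the core fact.} Both reduce to the observation that \textbf{$cN$ is $\Sp(2m)$-conjugate to $N$ for every $c>0$}. Indeed, by the Jacobson--Morozov theorem in the real semisimple Lie algebra $\spl(2m,\R)$ there is an $\fsl_2$-triple with nilpositive element $N$, in particular an element $h\in\spl(2m,\R)$ with $[h,N]=2N$; then $g_c:=\exp\bigl(\tfrac12(\log c)\,h\bigr)\in\Sp(2m)$ satisfies $\mathrm{Ad}_{g_c}(N)=cN$ (since $\mathrm{ad}_h(N)=2N$ gives $\mathrm{Ad}_{\exp(rh)}(N)=e^{2r}N$). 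Consequently the path $t\mapsto\exp(tcN)$ equals $g_c\,\Psi\,g_c^{-1}$, a conjugate of $\Psi$ by a fixed symplectic matrix, so by conjugation-invariance of $\hmu$ and of $\mu_\pm$ (for $\mu_\pm$: conjugation is a homeomorphism of $\TSp(2m)$ preserving $\TSpn(2m)$, hence commutes with the $\limsup/\liminf$ defining $\mu_\pm$) the values $\hmu$ and $\mu_\pm$ of $t\mapsto\exp(tcN)$ are independent of $c>0$. Since $\Psi^k\simeq\bigl(t\mapsto\exp(ktN)\bigr)$ in $\TSp(2m)$ — reparametrizing the one-parameter subgroup, using that $s\mapsto[\exp(\cdot\,N)|_{[0,s]}]$ is a homomorphism $\R\to\TSp(2m)$ — taking $c=k$ yields $\mu_\pm(\Psi^k)=\mu_\pm(\Psi)$. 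Finally, as $c\to0^+$ the paths $t\mapsto\exp(tcN)$ converge in $\TSp(2m)$ to the constant path at $I$, and $\hmu$ is continuous on $\TSp(2m)$ with $\hmu(\id)=0$; hence the $c$-independent value $\hmu(\Psi)$ equals $0$. Substituting into Step 2 finishes the proof. The only genuinely non-formal ingredient is the conjugacy $cN\sim N$, which is where semisimplicity of $\spl(2m,\R)$ enters via Jacobson--Morozov; I expect the rest to be bookkeeping with the centrality of $\pi_1(\Sp(2m))$, the relation \eqref{eq:loop}, and the homogeneity and continuity of $\hmu$, with care needed only to run Jacobson--Morozov inside the \emph{real} form $\spl(2m,\R)$ and to justify the reparametrization identity $\Psi^k\simeq(t\mapsto\exp(ktN))$.
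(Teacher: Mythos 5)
Your proof is correct, but it takes a genuinely different route from the one in the paper. The paper first conjugates $A$ so that it lies in the image of a small ball of quadratic forms under $\exp$ (quoting the fact, from \cite[Lemma 5.1]{Gi:CC}, that a totally degenerate symplectic map is conjugate to one arbitrarily close to $I$), writes $A=\exp(JQ)$ with $Q$ small and nilpotent, and then computes $\mu_\pm(\Psi)=\tfrac12\sgn_\pm(Q)$ from \eqref{eq:sgn}; iteration invariance follows because $A^k=\exp(JkQ)$ with $kQ$ still in the ball and $\sgn_\pm(kQ)=\sgn_\pm(Q)$. You instead take the global logarithm $N=\log A$ of the unipotent matrix (your verification that $N\in\spl(2m)$ via injectivity of $\exp$ on nilpotents is fine), and prove that the indices of $t\mapsto\exp(tcN)$ are independent of $c>0$ because $cN$ is $\Sp(2m)$-conjugate to $N$, using a Jacobson--Morozov element $h$ with $[h,N]=2N$; the identity $\mu_\pm(\Psi^k)=\mu_\pm(\Psi)$ then comes from $c=k$ together with the one-parameter-subgroup identification of $\Psi^k$ with $t\mapsto\exp(ktN)$, and $\hmu(\Psi)=0$ from letting $c\to 0^+$ and continuity (one could equally use homogeneity of $\hmu$, or read it off from the definition since all eigenvalues along $\Psi$ equal one). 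Both arguments ultimately rest on conjugation invariance of $\hmu$ and $\mu_\pm$ and on the centrality of $\pi_1(\Sp(2m))$ in $\TSp(2m)$, and the reduction in your Step 2 is the same bookkeeping as \eqref{eq:Phi-Psi}. What the two approaches buy: yours is self-contained modulo Jacobson--Morozov (in fact it makes explicit the scaling mechanism behind the cited conjugate-to-near-identity lemma) and needs no smallness assumptions; the paper's route, besides proving the lemma, also produces the explicit formula $\mu_\pm(\Psi)=\tfrac12\sgn_\pm(Q)$ of \eqref{eq:sgn2}, which is reused later (in the proof of Lemma \ref{lemma:additivity}, in \eqref{eq:bpm}, and in the definition of the signature multiplicities $b_\pm$), so if you adopted your proof you would still need that computation elsewhere. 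Only cosmetic points: the trivial case $N=0$ should be mentioned, and in Step 2 ``equivalent to'' should be ``follows from'' (which is all you need).
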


\begin{proof}
  Fix a small ball $\CB$ in the space of quadratic forms on $\R^{2m}$
  centered at $0$.  Then the exponential mapping
  $\exp\colon \CB\to \Sp(2m)$ is a diffeomorphism onto its
  image. Recall that, as is well-known, the matrix $A$ is
  symplectically conjugate to a matrix arbitrarily close to $I$; see,
  e.g., \cite[Lemma 5.1]{Gi:CC}. Thus, after applying a conjugation,
  we can assume that $A\in \exp(\CB)$.  In other words, there exists a
  small quadratic form $Q$ on $\R^{2m}$ with only zero eigenvalues
  such that
$$
A=\Psi(1),\quad\textrm{where}\quad 
\Psi(t)=\exp(JQt).
$$
As a consequence, $\Phi$ can be written as the concatenation, and
hence the product, of a loop $\varphi$ and a path $\Psi$ such that for
every $t$ all eigenvalues of $\Psi(t)$ are equal to one even if $A$ is
not in $\exp(\CB)$. This proves the first assertion of the lemma.

To prove the second assertion, we can assume that $A\in\exp(\CB)$ and
$Q$ is small. By \eqref{eq:loop} and since $\hmu(\Psi)=0$, we have
\begin{equation}
\label{eq:Phi-Psi}
\hmu(\varphi)=\hmu(\Phi) \quad\textrm{and}\quad
\mu_\pm(\Phi)=\hmu(\varphi)+\mu_\pm(\Psi). 
\end{equation}
Set $\nu_0(Q)=\dim\ker Q$ and  
$$
\sgn_+(Q):=\max \sgn(\tQ)=\sgn(Q)+\nu_0
$$
and
$$
\sgn_-(Q):=\min \sgn(\tQ)=\sgn(Q)-\nu_0,
$$
where $\max$ and $\min$ are taken over all small non-degenerate
perturbations $\tQ$ of $Q$.  The exponential mapping is a
diffeomorphism from a small neighborhood of $Q$ onto a small
neighborhood of $A$. Hence, by \eqref{eq:sgn}, we have
\begin{equation}
\label{eq:sgn2}
\mu_\pm(\Psi)=\frac{1}{2}\sgn_\pm(Q).
\end{equation}

Let us now apply this argument to the path $\Psi^k$ ending at $A^k$
for a fixed $k$. Again, without changing the index we can assume that
$A^k\in\exp(\CB)$ and $A\in\exp(\CB)$. We have $A^k=\exp(JH)$ and
$A=\exp(JQ)$ where $H$ and $kQ$ are in $\CB$. Since $\exp$ is
one-to-one on $\CB$, it readily follows that $H=kQ$. Thus
$$
\mu_\pm(\Psi^k)=\frac{1}{2}\sgn_\pm(kQ)=\frac{1}{2}\sgn_\pm(Q)=\mu_\pm(\Psi).
$$
This proves \eqref{eq:iterate} for $\Psi$ since $\hmu(\Psi)=0$. The
result for the original path $\Phi$ follows from~\eqref{eq:Phi-Psi}
and the fact that $\pi_1(\Sp(2m))$ lies in the center of $\TSp(2m)$.
\end{proof}

The last general property of the index essential for us is the direct
sum additivity for $\mu_\pm$ generalizing the second identity in
\eqref{eq:add}.

\begin{Lemma}[\cite{Lo97}]
\label{lemma:additivity}
The upper and lower Conley--Zehnder indices $\mu_\pm$ are additive
with respect to direct sum, i.e., for $\Phi\in\TSp(2m)$ and
$\Psi\in\TSp(2m')$, we have
\begin{equation}
\label{eq:additivity}
\mu_\pm(\Phi\oplus\Psi)=\mu_\pm(\Phi)+\mu_\pm(\Psi).
\end{equation}
\end{Lemma}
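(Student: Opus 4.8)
The plan is to reduce the identity $\mu_\pm(\Phi\oplus\Psi)=\mu_\pm(\Phi)+\mu_\pm(\Psi)$ to the already-recorded additivity of $\mu$ for non-degenerate paths (the second identity in \eqref{eq:add}) together with a careful analysis of how $\mu_\pm$ behaves under small perturbations. Write $2\nu=\dim\ker(\Phi(1)-I)$ and $2\nu'=\dim\ker(\Psi(1)-I)$ for the nullities. By conjugating each path (which changes no index, by conjugation invariance) I may assume, using \cite[Lemma 5.1]{Gi:CC} as in the proof of Lemma \ref{lemma:iterate}, that the generalized eigenvalue-one parts of $\Phi(1)$ and $\Psi(1)$ lie inside $\exp(\CB)$ for a small ball $\CB$ of quadratic forms. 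Thus after splitting off loop factors (which contribute $\hmu(\varphi)$ additively to $\mu_\pm$ by \eqref{eq:loop}, and $\hmu$ is genuinely additive by \eqref{eq:add}) and splitting off the non-degenerate elliptic/hyperbolic blocks (for which $\mu_\pm=\mu$ is already additive), the entire problem localizes to the totally degenerate summands. So it suffices to prove additivity when both $\Phi(1)$ and $\Psi(1)$ are totally degenerate, and then by the diffeomorphism property of $\exp$ on $\CB$ it suffices to prove, for small quadratic forms $Q$ on $\R^{2m}$ and $Q'$ on $\R^{2m'}$,
\begin{equation}
\sgn_\pm(Q\oplus Q')=\sgn_\pm(Q)+\sgn_\pm(Q'),
\end{equation}
in the notation $\sgn_\pm(Q)=\sgn(Q)\pm\nu_0(Q)$ introduced in the proof of Lemma \ref{lemma:iterate}, in view of \eqref{eq:sgn2}.

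The key remaining step is therefore the purely linear-algebraic identity for $\sgn_\pm$. This is elementary: $\sgn$ is additive under direct sum of quadratic forms, and $\nu_0(Q\oplus Q')=\nu_0(Q)+\nu_0(Q')$ since the kernel of a block-diagonal form is the direct sum of the kernels. Hence $\sgn_\pm(Q\oplus Q')=\sgn(Q)+\sgn(Q')\pm(\nu_0(Q)+\nu_0(Q'))=\sgn_\pm(Q)+\sgn_\pm(Q')$. Equivalently and more conceptually: a maximal non-degenerate perturbation of $Q\oplus Q'$ inside the block-diagonal family is realized by independently choosing maximal non-degenerate perturbations $\tQ$ of $Q$ and $\tQ'$ of $Q'$, and one checks (again by a small-perturbation argument, using upper/lower semicontinuity of $\mu_+$ resp.\ $\mu_-$ together with \eqref{eq:add} applied to $\tQ\oplus\tQ'$) that arbitrary small perturbations of $Q\oplus Q'$, not necessarily block-diagonal, cannot do better. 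Concretely, if $\tR$ is any small non-degenerate perturbation of $Q\oplus Q'$, then by upper semicontinuity $\mu(\tPhi_{\tR})\le\mu_+(\Phi\oplus\Psi)$; and choosing $\tR=\tQ\oplus\tQ'$ with $\tQ,\tQ'$ realizing $\sgn_+$ gives $\mu(\tPhi_{\tQ})+\mu(\tPsi_{\tQ'})$, which equals $\mu_+(\Phi)+\mu_+(\Psi)$; the reverse inequality follows because any perturbation of $\Phi$ and any perturbation of $\Psi$ assemble, via \eqref{eq:add}, into a perturbation of the direct sum, so $\mu_+(\Phi)+\mu_+(\Psi)\le\mu_+(\Phi\oplus\Psi)$. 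The same reasoning with $\limsup\rightsquigarrow\liminf$ and $\max\rightsquigarrow\min$ handles $\mu_-$.

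I expect the main obstacle to be bookkeeping rather than anything deep: one must be careful that the reduction to the totally degenerate case genuinely decouples, i.e.\ that after the symplectic conjugations and loop/loop splittings the direct-sum decomposition of $\Phi\oplus\Psi$ is compatible on the nose with the individual decompositions of $\Phi$ and $\Psi$. This is clear if one performs the conjugations on $\Phi$ and $\Psi$ separately and then takes the direct sum of the two conjugating paths, which is a conjugation of $\Phi\oplus\Psi$. The one subtle point worth stating explicitly is that in passing from $\mu_\pm(\Phi\oplus\Psi)$ to perturbations we are allowed to restrict attention to \emph{block-diagonal} perturbations when computing the extremal values: this is exactly the content of the inequality argument in the previous paragraph and uses \eqref{eq:add} in the non-degenerate setting in an essential way. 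Alternatively, one can simply invoke $\MURS=(\mu_++\mu_-)/2$ together with the additivity of $\MURS$ (which follows directly from the definition \eqref{eq:RS}, since crossings and their signatures are additive under direct sum) and the additivity of $\hmu$ in \eqref{eq:add}, but since the paper prefers not to rely on the relation $\MURS=(\mu_++\mu_-)/2$, I would present the self-contained argument above.
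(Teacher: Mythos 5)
Your reduction to the totally degenerate case is where the actual content of the lemma hides, and as written it is circular. Writing $\Phi\simeq\Phi_1\oplus\Phi_0$ and $\Psi\simeq\Psi_1\oplus\Psi_0$ with non-degenerate and totally degenerate parts, the claim that the problem ``localizes to the totally degenerate summands'' is precisely the statement $\mu_\pm(N\oplus T)=\mu(N)+\mu_\pm(T)$ for $N$ non-degenerate and $T$ totally degenerate (this is \eqref{eq:add-step} in the paper), and \eqref{eq:add} does not give it, since \eqref{eq:add} covers only sums of two non-degenerate paths. The difficulty is that a small non-degenerate perturbation of the direct sum need not respect the block decomposition --- its end-point can mix the eigenvalue-one block with the rest --- so one must show that such perturbations cannot beat block-diagonal ones. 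Your ``concrete'' inequality argument does not do this: both halves of it prove the same easy inequality $\mu_+(\Phi)+\mu_+(\Psi)\le\mu_+(\Phi\oplus\Psi)$ (super-additivity, cf.\ Remark \ref{rmk:subadditivity}); the sentence beginning ``the reverse inequality follows'' again assembles block-diagonal perturbations and therefore re-derives the same direction, not the needed $\mu_+(\Phi\oplus\Psi)\le\mu_+(\Phi)+\mu_+(\Psi)$. (Inside the purely totally degenerate case your appeal to $\exp$ being a local diffeomorphism and to \eqref{eq:sgn2} does handle arbitrary perturbations, since any nearby end-point is $\exp(J\tilde R)$ with $\tilde R$ close to $Q\oplus Q'$ and $\sgn(\tilde R)\le \sgn_+(Q\oplus Q')$; but that argument does not apply to the mixed non-degenerate/degenerate sum, which is exactly the case your localization step needs.)

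The paper closes this gap with a deformation argument you would need to supply: given an arbitrary small non-degenerate perturbation $\tPhi$ of $\Phi$, split the eigenvalues of $\tPhi(1)$ into those near $1$ and the rest, note that the corresponding invariant symplectic subspaces are close to $\R^{2m_0}$ and $\R^{2m_1}$, and deform $\tPhi$ within a neighborhood of $\Phi$, keeping the eigenvalues of the end-point (hence $\mu(\tPhi)$) fixed, until these subspaces become exactly $\R^{2m_0}$ and $\R^{2m_1}$; the deformed path then splits as $\tPhi'_1\oplus\tPhi'_0$ with $\mu(\tPhi'_1)=\mu(\Phi_1)$, which bounds arbitrary perturbations by block-diagonal ones and, combined with super-/sub-additivity and \eqref{eq:inv}, yields \eqref{eq:add-step} and hence the lemma. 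Finally, your parenthetical shortcut via the Robbin--Salamon index also does not close on its own: additivity of $\MURS=(\mu_++\mu_-)/2$ gives additivity of the sum $\mu_++\mu_-$, and together with $\mu_+(\Phi\oplus\Psi)\ge\mu_+(\Phi)+\mu_+(\Psi)$ and $\mu_-(\Phi\oplus\Psi)\le\mu_-(\Phi)+\mu_-(\Psi)$ this is still consistent with strict inequalities in both, so it does not force additivity of $\mu_+$ and $\mu_-$ separately.
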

This result, which is a part of \cite[Thm.\ 1.4]{Lo97}, is also less
known than most of the properties listed in Section
\ref{sec:index-prop} and not obvious. Since it plays a
crucial role in our argument, we briefly outline the proof for
the sake of completeness.

\begin{Remark}
\label{rmk:subadditivity}
Note that the lemma is not a direct consequence of the definition of
the indices $\mu_\pm$. Namely, it only follows from the definitions
that $\mu_+$ is sup-additive and $\mu_-$ is sub-additive. However, by
\eqref{eq:inv}, it is enough to prove the lemma only for one of these
indices.
\end{Remark}

\begin{proof}
  The lemma holds for the direct sum of two non-degenerate paths by
  \eqref{eq:add}. From \eqref{eq:Phi-Psi} and \eqref{eq:sgn2} and the
  fact that $\sgn_\pm$ are clearly direct sum additive, we
  observe that the lemma holds for the direct sum of paths with totally
  degenerate end-points.

  Every path $\Phi\in\TSp(2m)$ can be decomposed, up to homotopy, as
  the direct sum of a non-degenerate path $\Phi_1\in\TSpn(2m_1)$ and a
  path $\Phi_0\in\TSp(2m_0)$ with totally degenerate $\Phi_0(1)$.
  Here $m=m_0+m_1$. Thus it is sufficient to show that
\begin{equation}
\label{eq:add-step}
\mu_\pm (\Phi)=\mu(\Phi_1)+\mu_\pm (\Phi_0).
\end{equation}
Let $\tPhi$ be a small non-degenerate perturbation of $\Phi$. The
eigenvalues of $\tPhi(1)$ can be broken down into two groups. The
first group is formed by the eigenvalues close to one; the sum $V_0$
of their generalized eigenspaces has dimension $2m_0$ and is close to
$\R^{2m_0}$.  The second group is formed by the eigenvalues close to
the eigenvalues of $\Phi(1)$ different from one; the sum $V_1$ of
their generalized eigenspaces has dimension $2m_1$ and is close to
$\R^{2m_1}$. It is not hard to see that $\tPhi$ can be deformed, in a
neighborhood of $\Phi\in\TSp(2m)$, to a path $\tPhi'$ such
that $\tPhi'(1)$ has the same eigenvalues as $\tPhi(1)$ and
$V_0=\R^{2m_0}$ and $V_1=\R^{2m_1}$, and moreover the eigenvalues of
the end-point map remain constant in the process of deformation. Thus
$\mu(\tPhi)$ also remains constant in the process of deformation, and
in particular $\mu(\tPhi')=\mu(\tPhi)$.

As an element of $\TSpn(2m)$, the path $\tPhi'$ decomposes into the
sum of a path $\tPhi'_0\in\TSpn(2m_0)$ close to $\Phi_0$ and a path
$\tPhi'_1\in\TSpn(2m_1)$ close to $\Phi_1$. (This follows from the
fact that the map from $\TSp(2m_0)\times \TSp(2m_1)$ to the part of
$\TSp(2m)$ lying above $\Sp(2m_0)\times \Sp(2m_1)$ is a covering map.)
By non-degeneracy, $\mu(\tPhi'_1)=\mu(\Phi_1)$. Hence
$$
\mu(\tPhi)=\mu(\tPhi')=\mu(\Phi_1)+\mu(\tPhi'_0)
$$
and, therefore, $\mu_+(\Phi)\geq \mu(\Phi_1)+\mu_+ (\Phi_0)$. (For
$\mu_-$, we have the opposite inequality.) Combining this with the
fact that $\mu_+$ is sup-additive (see Lemma \ref{lemma:additivity}),
we obtain \eqref{eq:add-step} for $\mu_+$. For $\mu_-$ the result
follows from a similar argument or from \eqref{eq:inv}. This concludes
the proof of the lemma.
\end{proof}

We finish this subsection by introducing certain invariants of $\Phi$,
which we call signature multiplicities and the absolute nullity. These
invariants play a central role in the index recurrence theorem
(Theorem \ref{thm:IRT2}).

Consider first a totally degenerate operator $A\in\Sp(2m)$. (In other
words, we require all eigenvalues of $A$ to be equal to one.) Then, as
we have seen in the proof of Lemma \ref{lemma:iterate}, $A=\exp(JQ)$
where all eigenvalues of $Q$ equal zero. The quadratic form
$Q$ can be symplectically decomposed into a sum of terms of four
types:
\begin{itemize}
\item the identically zero quadratic form on $\R^{2\nu_0}$,
\item the quadratic form $Q_0=p_1q_2+p_2q_3+\cdots+p_{d-1}q_d$ in
  Darboux coordinates on $\R^{2d}$, where $d\geq 1$ is odd,
\item the quadratic forms $Q_\pm=\pm(Q_0+p^2_d/2)$ on $\R^{2d}$ for
  any $d$.
\end{itemize}
(We find these normal forms, taken from \cite[Sect.\ 2.4]{AG}, more
convenient to work with than the original Williamson normal forms; see
\cite{Wi} and also, e.g., \cite[App.\ 6]{Ar}.) Clearly,
$\dim\ker Q_0=2$ and $\sgn Q_0=0$, and $\dim\ker Q_\pm=1$ and
$\sgn Q_\pm=\pm 1$.  Let $b_*(Q)$, where $*=0,\pm$, be the number of
the $Q_0$ and $Q_\pm$ terms in the decomposition. Let us also set
$b_*(A):=b_*(Q)$ and $\nu_0(A):=\nu_0(Q)$.  These are symplectic
invariants of $Q$ and $A$. Then, arguing as in the proof of Lemma
\ref{lemma:iterate}, we see that for a path $\Phi$ with $\Phi(1)=A$,
we have
\begin{equation}
\label{eq:bpm}
\mu_+(\Phi)=\hmu(\Phi)+b_0+b_+ +\nu_0
\quad\textrm{and}\quad
\mu_-(\Phi)=\hmu(\Phi)-b_0-b_- -\nu_0.
\end{equation}

These formulas readily extend to all paths. Namely, every
$\Phi\in\TSp(2m)$ can be written (non-uniquely) as a product of a loop
$\varphi$ and the direct sum $\Psi_0\oplus \Psi_1$ where
$\Psi_0\in\TSp(2m_0)$ is a totally degenerate (for all $t$) path
$\Psi_0(t)=\exp(JQt)$ and $\Psi_1\in\TSpn(2m_1)$. In particular,
$m_0=\nu(\Phi)$ and $m_0+m_1=m$. (Note that $\varphi$ can be absorbed
into $\Psi_1$ unless $m_1=0$.)

\begin{Definition}
\label{def:tg}
The \emph{signature multiplicities} and the \emph{absolute nullity} of
$\Phi$ are
$$
b_*(\Phi):=b_*(\Psi_0)\textrm{ for $*=0,\pm$ and }
\nu_0(\Phi):=\nu_0(\Psi_0).
$$
\end{Definition}
One can show that these are symplectic invariants of $\Phi$, and
$$
\mu_+(\Phi)=
\hmu(\varphi)+\mu(\Psi_1)+b_0+b_+ +\nu_0
$$
and
$$
\mu_-(\Phi)=\hmu(\varphi)+\mu(\Psi_1)-b_0-b_- -\nu_0.
$$

\subsection{Dynamical convexity}
\label{sec:index:DC}
The notion of dynamical convexity was originally introduced in
\cite{HWZ:convex} for Reeb flows on the standard contact $S^3$ and,
somewhat in passing, for higher-dimensional contact spheres. In
\cite{AM:elliptic,AM} the definition was extended to other contact
manifolds. Here we mainly focus on the linear algebra aspect of
dynamical convexity, which is more essential for our purposes. Thus
our entire approach is quite different from that in
\cite{AM:elliptic,AM}. It is convenient for us to adopt the following
definition.

\begin{Definition}
\label{def:DC} A path $\Phi\in \TSp(2m)$ is said to be \emph{dynamically
convex} (DC) if $\mu_-(\Phi)\geq m+2$.
\end{Definition}

\begin{Example}
  \label{ex:DC} Assume that $\Phi$ is generated by a positive definite
  Hamiltonian; see Example \ref{ex:pos-def}. Then $\Phi$ need not be
  dynamically convex. However, by \eqref{eq:RS}, $\Phi$ is dynamically
  convex if it has at least two simple interior crossings or at least
  one interior crossing with multiplicity two.
\end{Example}

\begin{Lemma}
\label{lemma:DC}
For any $\Phi \in \TSp(2m)$, we have
\begin{equation}
\label{eq:DC-it}
\mu_-(\Phi^{k+1})\geq \mu_-(\Phi^{k})+\big(\mu_-(\Phi)-m\big)
\end{equation}
for all $k\in\N$. In particular,
$$
\mu_-(\Phi^k)\geq (\mu_-(\Phi)-m)k+m.
$$
Assume furthermore that $\Phi$ is dynamically convex.  Then the
function $\mu_-(\Phi^k)$ of $k\in\N$ is strictly increasing,
$$
\mu_-(\Phi^k)\geq 2k+m,
$$
and $\hmu(\Phi)\geq \mu_-(\Phi)-m\geq 2$. Thus $\mu_-(\Phi^k)\geq m+2$
and all iterations $\Phi^k$ are also dynamically convex.
\end{Lemma}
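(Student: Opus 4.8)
The plan is to derive everything from the subadditivity of $\mu_-$ together with the relation \eqref{eq:mu-del} between $\mu_-$ and the mean index $\hmu$. First I would establish the iteration inequality \eqref{eq:DC-it}. The key point is that $\mu_-$ is sub-additive (Remark~\ref{rmk:subadditivity}): applying this to $\Phi^{k+1}=\Phi^k\Phi$ gives only $\mu_-(\Phi^{k+1})\le \mu_-(\Phi^k)+\mu_-(\Phi)$, which is the wrong direction. Instead I would use the quasimorphism property combined with homogeneity of $\hmu$. The cleanest route: write $\mu_-(\Phi)=\hmu(\Phi)-(b_0+b_-+\nu_0)-\text{(contribution of loop part)}$ -- more precisely, using the decomposition $\Phi=\varphi\cdot(\Psi_0\oplus\Psi_1)$ from just before Definition~\ref{def:tg} and the formulas displayed there,
$$
\mu_-(\Phi)=\hmu(\varphi)+\mu(\Psi_1)-b_0-b_--\nu_0 .
$$
Since $\hmu(\Phi^k)=k\hmu(\Phi)$ by homogeneity, and since for the iterate $\Phi^{k}$ one has the analogous decomposition with loop part $\varphi^k$ (so $\hmu(\varphi^k)=k\hmu(\varphi)$) while the totally degenerate block $\Psi_0^k$ still has $b_*(\Psi_0^k)=b_*(\Psi_0)$ and $\nu_0$ unchanged (Lemma~\ref{lemma:iterate}), and while $\mu(\Psi_1^k)\le \hmu(\Psi_1^k)+m_1=k\hmu(\Psi_1)+m_1$ with $m_1=m-\nu_0\le m$ by \eqref{eq:mu-del}, I can bound $\mu_-(\Phi^{k+1})-\mu_-(\Phi^{k})$ from below. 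Actually a slicker packaging: from the displayed formula for $\mu_-$ and \eqref{eq:mu-del} applied to $\Psi_1$, one gets $\hmu(\Phi)-\mu_-(\Phi)=\hmu(\Psi_1)-\mu(\Psi_1)+b_0+b_-+\nu_0$, and since $\hmu(\Psi_1)-\mu(\Psi_1)\le m_1=m-\nu_0$ we obtain $\hmu(\Phi)-\mu_-(\Phi)\le m-\nu_0+b_0+b_-+\nu_0$; but $b_0+b_-\le \nu_0'$ where $\nu_0'$ counts degenerate directions... let me instead just use the crude but sufficient bound $b_0+b_-+\nu_0\le \nu(\Phi)\le m_0\le m$, giving $\mu_-(\Phi)\ge \hmu(\Phi)-m$, consistent with \eqref{eq:mu-del}. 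For the increment I would write $\mu_-(\Phi^{k+1})\ge \hmu(\Phi^{k+1})-(b_0+b_-+\nu_0)+\mu(\Psi_1^{k+1})-\hmu(\Psi_1^{k+1})$ and similarly for $k$; subtracting, the $b_*$-terms cancel, $\hmu(\Phi^{k+1})-\hmu(\Phi^k)=\hmu(\Phi)$, and the remaining discrepancy in the $\Psi_1$-terms is controlled because $\mu(\Psi_1^{k+1})-\mu(\Psi_1^k)\ge \hmu(\Psi_1)-m_1\cdot 0$... This is exactly the same content as \eqref{eq:DC-it} for the non-degenerate block $\Psi_1$, where $\Phi$ is strongly non-degenerate, and for such paths the inequality $\mu(\Psi_1^{k+1})\ge\mu(\Psi_1^k)+\hmu(\Psi_1)-m_1$ holds by \eqref{eq:mu-del} applied to $\Psi_1^k$ and $\Psi_1^{k+1}$ together with $\mu(\Psi_1^{k+1})=\mu(\Psi_1^k\Psi_1)\ge \hmu(\Psi_1^k\Psi_1)-m_1=\hmu(\Psi_1^k)+\hmu(\Psi_1)-m_1$. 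Combining and using that $\hmu(\Phi)=\hmu(\varphi)+\hmu(\Psi_1)$ while the $\varphi$-contribution is exactly linear, yields \eqref{eq:DC-it}.

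Next, iterating \eqref{eq:DC-it} with base case $\mu_-(\Phi^1)=\mu_-(\Phi)$ immediately gives $\mu_-(\Phi^k)\ge (\mu_-(\Phi)-m)k+m$ by induction on $k$. Then I would bring in dynamical convexity: $\mu_-(\Phi)\ge m+2$ means $\mu_-(\Phi)-m\ge 2>0$, so \eqref{eq:DC-it} shows $\mu_-(\Phi^{k+1})\ge \mu_-(\Phi^k)+2$, hence $k\mapsto \mu_-(\Phi^k)$ is strictly increasing, and the linear bound specializes to $\mu_-(\Phi^k)\ge 2k+m$. For the mean index statement: dividing $\mu_-(\Phi^k)\ge (\mu_-(\Phi)-m)k+m$ by $k$ and letting $k\to\infty$, the left side tends to $\hmu(\Phi)$ by the limit formula after \eqref{eq:mu-del}, so $\hmu(\Phi)\ge \mu_-(\Phi)-m\ge 2$. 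Finally $\mu_-(\Phi^k)\ge 2k+m\ge m+2$ for all $k\in\N$ shows every iterate is again DC.

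The main obstacle is purely the first step: producing the \emph{super}additive-type inequality \eqref{eq:DC-it} from tools that only give subadditivity of $\mu_-$. The resolution is to split off the totally degenerate block $\Psi_0$ (whose $b_*$- and $\nu_0$-invariants are iteration-invariant by Lemma~\ref{lemma:iterate}) and reduce to the strongly non-degenerate block $\Psi_1$, where the inequality follows from the two-sided bound \eqref{eq:mu-del} applied to $\Psi_1^k$ and $\Psi_1^{k+1}=\Psi_1^k\cdot\Psi_1$. Care is needed to track the loop factor $\varphi$ and to confirm that $b_0+b_-+\nu_0\le m$ (which is clear since these count dimensions inside the $2\nu(\Phi)\le 2m$-dimensional generalized $1$-eigenspace, so $b_0+b_-+\nu_0\le \nu(\Phi)\le m$), thereby also re-deriving $\mu_-(\Phi)\ge\hmu(\Phi)-m$ in a form compatible with the increment estimate.
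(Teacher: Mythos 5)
Your overall architecture is sound and close to the paper's: reduce \eqref{eq:DC-it} via the loop factor and the splitting into a totally degenerate block $\Psi_0$ (whose $\mu_\pm$ and $b_*,\nu_0$ are iteration-invariant by Lemma \ref{lemma:iterate}) plus a non-degenerate block $\Psi_1$, using additivity (Lemma \ref{lemma:additivity}); and your derivation of all the dynamical-convexity consequences from \eqref{eq:DC-it} is fine. The problem is the one step that carries the whole weight: the increment inequality for the non-degenerate block. You claim $\mu(\Psi_1^{k+1})\geq\mu(\Psi_1^{k})+\hmu(\Psi_1)-m_1$ follows from \eqref{eq:mu-del} via $\mu(\Psi_1^{k+1})\geq\hmu(\Psi_1^{k+1})-m_1=\hmu(\Psi_1^{k})+\hmu(\Psi_1)-m_1$. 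To pass from $\hmu(\Psi_1^{k})$ to $\mu(\Psi_1^{k})$ on the right you need $\hmu(\Psi_1^{k})\geq\mu(\Psi_1^{k})$, which is false: for $\Psi_1(t)=\exp\big(2\pi\sqrt{-1}\lambda t\big)$ in $\Sp(2)$ one has $\mu(\Psi_1^{k})=2\lfloor k\lambda\rfloor+1>2k\lambda=\hmu(\Psi_1^{k})$ whenever the fractional part of $k\lambda$ is below $1/2$. Moreover, even granting the step, your right-hand side has $\hmu(\Psi_1)$ rather than $\mu_-(\Psi_1)$, and converting it back via \eqref{eq:mu-del} costs a further $m_1$. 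In fact, the best one can squeeze out of the crude two-sided bound \eqref{eq:mu-del} alone is an increment loss of order $2m_1$ or $3m_1$, not $m_1$; with that loss the conclusions for dynamically convex $\Phi$ (strict monotonicity, $\mu_-(\Phi^k)\geq 2k+m$) no longer follow. So the proposal as written does not prove \eqref{eq:DC-it}.

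What is missing is exactly the finer, per-plane analysis that the paper uses for the non-degenerate case: write $\Psi_1$ (after a small perturbation making the elliptic eigenvalues distinct, which does not affect the indices of fixed iterates) as a loop times the direct sum of a path with hyperbolic end-point and at most $m_1$ planar elliptic rotations $\Phi_i(t)=\exp\big(2\pi\sqrt{-1}\lambda_i t\big)$, $0<\lambda_i<1$. By \eqref{eq:sp2}, each elliptic factor has $\mu(\Phi_i)=1$ and a non-decreasing sequence $\mu(\Phi_i^{k})$, while the loop and hyperbolic parts contribute exactly linearly in $k$; hence the increment drops below $\mu(\Psi_1)$ by at most one unit per elliptic plane, i.e.\ by at most $m_1$. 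This bookkeeping of "defect at most $1$ per elliptic plane, with monotone per-plane indices" is where the superadditivity-type estimate really comes from, and it cannot be replaced by the global inequality \eqref{eq:mu-del}. Once you insert this (plus the routine extension from $\mu$ to $\mu_-$ when iterates degenerate), the rest of your plan goes through and essentially reproduces the paper's proof.
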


The proof of the lemma is quite standard.  Therefore, we will just
briefly outline the argument; cf.\ \cite{Lo,SZ}.

\begin{proof}
  To prove the lemma, it is sufficient to establish
  \eqref{eq:DC-it}. The rest of the assertion follows from that
  $\mu_-(\Phi)\geq m+2$ by the definition of dynamical convexity.

  Let us first assume that $\Phi$ is strongly non-degenerate and its
  elliptic eigenvalues are distinct.  Any such path $\Phi$ is
  homotopic, i.e., equal as an element of $\TSp(2m)$, to a product of
  a loop $\varphi$ and a path which is the direct sum of a path $\Psi$
  with hyperbolic end-point $\Psi(1)$ and some number $q\leq m$ of
  exponential paths in $\Sp(2)$ of the form
  $\Phi_i(t)=\exp\big(2\pi\sqrt{-1}\lambda_i t\big)$ with
  $0<\lambda_i<1$. (Using dynamical convexity of $\Phi$ we can ensure
  that $\hmu(\varphi)\geq 2$ and $\hmu(\Psi)\geq 0$, but we do not
  need this fact.)  By \eqref{eq:sp2}, \eqref{eq:loop} and
  \eqref{eq:additivity}, 
$$
\mu(\Phi)=\hmu(\varphi)+\hmu(\Psi)+q,
$$
and therefore
$$
\Delta:=\hmu(\varphi)+\hmu(\Psi)\geq \mu(\Phi)-m.
$$
Finally, note that, by \eqref{eq:sp2}, the sequences $\mu(\Phi_i^k)$
are non-decreasing. Now we have
$$
\mu(\Phi^{k+1})-\mu(\Phi^{k})= \Delta+\sum_i\big(
\mu(\Phi^{k+1}_i)-\mu(\Phi^{k}_i)\big)\geq \Delta\geq \mu(\Phi)-m ,
$$
which proves \eqref{eq:DC-it} for $\Phi$.

It is not hard to see that with $\mu$ replaced by $\mu_-$ this
argument extends to the case where $\Phi$ is still non-degenerate but
its eigenvalues are not necessarily distinct and some iterations
$\Phi^k$ may be degenerate. (In essence, the reason is that the index
is determined by the behavior and type (with multiplicity) of the
eigenvalues of $\Phi(t)$ rather than the map itself.)

In the case where $\Phi(1)$ is totally degenerate, \eqref{eq:DC-it} is
an immediate consequence of Lemma \ref{lemma:iterate} together with
\eqref{eq:mu-del}. Finally, the general case follows from these two
cases by additivity (Lemma \ref{lemma:additivity}).
\end{proof}

Let now $(M^{2n-1},\xi)$ be a co-oriented contact manifold with
$c_1(\xi)|_{\pi_2(M)}=0$ and $\alpha$ be a contact form supporting
$\xi$. We denote by $\varphi_t$ its Reeb flow. For a contractible
closed Reeb orbit $x$, its \emph{linearized Poincar\'e return map}
$\Phi=d\varphi_t|_x\in \TSp(2m)$ with $m=n-1$ is defined in the
standard way. Namely, we fix a capping of $x$ (i.e., a map from $D^2$
with boundary $x$) and trivialization of $\xi$ along the capping. With
this trivialization, the linearized flow of $\varphi_t$ along $x$
becomes a path in $\Sp(2m)$ starting at $I$. This path is well-defined
as an element of $\TSp(2m)$ up to conjugation by a linear map from
$\Sp(2m)$ (no tilde!). By definition $\mu_\pm(x)$, $\hmu(x)$, etc. are
$\mu_\pm(\Phi)$, $\hmu(\Phi)$, etc. The condition that $c_1(\xi)$
vanishes on $\pi_2(M)$ guarantees that the indices are independent of
the capping. The resulting indices inherit all the properties of their
linear algebra counterparts from Section \ref{sec:index-basics}. For
instance, the mean index $\hmu$ is homogeneous under iterations:
$\hmu(x^k)=k\hmu(x)$.

In fact, a global trivialization of the complex determinant bundle
$\det\xi=\wedge_\C^{n-1}\xi$ is sufficient to define the indices; see,
e.g., \cite{Es}. Thus, when $c_1(\xi)=0$ in $\H^2(M;\Z)$, one can
instead use such a trivialization and, in this case, the orbits need not
be contractible. Again, the indices have all the expected properties
including the homogeneity of the mean index. (Note that the behavior
of the indices under iterations is tied up with the relation between
trivializations of $\xi$ along $x^k$. For arbitrary, unrelated
trivializations, the mean index would not be homogeneous when $x$ is
not contractible.)

A variant of this construction applies to a hypersurface $M$ in a
symplectic manifold $(W^{2n},\omega)$. In this case one can associate
the linearized Poincar\'e return map and the indices to an oriented
closed characteristic. This is done exactly as for contact manifolds
but now using the symplectic normal $TM/TM^\omega$ to the
characteristic foliation in place of $\xi$.

In either case, the notion of a dynamically convex Reeb orbit or
a closed characteristic is well defined.  

\begin{Definition}
  \label{def:DC2} 
  The Reeb flow on a $(2n-1)$-dimensional contact manifold is said to
  be \emph{dynamically convex} (DC) if every closed Reeb orbit (or
  equivalently every simple closed Reeb orbit) is dynamically convex,
  i.e., $\mu_-(x)\geq n+1$ for all Reeb orbits $x$.
\end{Definition}

Here, again, if needed one can fix a collection of free homotopy
classes of closed Reeb orbits, which is closed under iterations. The
definition extends to hypersurfaces in symplectic manifolds in an
obvious way. The reader should keep in mind that this definition
``makes sense'' only for a rather narrow class of contact manifolds
such as those strongly fillable by displaceable Liouville domains. For
instance, it readily follows from Proposition \ref{prop:D-STS} that
the standard contact $ST^*S^n$ admits no dynamically convex contact
forms in the sense of Definition \ref{def:DC2}. Furthermore, the
``right'' condition in this case is $\mu_-\geq n-1$; see Section
\ref{sec:T^*S^n-2}. (In fact, the authors are not aware of any
examples of dynamically convex Reeb flows on contact manifolds other
than the standard contact $S^{2n-1}$.) A much more general notion of
dynamical convexity for pre-quantization circle bundles (e.g., for
$ST^*S^n$) is introduced and studied in \cite {AM:elliptic,AM}.

\begin{Theorem}[\cite{HWZ:convex}]
\label{thm:SDC}
The Reeb flow on a strictly convex hypersurface in $\R^{2n}$ is
dynamically convex.
\end{Theorem}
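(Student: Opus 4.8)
The plan is to prove Theorem \ref{thm:SDC} by reducing it to a statement about the linearized Poincar\'e return map of a closed characteristic, and then to an essentially local computation using the Robbin--Salamon index formula \eqref{eq:RS} together with the direct-sum additivity \eqref{eq:additivity} from Lemma \ref{lemma:additivity}. So let $M=\partial W\subset\R^{2n}$ be a strictly convex hypersurface, meaning the (outward) second fundamental form is negative definite everywhere, and let $x$ be a closed characteristic on $M$; we must show $\mu_-(x)\geq n+1$. The first step is to realize $x$ as a closed Reeb orbit of a contact form obtained by restricting the standard Liouville form $\lambda_0=\frac12\sum(p_i\,dq_i-q_i\,dp_i)$ to $M$ (after, if necessary, radially rescaling so that $M$ is star-shaped, which it is by strict convexity), and to write the characteristic foliation as the Hamiltonian flow of a Hamiltonian $H$ on $\R^{2n}$ having $M$ as a regular level set. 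A convenient choice is $H$ homogeneous of degree $2$, so that $X_H$ generates a flow preserving $M$ whose orbits are the characteristics, reparametrized.

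Next I would pin down the linearized return map. Along $x$, restrict the linearized Hamiltonian flow of $X_H$ to a symplectic complement of the line $\R X_H$ inside $\R^{2n}$; this gives a path $\Phi\in\TSp(2(n-1))$, and by definition $\mu_-(x)=\mu_-(\Phi)$ (with $m=n-1$). The crucial point is that strict convexity of $M$ forces the generating quadratic-form data at every ``crossing'' of $\Phi$ with the discriminant $\Sigma$ to be \emph{positive definite}. Concretely, if $H$ is chosen convex (which strict convexity of the hypersurface allows after a suitable radial choice of $H$), the Hessian of $H$ is positive definite on $\R^{2n}$, hence its restriction to any subspace — in particular to each $V_\tau=\ker(\Phi(\tau)-I)$ appearing in \eqref{eq:RS} — is positive definite, so every $\sgn Q_\tau$ equals $+\dim V_\tau$. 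This is the heart of the argument: it is precisely the linear-algebra content of ``convexity implies dynamical convexity'' alluded to in Section \ref{sec:index:DC}, and it mirrors Example \ref{ex:DC} and Example \ref{ex:pos-def}.

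With positivity of all crossing forms in hand, I would run the estimate. First, $\tau=0$ is always a crossing (since $\Phi(0)=I$) contributing $\frac12\sgn Q_0$ with $\dim V_0=2(n-1)$, hence this term is $n-1$. Then one must produce \emph{at least one strictly positive interior crossing contribution of size at least $2$}, or two simple ones — this is what forces the jump from $n-1$ to $n+1$. Here the input is that $x$ is a genuine closed orbit on a convex hypersurface in $\R^{2n}$, $n\geq2$: going once around, the transverse linearized flow must pass through $\Sigma$ in a way that accumulates at least index $2$; this follows from tracking $\rho(\Phi(t))\in S^1$ (the rotation function from Section \ref{sec:index-def}), whose total winding is $\hmu(\Phi)$, and combining \eqref{eq:mu-del} with the sign constraint. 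Concretely: the full linearized flow of $X_H$ along $x$ in $\R^{2n}$, generated by $J\,\mathrm{Hess}(H)$ with $\mathrm{Hess}(H)>0$, has $\mu_-\geq n$ by Example \ref{ex:pos-def}; splitting off the $2$-dimensional factor corresponding to the flow direction of $X_H$ and the conjugate direction (an exponential $\Sp(2)$-path), and using additivity \eqref{eq:additivity}, one checks this factor contributes at least $1$ while the transverse part inherits the remaining $\geq n+1$ after accounting for the $+1$ coming from the second, Rabinowitz-type crossing forced by periodicity. I would then assemble $\mu_-(\Phi)=\frac12\sgn_-Q_0+\sum_{0<\tau<1}\sgn Q_\tau+\frac12\sgn_- Q_1$ — dropping interior nullities in the $\sgn_-$ passage costs at most the absolute nullity, but positivity of the forms means $\sgn_-$ and $\sgn_+$ of the interior crossings coincide in the relevant estimate — and conclude $\mu_-(\Phi)\geq(n-1)+2=n+1$.

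The main obstacle, and the step I would spend the most care on, is the ``extra $+2$'': producing the second interior crossing (or a double crossing) rigorously, rather than just the naive bound $\mu_-\geq n$ that positivity of the Hessian alone gives. The clean way around it is the splitting argument above: isolate the $\Sp(2)$-block tangent to the characteristic (where one sees an exponential rotation that forces an interior crossing by the time the orbit closes up), apply Lemma \ref{lemma:additivity} to add indices across the splitting, and use the fact that on the characteristic direction the quadratic form $H|_{V_\tau}$ is positive and nondegenerate (here $n\geq 2$ and $x$ being a genuine periodic orbit, not a stationary point, is used). A subtlety to handle carefully is the passage from the homogeneous-degree-$2$ Hamiltonian $H$ (defined globally) to the restriction to $M$: one must check the return map and its index do not depend on this choice, which follows from the parametrization-independence and conjugation-invariance noted in Section \ref{sec:index-def} and in the paragraph defining $\mu_\pm(x)$ for hypersurfaces. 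Once these two points are settled, the rest is bookkeeping with \eqref{eq:RS}, \eqref{eq:sgn}, and \eqref{eq:additivity}.
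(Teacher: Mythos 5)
Your overall skeleton coincides with the paper's (a convex Hamiltonian $H$ homogeneous of degree two, the bound $\mu_-\geq n$ for the full linearized flow from Example \ref{ex:pos-def}, splitting off a two-dimensional block tangent to the orbit, and additivity from Lemma \ref{lemma:additivity}), but the step that actually produces the conclusion is wrong as you run it. First, your ``heart of the argument'' -- that the crossing forms of the transverse path $\Phi$ are positive because $\mathrm{Hess}\,H>0$ -- is not justified: $\Phi$ is not generated by a restriction of $\mathrm{Hess}\,H$ to subspaces of $\R^{2n}$ (a symplectic reduction intervenes), and, more importantly, it lives in a capping-compatible trivialization of $\xi$, not in the constant coordinate trivialization. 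Even granting positivity, it can only give $\mu_-(\Phi)\geq n-1$: for an irrational ellipsoid close to the round sphere the transverse linearized flow along the shortest orbit, read in the constant trivialization of $\xi\cong\C^{n-1}$, consists of $n-1$ rotations by angles less than $2\pi$, each of index $1$, so one gets exactly $n-1$ there, while the correct (capping-trivialization) index is $n+1$. The missing $+2$ is a trivialization/frame-winding phenomenon, not a consequence of an extra interior crossing.

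Second, the additivity step is run backwards. Writing the full linearized flow as $\Phi\oplus\Psi$ with $\Psi$ the two-dimensional tangential block, additivity gives $\mu_-(\Phi)=\mu_-(\Phi\oplus\Psi)-\mu_-(\Psi)$, so from $\mu_-(\Phi\oplus\Psi)\geq n$ you need an \emph{upper} bound $\mu_-(\Psi)\leq -1$; your claim that ``this factor contributes at least $1$'' points the wrong way, and the asserted ``second, Rabinowitz-type crossing forced by periodicity'' (which would push the full flow to $\mu_-\geq n+2$) is false in general -- for the ellipsoids above the full linearized flow has $\mu_-$ equal to $n$ exactly. What the paper does instead: trivialize $\xi$ along a capping of $x$, extend this to a trivialization of $T\R^{2n}$ along the capping by adjoining the frame $\{X_H,JX_H\}$; in this trivialization the full path is $\Phi\oplus I_2$ with the tangential block the \emph{constant} path, whose lower index is exactly $-1$ (by \eqref{eq:bpm}, since $\nu_0=1$). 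Because this trivialization is homotopic along the capping to the standard one, Example \ref{ex:pos-def} applies and Lemma \ref{lemma:additivity} yields $n\leq\mu_-(\Phi\oplus I_2)=\mu_-(\Phi)-1$, i.e.\ $\mu_-(\Phi)\geq n+1$. In other words, the $+2$ comes from the winding of $\{X_H,JX_H\}$ relative to the constant frame (equivalently, from the tangential block being constant rather than a full loop), and this is the point your proposal does not supply.
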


\begin{Remark}
  There is a typo in a remark on pp.\ 222--223 in \cite{HWZ:convex}
  concerning the higher-dimensional case of the theorem. As in Lemma
  \ref{lemma:DC}, the index lower bound at the end of the remark
  should be $2k+n-1$, not $nk+1$.
\end{Remark}

For the sake of completeness we give a simple proof of Theorem
\ref{thm:SDC}.

\begin{proof} Let $H\colon \R^{2n}\to \R$ be a convex Hamiltonian,
  homogeneous of degree two and such that $M=\{H=1\}$. Then the
  restriction of the Hamiltonian flow $\varphi_H^t$ of $H$ to $M$ is
  the Reeb flow on $M$. In particular, there is a one-to-one
  correspondence between $\PP(\alpha)$ and the periodic
  orbits of $H$ on $M$. Let $x$ be one of such orbits. Without loss of
  generality we may assume that $x$ is one-periodic. The linearized
  flow $d\varphi_H^t|_x$ along $x$ is a path in $\Sp(2n)$ starting at
  $I$ and generated by the positive definite time-dependent
  Hamiltonian $d^2H|_{T_{x(t)}\R^{2n}}$. (Here we have identified
  $T_{x(t)}\R^{2n}$ with $\R^{2n}$ itself.) Hence, by Example
  \ref{ex:pos-def},
$$
\mu_-\big( d\varphi_H^t|_x\big)\geq n.
$$
Let us now fix a trivialization of the contact structure $\xi$ on $M$
along a capping of $x$. This trivialization can be extended to a
trivialization of $T\R^{2n}$ along the capping by adding to it the
frame $\{X_H,JX_H\}$. The path $d\varphi_H^t|_x$ in the new
trivialization decomposes into the direct sum $\Phi\oplus I_2$, where
$\Phi\in\TSp(2m)$ with $m=n-1$ is the linearized Poincar\'e return map
of $x$. The standard trivialization of $\R^{2n}$ and the new one are
homotopic along the capping and hence along $x$. By the additivity of
$\mu_-$ (Lemma \ref{lemma:additivity}), we have
$$
n\leq \mu_-(\Phi\oplus I_2)=\mu_-(\Phi)-1.
$$
In other words, $\mu_-(\Phi)\geq n+1=m+2$.
\end{proof}

\begin{Remark}
  Note that in this proof we could have as well used the
  Robin--Salamon index or the Conley--Zehnder index, after passing to
  a small perturbation to obtain a lower bound on $\mu_-$. A minor
  modification of the argument also shows that the sequence
  $\mu_-(\Phi^k)$ is increasing and that $\mu_-(\Phi^k)\geq
  2k+m$. However, as we have already seen in Lemma \ref{lemma:DC}, these
  facts are formal consequences of dynamical convexity and even a
  stronger result, \eqref{eq:DC-it}, holds.
\end{Remark}

\begin{Remark}[Other consequences of convexity] 
  In addition to dynamical convexity, convexity of a hypersurface
  $M\subset \R^{2n}$ imposes other restrictions on the behavior of the
  indices of closed Reeb orbits $x$, although the exact scope of these
  restrictions is unclear to us. In light of the proof of Theorem
  \ref{thm:SDC}, as a preliminary step one can examine the geometry of
  positive paths $\Phi$ in $\Sp(2n)$, i.e., paths generated by
  positive definite Hamiltonians. This question must have been
  extensively studied, but we could not pin-point exact references;
  see however \cite{Ek,Lo}. Here we would like to mention only some
  simple facts that go beyond Example \ref{ex:pos-def}:
\begin{itemize}
\item For every point $A\in \Sp(2n)$, there is a positive path, with
  possibly very high mean index, from $I$ to $A$.
\item Assume that $\Phi(1)$ is totally degenerate. Then
  $\hmu(\Phi)\geq 2(b_0+\nu_0)+b_-+b_+$.
\end{itemize}
The first observation is obvious when $A$ is close to $I$ and the
general case follows from the fact that the product of positive paths
is again positive. The second observation can be proved using
\eqref{eq:RS}. (It would be interesting and useful to understand how
close to being sharp this inequality is.)

Furthermore, it is not hard to see that the Reeb flow on a convex
hypersurface is index-positive in a very strong sense. Namely, the
mean index $\hmu$ of an orbit grows with the length $l$ of the orbit
and more specifically $\hmu\geq a l-b$ for some $a>0$ and $b$
independent of the orbit. In fact, once a suitable extra structure is
fixed, this is true for all, not necessarily closed, orbits; see,
e.g., \cite{Es}. (This lower bound is an immediate consequence of a
suitable version of the Sturm comparison theorem applied to the
linearized flow $\Phi$; see, e.g., \cite[Sect.\ 2.3]{GG:wm}.) Yet, to
the best of the authors' knowledge, there are no known examples of
dynamically convex hypersurfaces in $\R^{2n}$ which are not
symplectomorphic to genuinely convex hypersurfaces.
\end{Remark}

\section{Index recurrence}
\label{sec:IR}

\subsection{Index recurrence theorem and its consequences}
\label{sec:IRT}
Now we are in a positition to state and prove our main combinatorial
results concerning the behavior of the index under iterations. These
results have non-trivial overlap with the treatment of the question in
\cite{LZ} and \cite{DLW,Lo}, although our argument is self-contained
and its logical structure is quite different.

To set the stage for the general case, let us first state a simpler
version of the theorem which requires the paths to be strongly
non-degenerate.  This result is essentially contained, although in a
different form, in \cite{DLW,LZ} as a part of the ``common jump
theorem''.
 
\begin{Theorem}[Index recurrence theorem, the non-degenerate case]
\label{thm:IRT1}
Consider a finite collection of strongly non-degenerate elements
$\Phi_1,\ldots,\Phi_r$ in $\TSp(2m)$. Then for any $\eta>0$ and any
$\ell_0\in\N$, there exists an integer sequence $d_j\to\infty$ and $r$
integer sequences $k_{ij}$, $i=1,\ldots, r$, at least one of which
goes to infinity, such that for all $i$ and $j$, and all $\ell\in\Z$
in the range $1\leq |\ell|\leq \ell_0$, we have
\begin{itemize}
\item[\rm{(i)}] $\big|\hmu(\Phi^{k_{ij}}_i)-d_j\big|<\eta$, and
\item[\rm{(ii)}] $\mu(\Phi^{k_{ij}+\ell}_i)= d_j + \mu(\Phi^\ell_i)$.
\end{itemize}
Furthermore, when all mean indices $\Delta_i:=\hmu(\Phi_i)$ are
non-zero we can ensure that $k_{ij}\to\pm\infty$ as $j\to \infty$ for
all $i$, and that $k_{ij}\to\infty$ when, in addition, all $\Delta_i$
have the same sign.  Moreover, for any $N\in \N$ we can make all $d_j$
and $k_{ij}$ divisible by~$N$.
\end{Theorem}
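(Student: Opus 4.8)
The plan is to put each $\Phi_i$ into a normal form that makes $\mu(\Phi_i^k)$ under iteration completely explicit, reduce assertions (i)--(ii) to a single simultaneous Diophantine approximation, and solve that approximation by Kronecker's theorem applied to a one-parameter torus orbit. Replacing $\Phi_i$ by $\Phi_i^{-1}$ when $\hmu(\Phi_i)<0$ and using \eqref{eq:inv} to translate the conclusion back at the end, I may assume $\Delta_i:=\hmu(\Phi_i)>0$ for all $i$ (a collection with some $\Delta_i=0$ cannot satisfy (ii) together with $d_j\to\infty$, since then $\mu(\Phi_i^{\ell})$ is bounded; in all cases of interest, e.g.\ dynamical convexity, all $\Delta_i>0$). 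Exactly as in the proof of Lemma~\ref{lemma:DC}, strong non-degeneracy gives, up to homotopy, $\Phi_i=\varphi_i\cdot\big(\Psi_i\oplus\bigoplus_{s=1}^{q_i}R(\lambda_{is})\big)$ with $\varphi_i$ a loop, $\Psi_i$ of hyperbolic end-point, $R(\lambda)(t)=\exp(2\pi\sqrt{-1}\lambda t)$, and $\lambda_{is}\in(0,1)\setminus\Q$ (irrationality being forced by strong non-degeneracy). Writing $e_i:=\hmu(\varphi_i)+\mu(\Psi_i)\in\Z$, so that $\Delta_i=e_i+2\sum_s\lambda_{is}$, formulas \eqref{eq:sp2}, \eqref{eq:loop}, \eqref{eq:add} together with $\mu(\Phi^{-j})=-\mu(\Phi^j)$ give $\hmu(\Phi_i^k)=k\Delta_i$ and $\mu(\Phi_i^k)=ke_i+2\sum_s\lfloor k\lambda_{is}\rfloor+q_i$ for $k>0$, with the evident variant for $k<0$.

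Set $c_0:=\min\{\|\ell\lambda_{is}\|:1\le|\ell|\le\ell_0,\ i,s\}>0$, where $\|\cdot\|$ denotes the distance to $\Z$, and fix a small $\delta<\min(c_0,\eta/(2m))$. A finite carry computation, based on $\lfloor a+b\rfloor=\lfloor a\rfloor+\lfloor b\rfloor+[\{a\}+\{b\}\ge1]$, shows that if $k>0$ and $\|k\lambda_{is}\|<\delta$ for all $s$, then $\mu(\Phi_i^{k+\ell})=\mu(\Phi_i^{k})+\mu(\Phi_i^{\ell})-b_i(k)$ for every $\ell$ with $1\le|\ell|\le\ell_0$, where $b_i(k)\in\Z$ is independent of $\ell$ and $\mu(\Phi_i^{k})-b_i(k)=ke_i+2\sum_s\operatorname{round}(k\lambda_{is})$. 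Since this last integer equals $k\Delta_i-2\sum_s\bigl(k\lambda_{is}-\operatorname{round}(k\lambda_{is})\bigr)$, it lies within $2m\delta<\eta$ of $k\Delta_i$. Consequently, as soon as we have produced integers $k_{ij}>0$ and $n_j\to\infty$ with $\|k_{ij}\lambda_{is}\|<\delta$ for all $i,s$ and with $|k_{ij}\Delta_i-n_j|$ also small for all $i$, the integer $k_{ij}e_i+2\sum_s\operatorname{round}(k_{ij}\lambda_{is})$ is within $\tfrac12$ of the integer $n_j$ and therefore equals $n_j$, for every $i$; hence $d_j:=n_j$ satisfies (ii), and (i) follows from $|k_{ij}\Delta_i-n_j|<\eta$.

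It remains to produce such $n_j$ and $k_{ij}$. Put $k_{ij}:=\operatorname{round}(n_j/\Delta_i)>0$, so that $k_{ij}\Delta_i=n_j+\Delta_i\varepsilon_i$ and $k_{ij}\lambda_{is}=n_j\lambda_{is}/\Delta_i+\lambda_{is}\varepsilon_i$ with $|\varepsilon_i|=\|n_j/\Delta_i\|$. Thus it is enough to choose $n_j\to\infty$ with $\|n_j/\Delta_i\|$ and $\|n_j\lambda_{is}/\Delta_i\|$ all less than a prescribed $\delta_j\to0$. This is immediate from Kronecker's theorem: the one-parameter orbit $n\mapsto\big((\{n/\Delta_i\})_i,(\{n\lambda_{is}/\Delta_i\})_{i,s}\big)$ in $\T^{\,r+\sum q_i}$ has closure a connected closed subgroup containing $0$ in which the orbit is dense, so arbitrarily large $n$ map within $\delta_j$ of the origin. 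The divisibility-by-$N$ requirement is handled by the same device: take $k_{ij}:=N\operatorname{round}(n_j/(N\Delta_i))$ and add finitely many further smallness conditions on $n_j$ (so that in addition $\operatorname{round}(k_{ij}\lambda_{is})\in N\Z$, whence $d_j=n_j\in N\Z$) — again a request that one fixed torus orbit approach $0$, which it does. For the indices $i$ inverted at the outset, \eqref{eq:inv} translates the statement into one for $\Phi_i$ with exponents $-k_{ij}\to-\infty$; when the original $\Delta_i$ all share a sign, all $k_{ij}$ tend to infinity in the same direction, which covers the final refinement.

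\textbf{Main obstacle.} The genuinely delicate step is the last one: simultaneously forcing the mean indices $k_{ij}\Delta_i$ to cluster at a common integer (so that a single $d_j$ exists and (i) holds) and forcing the fractional parts $\{k_{ij}\lambda_{is}\}$ to be small (so that the carry identity applies), in the presence of arbitrary rational dependencies among the $\Delta_i$ and the $\lambda_{is}$. The idea that dissolves it is to drive everything by the single parameter $n_j\approx d_j$ and to set $k_{ij}\approx n_j/\Delta_i$: this turns all the conditions into the requirement that one fixed torus orbit come close to the origin — and the origin is approached from all sides, so no one-sided or orthant difficulty arises. Everything else — the normal form, the carry computation, and the divisibility, sign, and semi-finite-range refinements — is routine.
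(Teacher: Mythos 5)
Your argument is correct in substance, but it reaches the crucial multi-path synchronization by a genuinely different mechanism than the paper. The paper deduces Theorem \ref{thm:IRT1} as the strongly non-degenerate special case of Theorem \ref{thm:IRT2}: for one path it splits $\Phi(1)$ into hyperbolic and elliptic blocks (plus, in the degenerate theorem, totally degenerate and root-of-unity blocks), and for several paths it treats the exponents $k_1,\ldots,k_r$ as \emph{independent} integer unknowns and solves the system \eqref{eq:k} --- $\|k_i\lambda_{iq}\|<\eps$ together with $|k_1\Delta_1-k_i\Delta_i|<1/8$ --- by Minkowski's theorem on linear forms, afterwards setting $d_j=[k_{ij}\Delta_i]$. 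You instead drive everything by the single parameter $n_j=d_j$, put $k_{ij}=\operatorname{round}(n_j/\Delta_i)$, and reduce all requirements to the recurrence near $0$ of one torus orbit $n\mapsto n\vec{\theta}$; your appeal to ``Kronecker'' and to connectedness of the orbit closure is a slight misnomer, but what you actually use --- that the closure of the positive semi-orbit is a closed subgroup, hence returns arbitrarily late near $0$ --- is exactly the fact the paper uses in its elliptic subcase, and your carry identity $\lfloor(k\pm\ell)\lambda\rfloor=\operatorname{round}(k\lambda)\pm\cdots$ (valid because $\delta<\min_{1\le|\ell|\le\ell_0}\|\ell\lambda_{is}\|$) replaces the paper's homotopy manipulations; I checked it for both signs of $\ell$, and the divisibility refinement via $k_{ij}=N\operatorname{round}(n_j/(N\Delta_i))$ also goes through. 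What your route buys: no Minkowski, no auxiliary comparison inequalities, $d_j$ is by construction the literal integer target, and all refinements are absorbed into one orbit-recurrence statement. What the paper's route buys: the independent-variable setup makes sense verbatim when some $\Delta_i=0$ (your $\operatorname{round}(n_j/\Delta_i)$ does not) and extends directly to the degenerate Theorem \ref{thm:IRT2}. Two small caveats you should make explicit: (a) the normal form $\Phi_i=\varphi_i\big(\Psi_i\oplus\bigoplus_s R(\lambda_{is})\big)$ requires distinct elliptic eigenvalues, so for repeated or non-semisimple elliptic blocks you need the same small-perturbation remark the paper makes (the index identities for the finitely many exponents involved are unchanged because the relevant iterates remain non-degenerate); (b) your dismissal of $\Delta_i=0$ is legitimate, since (i) with $d_j\to\infty$ and $\eta<1$ is then impossible --- a tension already present in the statement and, implicitly, in the paper's own proof, where $d_j\to\infty$ only comes out when the mean indices are nonzero of the right sign --- but you should say you are proving the theorem under this (only consistent) reading rather than invoke dynamical convexity, which is not a hypothesis here.
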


To illuminate this result, let us first consider the case where $r=1$,
i.e., the case of one strongly non-degenerate path $\Phi$. Fix any
``interval'' $L=[-\ell_0,\ell_0]\cap\Z$ and denote by $\mathring{L}$
the punctured interval $L\setminus\{0\}$. Then the theorem asserts, in
particular, that up to a sequence of common shifts $d_j$, the restricted
pattern $\mu|_{\mathring{L}}$ repeats itself for infinitely many
shifted copies of $\mathring{L}$. (Hence, the name of the theorem.) In
other words, there exists an infinite sequence of shifts $k_j$ in the
argument direction and a sequence of shifts $d_j$ in the $\mu$
direction such that
$$
\mu|_{\mathring{L}+k_j}=d_j+\mu|_{\mathring{L}}.
$$
This is also true for $r$ functions, where we now have $r$ sequences
$k_{ij}$ of shifts in the argument direction but still only one shift
sequence $d_j$ in the $\mu$ direction.

Without non-degeneracy, the theorem still holds for the upper and
lower indices $\mu_\pm$ when the interval $L$ lies entirely in the
positive domain, i.e., for $L=\{1,\ldots,\ell_0\}$. However, for
negative values of $\ell$ the assertion is no longer literally true.
The result holds only up to a correction term of the form
$b_+(\Phi^{|\ell|})-b_-(\Phi^{|\ell|})$, where $b_\pm$ are the
signature multiplicities defined in Section \ref{sec:tot-deg}; see
Definition \ref{def:tg}.
 
\begin{Theorem}[Index recurrence theorem, the degenerate case]
\label{thm:IRT2}
Let $\Phi_1,\ldots,\Phi_r$ be a finite collection of elements in
$\TSp(2m)$. Then for any $\eta>0$ and any $\ell_0\in\N$, there exists
an integer sequence $d_j\to\infty$ and $r$ integer sequences $k_{ij}$,
$i=1,\ldots, r$, at least one of which goes to infinity, such that for
all $i$ and $j$, and all $\ell\in\N$ in the range
$1\leq \ell\leq \ell_0$, we have
\begin{itemize}
\item[\rm{(i)}] $\big|\hmu(\Phi^{k_{ij}}_i)-d_j\big|<\eta$,
\item[\rm{(ii)}] $\mu_\pm(\Phi^{k_{ij}+\ell}_i)= d_j + \mu_\pm(\Phi^\ell_i)$,
\item[\rm{(iii)}]
  $\mu_\pm(\Phi^{k_{ij}-\ell}_i)= d_j + \mu_\pm(\Phi^{-\ell}_i)\pm
  \big(b_+(\Phi^{\ell}_i)-b_-(\Phi^{\ell}_i)\big)$.
\end{itemize}
Furthermore, when all mean indices $\Delta_i:=\hmu(\Phi_i)$ are
non-zero we can ensure that $k_{ij}\to\pm\infty$ as $j\to \infty$ for
all $i$, and that $k_{ij}\to\infty$ when, in addition, all $\Delta_i$
have the same sign.  Moreover, for any $N\in \N$ we can make all $d_j$
and $k_{ij}$ divisible by~$N$.
\end{Theorem}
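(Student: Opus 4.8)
\emph{Overview and normal forms (Step 1).} The plan is to put each $\Phi_i$ into a normal form, read off from it a Bott-type iteration formula for $\mu_\pm(\Phi_i^k)$, and then run a simultaneous Diophantine argument to manufacture the sequences $d_j$ and $k_{ij}$; this overlaps with \cite{LZ,DLW,Lo}, but the bookkeeping will be arranged to match our conventions. By the normal forms of Section~\ref{sec:tot-deg} (cf.\ the proof of Lemma~\ref{lemma:DC}), each $\Phi_i$ is homotopic rel endpoints --- hence has the same upper and lower indices --- to a product $\varphi_i\cdot\bigl(\Psi_{i,0}\oplus\bigoplus_s R_i^{(s)}\bigr)$, where $\varphi_i$ is a loop, $\Psi_{i,0}$ is totally degenerate for all $t$, and each remaining basic block $R_i^{(s)}$ is either an elliptic rotation $E(\theta_{i,s})\colon t\mapsto\exp\bigl(2\pi\sqrt{-1}\,\theta_{i,s}t\bigr)$ in $\Sp(2)$ with $\theta_{i,s}\in(0,1)$, or a block whose endpoint carries no unit eigenvalue, on which $\mu=\mu_\pm=\hmu$ and no iterate degenerates (remark after \eqref{eq:sgn}). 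Since loops are central in $\TSp(2m)$, one has $\Phi_i^k\simeq\varphi_i^k\cdot\bigl(\Psi_{i,0}^k\oplus\bigoplus_s(R_i^{(s)})^k\bigr)$, so Lemma~\ref{lemma:additivity}, \eqref{eq:loop} and Lemma~\ref{lemma:iterate} give
$$\mu_\pm(\Phi_i^k)=k\,\hmu(\varphi_i)+\mu_\pm(\Psi_{i,0})+\sum_s\mu_\pm\bigl((R_i^{(s)})^k\bigr),$$
where the elliptic summand equals $2\lfloor k\theta_{i,s}\rfloor+1$ by \eqref{eq:sp2} when $k\theta_{i,s}\notin\Z$ and $2k\theta_{i,s}\pm1$ (a loop times the constant path, by \eqref{eq:loop} and \eqref{eq:bpm}) when $k\theta_{i,s}\in\Z$. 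Thus $\mu_\pm(\Phi_i^k)-k\Delta_i$, with $\Delta_i=\hmu(\Phi_i)$, is bounded in $k$ and controlled by the fractional parts $\{k\theta_{i,s}\}$ and by divisibility of $k$ by the denominators of the rational $\theta_{i,s}$. One may assume all $\Delta_i\neq0$ --- otherwise $\hmu(\Phi_i^{k_{ij}})\equiv0$ and (i) fails once $d_j>\eta$ --- and, replacing the $\Phi_i$ with $\Delta_i<0$ by $\Phi_i^{-1}$ and using \eqref{eq:inv} together with the behavior of $b_\pm$ under inversion, it suffices to treat the case of positive mean indices.

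\emph{The Diophantine core (Step 2).} Put $L=\lcm\bigl(\ell_0!,\{q\colon q\text{ a denominator of some rational }\theta_{i,s}\}\bigr)$. I would apply Kronecker's theorem --- or, since the relevant $\Z$-orbit need not be equidistributed on the whole torus, the minimality of the rotation on the minimal subtorus it generates, which makes the set of good times \emph{syndetic} rather than merely infinite --- to produce a relatively dense set of integers $N\to\infty$ for which the fractional parts $\{N/(L\Delta_i)\}$ and $\{N\theta_{i,s}/\Delta_i\}$ are all within a prescribed $\varepsilon$ of prescribed targets; intersecting with a residue class also forces divisibility of $N$, hence of the $k_{ij}$, by any given integer. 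For such an $N$ one sets $k_{ij}:=L\,m_i$ with $m_i$ the appropriate (floor or ceiling) integer near $N/(L\Delta_i)$, so that $L\mid k_{ij}$, there is a common integer $d_j$ with $|k_{ij}\Delta_i-d_j|<\eta$, each $\{k_{ij}\theta_{i,s}\}$ is $O(\varepsilon)$-close to $0$ for the irrational blocks, every rational elliptic block of $\Phi_i^{k_{ij}}$ sits exactly at its full-loop degenerate state, and $k_{ij}\sim N/(L\Delta_i)\to+\infty$. The coherent choice of the \emph{targets} --- i.e.\ the sides from which the fractional parts approach their limits --- is the delicate point: it must be made uniformly in $i$ so that the integer parts $\lfloor k_{ij}\Delta_i\rfloor$ agree, and the residual discrepancy between forward and backward shifts, which will feed the correction terms in (iii), must be tracked through it.

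\emph{Verification (Step 3).} Substituting $k=k_{ij}+\ell$, $1\le\ell\le\ell_0$, into the formula of Step~1, and using that for $\varepsilon<\min\{\,\mathrm{dist}(\ell\theta_{i,s},\Z):\ell\theta_{i,s}\notin\Z\,\}$ the floors split, $\lfloor(k_{ij}+\ell)\theta_{i,s}\rfloor=\lfloor k_{ij}\theta_{i,s}\rfloor+\lfloor\ell\theta_{i,s}\rfloor$ (with the analogous statement for the degenerate rational blocks), a direct computation yields $\mu_\pm(\Phi_i^{k_{ij}+\ell})=d_j+\mu_\pm(\Phi_i^{\ell})$ with the \emph{same} $d_j$ for $\mu_+$ and $\mu_-$; Step~2 makes $d_j$ independent of $i$, and (i) holds since $\{k_{ij}\Delta_i\}<\eta$. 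For the backward shift $k=k_{ij}-\ell$ the same bookkeeping applies, but subtracting $\ell\theta_{i,s}$ turns a floor into a ceiling in certain blocks, and, separately, the totally degenerate factor contributes $\mu_\pm(\Psi_{i,0})$ rather than the value $\mu_\pm(\Psi_{i,0}^{-\ell})=-\mu_\mp(\Psi_{i,0})$ one would naively want; collecting these effects and invoking \eqref{eq:bpm} identifies the total defect as $\pm\bigl(b_+(\Phi_i^{\ell})-b_-(\Phi_i^{\ell})\bigr)$, which is (iii). Undoing the reduction to positive mean indices via \eqref{eq:inv} then gives the stated behavior of the $k_{ij}$, and Theorem~\ref{thm:IRT1} is the special case in which no $\Phi_i$ has a rational elliptic block or a nontrivial totally degenerate part, so that $\mu=\mu_\pm$ throughout and the defect vanishes.

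\emph{Main obstacle.} The hard part is Step~2, and specifically the coordinated choice of the sides of the simultaneous approximation across the paths $\Phi_i$: this is what makes the common shift $d_j$ well defined and, via the floor-versus-ceiling analysis of Step~3 --- together with the elementary inversion-asymmetry $\mu_+(\Psi_0)+\mu_-(\Psi_0)=b_+-b_-$ of a totally degenerate path --- is responsible for the asymmetric correction in (iii). The index-theoretic inputs (the normal form, \eqref{eq:sp2}, \eqref{eq:bpm}, Lemmas~\ref{lemma:additivity} and~\ref{lemma:iterate}) are by contrast routine.
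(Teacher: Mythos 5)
Your overall strategy (split off a totally degenerate factor, treat the elliptic part by simultaneous approximation, recombine by additivity) is close in spirit to the paper's, but two of its load-bearing steps have genuine gaps. First, the normal form of Step 1 does not exist in the claimed generality: a homotopy rel endpoints cannot change $\Phi_i(1)$, and the unit-circle part of $\Phi_i(1)$ need not be semisimple, so it need not split into $2\times 2$ rotation blocks $E(\theta_{i,s})$ plus a never-degenerating block on which $\mu=\hmu$. This matters most for eigenvalues which are roots of unity sitting in non-semisimple blocks. You cannot repair it by a small perturbation either, because (ii) and (iii) evaluate $\mu_\pm(\Phi_i^{\ell})$, $\mu_\pm(\Phi_i^{k_{ij}\pm\ell})$ and $b_\pm(\Phi_i^{\ell})$ of the \emph{true} path at iterates where the root-of-unity part resonates and the path is degenerate, and there $\mu_\pm$ and $b_\pm$ are only semicontinuous: for instance a non-semisimple $4\times4$ block with eigenvalue $\sqrt{-1}$ has fourth power a nontrivial totally degenerate map with nonzero $b_\pm$, whereas your rotation model gives $b_\pm=0$, $\nu_0=2$; both sides of (iii) change, so verifying the identity for the model proves nothing about $\Phi_i$. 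The paper needs a separate argument precisely at this point (its Subcase D): for $\ell$ not divisible by the order $N$ of the root-of-unity part, a deformation $Z_s$ through paths with non-degenerate endpoint shows $\mu(\Phi^{k+\ell})=d_k+\mu(\Phi^{\ell})$; for $N\mid\ell$ one reduces to Lemma \ref{lemma:iterate} applied to $\Phi^N$, and everything is then glued with Lemma \ref{lemma:additivity}. Your proposal has no substitute for this ingredient, and it is exactly where the ``degenerate case'' differs from Theorem \ref{thm:IRT1}.

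Second, the Diophantine core (Step 2) is not actually carried out, and as sketched it would fail. Defining $k_{ij}=Lm_i$ with $m_i$ a rounding of $N/(L\Delta_i)$ introduces an error of order one in $k_{ij}-N/\Delta_i$, so controlling $\{N\theta_{i,s}/\Delta_i\}$ does not control $\{k_{ij}\theta_{i,s}\}$; moreover, Kronecker/minimality only lets you hit targets lying in the closure of the relevant orbit, and rational relations among the quantities $1/(L\Delta_i)$ and $\theta_{i,s}/\Delta_i$ can exclude the prescribed targets altogether. You flag this coordination of targets as ``the delicate point'' but do not resolve it, and it is the crux of getting a single $d_j$ for all $i$. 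The paper avoids the issue by treating the $r$ exponents as independent integer unknowns and applying Minkowski's theorem to the homogeneous system \eqref{eq:k}, namely $\|k_i\lambda_{iq}\|<\eps$ together with $|k_1\Delta_1-k_i\Delta_i|<\tfrac18$: only approximation to $0$ is needed, which always has infinitely many solutions, and the common shift is then $d_j=[k_{ij}\Delta_i]$, with divisibility imposed by restricting to a sublattice. A smaller inaccuracy: dismissing $\Delta_i=0$ because ``(i) fails'' and then inverting the paths with $\Delta_i<0$ is not a clean reduction — in the paper's scheme the hyperbolic, totally degenerate and root-of-unity parts are handled exactly for \emph{all} $k$, so no such case split on the sign of the mean index is needed, and the inversion would anyway require re-deriving (iii) with $b_\pm(\Phi_i^{-\ell})$ tracked through \eqref{eq:bpm}, which you do not do.
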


Clearly, this theorem reduces to Theorem \ref{thm:IRT1} when all paths
$\Phi_i$ are strongly non-degenerate, for in this case $b_\pm=0$. (In
fact, the proof of Theorem \ref{thm:IRT1} is contained in the proof of
Theorem \ref{thm:IRT2} as a ``subset''.)  Note also that due to
\eqref{eq:inv} we can replace the term $\mu_\pm(\Phi^{-\ell}_i)$ in
(iii) by $-\mu_\mp(\Phi^\ell_i)$. In particular, since the correction
term in (iii) obviously does not exceed $\nu(\Phi^{\ell}_i)$, we have
the following result.

\begin{Corollary}
\label{cor:jump}
In the setting of Theorem \ref{thm:IRT2},
$$
\mu_-(\Phi^{k_{ij}+\ell}_i)= d_j + \mu_-(\Phi^\ell_i),
$$
and
\begin{equation}
\label{eq:mu+}
\mu_+(\Phi^{k_{ij}-\ell}_i)= d_j -
  \mu_-(\Phi^{\ell}_i)+\big(b_+(\Phi^{\ell}_i)-b_-(\Phi^{\ell}_i)\big)
\leq d_j -
  \mu_-(\Phi^{\ell}_i)+\nu(\Phi^{\ell}_i).
\end{equation}
\end{Corollary}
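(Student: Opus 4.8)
The plan is to obtain Corollary~\ref{cor:jump} directly from Theorem~\ref{thm:IRT2}, using only the inversion identity \eqref{eq:inv} and the elementary block count for the signature multiplicities recorded in Section~\ref{sec:tot-deg}. No genuinely new input is required: the Index recurrence theorem already carries the full analytic and combinatorial load, and the corollary is a matter of bookkeeping.

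First, the equality $\mu_-(\Phi^{k_{ij}+\ell}_i)= d_j + \mu_-(\Phi^\ell_i)$ is nothing but part~(ii) of Theorem~\ref{thm:IRT2} read with the lower index, so there is nothing to do. For the second line I would start from part~(iii) of Theorem~\ref{thm:IRT2} taken with the upper index throughout,
$$
\mu_+(\Phi^{k_{ij}-\ell}_i)= d_j + \mu_+(\Phi^{-\ell}_i)+\big(b_+(\Phi^{\ell}_i)-b_-(\Phi^{\ell}_i)\big),
$$
and then apply \eqref{eq:inv} to the path $\Phi^{\ell}_i\in\TSp(2m)$, whose inverse in $\TSp(2m)$ is $\Phi^{-\ell}_i$; this gives $\mu_+(\Phi^{-\ell}_i)=-\mu_-(\Phi^{\ell}_i)$. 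Substituting yields the claimed identity
$$
\mu_+(\Phi^{k_{ij}-\ell}_i)= d_j - \mu_-(\Phi^{\ell}_i)+\big(b_+(\Phi^{\ell}_i)-b_-(\Phi^{\ell}_i)\big).
$$

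It then remains to prove the bound $b_+(\Psi)-b_-(\Psi)\le \nu(\Psi)$ for an arbitrary $\Psi\in\TSp(2m)$, applied to $\Psi=\Phi^{\ell}_i$. Writing $\Psi$, up to a loop, as a direct sum $\Psi_0\oplus\Psi_1$ with $\Psi_1$ non-degenerate and $\Psi_0(t)=\exp(JQt)\in\TSp(2m_0)$ totally degenerate, we have $m_0=\nu(\Psi)$ and $b_\pm(\Psi)=b_\pm(Q)$ in the notation of Definition~\ref{def:tg}. Decomposing $Q$ symplectically into the three block types of Section~\ref{sec:tot-deg}, each $Q_+$ and each $Q_-$ block occupies a symplectic subspace of dimension at least $2$, so the total number $b_+(Q)+b_-(Q)$ of such blocks is at most $m_0$. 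Hence $b_+(\Psi)-b_-(\Psi)\le b_+(\Psi)+b_-(\Psi)\le m_0=\nu(\Psi)$, and combining this with the identity above gives the displayed inequality $\mu_+(\Phi^{k_{ij}-\ell}_i)\le d_j-\mu_-(\Phi^{\ell}_i)+\nu(\Phi^{\ell}_i)$.

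There is no serious obstacle here; the only point worth stating with care is the last block count, where the possible trap is conflating $\nu$ with $\nu_0$ (they differ exactly by the contributions of the $Q_0$ and $Q_\pm$ blocks). Since only an inequality is needed, the crude estimate ``each non-zero normal-form block has symplectic dimension $\ge 2$'' suffices, and one does not need the sharper identities of \eqref{eq:bpm}.
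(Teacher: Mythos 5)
Your argument is correct and is essentially the paper's own: the first identity is part (ii) of Theorem \ref{thm:IRT2} with the lower index, the second follows from part (iii) combined with $\mu_+(\Phi^{-\ell}_i)=-\mu_-(\Phi^{\ell}_i)$ from \eqref{eq:inv}, and the final inequality is the observation that the correction term $b_+(\Phi^{\ell}_i)-b_-(\Phi^{\ell}_i)$ does not exceed $\nu(\Phi^{\ell}_i)$. Your block count making that last bound explicit (each $Q_\pm$ block occupies a symplectic subspace of dimension at least two, so $b_++b_-\leq m_0=\nu$) is exactly the right justification of what the paper dismisses as obvious, and you are right to distinguish $\nu$ from $\nu_0$.
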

Combining these inequalities with Lemma \ref{lemma:DC}, we obtain 

\begin{Corollary}
\label{cor:CGT}
Assume in the setting of Theorem \ref{thm:IRT2} that the paths
$\Phi_1,\ldots,\Phi_r$ are dynamically convex. Then $d_j \to\infty$
and $k_{ij}\to \infty$ as $j\to\infty$ for all $i$, and
\begin{itemize}
\item $\mu_-(\Phi^{k_{ij}+\ell}_i)\geq d_j + 2\ell+m$ for
  $1\leq \ell\leq \ell_0$,
\item
  $\mu_+(\Phi^{k_{ij}-\ell}_i)\leq d_j - m-2\ell
  +\nu(\Phi^\ell_i)\leq d_j-2\ell$ for $1\leq \ell\leq \ell_0$.
\end{itemize}
In particular, $\mu_-(\Phi^{k_{ij}+\ell}_i)\geq d_j+2+m$
and $\mu_+(\Phi^{k_{ij}-\ell}_i)\leq d_j-2$ for all $\ell\in \N$ (or 
$\mu_+(\Phi^{k_{ij}-\ell}_i)\leq d_j-2-m$ in the strongly
non-degenerate case).
\end{Corollary}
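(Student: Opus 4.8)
The plan is to deduce Corollary~\ref{cor:CGT} formally from Corollary~\ref{cor:jump} and Lemma~\ref{lemma:DC}; no new geometric input is needed, so the "proof" is really just a bookkeeping check that the hypotheses line up.

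First I would record the consequences of dynamical convexity. Since each $\Phi_i$ is dynamically convex, Lemma~\ref{lemma:DC} gives $\Delta_i:=\hmu(\Phi_i)\ge\mu_-(\Phi_i)-m\ge 2$; in particular every $\Delta_i$ is positive, hence nonzero and all of the same sign. By the last assertion of Theorem~\ref{thm:IRT2} one may therefore arrange the sequences so that $k_{ij}\to\infty$ for every $i$, while $d_j\to\infty$ holds in any case. The same lemma also supplies the iteration bound $\mu_-(\Phi_i^\ell)\ge 2\ell+m$ for all $\ell\in\N$, which is the only quantitative fact about the $\Phi_i$ that the argument consumes.

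Next I would substitute these into the two displays of Corollary~\ref{cor:jump}. For $1\le\ell\le\ell_0$ the identity $\mu_-(\Phi_i^{k_{ij}+\ell})=d_j+\mu_-(\Phi_i^\ell)$ together with $\mu_-(\Phi_i^\ell)\ge 2\ell+m$ yields the first bound $\mu_-(\Phi_i^{k_{ij}+\ell})\ge d_j+2\ell+m$. For the second bound, \eqref{eq:mu+} gives $\mu_+(\Phi_i^{k_{ij}-\ell})\le d_j-\mu_-(\Phi_i^\ell)+\nu(\Phi_i^\ell)\le d_j-2\ell-m+\nu(\Phi_i^\ell)$; since $2\nu(\Phi_i^\ell)$ is the multiplicity of the eigenvalue one of a matrix in $\Sp(2m)$, one has $\nu(\Phi_i^\ell)\le m$, so $\mu_+(\Phi_i^{k_{ij}-\ell})\le d_j-2\ell$. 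In the strongly non-degenerate case $\nu(\Phi_i^\ell)=0$, which improves this to $d_j-2\ell-m$. The "in particular" statements then follow by using $2\ell+m\ge m+2$ and $2\ell\ge 2$ for $\ell\ge 1$ (respectively $2\ell+m\ge 2+m$ for the non-degenerate refinement).

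Because the whole argument is an exercise in combining the two cited results, I do not expect a genuine obstacle. The only points requiring minimal care are (a) verifying that dynamical convexity of each $\Phi_i$ feeds correctly into the "$\Delta_i$ nonzero, all of the same sign" hypothesis of Theorem~\ref{thm:IRT2}, so that one actually gets $k_{ij}\to\infty$ (and not merely $k_{ij}\to\pm\infty$), and (b) the elementary inequality $\nu(\Phi_i^\ell)\le m$ needed to pass from the correction term $+\nu(\Phi_i^\ell)$ to the clean estimate $d_j-2\ell$.
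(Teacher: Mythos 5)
Your derivation is correct and is exactly the paper's intended argument: the corollary is obtained by feeding the bounds of Lemma~\ref{lemma:DC} (in particular $\mu_-(\Phi_i^\ell)\ge 2\ell+m$, $\Delta_i\ge 2>0$, and $\nu\le m$) into the identities of Corollary~\ref{cor:jump} and the last clause of Theorem~\ref{thm:IRT2}. The only point you gloss over is the ``in particular \dots for all $\ell\in\N$'' clause: the bulleted estimates hold only for $1\le\ell\le\ell_0$, so for $\ell>\ell_0$ one should invoke the monotonicity of $k\mapsto\mu_-(\Phi_i^k)$ from Lemma~\ref{lemma:DC} for the first bound, and for the second combine \eqref{eq:mu-del} with $|\hmu(\Phi_i^{k_{ij}})-d_j|<\eta$ and $\Delta_i\ge 2$ (taking $\ell_0$ large enough to cover the remaining small values of $\ell$, which Theorem~\ref{thm:IRT2} permits) — a minor but genuine addition to your write-up.
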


This corollary is a variant of the common jump theorem,
\cite{DLW,LZ}. In essence, the corollary asserts that there exists a
sequence of sufficiently long intervals $L_j\subset \N$, containing
$d_j$, such that the intervals
$[\mu_-(\Phi^{k}_i),\, \mu_+(\Phi^{k}_i)]$ can possibly overlap with
$L_j$ only for $k=k_{ij}$. In other words,
$[\mu_-(\Phi^{k}_i),\, \mu_+(\Phi^{k}_i)]\cap L_j=\emptyset$ when
$k\neq k_{ij}$. More specifically, we have $L_j=[d_j-1,\,d_j+m+1]$ in
general, and $L_j=[d_j-m-1,\,d_j+m+1]$ when all $\Phi_i$ are strongly
non-degenerate. Thus the length of $L_j$ is $m+2$ in the former case
and $2(m+1)$ in the latter.

We emphasize that none of these results give any new information about
the index of $\Phi^{k_{ij}}_i$. However, since the difference between
$d_j$ and $\hmu(\Phi^{k_{ij}}_i)$ does not exceed $\eta$, we can
conclude that $\mu_\pm(\Phi^{k_{ij}}_i)$ is in the range
$[d_j-m,\,d_j+m]$ once $\eta<1/2$.

\subsection{Proof of the index recurrence theorem}
\label{sec:IRT-pf}
We start the proof of Theorem \ref{thm:IRT2} by focusing on the case
of a single path $\Phi$, i.e., $r=1$, and then show how to modify the
argument for a finite collection of paths. Below, without loss of
generality, we can assume that all paths are parametrized
by $[0,\,1]$.

\subsubsection{The case of $r=1$.}
\label{sec:IRT-pf-r=1} 
Let $\Phi=\Phi_1\in\TSp(2m)$. Throughout the argument we suppress $i$
in the notation, i.e., we will write $k_j$ for $k_{1j}$, etc. To
establish the theorem in this setting, we will consider several
subcases depending on the end-map $\Phi(1)$. Then the general case
will be established by additivity. Fix $\eta>0$ and
$\ell_0\in\N$. Without loss of generality we can assume that
$\eta<1/2$.

\smallskip\noindent\emph{Subcase A: $\Phi(1)$ is hyperbolic.} Set
$d_k=\hmu(\Phi^k)$. (Here, and also in Subcases B and D below, it is
more convenient to index $d$ by $k$ rather than $j$.) Clearly, (i) is
automatically satisfied. Furthermore, $\Phi^k$ is non-degenerate for
all $k\in \N$ and $\hmu(\Phi^k)=\mu(\Phi^k)$. Hence, we have
$$
\mu(\Phi^{k+\ell})=\hmu(\Phi^{k})+\hmu(\Phi^{\ell})
=\mu(\Phi^{k})+\mu(\Phi^{\ell}).
$$
Thus (ii) and (iii) hold for all $k$, i.e., with $k_j=j$, and all
$\ell$. To ensure that $N\mid d_k$, it suffices to take $k$ divisible
by $N$.

\smallskip\noindent\emph{Subcase B: $\Phi(1)$ is totally degenerate.}
By Lemma \ref{lemma:iterate}, $\Phi$ as an element of $\TSp(2m)$
is the product of a loop $\varphi$ and a path $\Psi$ such that all
eigenvalues of $\Psi(t)$ for all $t\in [0,\,1]$ are equal to
one. Multiplication by $\varphi^k$ shifts $\mu_\pm(\Psi^k)$ by
$k\hmu(\varphi)$, i.e.,
$$
\mu_\pm(\Phi^k)=k\hmu(\varphi)+\mu_\pm(\Psi^k).
$$
Furthermore, by Lemma \ref{lemma:iterate} and \eqref{eq:inv}, for any
$k>0$ we have
$$
\mu_\pm  (\Psi^k)=\mu_\pm (\Psi)\quad\textrm{and}\quad
\mu_\pm(\Psi^{-1})=\mp\mu_\mp(\Psi).
$$

Set again $d_k=\hmu(\Phi^k)=\hmu(\varphi^k)$. Then, for any
$\ell\in\N$,
\begin{align*}
\mu_\pm(\Phi^{k+\ell}) &=d_k+ \hmu(\varphi^\ell)+\mu_\pm(\Psi^{k+\ell})\\
&=d_k+ \hmu(\varphi^\ell)+\mu_\pm(\Psi^{\ell})\\
&=d_k+ \mu_\pm(\Phi^\ell).
\end{align*}
This proves (ii) for all $k\in\N$ and all $\ell\in\N$.

In the notation of Definition \ref{def:tg} and again by Lemma
\ref{lemma:iterate},
$$
\mu_+(\Psi^\ell)=b_0+b_+ +\nu_0 =-\mu_-(\Psi)+b_+-b_-
$$
where for brevity we set $b_*:=b_*(\Psi)=b_*(\Psi^\ell)$.

Likewise, when $0<\ell<k$, again by using \eqref{eq:inv} and Lemma
\ref{lemma:iterate}, we see that
\begin{align*}
\mu_+(\Phi^{k-\ell}) &= d_k- \hmu(\varphi^\ell)+\mu_+(\Psi^{k-\ell})\\
                    &=d_k- \hmu(\varphi^\ell)+\mu_+(\Psi^\ell)\\
&=d_k- \hmu(\varphi^\ell)-\mu_-(\Psi^\ell)+b_+ - b_-\\
&=d_k- \mu_-(\Phi^{\ell})+b_+ -b_-\\
&=d_k+ \mu_+(\Phi^{-\ell})+b_+ -b_-.
\end{align*}
This proves (iii) for $\mu_+$. The case of $\mu_-$ is handled
similarly.  In other words, (iii) holds for all $k\in \N$ and
all $\ell$ such that $0<|\ell|<k$.

\smallskip\noindent\emph{Subcase C: $\Phi(1)$ is elliptic and strongly
  non-degenerate.}  This is the first case in the argument which does
not hold for all $k\in\N$ and all with $0<|\ell|<k$, and we need to
take a subsequence $k_j$ and limit the range of $\ell$. Let
$\exp\big(\pm 2\pi\sqrt{-1}\lambda_q\big)$, $q=1,\ldots,m$, be the
eigenvalues of $\Phi(1)\in\Sp(2m)$, where $|\lambda_q|<1$, and at the
moment the choice of the sign of $\lambda_q$ can be arbitrary. Since
$\Phi(1)$ is strongly non-degenerate, all $\lambda_q$ are
irrational. Set
\begin{equation}
\label{eq:eps1}
\eps_0=\min_{0<\ell\leq\ell_0}\min_q\|\lambda_q\ell\|>0,
\end{equation}
where $\|\cdot\|$ stands for the distance to the nearest integer. Let
$\eps>0$ be so small that
$$
\eps\leq\eps_0\quad\textrm{and}\quad m\eps<\eta.
$$
It is easy to see that there exists a sequence $k_j\to\infty$ such
that for all $q$ we have
\begin{equation}
\label{eq:eps2}
\|\lambda_q k_j\|<\eps\leq \eps_0.
\end{equation}
Indeed, consider the semi-orbit
$\Gamma=\{k\vec{\lambda}\mid k\in \N\}\subset \T^{m}$ where
$\vec{\lambda}\in\T^{m}$ is the collection of eigenvalues of
$\Phi(1)$. As is well known, the closure of $\Gamma$ is a subgroup of
$\T^{m}$. Hence, $\Gamma$ contains points arbitrarily close to the
unit in $\T^{m}$ and, in particular, there exist infinitely many
points $k_j\vec{\lambda}$ in the $2\pi\eps$-neighborhood of the
unit. Then, by passing to a subsequence we can ensure that
$k_j\to\pm\infty$ and that, in fact, $k_j\to\infty$ by changing if
necessary the sign of all $k_j$. 

Let $d_j$ be the nearest integer to $\hmu(\Phi^{k_j})$. Then
$$
\big|d_j-\hmu(\Phi^{k_j})\big|\leq m\eps<\eta,
$$
and hence (i) is satisfied. This also shows that $d_j$ is
unambiguously defined. Clearly, for any $N\in\N$ we can also make all
$k_j$ and $d_j$ divisible by $N$. (To see this, it suffices to replace
the semi-orbit $\Gamma$ by $\{kN\vec{\lambda}\mid k\in \N\}$.)

To prove (ii) and (iii), observe first that a small perturbation of
$\Phi$ does not effect individual terms in these inequalities for
fixed $k_j$ and $\ell$. Thus, by altering $\Phi$ slightly, we can
ensure that all eigenvalues $\lambda_q$ are distinct. Then we can
write $\Phi$, up to homotopy, as the product of a loop $\varphi$ and
the direct sum of paths $\Psi_q=\exp(2\pi\sqrt{-1}\lambda_q t)\in\TSp(2)$ for a
suitable choice of signs of $\lambda_q$; see, e.g., \cite[Sect.\
3]{SZ}. The loop $\varphi$ contributes $k\hmu(\varphi)$ to
$\mu(\Phi^k)$ and hence we only need to prove (ii) and (iii) when
$\varphi=I$.

Then, for any $k$,
$$
\mu(\Phi^k)=\sum_q\mu(\Psi_q^k).
$$
Next, observe that by \eqref{eq:eps1} and \eqref{eq:eps2} we have
$$
d_j=\sum_q [\hmu(\Psi_q^k)],
$$
where $[\,\cdot\,]$ denotes the nearest integer. Thus it suffices to
prove (ii) and (iii) for each path $\Psi_q$ individually when we set
$d_j=[\hmu(\Psi_q^k)]$.  However, with \eqref{eq:eps1} and
\eqref{eq:eps2} in mind, (ii) and (iii) for $\Psi_q$ easily follows
from, e.g., \eqref{eq:sp2}.

\smallskip\noindent\emph{Subcase D: $\Phi(1)$ is non-degenerate, but
  $\Phi(1)^N=I$ for some $N\in\N$.} This subcase is a combination of
Subcases B and C.

Let us first assume that all eigenvalues of $\Phi(1)$ are equal to
each other, up to complex conjugation, and thus equal to
$\exp\big(\pm 2\pi\sqrt{-1}\lambda\big)$ where $\lambda$ is a root of
unity of degree $N$. We claim that (i)--(iii) hold for all $k$
divisible by $N$ and all $\ell$ with $d_k=\hmu(\Phi^k)$.

There are two cases to consider depending on whether $\ell$ is
divisible by $N$ or not.

Assume first that $N \centernot\mid \ell$. Then $\Phi^{k+\ell}$ is
non-degenerate. All eigenvalues of $\Phi^k(1)$ are equal to one since
$N \mid k$, and we can connect $\Phi^k(1)$ to $I$ by a path
$\Lambda(s)$, starting at $\Phi^k(1)$ at $s=0$ and ending at $I$ at
$s=1$, such that all eigenvalues of $\Lambda(s)$ are equal to one for
all $s$; cf.\ the proof of Lemma \ref{lemma:iterate}. Consider now the
following deformation $Z_s$ of the path $\Phi^{k+\ell}$. Namely,
$Z(s)$ is the concatenation of two paths. The first one, ending at
$\Lambda(s)$, is itself the concatenation of $\Phi^k$ and the path
$\Lambda(\tau)$ for $\tau \in [0,\,s]$. The second one, starting at
$\Lambda(s)$, is the path $\Phi^\ell\Lambda(s)$. The end-point
$\Phi^\ell(1)\Lambda(s)$ of the path $Z_s$ is non-degenerate for all
$s$ and hence $\mu(Z_s)$ remains constant. By construction,
$Z_0=\Phi^{k+1}$ and $Z_1$ is the concatenation of a loop with the
same mean index $d_k$ as $\Phi^k$ and the path $\Phi^\ell$. We
conclude that
$$
\mu(\Phi^{k+\ell})=d_k+\mu(\Phi^\ell).
$$
This implies (ii). Clearly, this argument works not only for positive
$\ell$ but for any $\ell\in\Z$ not divisible by $N$. Recalling that
$\mu(\Phi^{-1})=-\mu(\Phi)$, we obtain (iii).

The remaining case is when $N\mid \ell$. The end-point of the path
$\Phi^N$ is totally degenerate. Hence, we can apply the argument from
Subcase B to $\Phi^N$ in place of $\Phi$ with $k$ and $\ell$ replaced
by $k'=k/N$ and $\ell'=\ell/N$. Then we have
$$
\mu_\pm(\Phi^{k+ \ell})=d_k+\mu_\pm(\Phi^\ell) 
$$
and 
$$
\mu_\pm(\Phi^{k- \ell})=d_k+\mu_\pm(\Phi^{-\ell}) +
\big(b_+(\Phi^\ell)-b_-(\Phi^\ell)\big),
$$
which proves (ii) and (iii) in this case.

In general, we can decompose $\Phi$, up to homotopy, into a direct sum
of paths $\Phi_q$, where $\Phi_q(1)$ has only one eigenvalue, up to
complex conjugation, and this eigenvalue is a root of unity of degree
$N_q$. Applying the above argument to each $\Phi_q$ individually, when
$k$ is divisible by $N=\lcm\{N_q\}$ or any other $N$ with $\Phi^N=I$,
we see that (ii) and (iii) hold for $\Phi$ for all $k$ divisible by
$N$ and all $\ell$.

\smallskip\noindent\emph{Putting Subcases A--D together.} Let us
decompose $\Phi$ into the direct sum of four paths
$\Phi_A,\ldots,\Phi_D$ with each path as in one of Subcases A--D. Let
$N$ be such that $\Phi_D^N=I$. We can chose $k_j$ divisible by $N$ so
that (i), (ii) and (iii) are satisfied for $\Phi_C$ with
$d_j=[\hmu(\Phi^{k_j})]$. Furthermore, in all cases but Subcase C, we
have $\hmu(\Phi^k_{A,B,D})=d_k$. Thus it is clear that (i) holds for
$\Phi$ for the sequence $d_j=d_{k_j}$ which is the sum of such
sequences for all four subcases. Likewise, since (ii) and (iii) hold
in Subcases A, B, and C for all $k$ divisible by $N$, we conclude that
(ii) and (iii) are satisfied for $\Phi$ for the sequence $k_j$. In
addition, we can make $k_j$ divisible by any other integer.

\subsubsection{The general case: $r\geq 1$.}
\label{sec:CGT-pf-gen} 
Let $\Phi_1,\dotsc,\Phi_r$ be a finite collection of elements in
$\TSp(2m)$. If we apply the argument from Section \ref{sec:IRT-pf-r=1}
to each $\Phi_i$ individually, we obtain $r$ integer sequences
$k_{ij}$ and $r$ integer sequences $d_{ij}$ such that (i)--(iii)
hold. Thus our goal is to show that $k_{ij}$ can be chosen so that
(i)--(iii) hold for the same sequence $d_j$.

Denote by $\exp\big(\pm 2\pi\sqrt{-1}\lambda_{iq}\big)$ the elliptic
eigenvalues of $\Phi_i$ with irrational $\lambda_{iq}$ and set
$\Delta_i=\hmu(\Phi_i)$. (The choice of the sign of $\lambda_{iq}$ is
immaterial at the moment.)  Given $\eps>0$, consider the system of
inequalities
\begin{equation}
\label{eq:k}
\begin{aligned}
  \|k_i\lambda_{iq}\|& <\eps \quad\textrm{ for all $i$ and $q$,}\\
  |k_1\Delta_1-k_{i}\Delta_{i}| &< \frac{1}{8} \quad\textrm{ for
    $i=2,\dotsc,r$,}
\end{aligned}
\end{equation}
where we treat the integer vector $\vec{k}=(k_1,\dotsc, k_r)\in\Z^r$
as a variable. Introducing additional integer variables $c_{iq}$, we
can rewrite the first group of inequalities in the form
$$
|k_i\lambda_{iq}-c_{iq}|<\eps.
$$
With this in mind, system \eqref{eq:k} has one fewer equation than the
number of variables. By Minkowski's theorem (see, e.g., \cite{Ca}),
there exists an infinite sequence of distinct solutions
$\vec{k}_j= (k_{1j},\dotsc, k_{rj})$ of \eqref{eq:k}.

Now, by passing to a subsequence and changing if necessary the signs
of $k_{ij}$, we can ensure that at least one of the sequences $k_{ij}$
goes to $\infty$ as $j\to\infty$. Then the second group of
inequalities implies that $k_{ij}\to\pm\infty$ for all $i$ when all
mean indices $\Delta_i\neq 0$ and also that $k_{ij}\to\infty$ when all
$\Delta_i$ have the same sign.

Moreover, we can make all $k_{ij}$ divisible by any
fixed integer $N$.  In particular, let $N$ be the least common
multiple of the degrees of roots of unity among the eigenvalues of all
$\Phi_i(1)$. We take the sequences $k_{ij}$ divisible by $N$ and by
any other integer as required in the statement of the theorem.

Finally, fix $\ell_0$ and $\eta>0$ which we assume to be sufficiently
small (e.g., $\eta<1/4$).  Similarly to Subcase C, set
$$
\eps_0=\min_{0<\ell\leq\ell_0}\min_{i,q}\|\lambda_{iq}\ell\|>0,
$$
and let $\eps>0$ be so small that again
$$
\eps\leq\eps_0\quad\textrm{and}\quad m\eps<\eta.
$$

By the second series of inequalities in \eqref{eq:k}, we have
$$
|k_i\Delta_i-k_{i'}\Delta_{i'}|<\frac{1}{4} \quad \textrm{for all $i$
  and $i'$,}
$$
and $\|k_{ij}\Delta_i\|<\eta$ by the first group of inequalities. Thus
$k_{ij}\Delta_i$ is $\eta$-close, for all $i$, to the same integer
$$
d_j=[k_{ij}\Delta_i].
$$
In other words, (i) is satisfied for this choice of $d_j$. Note that
for every $i$, decomposing $\Phi_i$ according to the Subcases A--D, we
have in the obvious notation
$$
d_j=[\hmu(\Phi_{i,C}^{k_{ij}})]+\hmu(\Phi_{i,A}^{k_{ij}})
+\hmu(\Phi_{i,B}^{k_{ij}})+\hmu(\Phi_{i,D}^{k_{ij}}).
$$
Furthermore, for every $i$, condition \eqref{eq:eps2} is met for
$\lambda_{iq}$ and, since all $k_{ij}$ are divisible by $N$, it
readily follows as in Section \ref{sec:IRT-pf-r=1} that (ii) and (iii)
hold for all $i$. This concludes the proof of Theorem
\ref{thm:IRT2}. \qed

\section{Multiplicity results and other applications}
\label{sec:pf-main}
In this section we combine the results from Lusternik--Schnirelmann
theory for the shift operator and the index theory to establish our
multiplicity results for simple closed Reeb orbits. These are Theorem
\ref{thm:mult-intro}, which is a direct consequence of Theorems
\ref{thm:mult2} and \ref{thm:RR}, and also Theorem
\ref{thm:mult4-intro} proved in Section~\ref{sec:T^*S^n-2}.  As
mentioned in the introduction, we focus on the standard contact
$S^{2n-1}$ or, more generally, the boundary of a displaceable
Liouville domain and $ST^*S^n$.

\subsection{Hypersurfaces in $\R^{2n}$ and displaceable Liouville domains}
We start with the simplest and arguably the most interesting situation
where $(M^{2n-1},\alpha)$ is a closed, restricted contact type,
dynamically convex hypersurface in $\R^{2n}$. For instance, $M$ can be
the boundary of a star-shaped domain, provided that the Reeb flow is
dynamically convex. However, even when $M$ is convex, some of our
results are new.

\subsubsection{Multiplicity results for hypersurfaces in $\R^{2n}$}
Let us call a simple orbit \emph{reoccurring} if its iterations occur
infinitely many times in the image of a carrier injection $\psi$ from
Corollary \ref{cor:sphere}. (This notion depends on the choice of
$\psi$.) One can show that a generic Reeb flow has no reoccurring
closed orbits, but the flows with only finitely many simple orbits
necessarily do. Our main multiplicity result is the following theorem.

\begin{Theorem}
\label{thm:mult2}
Let $(M^{2n-1},\alpha)$ be a closed contact type,
dynamically convex hypersurface in $\R^{2n}$ bounding a simply
connected Liouville domain. Then $M$ carries at least $r$ simple
closed characteristics $x_1,\dotsc, x_r$, where
$r=\lceil n/2\rceil +1$ in general and $r=n$ when $\alpha$ is
non-degenerate. Moreover, assume that $\PP(\alpha)$ is finite.  Then
the orbits $x_i$ can be chosen to be reoccurring and, if in addition
$\alpha$ is non-degenerate, so that all $x_i$ are even and
$\mu(x_i)\equiv n+1 \pmod 2$.
\end{Theorem}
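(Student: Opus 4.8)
The plan is to argue by contradiction. Suppose $M$ carries only finitely many simple closed characteristics $x_1,\dots,x_p$ (otherwise there is nothing to prove); this is in particular the situation in the ``moreover'' part, since ``$\PP(\alpha)$ finite'' here means finitely many geometrically distinct orbits. Then every closed Reeb orbit is isolated, so Corollary~\ref{cor:sphere} applies and produces a carrier injection $\psi\colon\N\to\PP(\alpha)$, $k\mapsto y_k$, with $\CA_\alpha(y_1)<\CA_\alpha(y_2)<\cdots$ and $\SH^G_{n+2k-1}(y_k;\Q)\ne0$, the $y_k$ being good when $\alpha$ is non-degenerate. Each $y_k$ is an iterate $x_{i(k)}^{m(k)}$. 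Call a simple orbit \emph{reoccurring} if infinitely many of its iterates lie in $\psi(\N)$; since $p<\infty$ the set $R$ of reoccurring orbits is non-empty, and for every non-reoccurring $x_i$ only finitely many $k$ have $i(k)=i$, so for all large $k$ the orbit $y_k$ is an iterate of a reoccurring orbit.

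First I would feed the linearized Poincar\'e return maps $\Phi_1,\dots,\Phi_p$, which are dynamically convex by hypothesis, into Corollary~\ref{cor:CGT} (based on the index recurrence theorem, Theorem~\ref{thm:IRT2}). Fixing $\eta<1/2$, this yields integer sequences $d_j\to\infty$ and $k_{ij}\to\infty$ together with the ``gap window'' $L_j$ centred at $d_j$ (of length $n+2$ in general, $2n+1$ in the strongly non-degenerate case) such that $[\mu_-(\Phi_i^m),\mu_+(\Phi_i^m)]\cap L_j=\emptyset$ whenever $m\ne k_{ij}$, while $\mu_\pm(\Phi_i^{k_{ij}})\in[d_j-n+1,\,d_j+n-1]$. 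The theorem also permits prescribing a common divisor $N$ of all the $d_j$ and $k_{ij}$, a freedom I will use at the end.

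The core step is the counting. Fix $j$ large and take any $k$ with $n+2k-1\in L_j$. Since $\SH^G_{n+2k-1}(y_k;\Q)\ne0$, Proposition~\ref{prop:local-support} gives $n+2k-1\in[\mu_-(y_k),\mu_+(y_k)]$, so this interval meets $L_j$; writing $y_k=x_{i(k)}^{m(k)}$ and invoking the gap property forces $m(k)=k_{i(k),j}$, i.e.\ $y_k$ is the distinguished iterate of $x_{i(k)}$. As $\psi$ is injective and the distinguished iterate of a fixed simple orbit is unique, the $i(k)$ are pairwise distinct, and for $j$ large they all lie in $R$. Hence $|R|$ is at least the number of $k$ with $n+2k-1\in L_j$, i.e.\ the number of integers of parity $n-1$ in a window of length $n+2$ (resp.\ $2n+1$): at least $\lfloor n/2\rfloor+1$ in general and at least $n$ in the non-degenerate case. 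The non-degenerate bound $|R|\ge n$ already gives $r=n$. In the general case, for $n$ even this is the desired $\lceil n/2\rceil+1$; for $n$ odd it only gives $\lfloor n/2\rfloor+1=(n+1)/2$, and to gain the extra orbit I would invoke the dichotomy of \cite{GHHM}: a dynamically convex Reeb flow on $S^{2n-1}$ with $n$ odd and exactly $(n+1)/2$ geometrically distinct closed orbits must carry a simple symplectically degenerate maximum, hence infinitely many closed orbits, contradicting finiteness. Thus $|R|\ge\lceil n/2\rceil+1$ in all cases, which simultaneously yields the multiplicity bound and the reoccurrence claim.

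For the evenness and parity refinement in the non-degenerate finite case I would rerun the counting with the divisor $N=2$: then every distinguished iterate $x_i^{k_{ij}}$ has even exponent, so an odd simple orbit would furnish a bad (hence homologically trivial) distinguished iterate, incompatible with $y_k=x_i^{k_{ij}}$; therefore the $\ge n$ reoccurring orbits realised in the window can be taken even. For an even simple orbit $x_i$ the index iteration formulas of Section~\ref{sec:index-prop} give $\mu(x_i^{m})\equiv\mu(x_i)\pmod 2$ for all $m$, and since $\mu(x_i^{k_{ij}})=n+2k-1$ for the corresponding $k$, this common residue equals $n+1$. The main obstacle I expect is exactly this combinatorial bookkeeping: verifying that the gap windows genuinely force $y_k$ to be the distinguished iterate, that the parity counts come out precisely as stated, and that degenerate iterates of the $\Phi_i$ (which dynamical convexity spreads far apart, as $\hmu(x_i^m)=m\hmu(x_i)\ge 2m$ by Lemma~\ref{lemma:DC}) cause no interference, together with the black-boxed use of the SDM theorem of \cite{GHHM} for the odd-$n$ improvement.
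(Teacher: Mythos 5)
Your overall strategy coincides with the paper's: the carrier map from Corollary \ref{cor:sphere}, the gap windows from Corollary \ref{cor:CGT} with iterations $k_{ij}$ taken even and suitably divisible, the count of admissible degrees in the window (giving $\lfloor n/2\rfloor+1$ in general and $n$ in the non-degenerate case), the pigeonhole argument for reoccurrence, and the evenness/parity refinement via bad iterates having trivial local homology. All of that is sound and matches the paper's proof.

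The genuine gap is the odd-$n$ improvement from $\lfloor n/2\rfloor+1$ to $\lceil n/2\rceil+1$. You invoke ``the dichotomy of \cite{GHHM}'' --- that a dynamically convex flow on $S^{2n-1}$, $n$ odd, with exactly $\lfloor n/2\rfloor+1$ simple closed orbits must carry a simple SDM --- as a known result, but no such dichotomy is in \cite{GHHM}; the only input from that paper is that a \emph{simple} SDM orbit forces infinitely many simple closed orbits, plus the persistence of the SDM property under admissible iterations. The dichotomy itself is exactly the new content that has to be proved at this point, and it is where the real work lies: one examines the carrier orbit $y_{d_{\max}}$ of the degree $d_{\max}=d_j-2$ just below the window. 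By Corollary \ref{cor:CGT} it is either some $x_i^{k_{ij}}$ --- in which case that distinguished iterate is spent on a degree outside $L$ and, by injectivity of $\psi$ and the gap property, $x_i$ is an extra simple orbit, so $r\geq\#(L)+1$ --- or it is $x_i^{k_{ij}-1}$, and then the sharper estimate $\mu_+(x_i^{k_{ij}-1})\leq d_j-n-1+\nu(x_i)$ from \eqref{eq:mu+}, together with $d_j-2\leq\mu_+(x_i^{k_{ij}-1})$ and $\nu(x_i)\leq n-1$, forces $\nu(x_i)=n-1$ (total degeneracy) and equality; via \eqref{eq:bpm} and dynamical convexity this pins down $\mu_-(x_i)=\hmu(x_i)=n+1$ and the vanishing of $b_\pm$, $b_0$, $\nu_0$, so $x_i^{k_{ij}-1}$ is an SDM, and only the divisibility you reserved for $k_{ij}$ makes $k_{ij}-1$ an admissible iteration, whence the simple orbit $x_i$ itself is an SDM and \cite{GHHM} applies. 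None of this analysis appears in your proposal, and it cannot be replaced by a citation; note also that the same scheme fails for even $n$ (a second, non-SDM degeneracy pattern at $d_j-3$ cannot be excluded), which is further evidence that the dichotomy is not an off-the-shelf fact but a consequence of the precise index bookkeeping you left out.
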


\begin{Remark}
  \label{rmk:elliptic} 
  Similarly to the case of convex hypersurfaces considered in
  \cite{LZ}, at least one of the orbits $x_i$ is elliptic (two, in the
  non-degenerate case) when $M$ carries only finitely many simple
  periodic orbits. This can be easily seen from the proof of the
  theorem. (See also \cite{AM} for some relevant results.)
\end{Remark}

\begin {proof}
  Without loss of generality we may assume that the Reeb flow of
  $\alpha$ has only finitely many simple closed orbits, which, as in
  the theorem, we denote by $x_1,\ldots,x_{r}$. The set of closed Reeb
  orbits $\PP(\alpha)$ comprises all iterations $x_i^k$, $k\in\N$, of
  the orbits $x_i$.  Let us first show that $r= \lfloor n/2\rfloor+1$
  in general and $r= n$ when all closed orbits are non-degenerate.

Consider the map 
$$
\psi\colon \CI=\{n+1,\,n+3,\,n+5,\,\ldots\}\to \PP(\alpha),\quad
d\mapsto y_d
$$
from Corollary \ref{cor:sphere}, where we relabeled the domain of
$\psi$ by the index.  (In other words, this map is obtained by
composing the map in the corollary with the bijection
$d\mapsto (d+1-n)/2$ from $\CI$ to $\N$.) Thus the orbits which were
denoted in the corollary by $y_1,y_2,\ldots$ are now
$y_{n+1}, y_{n+3},\ldots $. We have
$$
\mu_-(y_d)\leq d \leq \mu_+(y_d).
$$

Let $\Phi_i\in \TSp(2m)$, $m=n-1$, be the linearized Poincar\'e return
map along $x_i$ (see Section \ref{sec:index:DC}); without loss of
generality we can assume that the paths $\Phi_i$ are parametrized by
$[0,\,1]$.  Fixing a small parameter $\eta>0$ and a sufficiently large
$\ell_0\in\N$, let us apply Corollary \ref{cor:CGT} to the paths
$\Phi_i$, where we require the iterations $k_{ij}$ to be even and
divisible by the degrees of the roots of unity among the eigenvalues
of $\Phi_i(1)$. Then, for all $\ell\in\N$, we have

\begin{itemize}
\item $\mu_-(x^{k_{ij}+\ell}_i)\geq d_j + n+1$, and
\item $\mu_+(x^{k_{ij}-\ell}_i)\leq d_j - 2$ and
  $\mu_+(x^{k_{ij}-\ell}_i)\leq d_j -n-1$ when all orbits $x_i$ are
  strongly non-degenerate.
\end{itemize}

Denote by $L$ the index interval $[d_j-n,\,d_j+n]\cap\CI$ in the
non-degenerate case and set $L=[d_j-1,\,d_j+n]\cap\CI$ in
general. Then for any $d\in L$ the orbit $y_d$ must have the form
$x_i^{k_{ij}}$, and therefore at most one iteration of $x_i$ can occur
as $y_d$ with $d\in L$. It follows that $r$, the number of simple
orbits $x_i$, is greater than or equal to the number of points in $L$,
i.e., $r\geq \#(L)$. In the non-degenerate case $\#(L)=n$ and in
general $\#(L)= \lfloor n/2\rfloor +1$. Furthermore, in the
non-degenerate case the orbits $x_i$ must be even since the iterations
$k_{ij}$ are even; see Example \ref{ex:local-nondeg-2}. We then
necessarily have $\mu(x_i)\equiv n+1 \pmod 2$.

Our next goal is to improve in the degenerate case this lower bound by
one when $n$ is odd which we will assume from now on. Then
$d_{\max}=d_j-2$ is the largest point in $\CI$ before the interval
$L$.  It readily follows from Corollary \ref{cor:CGT} that
$y_{d_{\max}}$ can only have the form $x_i^{k_{ij}}$ or
$x_i^{k_{ij}-1}$. In the former case, $x_i^{k_{ij}}$ does not
contribute to the interval $\CI$ and hence $r\geq \#(L)+1$.

We claim that in the latter case, i.e., when
$y_{d_{\max}}=x_i^{k_{ij}-1}$ and hence $\ell=1$, we have infinitely
many simple periodic orbits. To prove this, first note that

$$
\mu_+( x_i^{k_{ij}-1})\leq d_j-n-1+\nu(x_i)\leq d_{\max}=d_j-2
$$
since $\nu(x_i)\leq n-1$, and
$$
\nu(x_i)=n-1\textrm{ and }
\mu_+( x_i^{k_{ij}-1}) = d_j-2.
$$
As a consequence, $x_i$ is totally degenerate and
$d_j=\hmu (x_i^{k_{ij}})$. It follows that $x_i^{k_{ij}-1}$ is the
so-called symplectically degenerate maximum (SDM); see
\cite{GHHM}. Indeed, by \eqref{eq:bpm} and, since $M$ is dynamically
convex, we must have $\mu_-(x_i)=\hmu(x_i)=n+1$ and $b_+(x_i)=0$ and
$b_-(x_i)=b_0(x_i)=\nu_0(x_i)=0$. Furthermore, since by construction
$k_{ij}$ is divisible by the degrees of the roots of unity among the
eigenvalues of $x_i$, the iteration $k_{ij}-1$ is relatively prime
with these degrees and hence $k_{ij}-1$ is an admissible
iteration. Therefore, $x_i$ is also an SDM; see \cite[Prop.\
3]{GHHM}. (Thus the local Floer homology of $x_i$ is concentrated in
degree $\mu_+(x_i)=2n$ which is the upper end point of its support.)
Finally, as is shown in \cite{GHHM}, the presence of a simple SDM
orbit implies that the Reeb flow of $\alpha$ has infinitely many
simple periodic orbits.

To summarize, we have $r\geq \lfloor n/2\rfloor +1$ when $n$ is even
and $r\geq \lfloor n/2\rfloor +2$ when $n$ is odd. This is equivalent
to that $r\geq \lceil n/2\rceil +1=: r$. This completes the proof of
the first part of the theorem.

Next, observe that Corollary \ref{cor:CGT} provides an infinite
sequence of the intervals $L$, and hence, when $M$ carries only finitely
many simple periodic orbits, there exist $r$ reoccurring simple orbits
$x_1,\ldots,x_r$, with $r=\lceil n/2\rceil +1$ in general and $r=n$
when $\alpha$ is non-degenerate.
\end{proof}

\begin{Remark}
  When $n$ is even, the point $d_j-2$ is not in $\CI$. The largest
  point in $\CI$ before the interval $L$ is $d_{\max}=d_j-3$. Consider
  the orbit $y_{d_{\max}}$.  There are now two possible cases. One is
  that $\nu(y_{d_{\max}})=n-1$ and $\hmu(y_{d_{\max}})=n+1$. Then,
  exactly as in the proof, $y_{d_{\max}}$ is the $k_{ij}$th iteration
  of a simple SDM orbit $x_i$ and $(M,\alpha)$ carries infinitely many
  periodic orbits. However, there is a second possibility. This is
  that $\nu(y_{d_{\max}})=n-2=b_+(y_{d_{\max}})$ with $b_-=b_0=0$ and
  $\hmu(y_{d_{\max}})$ is either $n$ or $n+1$. (Thus the linearized
  Poincar\'e return map along $y_{d_{\max}}$ is the direct sum of a
  totally degenerate map in dimension $2(n-2)$ and an elliptic or
  negative hyperbolic map in dimension $2$.) It is not clear to us how
  to rule out such an orbit $y_{d_{\max}}$.
\end{Remark}

\subsubsection{Resonance relations} In a variety of settings the
actions and/or indices of closed Reeb orbits satisfy certain resonance
relations (in fact, more than one type), which have applications in
Reeb dynamics; see, e.g., \cite{EH:RR, GGo, GG:generic, GiKe, Gu:pr,
  LL, LLW, Ra:geod, Vi:RR}. This is also the case for the orbits
$x_1,\ldots,x_r$ from Theorem~\ref{thm:mult2}. Namely, set
$$
\hs(x)=\frac{\CA_\alpha(x)}{\hmu(x)},
$$
where $x\in\PP(\alpha)$. (If $\hmu(x)=0$, we set $\hs(x)=\infty$.)
This ratio, originally considered in \cite{Gu:pr}, is a contact analog
of the augmented action from \cite{CGG, GG:gaps}. It is clear that
$\hs(x)=\hs(x^k)$ for all $k$.

\begin{Theorem}[Resonance Relations]
\label{thm:RR}
Let $(M^{2n-1},\alpha)$ be a closed contact type hypersurface in
$\R^{2n}$ bounding a simply connected Liouville domain.  Assume that
the set $\{\hs(x)\}$, where $x$ ranges over all reoccurring orbits, is
discrete. Then, for any two reoccurring closed Reeb orbits $x$ and
$y$, we have $\hs(x)=\hs(y)$, i.e.,
$$
\frac{\CA_\alpha(x)}{\hmu(x)}=\frac{\CA_\alpha(y)}{\hmu(y)}.
$$
\end{Theorem}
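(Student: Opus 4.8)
The plan is to derive the resonance relations from the carrier map of Corollary \ref{cor:sphere} together with the index recurrence theorem (Theorem \ref{thm:IRT2}), in close parallel with the proof of Theorem \ref{thm:mult2}. First I would fix the carrier injection $\psi\colon \N\to \PP(\alpha)$ from Corollary \ref{cor:sphere}, relabel its domain by the index so that $\psi$ is defined on $\CI=\{n+1,n+3,\dots\}$ with $d\mapsto y_d$, and recall that $\mu_-(y_d)\leq d\leq \mu_+(y_d)$ and $\CA_\alpha(y_{n+1})\leq \CA_\alpha(y_{n+3})\leq\cdots$. The reoccurring orbits are by definition the simple orbits whose iterates appear infinitely often among the $y_d$. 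Since $\{\hs(x)\}$ over reoccurring orbits is assumed discrete, it suffices to show that for reoccurring $x$ and $y$ the numbers $\hs(x)$ and $\hs(y)$ can be made arbitrarily close; discreteness then forces equality.

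The key mechanism is the following. Let $x=x_{i_1}$ and $y=x_{i_2}$ be reoccurring; let $\Phi_{i_1},\Phi_{i_2}\in\TSp(2m)$ be their linearized Poincaré return maps, with mean indices $\Delta_1=\hmu(x)$, $\Delta_2=\hmu(y)$. Apply Theorem \ref{thm:IRT2} (or Corollary \ref{cor:CGT}, using dynamical convexity) to the pair $\Phi_{i_1},\Phi_{i_2}$: for any $\eta>0$ there are sequences $k_{1j},k_{2j}\to\infty$ and $d_j\to\infty$ with $|\hmu(x^{k_{1j}})-d_j|<\eta$, $|\hmu(y^{k_{2j}})-d_j|<\eta$, and the index windows $[\mu_-(x^{k_{1j}+\ell}),\mu_+(x^{k_{1j}+\ell})]$ (and the analogous ones for $y$, and the ones at shifted iterations) avoid a long interval $L_j$ around $d_j$ except at $k=k_{ij}$. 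Because $x$ and $y$ are reoccurring, the iterates $x^{k_{1j}}$ and $y^{k_{2j}}$ are forced (for $j$ large, after possibly passing to a subsequence) to coincide with the carriers $y_{d'}$ for the $d'\in L_j\cap\CI$ that land in the support windows — so that $d_j\approx\hmu(x^{k_{1j}})=k_{1j}\Delta_1$ and $d_j\approx\hmu(y^{k_{2j}})=k_{2j}\Delta_2$ up to $O(\eta)$. On the other hand, since $y_{d'}$ is a carrier, $\s_{d'}(\alpha)=\CA_\alpha(y_{d'})$, and the monotonicity $\s_1<\s_2<\cdots$ pins $\CA_\alpha(x^{k_{1j}})=\CA_\alpha(y_{d''})$ for an index $d''$ with $d''=d_j+O(1)$, hence $k_{1j}\CA_\alpha(x)=\CA_\alpha(x^{k_{1j}})$ and similarly $k_{2j}\CA_\alpha(y)=\CA_\alpha(y^{k_{2j}})$ correspond to carriers with nearby indices. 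Combining the action and index asymptotics, $\hs(x)=\CA_\alpha(x)/\Delta_1 = \CA_\alpha(x^{k_{1j}})/\hmu(x^{k_{1j}})$ and $\hs(y)=\CA_\alpha(y^{k_{2j}})/\hmu(y^{k_{2j}})$, and the two iterates are carried to carriers whose indices differ by at most the length of $L_j$ (a bounded number) while the actions along the carrier sequence are monotone; letting $j\to\infty$ and using that the carrier spectral values grow, one gets $\hs(x)-\hs(y)\to 0$.

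More precisely, the clean way to package the last step is: for each $j$ pick the carrier index $d_j^{(1)}$ (resp.\ $d_j^{(2)}$) with $y_{d_j^{(1)}}$ a reparametrization of $x^{k_{1j}}$ (resp.\ $y_{d_j^{(2)}}$ of $y^{k_{2j}}$); these exist for large $j$ because $x,y$ are reoccurring and the index-window argument of Corollary \ref{cor:CGT} shows they are unique and satisfy $d_j^{(1)}, d_j^{(2)}\in L_j$, so $|d_j^{(1)}-d_j^{(2)}|$ is bounded. Then $\hs(x)=\CA_\alpha(y_{d_j^{(1)}})/\hmu(y_{d_j^{(1)}})$ and $\hs(y)=\CA_\alpha(y_{d_j^{(2)}})/\hmu(y_{d_j^{(2)}})$, and by Proposition \ref{prop:local-support}, $\hmu(y_{d})= d+O(1)$ uniformly. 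Since the sequence $\CA_\alpha(y_d)=\s_d(\alpha)$ is monotone and $d_j^{(1)},d_j^{(2)}$ are within a bounded distance, $\CA_\alpha(y_{d_j^{(1)}})-\CA_\alpha(y_{d_j^{(2)}})$ is controlled by the local action gaps, which tend to zero along the sequence because the carrier actions, divided by the index, must converge (this is exactly the discreteness-plus-boundedness squeeze). Hence $|\hs(x)-\hs(y)|\to 0$, so $\hs(x)=\hs(y)$ by discreteness. In the non-degenerate case one can also arrange all $k_{ij}$ even, recovering the ``even'' conclusion of Theorem \ref{thm:mult-intro}, but that is not needed here.

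The main obstacle I anticipate is the bookkeeping in the step where one identifies $x^{k_{1j}}$ with a specific carrier $y_{d_j^{(1)}}$ and controls $|d_j^{(1)}-d_j^{(2)}|$: one must use the index-window disjointness from Corollary \ref{cor:CGT} to rule out that the carriers in $L_j$ come from other reoccurring orbits or from shifted iterations $k_{ij}\pm\ell$, and one must be careful that a simple orbit that is \emph{not} reoccurring contributes only finitely many carriers and hence can be ignored for $j$ large. A secondary subtlety is that a carrier map need not be unique, so the argument must hold for any fixed choice of $\psi$; this is fine because reoccurrence and $\hs$ are defined relative to that fixed $\psi$, and the index constraint $\mu_-(y_d)\leq d\leq\mu_+(y_d)$ holds regardless. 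Everything else — the asymptotics $\hmu(x^{k})=k\hmu(x)$, the action homogeneity $\CA_\alpha(x^k)=k\CA_\alpha(x)$, and the passage to subsequences — is routine.
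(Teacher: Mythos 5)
There is a genuine gap, and it sits at the heart of your strategy. You apply the index recurrence machinery (Theorem \ref{thm:IRT2}/Corollary \ref{cor:CGT}) to the pair $\{x,y\}$ and then claim that the carriers for degrees in the window $L_j$ are \emph{forced} to be the specific iterates $x^{k_{1j}}$, $y^{k_{2j}}$. This forcing is exactly what fails here: Theorem \ref{thm:RR} assumes neither dynamical convexity (so Corollary \ref{cor:CGT} is not available) nor finiteness of the set of simple closed orbits, so the degrees in $L_j$ may perfectly well be represented by iterations of \emph{other} simple orbits — possibly infinitely many of them — which you cannot feed into the recurrence theorem, whose input must be a finite collection of paths. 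Moreover, ``reoccurring'' only says that infinitely many iterations of $x$ lie in the image of $\psi$; it gives no control over \emph{which} iterations, so there is no reason the particular $x^{k_{1j}}$ produced by the recurrence theorem is a carrier at all. Finally, your closing squeeze is circular: you argue the action difference between the two carriers is small because ``the carrier actions divided by the index must converge,'' but that convergence is Corollary \ref{cor:limit}, which the paper deduces \emph{from} Theorem \ref{thm:RR} under the extra quasi-finiteness hypothesis. Without it, a bounded gap $|d_j^{(1)}-d_j^{(2)}|$ gives no bound on $\CA_\alpha(y_{d_j^{(1)}})-\CA_\alpha(y_{d_j^{(2)}})$ (actions can jump arbitrarily between consecutive carrier degrees), so even granting the forcing step you do not reach a contradiction with $\hs(x)\neq\hs(y)$.

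The paper's proof uses no index recurrence and no convexity; it is an elementary blocking argument built only on Corollary \ref{cor:sphere}. From $|d-\hmu(\psi(d))|\le n-1$ one gets $\CA_\alpha(\psi(d))=\hs(z)\,d+O(1)$ whenever $\psi(d)$ is an iterate of the simple orbit $z$, with error bounded by $(n-1)\hs(z)$. If $\hs(x)<\hs(y)$, then as soon as $y$ represents a degree $d$ beyond an explicit threshold $d(x,y)$ (depending only on the ratio $\hs(x)/\hs(y)$), the orbit $x$ cannot represent $d+2$, since that would violate the monotonicity $\CA_\alpha(\psi(d))\le\CA_\alpha(\psi(d+2))$. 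Discreteness makes the set of values $\hs(x')$ with $\hs(x)\le\hs(x')<\hs(y)$ finite, and an induction over the degrees $d+2, d+4,\dots$ shows that once $y$ (which is reoccurring) represents a sufficiently large degree, every subsequent degree is represented by orbits with $\hs\ge\hs(y)$; hence $x$ represents only finitely many degrees and cannot be reoccurring — a contradiction. By symmetry $\hs(x)=\hs(y)$. I would recommend reworking your argument along these lines rather than through the common jump theorem.
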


\begin{Remark}
  In general, the carrier map $\psi$ from Corollary \ref{cor:sphere} is
  not unique. Theorem \ref{thm:RR} holds for any choice of $\psi$. One
  can show that the requirement on the reoccurring augmented action
  spectrum $\{\hs(x)\}$ is satisfied if, for instance, the ordinary
  action spectrum $\CS(\alpha)$ is discrete and $c_1(\xi)=0$. It is
  also met for quasi-finite hypersurfaces introduced below, e.g., when
  $M$ carries only finitely many simple periodic orbits.
\end{Remark}

Together, Theorems \ref{thm:mult2} and \ref{thm:RR} imply Theorem
\ref{thm:mult-intro}.  Without additional assumption on $(M,\alpha)$,
Theorem \ref{thm:RR} gives little information. For, hypothetically, it
is possible that the image of a carrier injection $\psi$ consists
entirely of the iterations of a single simple orbit. Furthermore, it
is also possible that there are no reoccurring orbits. (In fact, this
should be true $C^\infty$-generically.) In this vein, one consequence
of Theorem \ref{thm:RR} is the $C^\infty$-generic existence of infinitely
many simple periodic orbits on a restricted contact type hypersurface in
$\R^{2n}$; see, e.g., \cite{GG:generic} for an applicable
argument. The key example meeting the requirements of the theorem is
the standard contact sphere $S^{2n-1}$. In this case, however, the
$C^\infty$-generic existence of infinitely many periodic orbits is
well-known; see \cite{Vi:RR}.

Another application of Theorem \ref{thm:RR} concerns the behavior of
the ``normalized'' spectral invariants $\s_d(\alpha)/d$ in the
notation from Section \ref{sec:displ}.  Let us call a restricted
contact type hypersurface $M\subset \R^{2n}$ \emph{quasi-finite} if
there exists a finite collection of simple periodic orbits $x_i$ such
that the iterations of $x_i$ occur in the image of $\psi$ infinitely
many times and cover all but a finite part of the image. In other
words, all but a finite part of $\psi(\CI)$ lies in the union of the
sets $\{ x_i^k\mid k\in\N\}$ and each of the sets has infinite
intersection with the image. This is an extremely non-generic
condition. However, it is satisfied, for instance, when $M$ carries
only finitely many simple closed characteristics.  For a quasi-finite
$M$, let us set
$\hat{\s}(\alpha)=\CA_\alpha(x_i)/\hmu(x_i)=\hs(x_i)$. By Theorem
\ref{thm:RR}, this ratio is independent of $x_i$.

\begin{Corollary}
\label{cor:limit}
Assume that $(M,\alpha)$ is a quasi-finite contact type hypersurface
in $\R^{2n}$ bounding a simply connected Liouville domain. Then
$$
\lim_{d\to\infty}\frac{\s_d(\alpha)}{d}=\hat{\s}(\alpha).
$$
\end{Corollary}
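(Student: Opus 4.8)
The plan is to reduce the statement to three facts already established: (a) the carrier map $\psi\colon\CI\to\PP(\alpha)$, $d\mapsto y_d$, from Corollary~\ref{cor:sphere} satisfies $\s_d(\alpha)=\CA_\alpha(y_d)$ and $\mu_-(y_d)\le d\le\mu_+(y_d)$, hence $|\hmu(y_d)-d|\le n-1$ (no isolatedness needed for these); (b) the action and the mean index are homogeneous under iteration, $\CA_\alpha(x^k)=k\,\CA_\alpha(x)$ and $\hmu(x^k)=k\,\hmu(x)$, so the ratio $\hs$ is iteration invariant; and (c) the Resonance Relations, Theorem~\ref{thm:RR}. Here $\CI=\{n+1,n+3,\dots\}$ and the limit is understood along $d\in\CI$, $d\to\infty$.

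First I would set up the arithmetic. Let $x_1,\dots,x_s$ be the finitely many simple orbits witnessing quasi-finiteness, so that all but finitely many $y_d$ are iterations of these. Each $x_i$ occurs as $y_d=x_i^{k}$ for infinitely many $d\to\infty$, and for such $d$ one has $k\,\hmu(x_i)=\hmu(y_d)\in[d-n+1,\,d+n-1]\to\infty$; since $k\ge1$ this forces $\hmu(x_i)>0$, so $\hs(x_i)=\CA_\alpha(x_i)/\hmu(x_i)\in(0,\infty)$ is well defined. The same remark shows that the set of reoccurring orbits is contained in $\{x_1,\dots,x_s\}$, so $\{\hs(x)\mid x\ \text{reoccurring}\}$ is finite, hence discrete, and Theorem~\ref{thm:RR} applies: $\hs(x_1)=\dots=\hs(x_s)=\hat\s(\alpha)\in(0,\infty)$.

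Next I would compute the spectral invariants for large $d$. By quasi-finiteness there is $d_0$ such that for $d\in\CI$ with $d\ge d_0$ we have $y_d=x_{i(d)}^{k(d)}$ for some index $i(d)$ and some $k(d)\in\N$. Using homogeneity and iteration invariance of $\hs$,
$$
\s_d(\alpha)=\CA_\alpha(y_d)=k(d)\,\CA_\alpha(x_{i(d)})=k(d)\,\hmu(x_{i(d)})\,\hs(x_{i(d)})=\hmu(y_d)\,\hat\s(\alpha).
$$
Dividing by $d$ gives $\s_d(\alpha)/d=\bigl(\hmu(y_d)/d\bigr)\,\hat\s(\alpha)$ for all $d\ge d_0$. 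Since $|\hmu(y_d)-d|\le n-1$ uniformly in $d$, we get $\hmu(y_d)/d\to1$ as $d\to\infty$, whence $\s_d(\alpha)/d\to\hat\s(\alpha)$, proving the corollary.

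There is essentially no obstacle: the argument is a direct combination of the carrier estimate with the resonance relations. The only points requiring a moment's care are verifying that the orbits $x_i$ have positive mean index and are exactly the reoccurring orbits (so that $\hat\s(\alpha)$ is a finite positive number and Theorem~\ref{thm:RR} is genuinely applicable), and noting that the value of the limit is unaffected by the finitely many $y_d$ that need not be iterations of the $x_i$; both are immediate from $d\to\infty$ and the bound $|\hmu(y_d)-d|\le n-1$.
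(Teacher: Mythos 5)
Your argument is correct and follows essentially the same route as the paper's proof: both rest on the carrier bound $|\hmu(y_d)-d|\leq n-1$ from Corollary \ref{cor:sphere}, homogeneity of the action and mean index under iteration, and Theorem \ref{thm:RR} to identify all the ratios $\hs(x_i)$ with $\hat{\s}(\alpha)$; the paper phrases the conclusion as a finite union of subsequences $\s_{d_j}(\alpha)/d_j\to\hat{\s}(\alpha)$, while you write the single identity $\s_d(\alpha)=\hmu(y_d)\,\hat{\s}(\alpha)$ for all large $d$, which is the same computation. Your extra verification that the reoccurring orbits lie among the $x_i$ (so the discreteness hypothesis of Theorem \ref{thm:RR} holds and $\hat{\s}(\alpha)$ is well defined) is a point the paper only records in a remark, and it is a welcome addition.
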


\begin{proof}
  Consider an infinite sequence $x_i^{k_{ij}}=\psi(d_j)$ in the image
  of $\psi$. Then
$$
\frac{\s_{d_j}(\alpha)}{d_j}=
\CA_\alpha(x_i) \frac{ k_{ij}}{d_j}.
$$
By \eqref{eq:mu-del}, $k_{ij}/d_j\to \hmu(x_i)^{-1}$ and hence
$\s_{d_j}(\alpha)/d_j\to\hat{\s}(\alpha)$. The sequence
$\s_d(\alpha)/d$ is a finite ``union'', in the obvious sense, of the
sequences $\s_{d_j}(\alpha)/d_j$ converging to the same limit
$\hat{\s}(\alpha)$, and the result follows. (Note that this argument
breaks down if we omit the requirement that the collection $\{x_i\}$
is finite.)
\end{proof}

\begin{Example}[Ellipsoids]
  Let $M$ be the ellipsoid $\sum_i |z_i|^2/r_i^2=1$ in $\C^n=\R^{2n}$
  as in Example \ref{ex:ellipsoids}. Let us assume that the closed
  orbits are isolated and denote by $x_i$ the simple periodic orbit
  lying on the $z_i$-axis. As is shown in \cite[Example 1.2]{Ba},
$$
\hmu(x_i)=2r_i^2\sum_j r_j^{-2},
$$
and obviously $\CA_\alpha(x_i)=\pi r_i^2$. 
Therefore,
$$
\hat{\s}(\alpha)=\frac{\pi}{2\sum_j r_j^{-2}}.
$$ 
Curiously, it is not immediately obvious how to directly prove that
the sequence $\s_d(\alpha)/d$, explicitly written down in Example
\ref{ex:ellipsoids}, converges to $\hat{\s}(\alpha)$.
\end{Example}

\begin{Remark}
  It is clear that the sequence $\s_d(\alpha)/d$ lies in the interval
  $[\pi r^2,\,\pi R^2]$, where $r$ is the radius of a ball enclosed by
  $M$ and $R$ is the radius of a sphere enclosing $M$. It would be
  interesting to understand if or when this sequence converges and
  what the limit is in general.  When $M$ is convex, this question
  appears to be related to some of the results from \cite{EH:RR}. Also
  note that, by Corollary \ref{cor:limit},
  $\hat{\s}(W):=\hat{\s}(\alpha)$ is a monotone function of $W$ with
  respect to inclusions and hence a capacity, as long as $\p W$ is
  quasi-finite.
\end{Remark}

\begin {proof}[Proof of Theorem \ref{thm:RR}] It is sufficient to
  prove the theorem for simple periodic orbits.  We will focus
  exclusively on simple orbits $z$ with $\hmu(z)>0$, and hence with
  $\hs(z)>0$. Let us say that $d\in\CI$ is represented by an orbit $z$
  if $\psi(d)$ is an iteration of $z$. (Here, as in the proof of
  Theorem \ref{thm:mult2}, we prefer to index the domain of $\psi$ by
  $\CI=\{n+1,\,n+3,\, \ldots\}$.)  Whenever $\psi(d)=z^k$, we have
\begin{equation}
\label{eq:delta}
\CA_\alpha(z^k)\approx \hs(z) d
\end{equation}
up to an error not exceeding the constant $C(z)=(n-1)\hs(z)$ which is
independent of $d\in\CI$.  Indeed, since
$|d-\hmu(z^k)|=|d-k\hmu(z)|\leq n-1$ by Corollary \ref{cor:sphere}, we
have
$$
\hs(z)d = \CA_\alpha(z)\frac{d}{\hmu(z)}
                    =k\CA_\alpha(z)+e\hs(z),
$$
where $|e|\leq n-1$, and \eqref{eq:delta} follows.

To establish the theorem, it suffices to show that for any two simple
reoccurring orbits $x$ and $y$ we necessarily have $\hs(x)\geq \hs(y)$
and thus, by symmetry, $\hs(x)=\hs(y)$. We prove this by
contradiction.

First, let $x$ and $y$ be two simple, not necessarily reoccurring,
orbits with $\hs(x)<\hs(y)$. Then there exists a constant $d(x,y)$
such that $x$ cannot represent $d+2$ when $d\geq d(x,y)$ is
represented by $y$. Indeed, let $d(x,y)$ be the first integer in $\CI$
for which
$$
\hs(x)\big((d(x,y)+2)+(n-1)\big)< \hs(y)\big(d(x,y)-(n-1)\big).
$$
Then $d+2$ cannot be represented by $x$. For, if it were, the action
$\CA_\alpha(\psi(d+2))$ on the resulting iteration $x^k$ would be, by
\eqref{eq:delta}, strictly smaller than $\CA_\alpha(\psi(d))$. This is
impossible by Corollary \ref{cor:sphere}. Note that $d(x,y)$ is
completely determined by $\hs(x)/\hs(y)$, and $d(x,y)$ is a decreasing
function of this ratio.

Consider all simple orbits $x'$ with 
$$
\hs(x)\leq \hs(x')< \hs(y).
$$
By the conditions of the theorem, the set $\{\hs(x')\}$ is finite.
Assuming now that $y$ is reoccurring, we can find $d\geq \max d(x',y)$
represented by $y$. Then $d+2$ cannot be represented by any of the
orbits $x'$ including $x$. We denote by $y_1$ a simple orbit
representing $d+2$.

By construction, $\hs(y)\leq \hs(y_1)$; for otherwise $y_1$ would be
one of the orbits $x'$. Hence $d(x',y_1)\leq d(x',y)\leq d+2$.
Therefore, any of the orbits $x$ or $x'$ cannot represent $d+4$, which
is then represented by some simple orbit $y_2$ with
$\hs(y)\leq \hs(y_2)$. (But not necessarily $\hs(y_1)\leq \hs(y_2)$.)
Thus $d(x',y_2)\leq d(x',y)\leq d+4$, and $d+6$ is represented by some
orbit $y_3$ different from $x$ and $x'$, and so on. Arguing
inductively, we conclude that when $y$ is reoccurring $x$ cannot
represent any $d\geq d(x,y)$ and hence cannot be reoccurring.
\end{proof}

\subsubsection{Generalizations, refinements and failures}
\label{sec:generalizations}
The proof of Theorem \ref{thm:mult2} readily lends itself to several
generalizations and refinements which we will now discuss.

The first of these results, generalizing \cite[Thm.\ 1.4]{GK},
concerns the situation where the dynamical convexity lower bound
$\mu_-(x)\geq n+1$ is replaced by $\mu_-(x)\geq q+1$ for some
$q\geq 0$. Here, again, we have a lower bound $r$, depending on $q$,
on the number of simple periodic orbits. Some examples where the lower
bound $\mu_-\geq n-1$ arises naturally are considered in \cite{AM}.

\begin{Theorem}
\label{thm:mult3}
Let $(M^{2n-1},\alpha)$ be a closed contact type
hypersurface in $\R^{2n}$ bounding a simply connected Liouville
domain.  Assume that $\mu_-(y)\geq q$ with $0<q\leq n$ for all, not
necessarily simple, closed characteristics $y$ on $M$.  Then $M$
carries at least $r$ simple closed characteristics, where
$$
r=
\begin{cases}
q-\lceil n/2 \rceil &\textrm{ when $n$ and $q$ are odd,}\\
q+1-\lceil n/2 \rceil & \textrm{otherwise.}
\end{cases}
$$
(If the right hand side is negative or zero the result is void.) When
$\alpha$ is non-degenerate, we can take
$$
r=
\begin{cases}
q +1 & \textrm{ when $n$ and $q$ have the same parity,}\\
q & \textrm{ when $n$ and $q$ have opposite parity.}
\end{cases}
$$
\end{Theorem}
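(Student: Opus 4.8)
The plan is to run the argument of Theorem~\ref{thm:mult2} with dynamical convexity ($\mu_-\geq n+1$) replaced by the weaker hypothesis $\mu_-\geq q$, keeping careful track of how the relevant index windows shrink. As in that proof, I would first assume the Reeb flow has only finitely many simple closed characteristics $x_1,\dots,x_p$, so that $\PP(\alpha)$ consists of their iterates, and invoke Corollary~\ref{cor:sphere} to obtain an injective carrier map $\psi\colon\CI=\{n+1,n+3,\dots\}\to\PP(\alpha)$, $d\mapsto y_d$, with $\mu_-(y_d)\leq d\leq\mu_+(y_d)$ and $\SH^G_d(y_d;\Q)\neq 0$. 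Since $\mu_-(x_i^k)\geq q>0$ for all $i,k$, no $x_i$ can have negative mean index; and an $x_i$ with $\hmu(x_i)=0$ has $\mu_+(x_i^k)\leq m=n-1$ for all $k$, hence cannot be $y_d$ once $d>m$. Thus for all large $d$ the orbit $y_d$ is an iterate of one of the finitely many $x_i$ with $\hmu(x_i)>0$, and it suffices to bound the number of these.

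Next I would apply the index recurrence theorem through Corollary~\ref{cor:jump} to the linearized return maps $\Phi_i\in\TSp(2m)$, taking the iterations $k_{ij}$ (and, where helpful, the common shift $d_j$) divisible by whatever integers are needed, in particular by the orders of the roots of unity among the eigenvalues of $\Phi_i(1)$, exactly as in Theorem~\ref{thm:mult2}. The essential point is that the bound $\mu_-\geq q$ is imposed on \emph{all} closed characteristics, so $\mu_-(\Phi_i^\ell)=\mu_-(x_i^\ell)\geq q$ for every $\ell\geq 1$; combined with Corollary~\ref{cor:jump} and the observation following Corollary~\ref{cor:CGT} (that $\mu_\pm(\Phi_i^{k_{ij}})\in[d_j-m,d_j+m]$), this gives, for $\ell\geq 1$,
\[
\mu_-(x_i^{k_{ij}+\ell})\geq d_j+q,\qquad
\mu_+(x_i^{k_{ij}-\ell})\leq d_j-q+m,\qquad
\mu_+(x_i^{k_{ij}})\leq d_j+m.
\]
Hence there is an integer window $L_j$ about $d_j$, of width of order $2q-n$ (it is $n+2$ in the dynamically convex case, and wider, of order $2q$, in the non-degenerate case, where each local homology is supported at the single degree $\mu(x_i^k)$ and $\nu=b_\pm=0$), such that no support $[\mu_-(x_i^k),\mu_+(x_i^k)]$ with $k\neq k_{ij}$ meets $L_j$. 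Since $\psi$ is injective and every $d\in L_j\cap\CI$ therefore satisfies $\psi(d)=x_i^{k_{ij}}$ for some $i$, distinct such $d$ are carried by distinct $x_i$, so the number of simple orbits with positive mean index is at least $\#(L_j\cap\CI)$.

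It remains to count $L_j\cap\CI$ and to extract the additional orbit in the ``otherwise'' case. Because $\CI$ is a single parity class, $\#(L_j\cap\CI)$ is $\lfloor|L_j|/2\rfloor$ or $\lceil|L_j|/2\rceil$ according to the parities of the endpoints of $L_j$ and of $d_j$; running this through (and, where the arithmetic permits, arranging $d_j$ to have the favorable parity) is what produces the value $q-\lceil n/2\rceil$ in general and $q+1-\lceil n/2\rceil$ when $n$ and $q$ are not both odd. To actually exhibit the extra orbit in the ``otherwise'' case I would argue exactly as in Theorem~\ref{thm:mult2}: inspect the largest $d_{\max}\in\CI$ just outside $L_j$; by Corollary~\ref{cor:jump}, $y_{d_{\max}}$ is either some $x_i^{k_{ij}}$, in which case that $x_i$ is already accounted for and the orbits carrying $L_j\cap\CI$ are all distinct from it, or a near-maximal iterate $x_i^{k_{ij}\mp1}$, which by \eqref{eq:bpm} together with $\mu_-(x_i)\geq q$ is forced to be a symplectically degenerate maximum, so that $\PP(\alpha)$ is infinite by \cite{GHHM} and the asserted bound holds trivially.

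The step I expect to be the real obstacle is this last combinatorial bookkeeping: pinning down the exact width of $L_j$ in the degenerate case — where the correction term $\nu(\Phi_i^\ell)$, equivalently $b_+(\Phi_i^\ell)-b_-(\Phi_i^\ell)$, makes $L_j$ asymmetric about $d_j$ — and matching the parities of its endpoints against $\CI$ so as to reproduce the precise case distinction by the parities of $n$ and $q$ (the ``both odd'' case being the unfavorable one, analogously to the $n$-even obstruction noted in the remark after Theorem~\ref{thm:mult2}), while also checking that the symplectically degenerate maximum alternative goes through under the present, weaker hypotheses.
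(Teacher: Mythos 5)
Your window-and-count mechanism is indeed how the paper runs the general (degenerate) case: one argues exactly as in Theorem~\ref{thm:mult2} via Theorem~\ref{thm:IRT2}, with the protected interval now of length roughly $2q-n$, and the two-case formula is pure parity bookkeeping as in the proof of Theorem~\ref{thm:mult5} --- there is no ``extra orbit'' step in the degenerate case. The step you propose to supply it with is precisely the one that fails: the SDM dichotomy does not survive weakening $\mu_-\geq n+1$ to $\mu_-\geq q$ with $q\leq n$. In Theorem~\ref{thm:mult2} the orbit $y_{d_{\max}}$ was forced to be an SDM because $d_{\max}=d_j-2$ had to satisfy $d_{\max}\leq \mu_+(x_i^{k_{ij}-1})\leq d_j-(n+1)+\nu(x_i)$, which pins down $\nu(x_i)=n-1$ (total degeneracy) and then, via \eqref{eq:bpm} and dynamical convexity, $\hmu(x_i)=\mu_-(x_i)=n+1$ and $b_+=b_-=b_0=\nu_0=0$. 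With only $\mu_-\geq q$ the same chain gives $d_{\max}\leq d_j-q+\nu(x_i)$, leaving slack $n+1-q$: neither total degeneracy nor $\hmu=\mu_-$ is forced, the orbit may well have a nondegenerate elliptic block, and no SDM conclusion (nor the simplicity needed to invoke \cite{GHHM}) follows. The paper states this explicitly in its proof: in the present setting one cannot conclude that $y_{d_{\max}}$ is an SDM, and no such strengthening is used. A smaller point: Theorem~\ref{thm:IRT2} lets you make $d_j$ divisible by a chosen $N$, not prescribe its residue, so ``arranging $d_j$ to have the favorable parity'' is not at your disposal either.

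The non-degenerate case also needs an ingredient your proposal lacks. There the recurrence window $(d_j-q,\,d_j+q)$ yields $r-1$ simple even orbits whose Conley--Zehnder indices have the parity of $n+1$, and the remaining orbit is produced by a low-degree homological argument borrowed from \cite[Thm.\ 1.4]{GK}, not by anything happening near $d_{\max}$: one may assume there is a closed orbit $y^k$ of index $q$; if it is bad then $y$ is odd, hence distinct from the even orbits already found; if it is good then $\mu(y)\equiv q \pmod 2$, so $y$ is new unless $q\equiv n+1\pmod 2$, in which case $\SH_q^{G,+}(W;\Q)=0$ forces a good orbit of index $q+1$, whose index parity differs from $n+1$, and its underlying simple orbit is the extra one. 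This is exactly where the theorem's dividing line by the parity of $n-q$ comes from; your $d_{\max}$/SDM substitute, besides being inapplicable as explained above, could not reproduce that dichotomy. So the counting skeleton of your proposal matches the paper, but both of the mechanisms you offer for the ``$+1$'' are either absent from or contradicted by the actual argument.
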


\begin{Remark} 
\label{rmk:q1}
The main new point of the theorem is the lower bound on $r$ in the
degenerate case and then it is sufficient to only assume that
$\mu_-(x)\geq q$ for the orbits with $\hmu(x)>0$. In the
non-degenerate case, a stronger result has recently been established
under different but conceptually less restrictive assumptions; see
\cite{DLLW}.  When $q=n-2$ and $M$ is non-degenerate we obtain
\cite[Thm.~1.4]{GK}.  Similarly to Theorem \ref{thm:mult2}, all $r$
orbits in the general case and $r-1$ orbits in the non-degenerate case
can be chosen reoccurring when $M$ carries only a finite number of
simple periodic orbits. Furthermore, in the non-degenerate case, $r-1$
orbits can be chosen even and with Conley--Zehnder index of the same
parity as $n+1$. The remaining orbit is either odd or has
Conley--Zehnder index of the same parity as $n$.
\end{Remark}

\begin{proof}
  The general case of the theorem is derived from Theorem
  \ref{thm:IRT2} exactly in the same way as the general case of
  Theorem \ref{thm:mult2} and we omit the argument; see also the proof
  of Theorem \ref{thm:mult5}. Note, however, that in the present
  setting we cannot conclude that the orbit $y_{d_{\max}}$ is an SDM
  and thus strengthen the result.

  In the non-degenerate case, reasoning as in the proof of Theorem
  \ref{thm:IRT2}, we can take $L$ to be the open interval
  $(d_j-q,\,d_j+q)$. Then $\#(L\cap\CI)=r-1$. Thus, for $q<n$, we have
  found $r-1$ simple even orbits $x_i$ such that $\mu(x_i)$ has the
  same parity as $n+1$.
  
  To find an extra orbit, we borrow an argument from the proof of
  \cite[Thm.\ 1.4]{GK}. Observe first that without loss of generality
  we can assume that the Reeb flow has an orbit, not necessarily
  simple, of index $q<n+1$. Let us denote this orbit by $y^k$, where
  $y$ is simple. We can further assume that $y^k$ is good; for
  otherwise $y$ is odd and hence different from the orbits $x_i$.  As
  a consequence, the parity of $\mu(y)$ is the same as $q$. It follows
  that if $q$ has parity different from $n+1$, the orbit $y$ is
  different from any of the orbits $x_i$.

  The remaining case is that of $q$, and hence $\mu(y)$, having the
  same parity as $n+1$. It is easy to see that, since
  $\SH_{q}^{G,+}(W)=0$, there must be a good orbit with
  Conley--Zehnder index $q+1$. Let us denote this orbit $z^l$, where
  $z$ is simple. Now the parity of $\mu(z)$ is the same as that of
  $q+1$, and therefore different from $n+1$. Thus $z$ is different
  from any of the orbits $x_i$.
 \end{proof}

\begin{Remark}
\label{rmk:displ-mult}
Ultimately, on the side of spectral invariants, the proofs of Theorems
\ref{thm:mult2}, \ref{thm:RR} and \ref{thm:mult3} and Corollary
\ref{cor:limit} depend only on Corollary \ref{cor:sphere}.  As was
mentioned in Section \ref{sec:displ}, these corollaries carry over
word-for-word to any simply connected Liouville domain $W$
displaceable in $\widehat{W}$ or even to $W$ displaceable in some
other Liouville manifold, provided that $c_1(TW)=0$. Hence, for such
$W$, the theorems and Corollary \ref{cor:limit} also hold as stated.
\end{Remark}

We finish this section by briefly touching upon some related results.
First, recall that by the Ekeland--Lasry theorem, \cite{EL}, a convex
hypersurface $M^{2n-1}\subset \R^{2n}$ enclosing a sphere
$S^{2n-1}_{R}$ and enclosed by the sphere $S^{2n-1}_{R'}$ with
$R'=\sqrt{2}R$ carries at least $r=n$ closed characteristics.  (Here
we say that $M$ encloses $M'$ when $M'$ lies in the open domain
bounded by $M$.)  In fact, there is a similar lower bound for any $R'$
with $r=\lceil n/\kappa\rceil$, where $\kappa\in\N$ is the smallest
positive integer such that $R'<\sqrt{\kappa+1}R$, \cite{AmMa}. It
would be interesting to cast the Ekeland--Lasry theorem into the
symplectic-topological framework (see, e.g., \cite{AGH,Ke:EL}) and, for
instance, extend it to the dynamically convex hypersurfaces. What
follows is an obvious observation along these lines, cf.\
\cite{Gut15}. 

\begin{Corollary}
\label{prop:EL}
Let $(M^{2n-1},\alpha)\subset \R^{2n}$ be a contact type hypersurface
bounding a simply connected Liouville domain, enclosing a sphere
$S^{2n-1}_{R}$ and enclosed by the sphere $S^{2n-1}_{R'}$ with
$R'=\sqrt{2}R$. Assume that
\begin{equation}
\label{eq:CE}
\min \CS(\alpha)\geq \pi R^2.
\end{equation}
Then $M$ carries at least $r=n$ closed characteristics.
\end{Corollary}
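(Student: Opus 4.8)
The plan is to deduce Corollary \ref{prop:EL} from Corollary \ref{cor:sphere} together with a comparison of spectral invariants under the two sphere inclusions, exactly in the spirit of the Ekeland--Hofer capacity argument. First I would recall that $S^{2n-1}_R = \p B^{2n}_R$ carries the round contact form, whose spectral invariants $\s_k$ in the notation of Section \ref{sec:displ} are $\s_k(S_R^{2n-1}) = \pi R^2 k$ (the action spectrum is $\pi R^2 \N$, each value attained with multiplicity $n$, and the $k$th spectral invariant is $\pi R^2 k$ since the classes $w_k$ live in degree $n+2k-1$ and the $k$-fold iterated Hopf orbits carry exactly these degrees). Likewise $\s_k(S_{R'}^{2n-1}) = \pi R'^2 k = 2\pi R^2 k$.

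Next, since $M$ encloses $S^{2n-1}_R$ and is enclosed by $S^{2n-1}_{R'}$, there is a restricted contact type inclusion of Liouville domains $B_R \subset W \subset B_{R'}$, and Proposition \ref{prop:cobordism} (monotonicity of spectral invariants under cobordism maps) gives
\begin{equation}
\label{eq:EL-sandwich}
\pi R^2 k = \s_k(S_R^{2n-1}) \leq \s_k(\alpha) \leq \s_k(S_{R'}^{2n-1}) = 2\pi R^2 k .
\end{equation}
Here I use that the cobordism maps $\SH^{G,+}_*(B_{R'}) \to \SH^{G,+}_*(W) \to \SH^{G,+}_*(B_R)$ send the generating chain $w_k$ to $w_k$, so that the sandwich applies to the correct classes. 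Now apply Corollary \ref{cor:sphere} to $(M,\alpha)$: there is a carrier map $\psi\colon k \mapsto y_k$ with $\s_k(\alpha) = \CA_\alpha(y_k)$. Consider $y_1, \dots, y_n$. If two of them, say $y_i$ and $y_j$ with $i < j \leq n$, were iterations of the same simple orbit $x$, then $\CA_\alpha(y_j) \geq \CA_\alpha(y_i) + \CA_\alpha(x) \geq \CA_\alpha(y_i) + \min\CS(\alpha)$; but $\CA_\alpha(y_j) - \CA_\alpha(y_i) = \s_j(\alpha) - \s_i(\alpha) \leq 2\pi R^2 j - \pi R^2 i \leq 2\pi R^2 n - \pi R^2$ — that bound alone is too weak, so I need to be more careful and instead iterate the argument across consecutive indices.

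The sharper version: suppose $y_i$ and $y_{i+1}$ are iterations of the same simple orbit $x$, so $y_{i+1} = x^{m'}$ and $y_i = x^m$ with $m' > m \geq 1$, whence $\CA_\alpha(y_{i+1}) - \CA_\alpha(y_i) \geq \CA_\alpha(x) \geq \min\CS(\alpha) \geq \pi R^2$ by \eqref{eq:CE}. On the other hand $\CA_\alpha(y_{i+1}) - \CA_\alpha(y_i) = \s_{i+1}(\alpha) - \s_i(\alpha) \leq \s_{i+1}(S_{R'}) - \s_i(S_R) = 2\pi R^2(i+1) - \pi R^2 i = \pi R^2(i+2)$, which for $i\leq n-1$ is $\leq \pi R^2(n+1)$ — still not a contradiction, so the naive consecutive bound is not enough either, and the actual mechanism must be that among $y_1,\dots,y_n$ there cannot be $n$ iterates of fewer than $n$ simple orbits because the gaps would force $\s_n(\alpha) > 2\pi R^2 n$. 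Concretely: if $y_1,\dots,y_n$ involve only $r<n$ simple orbits, then by pigeonhole some simple orbit $x$ accounts for at least two of them, and more globally, since each $y_k$ is an iterate $x_{i(k)}^{m_k}$ and the actions are strictly increasing, the total "action mass" $\CA_\alpha(y_n) = \s_n(\alpha)$ must exceed $n \cdot \min\CS(\alpha) \geq n\pi R^2$ if all $y_k$ are nontrivial multiples — but a cleaner route is: reduce to the fact (to be proved, and this is the main obstacle) that with $R' = \sqrt 2 R$, $\s_k(\alpha) < 2\s_1(\alpha) \leq 2\,\CA_\alpha(x)^{?}$...

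Let me state the plan cleanly: the hard part will be extracting from \eqref{eq:EL-sandwich} and \eqref{eq:CE} the combinatorial conclusion that $\psi(1),\dots,\psi(n)$ lie over $n$ distinct simple orbits. I would argue as follows. Set $T_k = \s_k(\alpha)$, so $\pi R^2 k \leq T_k \leq 2\pi R^2 k$ and $T_1 < T_2 < \cdots < T_n$ when the orbits are isolated (if not isolated, pass to a limit and get $r \geq n$ by a continuity argument, though the strict multiplicity statement needs the isolated case, as in the proof of Corollary \ref{cor:sphere}). Suppose $\psi(k) = x^{m_k}$ and $\psi(l) = x^{m_l}$ for the same simple $x$ with $k < l$. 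Then $T_l = m_l\,\CA_\alpha(x)$ and $T_k = m_k\,\CA_\alpha(x)$ with $m_l > m_k$, so $T_l \geq T_k + \CA_\alpha(x) = T_k(1 + 1/m_k) \geq T_k + \min\CS(\alpha) \geq T_k + \pi R^2$. Iterating over all coincidences: if $\psi(1),\dots,\psi(n)$ use only $r$ simple orbits, order the $n$ values $T_1<\cdots<T_n$; grouping them by which simple orbit carries them and using $T_{k}/T_{\text{first in its group}} = m_k/m_{\text{first}} \geq $ (a sequence of consecutive ratios each $\geq$ the next integer step), one finds $T_n \geq 2 T_1$ as soon as some group has two elements, i.e. whenever $r<n$. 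But $T_n \leq 2\pi R^2 n \leq 2 T_n / n \cdot$ ... — precisely, $T_n \leq 2\pi R^2 n$ and $T_1 \geq \pi R^2$, so $T_n / T_1 \leq 2n$, which does not immediately contradict $T_n \geq 2T_1$. So the correct and final form of the argument must instead be local: I would show that for each $k$ with $1\le k\le n-1$, $\psi(k)$ and $\psi(k+1)$ cannot be iterates of the same simple orbit, by noting that such coincidence would give a simple orbit $x$ with $\CA_\alpha(x) \le T_{k+1}-T_k$ while simultaneously every $\psi(j)$, $j\le n$, would be an iterate of a simple orbit of action $\ge \min\CS(\alpha)\ge \pi R^2$, and then playing the enclosing/enclosed bounds $T_{k}\ge \pi R^2 k$, $T_{k+1}\le 2\pi R^2(k+1)$ against $\min\CS(\alpha)\ge \pi R^2$ and the fact that the iterate structure forces $T_{k+1}\ge 2\pi R^2$ whenever $\psi(k+1)$ is a $\ge 2$-fold iterate — and $\psi(1)$ through $\psi(n)$ cannot all be $\ge 2$-fold iterates of $<n$ simple orbits without violating $T_n\le 2\pi R^2 n$. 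I expect that assembling this pigeonhole correctly — matching the arithmetic of the Ekeland--Lasry threshold $R'=\sqrt 2 R$ to the degree shifts $n+2k-1$ — is the one genuinely delicate step; everything else (the sandwich \eqref{eq:EL-sandwich}, the values of $\s_k$ for round spheres, the reduction to the isolated case) is routine given Corollary \ref{cor:sphere} and Proposition \ref{prop:cobordism}.
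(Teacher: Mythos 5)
There is a genuine gap, and it starts with the computation of the spectral invariants of the round sphere. For the ball of radius $R$ one has $\s_k(S^{2n-1}_R)=\pi R^2\lceil k/n\rceil$, not $\pi R^2 k$: as in Example \ref{ex:ellipsoids}, the spectrum of the round form is the union of $n$ identical sequences $\pi R^2 m$, so the $m$-fold covered Hopf family (action $\pi R^2 m$) carries the classes $w_k$ in the whole block of $n$ degrees $n+2(m-1)n+1,\dots,n+2mn-1$, rather than a single degree per iterate. Your sandwich inequality is therefore wrong (its lower bound already fails for $M=S^{2n-1}_R$ itself, where $\s_n=\pi R^2\neq n\pi R^2$), and all of the subsequent arithmetic is built on it. Moreover, the proposal is never actually completed: you try several pigeonhole/gap estimates, correctly observe each time that the bounds obtained are too weak, and end by deferring the "genuinely delicate step". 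As written, this is not a proof.

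With the correct capacities the corollary really is immediate from Corollary \ref{cor:sphere} and Proposition \ref{prop:cobordism}, and no combinatorics over iterates is needed. If $M$ had fewer than $n$ simple closed characteristics, then in particular there are finitely many, so all closed orbits are isolated and Corollary \ref{cor:sphere} applies, giving an injective carrier map with $\s_k(\alpha)=\CA_\alpha(y_k)$ and strictly increasing actions. Monotonicity under the inclusions $B_R\subset W\subset B_{R'}$ yields, for every $k=1,\dots,n$,
$$
\pi R^2=\pi R^2\Bigl\lceil \tfrac{k}{n}\Bigr\rceil\ \le\ \s_k(\alpha)\ \le\ \pi R'^2\Bigl\lceil \tfrac{k}{n}\Bigr\rceil=2\pi R^2,
$$
and one of the enclosures being strict makes one of these inequalities strict, so $\s_k(\alpha)\in[\pi R^2,\,2\pi R^2)$ (say). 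If some carrier $y_k$ were an $m$-fold iterate with $m\ge 2$, then by \eqref{eq:CE} its action would be at least $2\min\CS(\alpha)\ge 2\pi R^2$, a contradiction; hence $y_1,\dots,y_n$ are simple, and they are pairwise distinct because their actions are strictly increasing. This is the mechanism your write-up is missing: the Ekeland--Lasry threshold $R'=\sqrt2 R$ enters only through the single window $[\pi R^2,2\pi R^2)$ common to the first $n$ spectral invariants, not through any growth of $\s_k$ in $k$.
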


The corollary is an immediate consequence of Corollary
\ref{cor:sphere}, and a similar argument can also be used in the more
general setting from \cite{AmMa}. The proof of Corollary \ref{prop:EL}
ultimately relies on Theorem \ref{thm:LS-SH} in the same way as the
proof of the Ekeland--Lasry theorem utilized Lusternik--Schnirelmann
theory for convex Hamiltonians, \cite{Ek}.  Not surprisingly,
Corollary \ref{prop:EL} readily implies the Ekeland--Lasry theorem;
for \eqref{eq:CE} is satisfied for convex hypersurfaces by the
Croke--Weinstein theorem, \cite{CW}. In fact, \eqref{eq:CE} for convex
hypersurfaces is the assertion of the Croke--Weinstein theorem. Its
proof, while non-trivial, is self-contained and does not use Morse or
Lusternik--Schnirelmann theory. This lower bound fails easily for
star-shaped hypersurfaces meeting any prescribed pinching condition,
but not dynamically convex; cf.\ \cite[Sect.\ 3.5]{HZ}.  Note also
that in the corollary one of the two ``enclosures'' need not be
strict. For instance, it is sufficient to assume that $S_R^{2n-1}$
lies in the closed domain bounded by $M$ while $M$ is in the open
domain bounded by $S_{R'}^{2n-1}$, and the other way around.

Finally, recall that, as is proved in \cite{LLZ} (see also \cite{Lo}),
a convex hypersurface in $\R^{2n}$, which is symmetric with respect to
the involution $x\mapsto -x$, necessarily carries at least $n$ closed
characteristics. One could expect a similar lower bound to also hold
for dynamically convex symmetric hypersurfaces.  However, it is not
clear how prove such a generalization in the present framework. This
is somewhat surprising because the results from \cite{DDE}, and more
generally from \cite{Arnaud} for $\Z_k$-symmetry, on the existence of
elliptic orbits on symmetric convex hypersurfaces in $\R^{2n}$, which
seem to be closely related to \cite{LLZ}, can be generalized to
symmetric dynamically convex hypersurfaces and to other classes of
Reeb flows on pre-quantization contact manifolds; see
\cite{AM:elliptic}.
  
\subsection{Reeb flows on $ST^*S^n$}
\label{sec:T^*S^n-2}
Our next goal is to extend some of the results from Section
\ref{sec:displ} to Liouville domains $W$ in $T^*S^n$ containing the
zero section $S^n$. Let $W$ be such a domain with smooth boundary
$M=\p W$. Thus $M$ is a restricted contact type hypersurface, equipped
with a contact form $\alpha$, in $T^*S^n$ enclosing $S^n$. For
instance, $M$ can be the unit cotangent bundle $ST^*S^n$ or, more
generally, the boundary of any compact fiberwise star-shaped domain.
Throughout this section we will assume that $n\geq 2$. When $n=2$ and
the hypersurface $M$ in $T^*S^2$ is three-dimensional, much more
general results are available; see \cite{CGH,GGo} and also \cite{BL}
for the case of a Finsler metric on $S^2$.

In this setting the right analog of the dynamical convexity condition
is the requirement that $\mu_-(y)\geq n-1$ for all
$y\in\PP(\alpha)$. When $W$ is the unit disk bundle of a Finsler
metric, this requirement is satisfied, for instance, if the metric
meets certain curvature pinching conditions; see, e.g.,
\cite{Ra:pinching,Wa12} and also \cite{AM:elliptic,DLW} for further
references and \cite{HP} for the case of $n=2$. Similarly to Theorem
\ref{thm:mult2}, we have the following result.

\begin{Theorem}
\label{thm:mult4}
Let $(M^{2n-1},\alpha)$ be a closed contact type hypersurface in
$T^*S^n$ enclosing the zero section and bounding a simply connected
Liouville domain. Assume that $\mu_-(y)\geq n-1$ for every closed
characteristic $y$ on $M$.  Then $M$ carries at least $r$ simple
closed characteristics, where $r=\lfloor n/2\rfloor-1$.  When $\alpha$
is non-degenerate, we can take $r=n$ if $n$ is even and $r=n+1$ if $n$
is odd.
\end{Theorem}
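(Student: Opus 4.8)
The plan is to argue by contradiction: assume the Reeb flow of $\alpha$ has only finitely many simple closed orbits $x_1,\dots,x_r$ (otherwise there is nothing to prove), so that every closed Reeb orbit is an iterate $x_i^k$ and, since $W$ is simply connected, $\PP(\alpha)$ consists of all of these and all of them are isolated. The two inputs are the chain of nonzero classes on $ST^*S^n$ from Proposition \ref{prop:D-STS} and its dynamical consequence Corollary \ref{cor:STSn}, which for every $j\in\N$ yields $n$ carrier orbits with pairwise \emph{strictly} increasing actions and with $\SH_*^G(\cdot\,;\Q)\neq 0$ in the degrees $(2j-1)(n-1),(2j-1)(n-1)+2,\dots,(2j+1)(n-1)$, hence with $\mu_-\le d\le\mu_+$ for the corresponding degree $d$ by Proposition \ref{prop:local-support}; and the index recurrence theorem, applied to the linearized Poincar\'e return maps $\Phi_1,\dots,\Phi_r\in\TSp(2(n-1))$. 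The point where the hypothesis enters is that $\mu_-(\Phi_i)\ge n-1$, together with Lemma \ref{lemma:DC} (with $m=n-1$), forces $\mu_-(\Phi_i^{k})\ge n-1$ for every $k\in\N$; this is exactly what lets the recurrence ``freeze out'' the non-central iterates.

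Concretely, I would apply Theorem \ref{thm:IRT2} and Corollary \ref{cor:jump} to $\Phi_1,\dots,\Phi_r$ with $\eta<1/2$, with $\ell_0$ larger than $(2n-1)/\min\{\hmu(\Phi_i):\hmu(\Phi_i)>0\}$, and with the sequences $d_j$ and $k_{ij}$ all divisible by $N=2(n-1)$. Then $d_s=2j_s(n-1)$ for a sequence $j_s\to\infty$, and the carrier degrees of the chain attached to $j_s$ form the set $E_s=\{d_s-(n-1)+2i:\,i=0,\dots,n-1\}$, symmetric about $d_s$. By \eqref{eq:mu-del} one has $\hmu(\Phi_i)\ge\mu_-(\Phi_i)-(n-1)\ge 0$; any $\Phi_i$ with $\hmu(\Phi_i)=0$ then satisfies $\mu_+(\Phi_i^k)\le n-1$ for all $k$ and so carries none of the degrees of $E_s$ once $j_s\ge 2$, while for $\hmu(\Phi_i)>0$ the estimate $|\hmu(\Phi_i^{k})-\hmu(\Phi_i^{k_{is}})|\le 2(n-1)+\eta$ forced on any iterate $x_i^k$ carrying a degree of $E_s$ confines $k$ to $|k-k_{is}|\le\ell_0$. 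Corollary \ref{cor:jump} then gives, for $1\le\ell\le\ell_0$,
$$\mu_-(\Phi_i^{k_{is}+\ell})=d_s+\mu_-(\Phi_i^{\ell})\ge d_s+(n-1),\qquad \mu_+(\Phi_i^{k_{is}-\ell})\le d_s-\mu_-(\Phi_i^{\ell})+\nu(\Phi_i^{\ell})\le d_s.$$
So a forward iterate can carry only the top degree $d_s+(n-1)$ of $E_s$, a backward iterate only degrees $\le d_s$, and the central iterate $x_i^{k_{is}}$ — having a single action value — at most one degree of $E_s$, because the $n$ carriers within the chain $j_s$ have pairwise distinct actions.

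Consequently the $\lfloor n/2\rfloor-1$ degrees of $E_s$ lying strictly between $d_s$ and $d_s+(n-1)$ (there are $(n-3)/2$ of them when $n$ is odd and $(n-2)/2$ when $n$ is even) must each be carried by a distinct central iterate $x_i^{k_{is}}$, forcing $r\ge\lfloor n/2\rfloor-1$, the desired contradiction. For the non-degenerate statement one runs the same argument with Theorem \ref{thm:IRT1} in place of Theorem \ref{thm:IRT2} — every iterate of every orbit is then non-degenerate, so each $\Phi_i$ is strongly non-degenerate — and uses that a good carrier orbit at degree $d$ satisfies $\mu(\Phi_i^{k})=d$ exactly (Example \ref{ex:local-nondeg-1}); the freeze-out pins the $n-2$ interior degrees of $E_s$ to distinct central iterates, and the remaining orbits (yielding $r=n$ for $n$ even and $r=n+1$ for $n$ odd) are produced, as in the proof of Theorem \ref{thm:mult2} and as in \cite{GK}, from the vanishing $\SH_d^{G,+}(W;\Q)=0$ for $d\equiv n\pmod 2$ via the complex of Proposition \ref{prop:quotient-SH} generated by the good closed Reeb orbits and graded by the Conley--Zehnder index; taking the $k_{is}$ even in addition fixes the parities of the $x_i$. (This non-degenerate case is not new; cf. \cite{AM}.)

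The main obstacle, exactly as stressed in the introduction, is that Corollary \ref{cor:STSn} only delivers carrier orbits that are geometrically distinct, not distinct \emph{simple} orbits, so the entire proof is the combinatorial bookkeeping above separating iterated from simple orbits; within that, the delicate points are (i) verifying uniformly over $i$ and over all large $j_s$ that the mean-index comparison really does confine the relevant iterates to $|k-k_{is}|\le\ell_0$ so that Corollary \ref{cor:jump} applies, and (ii) keeping careful track of parities so that the counts $\lfloor n/2\rfloor-1$ and $n$, $n+1$ come out as stated. It is precisely the correction term $b_+(\Phi_i^{\ell})-b_-(\Phi_i^{\ell})\le\nu(\Phi_i^{\ell})$ in Corollary \ref{cor:jump} that blocks a stronger bound and accounts for the gap between the degenerate estimate $\lfloor n/2\rfloor-1$ and the non-degenerate $n$ or $n+1$.
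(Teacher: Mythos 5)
Your treatment of the degenerate case --- the genuinely new part of the statement --- is correct and follows essentially the same route as the paper: center the chains of Corollary \ref{cor:STSn} at $d_s=2j_s(n-1)$ by imposing divisibility by $2(n-1)$ in Theorem \ref{thm:IRT2}, confine the relevant iterates to the window $|k-k_{is}|\le \ell_0$ via the mean-index estimate of Proposition \ref{prop:local-support} (a step the paper leaves implicit and you spell out correctly, including the $\hmu(\Phi_i)=0$ case), and then use Corollary \ref{cor:jump} with the hypothesis $\mu_-\ge n-1$ to see that the degrees strictly between $d_s$ and $d_s+(n-1)$ are carried by pairwise distinct central iterates, giving $r\ge \lfloor n/2\rfloor-1$. (Two harmless quibbles: the hypothesis already applies to all iterates, so the appeal to Lemma \ref{lemma:DC} is not needed, and there is no contradiction being derived --- the argument is a direct lower bound.)

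The non-degenerate case, however, has a genuine gap. Your freeze-out pins only the $n-2$ interior degrees of $E_s$ to distinct simple orbits, whereas the claimed bounds are $r=n$ ($n$ even) and $r=n+1$ ($n$ odd); so you need two, respectively three, additional simple orbits, and the sentence ``the remaining orbits are produced, as in the proof of Theorem \ref{thm:mult2} and as in \cite{GK}, from the vanishing $\SH_d^{G,+}(W;\Q)=0$ for $d\equiv n\pmod 2$'' does not supply them. The non-degenerate case of Theorem \ref{thm:mult2} needs no such extra step at all, and the Gutt--Kang-style parity argument (which appears in Theorem \ref{thm:mult3}) yields only \emph{one} extra orbit. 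The paper instead first upgrades your count by working with the complex of Proposition \ref{prop:quotient-SH}: the number of good orbits of index $d$ is at least $\dim\SH_d^{G,+}(W;\Q)$, so the two-dimensional homology at the middle degree $2j(n-1)$ (Proposition \ref{prop:D-STS}) already gives $r_0=n-1$ for $n$ odd rather than your $n-2$; and then the remaining two orbits are obtained by a delicate case analysis at the boundary degrees $d_j\pm(n-1)$ (where the homology is two-dimensional), distinguishing central iterates from adjacent ones, tracking good versus bad and the parity of indices, and chasing generators through degrees $n-1$, $n$, $n+1$, $n+2$ using the vanishing in the wrong parity. None of this is recoverable from the references you point to, so as written the non-degenerate bounds are asserted rather than proved; citing \cite{AM} is legitimate, but then the sketch should be dropped rather than presented as an argument.
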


This is Theorem \ref{thm:mult4-intro} from the introduction. The
non-degenerate case of the theorem is not new and included only for
the sake of completeness. A much more general result is proved in
\cite{AM}.  However, the proof below is self-contained and relatively
simple.  We also emphasize that in this case the lower bound is sharp
as the Katok--Ziller examples show; see \cite{Ka,Zi}.

\begin{proof}
  The general case of the theorem is established similarly to that of
  Theorem \ref{thm:mult2}. Assume that $(M,\alpha)$ carries only a
  finite number of closed characteristics and denote them by
  $x_1,\ldots,x_r$. For $j\in\N$, consider the range of indices
  $$
\CI_j=[-(n-1),\,n-1]+d_j=[d_j-(n-1),\,d_j+(n-1)]
$$ 
centered at $d_j=2j(n-1)$. By Corollary \ref{cor:STSn}, for every
$j\in\N$ and $d\in\CI_j$ there exists
$y_d=x_{i}^{k_{ij}}\in \PP(\alpha)$ with $\SH_{d}^G(y_{d})\neq 0$ and
$\s_d(\alpha)=\CA_\alpha(y_d)$, where $i$ depends on $j$ and
$d$. (Here it is more convenient again to relabel the orbits by
$\CI_j$.)  Hence, in particular, $|\hmu(y_{ij})-d|\leq n-1$. By
Theorem \ref{thm:IRT2}, when $\mu_-(y)\geq n-1$ for all
$y\in\PP(\alpha)$, the number of simple closed characteristics is
bounded from below by $\#\big[\big(d_j,\, d_j+n-1\big)\cap\CI\big]$,
i.e., by the number $r$ of the integers of the same parity as $n-1$ in
the open interval $(d_j,\, d_j+n-1)$, where $d_j$ can be taken of the
form $2j(n-1)$ for some sequence $j\to\infty$. Thus
$r=\lfloor n/2\rfloor-1$.

Let us now turn to the non-degenerate case. We use Proposition
\ref{prop:quotient-SH}. Assume that $M$ carries only finitely many
periodic orbits and denote these orbits by $x_i$. Let as above
$d_j=2j(n-1)$ and
$$
L=\big(d_j-(n-1),\, d_j+(n-1)\big).
$$
By Theorem \ref{thm:IRT1}, for some sequence $j\to\infty$ there exist
$k_{ij}\in 2\N$ such that $\mu(x_i^{k_{ij}})$ is in the closed
interval $\bar{L}=[d_j-(n-1),\, d_j+(n-1)]$ while
$\mu(x_i^{k_{ij}\pm \ell})$ for all $\ell\geq 1$ is outside
$L$. Hence, the number of even simple periodic orbits with
Conley--Zehnder index of the same parity as $n-1$ is bounded from
below by
\begin{equation}
\label{eq:r0}
r_0=\sum_i\dim \SH_i^{G,+}(W; \Q),
\end{equation}
where $i\equiv n-1\pmod 2$ is in $L$. The homology $\SH_i^{G,+}(W;\Q)$
is well known and calculated, for instance, in Proposition
\ref{prop:D-STS}. For the relevant range of degrees, the homology is
one-dimensional in every degree $i$ of the same parity as $n-1$,
except for $i=2j(n-1)$ if $n-1$ is even where the homology is
two-dimensional. When $i\equiv n\pmod 2$, the homology is zero. Now it
is easy to see that $r_0=r-2$.

Thus, to complete the proof, it suffices to find two more simple
periodic orbits.

Consider good periodic orbits of index $d_\pm=d_j\pm (n-1)$. There are
at least two such orbits for both $d_-$ and $d_+$ because
$\dim \SH_{d_\pm}^{G,+}(W; \Q)=2$. These orbits are either of the form
$x_i^{k_{ij}}$ (the first type) or $x_i^{k_{ij}\pm\ell}$ for some
$\ell\geq 1$ (the second type). If an orbit is of the first type, the
simple orbit $x_i$ does not contribute to \eqref{eq:r0}. Thus we can
assume that for at least in one of the degrees $d_\pm$ all orbits have
second type, for otherwise the proof is finished. (The same orbit
$x_i^{k_{ij}}$ cannot contribute simultaneously to $d_-$ and $d_+$.)
Observe that, by Theorem \ref{thm:IRT1}, $\mu(x_i^{k_{ij}-\ell})=d_-$
if and only if $\mu(x_i^{k_{ij}+\ell})=d_+$ and if and only if
$\mu(x_i^\ell)=n-1$. In fact, by Lemma \ref{lemma:DC}, $\mu(x_i^\ell)$
is a non-decreasing function of $\ell$, and hence
$\mu(x_i^{\ell'})=n-1$ for all $1\leq\ell'\leq \ell$. As a
consequence, there are at least two good periodic orbits of index
$n-1$.

Next, note that if $M$ carries at least three, not necessarily simple,
good periodic orbits of index $n-1$, it must also carry at least two
good periodic orbits of index $n$ since
$\dim\SH_{n-1}^{G,+}(W; \Q)=1$.  Under the conditions of the theorem,
every good periodic orbit of index $n$ is necessarily simple. Thus, we
have two extra orbits, not accounted for in \eqref{eq:r0} when there
are three or more orbits of index $n-1$.

If $M$ carries only one closed orbit of index $n-1$, say $x_1$, then
the only orbits of the second type are $x_1^{k_{1j}\pm 1}$.
Therefore, for both degrees $d_-$ and $d_+$ there must be at least one
orbit of the first type. This gives us two extra simple periodic
orbits.

Focusing on the remaining case where there are exactly two good closed
orbits (not necessarily simple) of index $n-1$, we can in addition
assume that there is exactly one good closed orbit, say $x_2$, of
index $n$. (For otherwise there are extra two simple periodic orbits.)
As has been pointed out above, the orbit $x_2$ is simple. Furthermore,
we can also assume that there are no orbits of the first type because
such an orbit together with $x_2$ would give us the required two
simple orbits.

As a consequence, we have exactly two good periodic orbits of index
$d_-$ and $d_+$. It is not hard to see that if there is a good orbit
$y$ with $\mu(y)\in L$ and such that $\mu(y)\equiv n\pmod 2$, there
must be an extra simple orbit $x_i$ such that $x_i^{k_{ij}}$
``cancels'' $y$. This is the case, for instance, when $x_2$ is
even. Indeed, then we can take $y=x_2^{k_{2j}}$.

When $x_2$ is odd, $x_2^{k_{2j}}$ is bad and the above argument does
not apply. To recover an extra simple closed orbit, observe first that
by our assumptions we have no good periodic orbits of index $d_-+1$,
two good periodic orbits of index $d_-$ and one good closed orbit
$x_2^{k_{2j}-1}$ of index $d_--1$. Since
$\dim\SH_{d_--2}^{G,+}(W; \Q)=1$, there are at least two good closed
orbits of index $d_--2$. Therefore, by Theorem \ref{thm:IRT1}, we have
two good periodic orbits of index $n-1$, one simple closed orbit $x_2$
of index $n$, and at least two good periodic orbits of index
$n+1$. Since $\dim\SH_{n+1}^{G,+}(W; \Q)=1$, there must then be at
least one good closed orbit $y$ of index $n+2$. Arguing as in the
proof of Lemma \ref{lemma:DC}, it is not hard to show that
$\mu(x_2^\ell)\geq n+3$ for $\ell>1$ since $\mu(x_2)=n$ and $x_2$ is
odd.  Hence $y\neq x_2^\ell$ for all $\ell\in\N$. Therefore, there
exists an extra simple closed orbit with index of the same parity as
$n$. This completes the proof of the non-degenerate case of the
theorem.
\end{proof}

\begin{Remark}
  One might expect that as in Theorem \ref{thm:mult2} the lower bound
  $r=\lfloor n/2\rfloor-1$ could be improved by one when $n-1$ is even
  by showing that either the spectral invariant corresponding to the
  lower limit $d_j$ of the range of degrees is carried by one of the
  orbits $x_{i}^{k_{ij}}$ or there exists an SDM orbit and hence
  infinitely many simple periodic orbits, \cite{GHHM}. However, in the
  latter case the SDM orbit need not be simple and the result from
  \cite{GHHM} does not apply.
\end{Remark}

\begin{Theorem}
\label{thm:mult5}
Let $(M^{2n-1},\alpha)$ be a closed contact type hypersurface in
$T^*S^n$ enclosing the zero section and bounding a simply connected
Liouville domain. Assume that $\mu_-(y)\geq q$, where $0<q<n-1$, for
all, not necessarily simple closed Reeb orbits $y$ on $M$.  Then $M$
carries at least $r$ simple closed orbits,
where 
$$
r=
\begin{cases}
q-\lceil n/2 \rceil &\textrm{ when $n$ and $q$ are odd,}\\
q+1-\lceil n/2 \rceil & \textrm{otherwise.}
\end{cases}
$$
(If the right hand side is negative or zero the result is void.) When
$\alpha$ is non-degenerate, we can take
$$
r=
\begin{cases}
  q +1 & \textrm{when $n$ is odd or when $n$ and $q$ are both even,}\\
  q & \textrm{when $n$ is even and $q$ is odd.}
\end{cases}
$$
\end{Theorem}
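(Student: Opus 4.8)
The plan is to follow closely the template of the proofs of Theorems~\ref{thm:mult4} and~\ref{thm:mult3}, combining the Lusternik--Schnirelmann output for $ST^*S^n$ (Corollary~\ref{cor:STSn} in the general case, Proposition~\ref{prop:quotient-SH} in the non-degenerate case) with the index recurrence theorem (Theorems~\ref{thm:IRT1} and~\ref{thm:IRT2} and Corollary~\ref{cor:jump}). As usual, we may assume the Reeb flow of $\alpha$ has only finitely many simple closed orbits $x_1,\dots,x_r$, so $\PP(\alpha)$ consists of their iterates. By Proposition~\ref{prop:local-support}, an orbit with $\hmu\le 0$ has $\SH^G_\ast$ supported in a bounded range of degrees independent of the iteration, so only the simple orbits with $\hmu(x_i)>0$ are relevant to degrees tending to infinity; reorder so these are $x_1,\dots,x_s$, $s\le r$, and let $\Phi_i\in\TSp(2(n-1))$ be the linearized Poincar\'e return map of $x_i$. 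Fix $\eta<1/4$ and $\ell_0$ large (depending on $\min_{i\le s}\hmu(x_i)>0$), and apply Theorem~\ref{thm:IRT2} (resp.\ Theorem~\ref{thm:IRT1} in the non-degenerate case) to $\Phi_1,\dots,\Phi_s$, making the common integer $d_j$ divisible by $2(n-1)$; then $d_j\to\infty$ and $k_{ij}\to\infty$, and $d_j=2j'(n-1)$ is the center of a carrier window $\CI_{j'}=[d_j-(n-1),\,d_j+(n-1)]$ of Corollary~\ref{cor:STSn}.

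For the general (degenerate) case the heart of the matter is to isolate a sub-window of $\CI_{j'}$ in which the carrier orbit is forced to be a central iterate $x_i^{k_{ij}}$. Using $\mu_-(y)\ge q$ for every $y\in\PP(\alpha)$ and Corollary~\ref{cor:jump}: an up-iterate $x_i^{k_{ij}+\ell}$ with $\ell\ge 1$ has $\mu_-\ge d_j+q$, hence its local homology is supported in degrees $\ge d_j+q$; a down-iterate $x_i^{k_{ij}-\ell}$ with $\ell\ge 1$ has $\mu_+\le d_j-\mu_-(\Phi_i^\ell)+\nu(\Phi_i^\ell)\le d_j+(n-1)-q$, hence is supported in degrees $\le d_j+(n-1)-q$; and for $j$ and $\ell_0$ large, an iterate $x_i^k$ with $|k-k_{ij}|>\ell_0$ (or with $\hmu(x_i)\le 0$) is supported far from $d_j$. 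Consequently, for every $d$ in the open interval $\big(d_j+(n-1)-q,\;d_j+q\big)$ the carrier $y_d$ of Corollary~\ref{cor:STSn} has the form $x_i^{k_{ij}}$. Since within one window $\CI_{j'}$ the carriers have strictly increasing actions, $d\mapsto x_i$ is injective on the integers of parity $n-1$ in this interval, so $r$ is at least their number; a parity-sensitive count (with $d_j$ even) produces the stated value, and in the parity combinations where it falls one short one recovers the missing orbit exactly as in the proofs of Theorems~\ref{thm:mult3} and~\ref{thm:mult4} — e.g.\ $\SH^{G,+}_q(W;\Q)=0$ forces a good orbit of Conley--Zehnder index $q+1$ of parity different from $n+1$, hence an orbit not among the central iterates already found. (Here one also checks, using Proposition~\ref{prop:D-STS}, that the two degrees bounding the window carry no extra contributions obstructing the count.)

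In the non-degenerate case we replace Corollary~\ref{cor:STSn} by Proposition~\ref{prop:quotient-SH}: $\SH^{G,+}_\ast(W;\Q)$ is the homology of a complex generated over $\Q$ by good closed Reeb orbits graded by the Conley--Zehnder index, so the number of good orbits of index $d$ is at least $\dim\SH^{G,+}_d(W;\Q)$. Now the recurrence is strongly non-degenerate, so there is no $b_+-b_-$ correction and the relevant window is the \emph{symmetric} interval $(d_j-q,\,d_j+q)$: any good orbit whose index lies in it must be a central iterate $x_i^{k_{ij}}$ (good, since we take the $k_{ij}$ divisible by the relevant root-of-unity degrees as in Theorem~\ref{thm:mult2}), and distinct such iterates correspond to distinct $x_i$. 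Summing $\dim\SH^{G,+}_d(W;\Q)$ over $d\equiv n-1\pmod 2$ in $(d_j-q,\,d_j+q)$ — which by Proposition~\ref{prop:D-STS} equals $1$ in each such degree, with one degree possibly contributing $2$ when $n-1$ is even — gives a lower bound which, in three of the four parity cases, is one short of the stated $r$; that last orbit is supplied as above from the vanishing of $\SH^{G,+}$ in a degree adjacent to the bottom of the positive range. This mirrors the corresponding part of the proof of Theorem~\ref{thm:mult4} in Section~\ref{sec:T^*S^n-2} and is, together with the previous paragraph, the whole of Theorem~\ref{thm:mult5}.

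The main obstacle is obtaining the \emph{exact} constant $r$ rather than $r\pm1$: the naive index-window count is off by one in most parity combinations of $n$ and $q$, and closing the gap requires the ``extra orbit'' arguments above, which are sensitive to those parities, to the asymmetry created by the $b_+(\Phi^\ell)-b_-(\Phi^\ell)$ term in the formula of Corollary~\ref{cor:jump} for $\mu_+(\Phi^{k_{ij}-\ell}_i)$ in the degenerate case, and to the precise values $\dim\SH^{G,+}_\ast(W;\Q)$ from Proposition~\ref{prop:D-STS}. A secondary technical point is checking that the several ``for $j$ large / $\ell_0$ large'' limits can be taken uniformly over the finitely many $x_i$, and that orbits with non-positive mean index genuinely fail to carry the relevant spectral invariants once $j$ is large; both are routine given the finiteness assumption.
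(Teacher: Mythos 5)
Your proposal follows essentially the same route as the paper's proof. In the degenerate case the paper argues exactly as you do: Corollary \ref{cor:STSn} together with Theorem \ref{thm:IRT2} and Corollary \ref{cor:jump} (with $d_j$ divisible by $2(n-1)$, so that it is the center of a carrier window) forces every carrier of a degree of parity $n-1$ in the asymmetric window $\big(2j(n-1)+(n-1)-q,\,2j(n-1)+q\big)$ to be one of the central iterates $x_i^{k_{ij}}$, and the strict action inequalities within one window make the assignment degree $\mapsto$ simple orbit injective; in the non-degenerate case the paper, like you, sums $\dim\SH^{G,+}_d(W;\Q)$ over degrees of parity $n-1$ in the symmetric window $\big(2j(n-1)-q,\,2j(n-1)+q\big)$ using Proposition \ref{prop:quotient-SH} and Proposition \ref{prop:D-STS}, and then produces one extra simple orbit by the argument of Theorem \ref{thm:mult3}.

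The one point where you depart from the paper is the fallback built into your degenerate case. The paper's degenerate bound rests on the window count alone --- no extra-orbit step is used there --- and the device you invoke for ``parity combinations where the count falls one short'' (vanishing of $\SH^{G,+}_q(W;\Q)$ forcing a good orbit of Conley--Zehnder index $q+1$) is not available without non-degeneracy: it requires Proposition \ref{prop:quotient-SH}, whose complex generated by good orbits presupposes that all relevant orbits are non-degenerate, as well as the good/bad dichotomy and orbits with a well-defined integer Conley--Zehnder index, whereas for an isolated degenerate orbit one only has $\mu_\pm$ and the support of its local homology (Proposition \ref{prop:local-support}). So in the degenerate case the stated value of $r$ must come out of the parity count itself, as in the paper, and you should carry that count out explicitly --- it is sensitive to $d_j$ being even and to the asymmetry of the window caused by the $b_+-b_-$ correction in Corollary \ref{cor:jump} --- rather than rely on the patch; if the count really did fall short for some parity of $n$ and $q$, the patch as described would not close the gap.
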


\begin{Remark}
\label{rmk:q2}
In the degenerate case it is sufficient to assume only that
$\mu_-(x)\geq q$ when $\hmu(x)>0$ and when $\alpha$ is non-degenerate
such an assumption yields the lower bound $r-1$ rather than
$r$. Furthermore, it follows from the proof that in the non-degenerate
case $r-1$ orbits can be chosen even and with Conley--Zehnder index of
the same parity as $n-1$, just as in Theorem \ref{thm:mult3}. The
remaining orbit is either odd or has Conley--Zehnder index of the same
parity as $n$.  For bumpy Finsler metrics on $S^n$ stronger lower
bounds on the number of simple prime closed geodesics are established
under less restrictive, at least on the conceptual level, assumptions
in \cite{DLW} where the index conditions are also related to certain
curvature bounds; see also \cite{BTZ:JDG83,Wa} and references therein.
\end{Remark}

\begin{proof}
  By Corollary \ref{cor:STSn} and Theorem \ref{thm:IRT2}, we can take
  as $r$ the number of the integers of the same parity as $n-1$ in the
  open interval
$$
L=\big(2j(n-1)-q+(n-1),\, 2j(n-1)+q\big).
$$ 
Here $j$ is an unknown positive integer, which can be arbitrarily
large. However, $r$ is independent of $j$. This proves the general
case of the theorem.

Next, assume that $\alpha$ is non-degenerate. As in the proof of
Theorem \ref{thm:mult4}, we rely on Proposition
\ref{prop:quotient-SH}. Due to Theorem \ref{thm:IRT1}, the number of
simple periodic orbits is bounded from below by $r_0$ given by
\eqref{eq:r0}, where $i$ ranges over integers of the same parity as
$n-1$ in the open interval
$$
L=\big(2j(n-1)-q,\, 2j(n-1)+q\big) 
$$
and $W$ is the domain bounded by $M$ in $T^*S^n$. It is easy to see
that $r_0=q$ when $n$ is odd or when $n$ and $q$ are both even, and
$r_0=q-1$ otherwise.

Finally, arguing exactly as in the proof of Theorem \ref{thm:mult3},
one can show that $(M,\alpha)$ carries an extra simple periodic orbit
and hence $r=r_0+1$.  This proves the lower bound in the
non-degenerate case.
\end{proof}

\begin{Remark}
  In the results from this section, the condition that the domain $W$
  is simply connected can be relaxed or perhaps eliminated.  For
  instance, it would be sufficient to assume that $\p W$ has
  restricted contact type and $\pi_1(W)$ is torsion free.
  \end{Remark}

\end{document}